 \def\@textbottom{\vskip \z@ \@plus 1pt}
 \let\@texttop\relax
\newtheorem{theorem}[subsection]{Theorem}
\newtheorem{lemma}[subsection]{Lemma}
\newtheorem{corollary}[subsection]{Corollary}
\newtheorem{conjecture}[subsection]{Conjecture}
\newtheorem{theoremm}[subsubsection]{Theorem}
\newtheorem{corollaryy}[subsubsection]{Corollary}
\newtheorem{proposition}[subsection]{Proposition}
\theoremstyle{definition}
\newtheorem{construction}[subsection]{Construction}
\newtheorem{definition}[subsection]{Definition}
\newtheorem{hypothesis}[subsection]{Hypothesis}
\newtheorem{remark}[subsection]{Remark}
\newtheorem{notation}[subsection]{Notation}
\newtheorem{propositionn}[subsubsection]{Proposition}
\newtheorem{definitionn}[subsubsection]{Definition}
\newtheorem{remarkk}[subsubsection]{Remark}
\numberwithin{equation}{subsection}
\def\calA{\mathcal{A}}
\def\calB{\mathcal{B}}
\def\calD{\mathcal{D}}
\def\calE{\mathcal{E}}
\def\calH{\mathcal{H}}
\def\calO{\mathcal{O}}
\def\calS{\mathcal{S}}
\def\gothe{\mathfrak{e}}
\def\gothp{\mathfrak{p}}
\def\AAA{\mathbb{A}}
\def\CC{\mathbb{C}}
\def\FF{\mathbb{F}}
\def\GG{\mathbb{G}}
\def\LL{\mathbb{L}}
\def\PP{\mathbb{P}}
\def\QQ{\mathbb{Q}}
\def\RR{\mathbb{R}}
\def\ZZ{\mathbb{Z}}
\def\rmM{\mathrm{M}}
\def\scrC{\mathscr{C}}
\DeclareMathOperator{\End}{End}
\DeclareMathOperator{\Gal}{Gal}
\DeclareMathOperator{\Hom}{Hom}
\DeclareMathOperator{\Aut}{Aut}
\DeclareMathOperator{\Res}{Res}
\DeclareMathOperator{\Spec}{Spec}
\renewcommand{\Im}{\mathrm{Im}}
\newcommand{\cHom}{\calH om}
\newcommand{\cris}{\mathrm{cris}}
\newcommand{\cSh}{\mathcal{S}h}
\newcommand{\dR}{\mathrm{dR}}
\newcommand{\et}{\mathrm{et}}
\newcommand{\Fil}{\mathrm{Fil}}
\newcommand{\Frob}{\mathrm{Frob}}
\newcommand{\GL}{\mathrm{GL}}
\newcommand{\id}{\mathrm{id}}
\newcommand{\Image}{\mathrm{Im}}
\newcommand{\Ker}{\mathrm{Ker}}
\newcommand{\Lie}{\mathrm{Lie}}
\newcommand{\pr}{\mathrm{pr}}
\newcommand{\T}{\rm T}
\newcommand{\A}{\rm A}
\newcommand{{\Sh}}{\mathrm{Sh}}
\newcommand{\Zp}{\ZZ_p}
\newcommand{\coker}{\mathrm{Coker}}
\newcommand{\diag}{\mathrm{diag}}
\newcommand{\JL}{\mathcal{J}\!\mathcal{L}}
\newcommand{\cH}{\mathcal{H}}
\newcommand{\cO}{\mathcal{O}}
\newcommand{\F}{\mathbb{F}}
\newcommand{\Z}{\mathbb{Z}}
\newcommand{\Q}{\mathbb{Q}}
\newcommand{\Tr}{\mathrm{Tr}}
\newcommand{\rp}{{\mathop{p}\limits^{\rightarrow}}_{*}}
\newcommand{\lp}{{\mathop{p}\limits^{\leftarrow}}_{*}}
\newcommand{\ch}{\mathrm{Ch}^{1}({\Sh}_{1,n-1}^{\rm ss},1,L)}
\DeclareMathOperator{\rank}{rank}
\newcommand{\Fpb}{\overline{\mathbb{F}}_p}
\newcommand{\cB}{\mathcal{B}}
\newcommand{\tcD}{\tilde\calD}
\newcommand{\tcE}{\tilde\calE}
\newcommand{\Def}{\mathscr{D}\mathrm{ef}}
\newcommand{\Gr}{\mathbf{Gr}}
\newcommand{\im}{\mathrm{Im}}
\newcommand{\len}{\mathrm{length}}
\newcommand{\hra}{\hookrightarrow}
\newcommand{\ra}{\rightarrow}
\newcommand{\xra}{\xrightarrow}
\title{Arithemetic level raising theorem for some unitary Shimura varieties mod $p$}
\author{Zijie Tao}
\address{Department of Mathematics, The University of Michigan}
\email{\href{mailto:zijietao@umich.edu}{{\tt zijietao@umich.edu}}}
\begin{document}

\begin{abstract}
    Let $F$ be a real quadratic field in which a fixed prime $p$ is inert, and $E_0$ be an imaginary quadratic field in which $p$ splits; put $E=E_0F.$ Let $\Sh_{1,n-1}$ be the special fiber over $\FF_{p^{2}}$ of the Shimura variety for $G(U(1,n-1)\times U(n-1,1))$ with hyperspecial level structure at $p$ for some integer $n\geq 2.$ Let $\Sh_{1,n-1}(K_{\gothp}^{1})$ be the special fiber over $\FF_{p^2}$ of a Shimura variety for $G(U(1,n-1)\times U(n-1,1))$ with parahoric level structure at $p$ for some integer $n\geq 2.$ We exhibit elements in the higher Chow group of the supersingular locus of $\Sh_{1,n-1}$ and investigate the stratification of this locus. Moreover, we study the geometry of $\Sh_{1,n-1}(K_{\gothp}^{1})$ and establish a version of the Ihara lemma. As an application, we prove the arithmetic level raising theorem for $n=3.$
\end{abstract}
\maketitle
\setcounter{tocdepth}{1}\setcounter{tocdepth}{1}
\tableofcontents
\section{Introduction}
The study of the geometry of Shimura varieties lies at the heart of the Langlands program. Their rich arithmetic structure forms a bridge between the world of automorphic representations and the world of Galois representations.

One of the interesting topics in this area is to prove the surjectivity of the arithmetic level raising map for unitary Shimura varieties. Rong Zhou introduced a new method in \cite{Zho23} to prove the surjectivity by calculating the higher Chow group ${\rm Ch}^{1}(X^{\rm ss} ,1, L)$ with $X^{\rm ss}$ to be the supersingular locus of the special fiber of a quaternionic Shimura variety and $L$ a $p$-coprime coefficient ring. More recently, Ruiqi Bai and his collaborator Hao Fu extended this approach by computing ${\rm Ch}^{1}(X^{\rm ss},1, L)$ for $X^{\rm ss}$ the supersingular locus of the special fiber of the unitary Shimura variety for $G(U(2r,1))$ with hyperspecial level structure at an inert prime $p.$ In both cases, the authors esablished a version of the Ihara lemma as a key step in proving the surjectivity of the arithmetic level raising map following the computations of the higher Chow group.

Let $F$ be a real quadratic field in which a fixed prime $p$ is inert, and $E_0$ be an imaginary quadratic field in which $p$ splits; put $E=E_0F.$ Let $\Sh_{1,n-1}$ denote the special fiber of the unitary Shimura varieties for $G(U(1,n-1)\times U(n-1,1))$ with hyperspecial level structure at $p.$ In our work, we calculate ${\rm Ch}^{1}({\Sh^{\rm ss}_{1,n-1}},1, L)$ with $L$ a $p$-coprime coefficient ring and $\Sh^{\rm ss}_{1,n-1}$ the supersingular locus of $\Sh_{1,n-1}$. Our approach is largely inspired by Zhiyuan Ding's work on toyshtukas as developed in \cite{Din19}. By leveraging the correspondences constructed in \cite{HTX17}, we reduce the computation to the problem of identifying principal divisors on certain Deligne-Luszting varieties.

To establish a form of the Ihara lemma, we study the Newton and the Ekedahl--Oort stratifications of $\Sh_{1,n-1}.$ Various cases of unitary Shimura varieties have been extensively investigated prior to our work. Viehmann and Wedhorn \cite{VW13} developed a general theory of Newton and Ekedahl--Oort stratifications for Shimura varieties of PEL-type with good reduction. Furthermore, Wooding studied these stratifications for $GU(m_{1},m_{2})$-Shimura variety  with hyperspecial level at an unramified prime $p$ for $0\leq m_1\leq m_2.$ B$\rm \ddot{u}$ltel and Wedhorn~\cite{BW06} studied $GU(1,n-1)$-Shimura variety with hyperspecial level at an inert prime $p$ and showed that the Newton, Ekedahl--Oort and final stratifications coincide on the non-supersingular locus. In our case, we give an explicit description of the Newton and Ekedahl--Oort stratifications of $\Sh_{1,n-1}$ and analyze their relationship. Moreover, we relate the Ekedahl--Oort stratification to the geometric correspondences ${\rm Y}_j$'s constructed in \cite{HTX17} for $1\leq j\leq n.$  To prove the Ihara lemma, we also introduce a unitary Shimura variety with parahoric level structure at $p$ and study its geometry in detail. Via  the Ihara lemma, we prove the surjectivity of the arithmetic level raising map for $n=3.$ The cases for $n\geq 4$ remain conjectural. The case for $n=2,$ which closely parallels the argument in \cite{Zho23}, is included in the appendix for completeness.

We explain the main results of this paper in more detail. As mentioned above,
let $F$ be a real quadratic field,  $E_0$ be an imaginary quadratic field, and $E=E_0F.$
 Let $p$ be a prime number inert in $F,$ and split in $E_0.$ 
 Let $ \gothp$ and $\bar \gothp$ denote the two places of $E$ above $p$ so that $E_\gothp$ and $ E_{\bar \gothp}$ are both isomorphic to $\QQ_{p^2},$ the unique unramified quadratic extension of $\QQ_{p}.$
For an integer $n\geq 1,$ let $G_{1,n-1}$ and $G_{0,n}$ be two kinds of algebraic groups over $\QQ$ to be defined in Section~\ref{S:Shimura data}. Then $G_{1,n-1}(\QQ_p)\simeq G_{0,n}(\QQ_p)\simeq \QQ_p^\times\times \GL_n(E_\gothp)$ and $G_{1,n-1}(\RR)$ (resp. $G_{0,n}(\RR)$) is the unitary similitude group with signature $(1,n-1)$ and $(n-1,1)$ (resp. $(0,n)$ and $(n,0)$) at the two archimedean places.

Let $\AAA$  denote the ring of finite adeles of $\QQ,$ and $\AAA^{\infty}$ be its finite part. Let $G$ be a similitude unitary group associated to a division algebra over $E$ equipped with an involution of second kind.
 Fix a sufficiently small open compact subgroup  $K\subseteq G(\AAA^{\infty})$  with $K_p=\ZZ_p^{\times}\times \GL_n(\ZZ_{p^{2}}) \subseteq G(\QQ_p),$ where $\ZZ_{p^2}$ is the ring of integers of $\QQ_{p^2}.$
Let $\calS h(G,K)$ be the Shimura variety associated to $G$ of level $K.$
  
According to Kottwitz \cite{Kot92},  when $K^p$ is neat,  $\calS h(G,K)$ admits a proper and smooth integral model over $\ZZ_{p^2}$ which parametrizes certain polarized abelian schemes with $K$-level structure. (See Section~\ref{S:defn of Shimura var}.)  
Let ${\Sh}_{1,n-1}$ (resp. ${\Sh}_{0,n}$) denote the special fiber of $\calS h(G_{1,n-1},K)$ (resp. $\calS h(G_{0,n},K)$) over $\FF_{p^2}$ and let $\overline{{\Sh}}_{1,n-1}$ (resp. $\overline{{\Sh}}_{0,n}$) be its geometric special fiber.  This is a proper smooth variety over $\FF_{p^2}$ of dimension $2n-2.$
Let ${\Sh}_{1,n-1}^{\mathrm{ss}}$ denote the supersingular locus of ${\Sh}_{1,n-1},$ i.e. the reduced closed subvariety of ${\Sh}_{1,n-1}$ that parametrizes supersingular abelian varieties. 
As illustrated in \cite{HTX17}, ${\Sh}_{1,n-1}^{\mathrm{ss}}$ is equidimensional of dimension $n-1.$

Let $\ch$ be the higher Chow group of ${\Sh}_{1,n-1}^{\rm ss}.$ Define $K_{\gothp}^{1}=\diag\{p^{-1},1,\dots,1\}\cdot K_{\gothp}\cdot \diag\{p,1,\dots,1\}\cap K_{\gothp},$ which is a parahoric subgroup of $K_{\gothp}=\GL_n(\ZZ_{p^2}).$
Let ${\Sh}_{0,n}(K_{\gothp}^{1})$ (resp. ${\Sh}_{1,n-1}(K_{\gothp}^{1})$) be the unitary Shimura variety group of level $K_{\gothp}^{1}$ with signature $(0,n)$ and $(n,0)$ (resp. $(1,n-1)$ and $(n-1,1)$) as in Definition~\ref{S:Parahoric level}. By Remark~\ref{D:two-projection}, there are two natural projections from ${\Sh}_{0,n}(K_{\gothp}^{1})$ (resp. ${\Sh}_{1,n-1}(K_{\gothp}^{1})$) to ${\Sh}_{0,n}$ (resp. ${\Sh}_{1,n-1}$), which we denote by ${{\mathop{p}\limits^{\leftarrow}}}$ and ${\mathop{p}\limits^{\rightarrow}}.$
In Definition~\ref{S:Hecke action} and Remark~\ref{S:hecke actionnn}, we define some Hecke actions on ${\Sh}_{0,n}$ and ${\Sh}_{0,n}(K_{\gothp}^{1}).$ Two most important Hecke actions are denoted by $\rm T$ and $\A$ which act on  ${\Sh}_{0,n}$ and ${\Sh}_{0,n}(K_{\gothp}^{1})$ respectively.

Then one of our main theorem with respect to the higher Chow group can be stated as follows:

\begin{theorem}
\label{baby goal}
Let $L$ be a $p$-coprime coefficient ring. With notations as above, we have
\begin{equation*}
    \ch=\Ker({\mathrm{H}}_{\acute{e}t}^{0}({{\Sh}_{0,n}(K_{\gothp}^{1})},L) \xrightarrow{\psi} {{\mathrm{H}}^{0}_{\acute{e}t}}({\Sh}_{0,n},L)^{\oplus n}),
\end{equation*}
where $\psi=(\lp,\rp, \rp {\A}, \cdots, \rp \A^{n-2}).$
\end{theorem}

The theorem gives a relation of unitary Shimura varieties with different signatures. With this theorem, we prove a form of the Ihara lemma, which is a key ingredient in the proof of the arithmetic level raising theorem inspired by \cite{Zho23}. We first need the following notations. Let $\mathtt c$ be the complex conjugation in $\Gal(E/F).$
\begin{definition}
    We say that a (complex) representation $\Pi$ of $\GL_n(\AAA_E)$ is \emph{RACSDC} (that is, regular algebraic conjugate self-dual cuspidal) if
    \begin{enumerate}
        \item $\Pi$ is an irreducible cuspidal automorphic representation;        \item $\Pi\circ \mathtt c\simeq \Pi^{\vee};$
        \item for every archimedean place $w$ of $F,$ $\Pi_{w}$ is regular algebraic.
    \end{enumerate}
\end{definition}

Fix an $RACSDC$ representation $\Pi$ of $\GL_n(\AAA_E).$ Denote by $\{\alpha_1,\cdots,\alpha_{n}\}$ the Satake parameters of $\Pi_{\gothp}.$ Let $\mathfrak{m}$ be the Hecke maximal ideal associate to $\Pi$ which is the kernel of a homomorphism $\phi_\Pi$ as defined in Section~\ref{S:Hecke algebra}.

From now on, we fix a strong coefficient field ${\rm L}$ and a prime $\lambda$ of ${\rm L}$ lying over $\ell$ as in Definition~\ref{S:Strong coeffcient field}. Let $\cO_\lambda$ be the maximal algebraic integral domain and $k_\lambda=\cO_\lambda/\lambda.$ 
\begin{hypothesis}
\label{Main hypo}
	We assume that 
    \begin{enumerate}
        \item The prime $l\nmid p(p^{2n-2}-1);$
        \item The Hecke maximal ideal $\mathfrak{m}$ is non-Eisenstein such that for every $i\neq d(a_{\bullet}),$ ${\rm H}^{i}_{\acute{e}t}({\overline{\Sh}}_{a_{\bullet}},\cO_\lambda)_{\mathfrak{m}}=0$ with ${\overline{\Sh}}_{a_{\bullet}}$ to be the geometric specical fiber of the unitary Shimura variety of signature $a_{\bullet}$ to be defined in Section~\ref{S:defn of Shimura var} and $d(a_{\bullet})$ to be its dimension and ${\rm H}^{d(a_{\bullet})}_{\acute{e}t}({\overline{\Sh}}_{a_{\bullet}},\cO_\lambda)_{\mathfrak{m}}$ is nozero and torsion-free.
        \item The Satake parameters $\alpha_{1},\cdots,\alpha_{n}$ mod $\mathfrak{m}$ at $\gothp$ are distinct.
        \item For any representation $\pi$ that is $\Pi$-congruent in the sense of \cite[Definition 6.1.8]{LTXZZ}, the multiplicities of $\pi$ for $a_{\bullet}=(1,n-1)$ and $a_{\bullet}=(0,n),$ denoted by $m_{1,n-1}(\pi)$ and $m_{0,n}(\pi),$ are equal.
        \item Let $a_\gothp^{(n)}$ be the eigenvalue on $\pi^{\GL_n(\cO_{E_\gothp})}$  of the Hecke operater ${\rm S}_\gothp$ to be defined in Definition~\ref{S:Hecke action}. Then $a_\gothp^{(n)}\equiv 1 \mod \mathfrak{m}.$
    \end{enumerate} 
\end{hypothesis}
\begin{hypothesis}
\label{Main hypo3}
    We assume that the Satake parameters $\alpha_i=p^2\alpha_{i+1} \mod \mathfrak{m}$ for $1\leq i\leq n-1.$
\end{hypothesis}
\begin{hypothesis}
\label{Main hypo2}
	We assume that 
	there exists one pair $(i,j)$ such that $\alpha_i=p^{2}\alpha_j \mod \mathfrak{m}$ and no pair $(i',j')$ such that $\alpha_{i'}=p\alpha_{j'} \mod \mathfrak{m}.$ 
\end{hypothesis}

It was proved in~\cite{HTX17} that $\Sh_{1,n-1}^{\rm ss}$ consists of irreducible components denoted by ${\rm Y}_j$ for $1\leq j\leq n$ which are parametrized by $\Sh_{0,n}.$ Then the closed immersions of these irreducible components into $\Sh_{1,n-1}$ induce the map
\[
\JL_{\mathfrak{m}}:{\rm H}^{0}_\et\big(\overline{\Sh}_{0,n},k_\lambda\big)_{\mathfrak{m}}^{\oplus n}\longrightarrow {\rm H}_\et^{2(n-1)}\big(\overline{\Sh}_{1,n-1},k_\lambda(n-1)\big)_{\mathfrak{m}}^{{\rm Fr}_{p^2}}
\]
In \cite{HTX17}, the injectivity of the map $\JL_{\mathfrak{m}}$ with coefficient $\overline{\QQ}_\ell$ was proved when $\alpha_1,\cdots,\alpha_n$ are all distinct. And under some other conditions, they showed it is an isomorphism. For our use, we proved a similar theorem with coefficient $k_\lambda:$
\begin{theorem}\label{T:main-theoremm}
Under Hypothesis~\ref{Main hypo}, the map 
$\JL_{\mathfrak{m}}$is an isomorphism.
\end{theorem}
The key step in the proof of Theorem~\ref{T:main-theoremm} is to show that ${\rm H}^{0}_\et\big(\overline{\Sh}_{0,n},k_\lambda\big)_{\mathfrak{m}}$ and ${\rm H}_\et^{2(n-1)}\big(\overline{\Sh}_{1,n-1},k_\lambda\big)_{\mathfrak{m}}$ can be expressed as the tensor product of the same $k_\lambda$ module with something depends only on $\bar\rho_\mathfrak{m}$ and the signature. In fact, we have the following proposition:
\begin{proposition}\label{S:mod l cohomologyy}
    For signature $a_\bullet=(a,n-a)$ and $K$ any open compact subgroup of $G(\AAA^\infty)$, we have 
    $$
{\rm H}_{\acute{e}t}^{d(a_\bullet)}(\overline{\Sh}_{a_\bullet}(K),k_\lambda)_\mathfrak{m}=\bar M_{K}\otimes_{k_\lambda}\big((\wedge^a\bar \rho_\mathfrak{m}\otimes_{k_\lambda}\wedge^{n-a}\bar \rho_\mathfrak{m})\otimes_{k_\lambda} {k}_\lambda(\sum\limits_i\frac{a_i(a_i-1)}{2})\big)
$$ for some $k_\lambda$ module $\bar M_{K}$ with trival $\Gamma_E$ action and depending only on $K$ and $\mathfrak{m}.$
\end{proposition}
Then we can prove the Ihara lemma:
\begin{theorem}
\label{Ihara lemma}
Under the Hypothesis~\ref{Main hypo}, we have: 
    \begin{enumerate}
        \item (Definite Ihara) The map 
        \begin{equation*}
        {\mathrm{H}}_{\acute{e}t}^{0}(\overline{\Sh}_{0,n}(K_{\gothp}^{1}),k_\lambda)_{\mathfrak{m}} \xrightarrow{\psi} {\mathrm{H}^{0}_{\acute{e}t}}({\overline{\Sh}}_{0,n},k_\lambda)^{\oplus n}_{\mathfrak{m}}
        \end{equation*} is surjective, where $\psi=(\lp,\rp, \rp {\A}, \cdots, \rp {\A}^{n-2}).$         
        \item (Indefinite Ihara) The map
        \begin{equation*}
        {\rm H}_{\acute{e}t}^{2(n-1)}({\overline{Sh}}_{1,n-1}(K_{\gothp}^{1}),k_\lambda(n))_{\mathfrak{m}} \xrightarrow{\psi} {\rm H^{2(n-1)}_{\acute{e}t}}({\overline{Sh}}_{1,n-1},k_\lambda(n))^{\oplus n}_{\mathfrak{m}}
        \end{equation*} is surjective with $\overline{Sh}_{1,n-1}, \overline{Sh}_{1,n-1}(K_{\gothp}^{1})$ the geometric generic fibers of $\calS h_{1,n-1},\calS h_{1,n-1}(K_{\gothp}^{1})$ and $\psi=(\lp,\rp, \rp {\A}, \cdots, \rp {\rm A}^{n-2})$ defined similarly. 
    \end{enumerate}
\end{theorem}

Before talking about the proof of Theorem~\ref{Ihara lemma}, we state a corollary coming directly from it:
\begin{corollary}
\label{level raising result}
    Fixed an unramified RACSDC representation $\Pi$ satisfies Hypothesis \ref{Main hypo} and \ref{Main hypo3}. Then there exists an irreducible representation $\Pi'$ of $\GL_n(\AAA_E)$ such that the associated Galois representation $\rho_{\Pi'}$ is residually isomorphic to $\rho_\Pi$ and the monodromy operator of $\rho_{\Pi'}$ is conjugate to $\begin{pmatrix}
        1&1\\
        0&1
    \end{pmatrix} \oplus 1_{n-2}.$
\end{corollary}

To prove Theorem~\ref{Ihara lemma}(1), we analyze the Newton and Ekedahl--Oort stratifications of ${{\Sh}}_{1,n-1}.$ We show that there are exactly $n^{2}$ Ekedahl--Oort strata, among which $\frac{n(n-1)}{2}$ of them lie in the nonsupersingular locus. It is proved that the $\mu$-ordinary locus is exactly the same as the Newton stratum with the highest dimension.
Furthermore, we describe how the Ekedahl--Oort strata in the supersingular locus relate to the geometric correspondences ${\rm Y}_{j}$'s. Based on \cite[Theorem 4.7]{Moo01}, we provide an explicit construction of the Dieudonn\'e module corresponding to each Ekedahl--Oort stratum. In addition, by analyzing Hasse invariants, we prove that the $\mu$--ordinary locus of ${{\Sh}}_{1,n-1}$ is affine.

Additionally, we study the geometry of ${{\Sh}}_{1,n-1}(K_{\gothp}^{1}),$ the special fiber of a unitary Shimura variety with parahoric level structure at $p.$ More explicitly, we construct $n$ correspondences ${\rm C}_{j}$ for $1\leq j\leq n$ between ${{\Sh}}_{1,n-1}(K_{\gothp}^{1})$ and ${{\Sh}}_{0,n}$ and show their image collectively cover the supersingular locus ${{\Sh}}_{1,n-1}(K_{\gothp}^{1}).$ Moreover, we prove that under a natrual projection map from ${{\Sh}}_{1,n-1}(K_{\gothp}^{1})$ to ${{\Sh}}_{1,n-1},$ the correspondences ${\rm C}_{j}$'s map onto the ${\rm Y}_{j}$'s for $1\leq j\leq n.$ In this way, the ${\rm C}_{j}$'s play an analogous role in ${{\Sh}}_{1,n-1}(K_{\gothp}^{1})$ to that of the ${\rm Y}_{j}$'s in ${{\Sh}}_{1,n-1}.$ We establish the definite Ihara lemma by analyzing the cohomology groups of ${\rm C}_j$'s and invoking Theorem~\ref{T:main-theoremm}.

Recall that we have fixed an open compact subgroup $K$ which is hyperspecial at $p.$ The key step for the proof of Theorem~\ref{Ihara lemma}(2) is to build a relation between Shimura varieties of different signatures. One relation comes from Theorem~\ref{T:main-theoremm}. To use Theorem~\ref{T:main-theoremm}, we find another rational prime $p'$ inert in $F$ and split in $E_0$ satisfying Hypothesis~\ref{Main hypo} and that $K_{p'}$ is hyperspecial which exists from the Chebotarev density theorem and that $K_v$ is hyperspecial for all but finitely many place $v$ of $\QQ.$ Let $K_p^1=\QQ_p^\times\times K_\gothp^1.$ We take $K'=K^pK_p^1.$ We add subscript $p'$ to denote the Shimura varieties defined at $p'.$ Here we use notation `$(K)$' and `$(K')$' to strengthen the level structure of Shimura varieties since we need to consider Shimura varieties for different primes.\footnote{In particular, $\Sh_{1,n-1}=\Sh_{1,n-1}(K)$ and $\Sh_{0,n}=\Sh_{0,n}(K).$} 
Then we can use proper base change theorem and Theorem~\ref{Ihara lemma}(1) to get a surjection on cohomology groups of generic fiber and these cohomology groups for different primes coincide. Thus by Theorem~\ref{T:main-theoremm}, there is a surjection on cohomology groups of generic fiber of Shimura varieties with signature $(1,n-1)$ and $(n-1,1),$ which coincide for different primes. Then we get Theorem~\ref{Ihara lemma}(2).

Corollary~\ref{level raising result} comes from the non-injectivity of $\psi$ in Theorem~\ref{Ihara lemma}. Denote by $\psi': {\mathrm{H}^{0}_{\acute{e}t}}({\overline{\Sh}}_{0,n},k_\lambda)^{\oplus n}_{\mathfrak{m}}\rightarrow {\mathrm{H}}_{\acute{e}t}^{0}(\overline{\Sh}_{0,n}(K_{\gothp}^{1}),k_\lambda)_{\mathfrak{m}}$ the dual of $\psi.$ It is also equivalent to $\psi'$ is not surjective. Considering $\psi\circ \psi'$ as an $n\times n$ matrix with each element a Hecke operator, it can be shown the determinant is zero under Hypothesis~\ref{Main hypo3}. And thus we get Corollary~\ref{level raising result}.

With the Ihara lemma, we cam prove the surjectivity of the arithmetic level raising map.
Recall the cycle class map:
$\beta:{\rm{Ch}}^1({{\Sh}_{1,n-1}^{{\rm{ss}}}},1,k_\lambda)\to{\rm{Ch}}^{n}({{\Sh}_{1,n-1}},1,k_\lambda)\to {\rm{H}}_{\acute{e}t}^{2n-1}({{\Sh}_{1,n-1}},k_\lambda(n)).$
By combining the Hochschild--Serre spectral sequence and localizing at a maximal ideal 
$\mathfrak{m}$
 of the Hecke algebra in Hypothesis~\ref{Main hypo}, we obtain the following diagram:

\[\begin{tikzcd}
	&& {0={\rm{H}}^0(\mathbb{F}_{p^2},{\rm{H}}_{\acute{e}t}^{2n-1}(\overline{{\Sh}}_{1,n-1},k_\lambda(n))_\mathfrak{m})} & {} & {} \\
	{{\rm{Ch}}^{n}({{\Sh}_{1,n-1}},1,k_\lambda)_{\mathfrak{m}}} & {{\rm{H}}^{2n-1}_{\mathcal{M}} ({{\Sh}_{1,n-1}},k_\lambda(n))_{\mathfrak{m}}} & {{\rm{H}}_{\acute{e}t}^{2n-1}({{\Sh}_{1,n-1}},k_\lambda(n))_{\mathfrak{m}}} & {} & {} \\
	{{\rm{Ch}}^1({{\Sh}_{1,n-1}^{{\rm{ss}}}},1,k_\lambda)_{\mathfrak{m}}} & {} & {{\rm{H}}^1(\mathbb{F}_{p^2},{\rm{H}}_{\acute{e}t}^{2n-2}(\overline{{\Sh}}_{1,n-1},k_\lambda(n))_\mathfrak{m})} & {} & {}
	\arrow[Rightarrow, no head, from=2-2, to=2-1]
	\arrow[from=2-2, to=2-3]
	\arrow["{{{Abel-Jacobi~~map}}}"'{pos=0.05}, from=2-2, to=3-3]
	\arrow[two heads, from=2-3, to=1-3]
	\arrow[from=3-1, to=2-1]
	\arrow["{{{level -raising ~~map}}}", from=3-1, to=3-3]
	\arrow[hook, from=3-3, to=2-3]
\end{tikzcd}\]

Under the hypothesis~\ref{Main hypo}, ${\rm{H}}_{\acute{e}t}^{i}(\overline{\Sh}_{1,n-1},k_\lambda(r))_\mathfrak{m}=0$ whenever $i\neq d.$ So the injection in the diagram is an isomorphism, and we get the level-raising map. For the level raising map we have the following conjecture which is well known as arithmetic level raising theorem:
\begin{conjecture}
\label{Arithmetic level raising map}
    Under the Hypothesis~\ref{Main hypo} and ~\ref{Main hypo2}, the level raising map
    \begin{equation*}
        {\rm{Ch}}^1({{\Sh}_{1,n-1}^{{\rm{ss}}}},1,k_\lambda)_{\mathfrak{m}}\rightarrow
        {{\rm{H}}^1(\mathbb{F}_{p^2},{\rm{H}}_{\acute{e}t}^{2n-2}(\overline{{\Sh}}_{1,n-1},k_\lambda(n))_\mathfrak{m})}
    \end{equation*}
    is surjective.
\end{conjecture}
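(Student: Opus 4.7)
The plan is to deduce this conjecture from the Indefinite Ihara Lemma (Theorem~\ref{Ihara lemma}(2)) combined with the higher Chow group computation (Theorem~\ref{baby goal}), following the strategy of \cite{zhou2019motiviccohomologyquaternionicshimura}. First, under Hypothesis~\ref{Main hypo}(2), the Hochschild--Serre spectral sequence collapses after localization at $\mathfrak{m}$ to give
$$
{\rm H}^{2n-1}_{\acute{e}t}({\Sh}_{1,n-1}, \F_l(n))_\mathfrak{m} \iso {\rm H}^1(\F_{p^2}, {\rm H}^{2n-2}_{\acute{e}t}(\overline{\Sh}_{1,n-1}, \F_l(n))_\mathfrak{m}),
$$
so the level-raising map is identified with the \'etale Abel--Jacobi map out of the supersingular higher Chow group. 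The problem reduces to showing that the composition
$$
{\rm Ch}^1({\Sh}^{{\rm ss}}_{1,n-1}, 1, \F_l)_\mathfrak{m} \to {\rm Ch}^n({\Sh}_{1,n-1}, 1, \F_l)_\mathfrak{m} \to {\rm H}^{2n-1}_{\acute{e}t}({\Sh}_{1,n-1}, \F_l(n))_\mathfrak{m}
$$
is surjective.

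The next step uses Theorem~\ref{baby goal} to identify the source with $\ker(\psi)$, where $\psi$ is built from the correspondences $(\lp, \rp, \rp A, \ldots, \rp A^{n-2})$ acting on ${\rm H}^0_{\acute{e}t}(\T, \F_l)_\mathfrak{m}$, while Theorem~\ref{Ihara lemma}(2) provides surjectivity of the analogous cohomological $\psi$ on ${\rm H}^{2(n-1)}_{\acute{e}t}$ of the parahoric variety $\overline{\Sh}_{1,n-1}(K^1_{\gothp})$ onto $n$ copies of the middle \'etale cohomology of $\overline{\Sh}_{1,n-1}$. Dualizing via Poincar\'e duality and passing through the Gysin map from the supersingular locus, the correspondences $C_j$ on the parahoric variety are matched via Helm's Tate-conjecture result~\cite{Helm_2017} with the supersingular cycles ${\rm Y}_j$. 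Surjectivity of the Ihara map on cohomology should therefore translate into surjectivity of the supersingular cycle class map at the level of $\mathfrak{m}$-localized Galois cohomology.

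The main obstacle is to track the Abel--Jacobi map rigorously through the weight filtration spectral sequence of the complement ${\Sh}_{1,n-1} \setminus {\Sh}^{{\rm ss}}_{1,n-1}$: one must show the higher weight contributions vanish after localization at the non-Eisenstein ideal $\mathfrak{m}$, so that the cycle classes coming from ${\Sh}^{{\rm ss}}_{1,n-1}$ exhaust the target. The non-root-of-unity hypothesis~\ref{Main hypo}(3) is the key input here, since the Frobenius eigenvalues on the nonsupersingular cohomology are controlled by the ratios $\alpha_{\gothp,i}/\alpha_{\gothp,j}$, which are required to avoid roots of unity modulo $\mathfrak{m}$. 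For $n=2$, where $\psi$ reduces to $(\lp, \rp)$ and only a single correspondence $C_1$ need be tracked, this bookkeeping collapses and the argument should go through, recovering the $n=2$ theorem advertised in the introduction; for $n \geq 3$, the combinatorial interaction between the $n$ correspondences $C_j$ and the Ekedahl--Oort strata in the supersingular locus requires finer control, which is why the conjecture is left open in that range.
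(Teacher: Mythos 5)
The statement you are addressing is stated in the paper as a \emph{conjecture}: the paper itself proves it only for $n=2$ (Theorem~\ref{Arithmetic level raising maps}, established in the section on arithmetic level raising for $n=2$) and explicitly leaves $n\geq 3$ open. Your proposal does the same — it defers the $n\geq 3$ case to ``finer control'' of the correspondences $C_j$ and the Ekedahl--Oort strata — so as a proof of the conjecture as stated it is not complete; the genuinely hard content (general $n$) is exactly what you postpone. Within that limitation, your opening reduction (Hochschild--Serre collapse under Hypothesis~\ref{Main hypo}(2), identification of the level-raising map with the Abel--Jacobi map out of ${\rm Ch}^1({\Sh}^{\rm ss}_{1,n-1},1,\F_l)_{\mathfrak m}$, and the use of Theorem~\ref{baby goal} to describe the source) is consistent with the paper's setup.

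For the $n=2$ case, however, the mechanism you sketch is not the one the paper uses, and the step you lean on is not substantiated. You propose to ``track the Abel--Jacobi map through the weight filtration spectral sequence of the complement ${\Sh}_{1,n-1}\setminus{\Sh}^{\rm ss}_{1,n-1}$'' and to kill higher-weight contributions using Hypothesis~\ref{Main hypo}(3) (the non-root-of-unity condition on $\alpha_{\gothp,i}/\alpha_{\gothp,j}$). The paper instead constructs a strictly semi-stable integral model by blowing up the parahoric model $\calS h_{1,1}(K^1_{\gothp})$ along ${\rm Y}_{00}$, runs the Rapoport--Zink weight spectral sequence for that model, identifies $E^{-2,4}_{2,\mathfrak m}$ with ${\rm Ch}^1({\Sh}^{\rm ss}_{1,1},1,\F_l)_{\mathfrak m}$, uses the \emph{definite} Ihara lemma to show $d_1^{-1,2}$ is injective after localization (so the odd terms of the $E_2$-page vanish), and then computes the relevant cokernel explicitly via the geometry ${\rm Y}_{01}\cong{\rm Y}_{10}\cong{\Sh}_{1,1}$ and the essential-Frobenius matrix $\begin{pmatrix}1 & F\\ S_{\gothp}^{-1}F & 1\end{pmatrix}$, with the arithmetic input being $l\nmid p(p^{2}-1)$ rather than the root-of-unity condition (which enters earlier, through the Tate conjecture and the Ihara lemmas); it also verifies separately that the map so produced coincides with the Abel--Jacobi/level-raising map, a compatibility your outline asserts but does not check. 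So the concrete gap is twofold: no argument is given that the ``higher weight contributions'' vanish in your proposed filtration of the open complement (the hypothesis you invoke does not obviously do this), and no identification of your spectral-sequence edge map with the level-raising map is provided — both points are where the paper's $n=2$ proof does real work, via the semistable blow-up and the explicit cokernel computation.
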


The idea for a possible proof of Conjecture~\ref{Arithmetic level raising map} is to use Weight spectral sequence for $\cSh_{1,n-1}(K_\gothp^1).$ Since this variety is not strictly semi-stable, we need to blow up $\cSh_{1,n-1}(K_\gothp^1)$ at a family of irreducible components of its special fiber. Then we get a spectral sequence $E_1^{p,q}$ converges to $H^{p+q}_{\acute{e}t}(\overline{Sh}_{1,n-1}(K_\gothp^1),k_\lambda)$ of five columns with indices $p=-2,-1,0,1,2.$ After localizing at $\mathfrak{m},$ It can be shown that $Gr_2H^{2n-2}_{\acute{e}t}(\overline{Sh}_{1,n-1}(K_\gothp^1),k_\lambda(n))_\mathfrak{m}=E_{\infty,\mathfrak{m}}^{-2,2}$ is a subgroup of ${\rm{Ch}}^1({{\Sh}_{1,n-1}^{{\rm{ss}}}},1,k_\lambda)_{\mathfrak{m}}.$ If we can show $Gr_1H^{2n-2}_{\acute{e}t}(\overline{Sh}_{1,n-1}(K_\gothp^1),k_\lambda(n))_\mathfrak{m}=0$ and the map from $Gr_0H^{2n-2}_{\acute{e}t}(\overline{Sh}_{1,n-1}(K_\gothp^1),k_\lambda(n))_\mathfrak{m}$ to $H^{2n-2}_{\acute{e}t}(\overline{Sh}_{1,n-1},k_\lambda(n))_\mathfrak{m}$ induced by $\psi$ in Theorem~\ref{Ihara lemma}(2) has cokernel ${{\rm{H}}^1(\mathbb{F}_{p^2},{\rm{H}}_{\acute{e}t}^{2n-2}(\overline{{\Sh}}_{1,n-1},k_\lambda(n))_\mathfrak{m})},$ then by a linear algebra argument as in Lemma~\ref{fil} Conjecture~\ref{Arithmetic level raising map} can be proved. However, the final step is too difficult to deal with for $n\geq 4.$

We have proved the arithmetic level raising theorem for $n=2,3.$
\begin{theorem}
    \label{Arithmetic level raising maps}
   Under the Hypothesis~\ref{Main hypo} and ~\ref{Main hypo2}, the level raising map
    \begin{equation*}
        {\rm{Ch}}^1({{\Sh}_{1,n-1}^{{\rm{ss}}}},1,k_\lambda)_{\mathfrak{m}}\rightarrow
        {{\rm{H}}^1(\mathbb{F}_{p^2},{\rm{H}}_{\acute{e}t}^{2n-2}(\overline{{\Sh}}_{1,n-1},k_\lambda(n))_\mathfrak{m})}
    \end{equation*}
    is surjective for $n=2,3.$
\end{theorem}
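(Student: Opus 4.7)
The plan is to combine the three ingredients already established in the paper: Theorem~\ref{baby goal} for $n=2$, the indefinite Ihara lemma Theorem~\ref{Ihara lemma}(2) for $n=2$, and the Hochschild--Serre analysis behind the diagram preceding Conjecture~\ref{Arithmetic level raising map}.

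First, I would rewrite both ends of the level-raising map. Theorem~\ref{baby goal} for $n=2$ identifies
\[
\mathrm{Ch}^1(\Sh_{1,1}^{\mathrm{ss}},1,\F_l)_{\mathfrak{m}} \;=\; \Ker\!\bigl(\mathrm{H}^0_{\acute{e}t}(\T,\F_l)_{\mathfrak{m}} \xrightarrow{(\lp,\rp)} \mathrm{H}^0_{\acute{e}t}(\Sh_{0,2},\F_l)^{\oplus 2}_{\mathfrak{m}}\bigr).
\]
Hypothesis~\ref{Main hypo}(2) forces the cohomology of $\overline{\Sh}_{1,1}$ localized at $\mathfrak{m}$ to concentrate in degree $2$, so the Hochschild--Serre spectral sequence degenerates into an isomorphism $\mathrm{H}^3_{\acute{e}t}(\Sh_{1,1},\F_l(2))_{\mathfrak{m}} \cong \mathrm{H}^1\bigl(\F_{p^2},\mathrm{H}^2_{\acute{e}t}(\overline{\Sh}_{1,1},\F_l(2))_{\mathfrak{m}}\bigr)$, and the target is realized as the cokernel of $\Frob_{p^2}-1$ acting on $\mathrm{H}^2_{\acute{e}t}(\overline{\Sh}_{1,1},\F_l(2))_{\mathfrak{m}}$.

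Second, I would feed in the indefinite Ihara surjection
\[
\mathrm{H}^2_{\acute{e}t}(\overline{\Sh}_{1,1}(K^1_\gothp),\F_l(2))_{\mathfrak{m}} \twoheadrightarrow \mathrm{H}^2_{\acute{e}t}(\overline{\Sh}_{1,1},\F_l(2))^{\oplus 2}_{\mathfrak{m}},
\]
and pass to $\Frob_{p^2}$-coinvariants, which preserves surjectivity since cohomology in degree two of $\widehat{\Z}$ with finite coefficients vanishes. The paper's geometric description of $\Sh_{1,1}(K^1_\gothp)$ identifies its supersingular locus with the union of the correspondences $C_1,C_2$, each of which fibers over the definite Shimura set $\T$; via the Gysin map and the Tate conjecture of \cite[Theorem 4.18]{Helm_2017}, any class in the parahoric-level middle cohomology lifting a given target class is represented by a cycle supported on $C_1 \cup C_2$.

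Third, the proof concludes by a diagram chase. The compatibility between the hyperspecial and parahoric Hochschild--Serre sequences, together with the identification of $C_j$ with $Y_j$ under the forgetful map $\Sh_{1,1}(K^1_\gothp) \to \Sh_{1,1}$, shows that the lift descends to a function on $\T$ lying in $\Ker(\lp,\rp)$, i.e.\ to a class in $\mathrm{Ch}^1(\Sh_{1,1}^{\mathrm{ss}},1,\F_l)_{\mathfrak{m}}$ by the first step, and by construction its image under the level-raising map is the original class. The main obstacle will be verifying this last compatibility: one must check that the Abel--Jacobi map intertwines the $(\lp,\rp)$ of the indefinite Ihara lemma with the $(\lp,\rp)$ of the definite Ihara lemma, which ultimately reduces to the Eichler--Shimura-type compatibility of the cycle class map with the correspondences of Helm and the explicit description of $\Sh_{1,1}(K^1_\gothp)^{\mathrm{ss}}$ established earlier in the paper.
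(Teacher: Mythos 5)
There is a genuine gap: your outline assembles the right ingredients (the $n=2$ computation of the higher Chow group, the indefinite Ihara surjection, the Hochschild--Serre identification of the target), but it is missing the mechanism that actually connects them, which in the paper is the Rapoport--Zink/Saito weight spectral sequence for the strictly semistable blow-up $\widetilde{\calS h}_{1,1}(K^1_{\gothp})$. Your step ``pass to $\Frob_{p^2}$-coinvariants'' of the indefinite Ihara surjection is not well-founded as stated: $\overline{Sh}_{1,1}(K^1_{\gothp})$ has bad reduction at $p$, so $\mathrm{H}^2_{\acute{e}t}(\overline{Sh}_{1,1}(K^1_{\gothp}),\F_l(2))_{\mathfrak m}$ carries an action of the full decomposition group with nontrivial monodromy, and relating any notion of Frobenius coinvariants of it to cycle classes supported on the special-fiber strata is precisely what the monodromy filtration analysis accomplishes. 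In the paper this is Proposition~\ref{vanis} (vanishing of $E^{\pm1,2}_{2,\mathfrak m}$, whose proof is where the \emph{definite} Ihara lemma enters --- your proposal never uses it) together with Lemma~\ref{fil}, which produce the surjection $E^{-2,4}_{2,\mathfrak m}(2)\twoheadrightarrow \coker\bigl(E^{0,2}_{2,\mathfrak m}(2)\to \mathrm{H}^2_{\acute{e}t}(\overline{\Sh}_{1,1},\F_l(2))_{\mathfrak m}\bigr)$ and identify $E^{-2,4}_{2,\mathfrak m}$ with $\mathrm{Ch}^1(\Sh^{\rm ss}_{1,1},1,\F_l)_{\mathfrak m}$. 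Similarly, your appeal to the Tate conjecture to claim that any parahoric-level class lifting a given target class ``is represented by a cycle supported on $C_1\cup C_2$'' is unjustified: Theorem~\ref{T:main-theorem} is a statement with $\overline{\QQ}_\ell$-coefficients about the hyperspecial special fiber, and its mod-$l$ analogue at parahoric level is essentially equivalent to the theorem being proved. The paper instead computes $\coker\alpha$ explicitly, using $Y_{01},Y_{10}\cong\Sh_{1,1}$ and the essential Frobenius to identify the relevant map with $\begin{pmatrix}1 & F\\ S_{\gothp}^{-1}F & 1\end{pmatrix}$ on $\mathrm{H}^2_{\acute{e}t}(\overline{\Sh}_{1,1},\F_l(2))_{\mathfrak m}^{\oplus 2}$, and shows the $\tilde{\rm Y}_{00},\tilde{\rm Y}_{11}$ contributions land in $(1-Fr_{p^2})\mathrm{H}^2$ using $l\nmid p(p^2-1)$.

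Finally, the step you flag as ``the main obstacle'' --- that the surjection so produced is the arithmetic level-raising map --- is not an afterthought but a substantive part of the argument: the paper proves it by a separate proposition comparing the weight-spectral-sequence filtration on nearby cycles (via $gr^M_0R\psi\Lambda$, $M_{\geq 0}R\psi\Lambda$, restriction to the ordinary locus, and the Gysin sequence for the supersingular locus) with the cycle class and Kummer-sequence description of $\mathrm{Ch}^1(\Sh^{\rm ss}_{1,1},1,\F_l)_{\mathfrak m}$. Without the semistable model and this compatibility, the diagram chase you sketch does not close.
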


The proof of Theorem ~\ref{Arithmetic level raising maps} for $n=2$ is quite similar to that in \cite{Zho23} but written without the language of weight spectral sequence, which we put in Appendix~\ref{I} and~\ref{A}.

To prove Theorem~\ref{Arithmetic level raising maps} for $n=3,$ we first analyze the geometry of the unitary Shimura varieties with Iwahori level at $p,$ denoted by $\Sh_{1,2}(Iw_\gothp)$ for its special fiber. Here $Iw_\gothp$ denotes the standard Iwahori subgroup of $\GL_n(\ZZ_{p^2}).$ The Hecke action $\A$ can be described by the relation of $\Sh_{1,2}(Iw_\gothp)$ and $\Sh_{1,2}(K_\gothp^1).$ It turns out that $\Sh_{1,2}(Iw_\gothp)$ has $9$ families of irreducible components denoted by ${\rm Z}_{ij}$ for $0\leq i,j\leq 2.$ We explore the relation of ${\rm Z}_{ij}$ and ${\rm Y}_k$ in detail and show that there are purely inseperable morphisms similar to those called `essential Frobenius' in \cite{Zho23}. These are the main parts in the calculation of the cokernel of the map from $Gr_0{\rm H}^{4}_{\acute{e}t}(\overline{Sh}_{1,2}(K_\gothp^1),k_\lambda(3))_\mathfrak{m}$ to ${\rm H}^{4}_{\acute{e}t}(\overline{Sh}_{1,2},k_\lambda(3))_\mathfrak{m}$ induced by $\psi.$ Here blowing up on $\cSh_{1,2}(Iw_\gothp)$ is also needed to use weight spectral sequence.

For the vanishing of $Gr_1{\rm H}^{4}_{\acute{e}t}(\overline{Sh}_{1,2}(K_\gothp^1),k_\lambda(n))_\mathfrak{m},$ it suffices to show the cohomology groups appearing in $E_{1,\mathfrak{m}}^{1,1}$ with integral coefficients are all torsion-free. Then by Weight monodromy conjecture and Hypothesis~\ref{Main hypo2}, we can get the vanishing result. The torsion-freeness comes from the symmetry of $\Sh_{1,2}(K_\gothp^1)$ and a rank counting on cohomology groups.

We briefly describe the structure of the paper. In Section~\ref{S:Shimura Varieties}, we consider a more general setup of unitary Shimura varieties of PEL type. We also prove a$\mod\ell$-version of the Tate conjectures proved in \cite{HTX17}. 
In Section~\ref{DG}, We recall basic notions of Dieudonn\'e modules and Grothendieck-Messing deformation theory. In Section~\ref{D}, we recall basic properties of higher Chow groups and calculate $\ch.$ In section \ref{S}, we study the Newton and Ekedahl--Oort stratification of ${{\Sh}}_{1,n-1}.$ In Section~\ref{G}, we study the geometry of ${{\Sh}}_{1,n-1}({K_{\gothp}^{1}})$ and in Section~\ref{I3} we give the proof of Theorem~\ref{Ihara lemma} for $n\geq 3.$ In section~\ref{GI}, we study the geometry of $\Sh_{1,2}(Iw_\gothp)$ with $Iw_\gothp$ the standard Iwahori subgroup. In Section~\ref{A3},
we prove Theorem~\ref{Arithmetic level raising maps} for $n=3.$ In Appendix~\ref{I}, we prove the Theorem~\ref{Ihara lemma} for $n=2.$
In Appendix~\ref{A}, we prove Theorem~\ref{Arithmetic level raising maps} for $n=2.$

\subsection*{Notation and Conventions}
\begin{itemize}
    \item All rings are commutative and unital; and ring homomorphisms preserve units.
    \item Throughout the article, we fix a prime $p.$ We fix an isomorphism $\iota_{p}: \CC \xra{\sim} \overline \QQ_p.$
    \item Let $F$ be a totally real field of degree $2$ in which $p$ is inert and $E_0$ be an imaginary quadratic extension of $\Q,$ in which $p$ splits. Put $E=E_0F.$ Denote by $\gothp$ and $\bar \gothp$ the two primes $p$ split into in $E.$ Let $q_i$ denote the $p$-adic embeddings of $E$ into $\overline{\QQ}_p$ corresponding to $\gothp$ and $\bar{q}_i$ the analogous embeddings corresponding to $\bar\gothp$ for $i=1,2.$ Put $\Sigma_{\infty,E}=\{q_1, q_2, \bar q_1, \bar q_2\}.$
    \item Let $\mathcal{X}$ be a scheme defined over $\ZZ_{p^2}.$ We use $X$ to denote its generic fiber and $\mathrm{X}$ to denote its special fiber. We use an overline to denote the geometric fiber. (e.g $\overline{X},\overline{\mathcal{X}}$ and $\overline{\mathrm{X}}.$)
    \item For a scheme $\overline{{\rm X}}$ defined over $\overline{\FF}_p,$ denote ${\rm Frob}_{p^2}$ an geometric Frobenius element in $\Gal(\overline{\FF}_p/{\FF}_{p^2}).$ We also denote by ${\rm Fr}_{p^2}$ the Frobenius action on ${\rm H}^i_{\acute{e}t}(\overline{{\rm X}},L(j))$ for $i.j$ integers and $i\geq 0.$ Here $L$ is a $p$-coprime coefficient ring.\footnote{In fact, we have ${\rm Fr}_{p^2}=p^{-2i}{\rm Frob}_{p^2}^{*}.$} 
    \item For a prime $p'$ different from $p,$ we use subscript $p'$ to strengthen a scheme defined over $p'$-adic field. (e.g $X_{p'},\mathrm{X}_{p'}.$)
\end{itemize}

\section*{Acknowledgement}
This work originated as a project in the summer school “Algebra and Number Theory”, jointly organized by Peking University and the Academy of Mathematics and Systems Science, CAS. The first part of the project was completed under the guidance of Ruiqi Bai, while the latter part formed the author’s undergraduate thesis under the mentorship of Professor Liang Xiao. The author is deeply grateful to Professor Shou-wu Zhang and Professor Liang Xiao, whose efforts were instrumental in the successful organization of the summer school. Special thanks also go to Ruiqi Bai and Professor Liang Xiao for their invaluable guidance and insightful advice throughout the project.
\section{Unitary Shimura Varieties}\label{S:Shimura Varieties}

\subsection{Shimura data}
\label{S:Shimura data}
Let $D$ be a division algebra of dimension $n^2$ over its center $E,$ equipped with a positive involution $*$ which restricts to the complex conjugation $c$ on $E.$ In particular, $D^{\mathrm{opp}}\simeq D\otimes_{E, c}E.$ 
We assume that $D$ splits at $\gothp$ and $\bar \gothp,$ and we fix an isomorphism
$
D \otimes_\QQ \QQ_p \simeq \rmM_n(E_{\gothp}) \times\rmM_n(E_{\bar\gothp})\simeq \rmM_{n}(\QQ_{p^2})\times \rmM_n(\QQ_{p^{2}}),
$
where  $*$ switches the two direct factors.  We use $\gothe$ to denote the element of $D \otimes_\QQ \QQ_p$ corresponding to the $(1,1)$-elementary matrix by which we mean an $n\times n$-matrix whose $(1,1)$-entry is $1$ and whose other entries are zero in the first factor.
Let  $a_{\bullet}=(a_1,a_2)$ be a tuple of $2$ numbers with  $a_i\in \{0,\dots, n\}$ for $1\leq i\leq 2.$ Assume that there exists an element $\beta_{a_{\bullet}}\in (D^\times)^{*=-1}$ such that the following condition is  satisfied:\footnote{As explained in the proof of \cite[Lemma~I.7.1]{HT01}, when $n$ is odd, such $\beta_{a_\bullet}$ always exists, and when $n$ is even, existence of $\beta_{a_\bullet}$ depends on the parity of $a_1+a_2.$}

Let $G_{a_{\bullet}}$ be  the algebraic group over $\Q$ such that $G_{a_{\bullet}}(R)$ for a $\QQ$-algebra $R$ consists of  elements $g\in (D^{\mathrm{opp}} \otimes_\QQ R)^{\times}$ with  $g\beta_{a_{\bullet}} g^*=c(g) \beta_{a_{\bullet}}$ for some $c(g)\in R^{\times}.$ If $G_{a_{\bullet}}^1$ denotes the kernel of the similitude character  $c: G_{a_{\bullet}}\ra \GG_{m,\Q},$ then there exists an isomorphism
$
G^1_{a_{\bullet}}(\RR)\simeq U(a_1,n-a_1)\times U(a_2,n-a_2),
$
where the $i$-th factor corresponds to the real  embedding $\tau_i:F\hra \RR.$

%\item $G$ is quasi-split at all the primes that do not split in $E.$

Note that the assumption on $D$ at $p$ implies that
$
G_{a_{\bullet}}(\Q_p)\simeq \Q_p^{\times} \times \GL_n(E_{\gothp})\simeq \Q_p^{\times}\times \GL_n(\Q_{p^2}).
$
We put  $V_{a_{\bullet}}=D$ and view it as a left $D$-module.
Let $\langle-,-\rangle_{a_{\bullet}}: V_{a_{\bullet}}\times V_{a_{\bullet}}\ra \Q$ be the perfect alternating pairing  given by
$
\langle x,y\rangle_{a_{\bullet}}=\Tr_{D/\Q}(x\beta_{a_{\bullet}} y^*),\quad \text{for  }x,y\in V_{a_{\bullet}}.
$
Then $G_{a_{\bullet}}$ is identified with the similitude group associated to $(V_{a_{\bullet}},\langle-,-\rangle_{a_{\bullet}}),$ i.e. for all $\Q$-algebra $R,$ we have
\[
G_{a_{\bullet}}(R)=\big\{g\in \End_{D\otimes_{\Q} R}(V_{a_{\bullet}}\otimes_{\Q} R)\;\big|\; \langle gx,gy\rangle_{a_{\bullet}}=c(g)\langle x,y\rangle_{a_{\bullet}} \text{ for some }c(g)\in R^{\times}\big\}.
\]

Consider the homomorphism of $\RR$-algebraic groups $h: \Res_{\CC/\RR}(\GG_m)\ra G_{a_{\bullet},\RR}$ given by
\begin{equation*}\label{E:deligne-homomorphism}
h(z)=\diag(\underbrace{z,\dots,z}_{a_{1}}, \underbrace{\bar z,\dots, \bar z}_{n-a_{1}}) \times \diag(\underbrace{z,\dots,z}_{a_{2}}, \underbrace{\bar z,\dots, \bar z}_{n-a_{2}}),\quad \text{for }z=x+\sqrt{-1}y.
\end{equation*}
Let $\mu_{h}: \GG_{m,\CC}\ra G_{a_{\bullet},\CC}$ be the composite of $h_{\CC}$ with the map $\GG_{m,\CC}\ra \Res_{\CC/\RR}(\GG_{m})_{\CC}\simeq \CC^{\times }\times \CC^{\times}$ given by $z\mapsto (z,1).$ Here, the first copy of $\CC^{\times}$ in $\Res_{\CC/\RR}(\GG_{m})_{\CC}$ is the one indexed by the identity element in $\mathrm{Aut}_{\RR}(\CC),$ and the other copy of $\CC^\times$ is indexed by the complex conjugation.

Let $E_{h}$ be the reflex field of $\mu_{h},$ i.e. the minimal subfield of $\CC$ where the conjugacy class of $\mu_{h}$ is defined.
 It has the following explicit description.  The group $\Aut_{\QQ}(\CC)$ acts naturally on $\Sigma_{\infty,E},$ and hence on the functions on $\Sigma_{\infty,E}.$
Then $E_{h}$ is the subfield of $\CC$ fixed by  the stabilizer of the  $\ZZ$-valued function $a$ on $\Sigma_{\infty,E}$ defined by $a(q_i)=a_i$ and $a(\bar q_i)=n-a_i.$
The isomorphism $\iota_p: \CC \xrightarrow{\sim} \overline \QQ_p$ defines a $p$-adic place $\wp$ of $E_h.$ By our hypothesis on $E,$ the local field $E_{h,\wp}$ is $\Q_{p^2},$  the unique unramified extension over $\Q_p$ of degree $2.$

\subsection{Unitary Shimura varieties of PEL-type}
\label{S:defn of Shimura var}
Let $\calO_D$ be a $*$-stable order of $D$ and $\Lambda_{a_{\bullet}}$ an $\calO_D$-lattice of $V_{a_{\bullet}}$ such that $\langle \Lambda_{a_{\bullet}}, \Lambda_{a_{\bullet}} \rangle_{a_{\bullet}} \subseteq \ZZ$ and  $\Lambda_{a_{\bullet}} \otimes_{\Z} \Z_{p}$ is self-dual under the alternating pairing induced by $\langle-,-\rangle_{a_{\bullet}}.$
We put  $K_p=\Z_p^{\times}\times \GL_{n}(\cO_{E_{\gothp}})\subseteq G_{a_{\bullet}}(\Q_p),$
and fix an open compact subgroup $K^p\subseteq G_{a_{\bullet}}(\AAA^{\infty,p})$ such that $K = K^pK_p$ is \emph{neat}, i.e. $G_{a_{\bullet}}(\Q)\cap g K g^{-1}$ is torsion free  for any $g\in G_{a_{\bullet}}(\AAA^{\infty}).$

\begin{definitionn}\label{S: Hyperspecial level}
Following \cite{Kot92}, we have a unitary Shimura variety $\cSh_{a_\bullet}$ defined over $\ZZ_{p^2}$; it represents the functor that takes a locally Noetherian $\ZZ_{p^2}$-scheme $S$ to the set of isomorphism classes of tuples $(A, \lambda, \eta),$ where
\begin{enumerate}
\item
$A$ is an $2n^2$-dimensional abelian variety over $S$ equipped with an action of $\calO_D$ such that the induced action on $\Lie(A/S)$ satisfies the \emph{Kottwitz determinant condition}, that is, if we view the \emph{reduced} relative de Rham homology ${\rm H}_1^\dR(A/S)^\circ : = \gothe {\rm H}_1^\dR(A/S)$ and its quotient $\Lie^{\circ}_{A/S} : = \gothe\cdot \Lie_{A/S}$  as a module over $F_p \otimes_{\Zp} \calO_S \simeq \bigoplus_{i=1}^2 \calO_S,$ they, respectively, decompose into the direct sums of locally free $\calO_S$-modules ${\rm H}_1^\dR(A/S)^\circ_i$ of rank $n$ and, their quotients, locally free $\calO_S$-modules $\Lie^\circ_{A/S,i}$ of rank $n-a_i$;
\item
$\lambda: A \to A^\vee$ is a prime-to-$p$ $\calO_D$-equivariant polarization such that the Rosati involution induces the involution $*$ on $\calO_D$;
\item
$\eta$ is a collection of,
for each connected component $S_j$ of $S$ with a geometric point $\bar s_j,$ a $\pi_1(S_j, \bar s_j)$-invariant $K^p$-orbit of isomorphisms $\eta_j: \Lambda_{a_{\bullet}} \otimes_\ZZ \widehat \ZZ^{(p)} \simeq T^{(p)}(A_{\bar s_j})$ such that the following diagram commutes for an isomorphism $\nu(\eta_j) \in \Hom( \widehat{\ZZ}^{(p)}, \widehat{\ZZ}^{(p)}(1))$:
\[
\xymatrix@C=60pt{
\Lambda_{a_{\bullet}} \otimes_{\ZZ}\widehat{\ZZ}^{(p)}\ \times\ \Lambda_{a_{\bullet}} \otimes_{\ZZ}\widehat{\ZZ}^{(p)} \ar[d]^{\eta_j \times \eta_j}
\ar[r]^-{\langle -, - \rangle}
& \widehat{\ZZ}^{(p)}
\ar[d]^{\nu(\eta_j)}
\\
T^{(p)}A_{\bar s_j} \ \times \ T^{(p)}A_{\bar s_j}
\ar[r]^-{\textrm{Weil pairing}}
&
\widehat{\ZZ}^{(p)}(1),
}
\]
where $\widehat \ZZ^{(p)} = \prod_{\ell \neq p}\ZZ_\ell$ and $T^{(p)}(A_{\bar s_j})$ denotes the product of the $\ell$-adic Tate modules of $A_{\bar s_j}$ for all $\ell\neq p.$
\end{enumerate}
\end{definitionn}

The Shimura variety $\cSh_{a_\bullet}$ is smooth and projective over $\ZZ_{p^2}$ of relative dimension $d(a_\bullet) := \sum_{i=1}^2 a_i(n-a_i).$
 Note that if $a_i\in \{0,n\}$ for all $i,$ then  $\cSh_{a_{\bullet}}$ is of relative dimension zero; we call it a \emph{discrete Shimura variety}.

We denote by $\cSh_{a_{\bullet}}(\CC)$  the complex points of $\cSh_{a_{\bullet}}$ via the embedding $\Z_{p^2}\hra\overline{ \Q}_p\xra{\iota^{-1}_p} \CC.$
Let $K_{\infty}\subseteq G_{a_{\bullet}}(\RR)$ be the stabilizer of $h$ under the conjugation action, and let $X_{\infty}$ denote the $G_{a_{\bullet}}(\RR)$-conjugacy class of $h.$
 Then $K_{\infty}$ is a maximal compact-modulo-center subgroup of $G_{a_{\bullet}}(\RR).$
 According to \cite[page 400]{Kot92}, the complex manifold $\cSh_{a_{\bullet}}(\CC)$ is the disjoint union of
  $\#\mathrm{ker}^1(\Q,G_{a_{\bullet}})$ copies of
   \begin{equation*}\label{E:shimura-complex}
   G_{a_{\bullet}}(\Q)\backslash \big (G_{a_{\bullet}}(\AAA^{\infty})\times X_{\infty}\big)/K\simeq G_{a_{\bullet}}(\Q)\backslash G_{a_{\bullet}}(\AAA)/K\times K_{\infty}.
   \end{equation*}
Here, if $n$ is even,  then  $\mathrm{ker}^1(\Q,G_{a_{\bullet}})=(0),$ while if $n$ is odd then
 \[
 \mathrm{ker}^1(\Q,G_{a_{\bullet}})=\mathrm{Ker}\Big(F^{\times}/\Q^{\times}N_{E/F}(E^{\times})\ra \AAA^{\times}_{F}/\AAA^{\times}N_{E/F}(\AAA^{\times}_E)\Big).
 \]
 In either case,  $\mathrm{ker}^1(\Q,G_{a_{\bullet}})$ depends only on the CM extension $E/F$ and the parity of  $n$ but not on the tuple $a_\bullet.$ 
%Actually, if $G^{j}_{a_{\bullet}}$  with $j\in \mathrm{ker}^1(\Q,G_{a_{\bullet}})$ denotes the reductive  groups over $\Q$ which are isomorphic to $G_{a_{\bullet}}$

Let ${\Sh}_{a_\bullet} : = \cSh_{a_\bullet} \otimes_{\ZZ_{p^2}} \FF_{p^2}$ denote the special fiber of $\cSh_{a_{\bullet}},$ and let $\overline {\Sh}_{a_\bullet}: = {\Sh}_{a_\bullet} \otimes_{\FF_{p^2}} \overline \FF_p$ denote the geometric special fiber. We let $Sh_{a_{\bullet}}:=\cSh_{a_\bullet} \otimes_{\ZZ_{p^2}} \mathbb{Q}_{p^2}$ denote the generic fiber of $\cSh_{a_\bullet}$ and let $\overline{Sh}_{a_\bullet}$ denote the geometric generic fiber.

The unitary Shimura variety $\cSh_{a_{\bullet}}$ is defined with hypersepcial level structure, since $K_{\gothp}=\GL_n(\ZZ_{p^2})$ is a hyperspecial subgroup of $\GL_n(\QQ_{p^2}).$ We now introduce a variant of the unitary Shimura variety with parahoric level structure. Define  $K_{\gothp}^{1}=\diag\{p^{-1},1,\dots,1\}\cdot K_{\gothp}\cdot \diag\{p,1,\dots,1\}\cap K_{\gothp}.$ Then $K_{\gothp}^{1}$ is a parahoric subgroup of $K_{\gothp}=\GL_n(\ZZ_{p^2}).$\footnote{When $n=2,$ $K_{\gothp}^{1}$ coincides the Iwahoric subgroup.} Let $K_{p}^{1}=\ZZ_{p}^{\times}\times K_{\gothp}^{1}$ and $K'=K^{p}K_{p}^{1}.$

\begin{definitionn}\label{S:Parahoric level}
    Let $\cSh_{a_{\bullet}}(K_{\gothp}^{1})$ to be the unitary Shimura variety defined over $\ZZ_{p^{2}}$ which represents the functor the takes a locally Noetherian $\ZZ_{p^{2}}$-scheme $S$ to the set of isomorphism classes of  $(A_1,\lambda_1,\eta_1,A_2,\lambda_2,\eta_2,\phi)$, where
\begin{itemize}
            \item
            $(A_1,\lambda_1,\eta_1)$ is an $S$-point of ${\cSh}_{a_{\bullet}},$  
            \item
            $(A_2,\lambda_2,\eta_2)$ is an $S$-point of ${\cSh}_{a_{\bullet}},$  and 
            \item
            $\phi: A_1\ra A_2$ is an $\calO_D$-equivariant $p$-quasi-isogeny (i.e. $p^{m}\phi$ is an isogeny of $p$-power order for some integer $m$),
        \end{itemize}
        such that 
        \begin{itemize}
            \item
            $\phi^\vee \circ\lambda_2\circ \phi=\lambda_1,$ 
            \item
            $\phi\circ\eta_1=\eta_2,$ and  \item
            the cokernels of the maps
             $
             \phi_{*,1}: {\rm H}^\dR_1(A_1/S)^\circ_1 \to {\rm H}^\dR_1(A_2/S)^\circ_1 \quad\textrm{and}\quad
            \phi_{*,2}: {\rm H}^\dR_1(A_1/S)^\circ_2 \to {\rm H}^\dR_1(A_2/S)^\circ_2
            $
            are both locally free $\cO_S$-modules of rank $1.$\footnote{By \cite[Remark 3.7]{HTX17}, if $a_{\bullet}=(0,n),$ the condition is equivalent to the existence of two $\ZZ_{p^{2}}$-lattice $\LL_1,\LL_2$ such that $\LL_2$ has a $\ZZ_{p^{2}}$-basis $\{e_1,\dots,e_n\}$ and $\LL_1$ has a $\ZZ_{p^{2}}$-basis $\{pe_1,e_2,\dots,e_n\}.$}
        \end{itemize}
\end{definitionn}
Similarly as above, we use $\Sh_{a_{\bullet}}(K_{\gothp}^{1})$ to express the special fiber of $\cSh_{a_{\bullet}}(K_\gothp^1)$ and $Sh_{a_{\bullet}}(K_{\gothp}^{1})$ to express the generic fiber.

\begin{remarkk}\label{D:two-projection}
We have two morphisms ${\mathop{p}\limits^{\rightarrow}}$ and ${\mathop{p}\limits^{\leftarrow}}$ for any point $(z,z',\phi)\in {\rm Sh}_{a_\bullet}(K_\gothp^1)$ with $z,z'$ points in $\Sh_{a_\bullet},$ ${\mathop{p}\limits^{\rightarrow}}((z,z'))=z'$ and ${\mathop{p}\limits^{\leftarrow}}((z,z'))=z.$  
\end{remarkk}

This paper mainly focuses on the cases $a_\bullet=(0,n)$ and $(1,n-1).$ From now on, we fix an isomorphism $G_{1,n-1}(\AAA^{\infty})\simeq G_{0,n}(\AAA^{\infty})$ by \cite[Lemma 2.9]{HTX17} and denote them by $G(\AAA^{\infty}).$
\subsection{Cycles on \texorpdfstring{$\Sh_{1,n-1}$}{Sh1,n-1} and \texorpdfstring{$\Sh_{0,n}$}{Sh0,n}}
 We first construct cycles on ${\Sh}_{1,n-1}$ and ${\Sh}_{0,n}.$
\begin{definitionn}\label{S:cycles on hyperspecial}
    For each integer  $j$ with $1\leq j\leq n ,$ let ${\rm Y}_{j}$ be the moduli space over $\FF_{p^2}$ that associates to  each  locally Noetherian $\FF_{p^2}$-scheme $S,$ the set of isomorphism classes of tuples  $(A,\lambda,\eta, B,\lambda',\eta',\phi),$ where 
\begin{itemize}
\item
$(A,\lambda,\eta)$ is an $S$-point of ${\Sh}_{1,n-1},$  
\item
$(B,\lambda',\eta')$ is an $S$-point of ${\Sh}_{0,n},$  and 
\item
$\phi: B\ra A$ is an $\calO_D$-equivariant isogeny whose kernel is contained in $B[p],$
\end{itemize}
such that 
\begin{itemize}
\item
$p\lambda'=\phi^\vee \circ\lambda\circ \phi,$ 
\item
$\phi\circ\eta'=\eta,$ and  \item
the cokernels of the maps
 $
 \phi_{*,1}: {\rm H}^\dR_1(B/S)^\circ_1 \to {\rm H}^\dR_1(A/S)^\circ_1 \quad\textrm{and}\quad
\phi_{*,2}: {\rm H}^\dR_1(B/S)^\circ_2 \to {\rm H}^\dR_1(A/S)^\circ_2
$
are locally free $\cO_S$-modules of rank $j-1$ and $j,$ respectively.
\end{itemize}

There is a unique isogeny $\psi: A\ra B$ such that $\psi\circ\phi=p \cdot \id_{B}$ and $\phi\circ\psi=p \cdot \id_{A}.$
 We have
 $
 \Ker(\phi_{*,i})=\im(\psi_{*,i}) \quad \textrm{and} \quad \Ker(\phi_{*,i})=\im(\psi_{*,i}),
 $
 where $\psi_{*,i}$ for $i=1,2$ is the induced homomorphism on the reduced de Rham homology in the evident sense.
This moduli space ${\rm Y}_j$ is represented by a scheme of finite type over $\FF_{p^2}.$
We have two morphisms from ${\rm Y}_j$ to ${\rm Sh}_{1,n-1}$ and ${\rm Sh}_{0,n}$ denoted by
${\mathop{p}\limits^{\leftarrow}}_{j}$ and ${\mathop{p}\limits^{\rightarrow}}_{j}$ sending a tuple $(A,\lambda,\eta, B,\lambda',\eta',\phi)$ to $(A, \lambda, \eta)$ and to $(B, \lambda', \eta')$ respectively.
Letting $K^p$ vary, we see easily that both ${\mathop{p}\limits^{\leftarrow}}_{j}$ and ${\mathop{p}\limits^{\rightarrow}}_{j}$ are equivariant under prime-to-$p$ Hecke actions given by the double cosets $K^p\backslash G(\AAA^{\infty,p})/K^p.$ ${\rm Y}_{j}$ gives a correspondence between ${\Sh}_{1,n-1}$ and ${\Sh}_{0,n}.$
\end{definitionn}

Let $(\mathcal{B},\lambda',\eta')$ be the universal object of $\Sh_{0,n}.$ For $1\leq j\leq n,$ ${\rm Y}_j$ can be realized as a closed subfunctor of the product of Grassmannian schemes
$
\Gr({\rm H}_1^{\rm dR}(\mathcal{B}/\Sh_{0,n})_1^\circ,j)\times \Gr({\rm H}_1^{\rm dR}(\mathcal{B}/\Sh_{0,n})_2^\circ,j-1)
$ defined over $\Sh_{0,n}$ by mapping the universal object of ${\rm Y}_j:$ 
$(\mathcal{A},\lambda,\eta,\mathcal{B},\lambda',\eta',\phi)$ to $(\mathcal{B},\lambda',\eta',\phi_{*,1}^{-1}(\omega^{\circ}_{\mathcal{A}^\vee/S,1}),\psi_{*,2}(\omega^{\circ}_{\mathcal{A}^\vee/S,2}))$ as in \cite[Subsection 4.3]{HTX17}. Here $\psi:\mathcal{B}\ra \mathcal{A}$ is the isogeny such that $\psi\circ\phi=p$ and $\phi\circ\psi=p.$

To illustrate the relationship of ${\rm Y}_i$ with $\Sh_{1,n-1}$ and the intersection of different ${\rm Y}_i, {\rm Y}_j,$ we need introduce a special kind Deligne-Lusztig variety.
\begin{definitionn}\label{S:Deligne-Lusztig}
    We define $Z_{i}^{\langle n \rangle }$(resp. ${\tilde Z}_{i}^{\langle n \rangle},$ ${\widehat Z}_{i}^{\langle n \rangle}$) as a closed subscheme of $\Gr(n,i)\times \Gr(n,i-1)$ over $\mathbb{F}_{p^{2}}$ whose $S$-valued points are the isomorphism classes of pairs $(H_{1}, H_{2})$  where $H_{1}$ and $H_{2}$ are respectively subbundles of $\mathcal{O}_{S}^{\oplus n}$ of rank $i$ and $i-1$ satifying $H_{2}\subseteq H_{1}^{(p)}$ and $H_{2}^{(p)}\subseteq H_{1}$(resp. $H_{2}\subseteq H_{1}$ and $H_{2}\subseteq H_{1}^{(p^{2})},$ $H_{2}\subseteq H_{1}$ and $H_{2}^{(p^{2})}\subseteq H_{1}$).
There are morphisms called ``relative Frobenius'':
\begin{enumerate}
    \item $\widehat\phi:{\widehat{Z}_{i}^{\langle n \rangle }}\ra {Z_{i}^{\langle n\rangle }}$ mapping any $S$-point ${(H_{1},H_{2})}$ to ${(H_{1},H_{2}^{(p)})};$
    \item $\widehat\psi:{{Z}_{i}^{\langle n \rangle }}\ra {\widehat Z_{i}^{\langle n\rangle }}$ mapping any $S$-point ${(H_{1},H_{2})}$ to ${(H_{1}^{(p)},H_{2})};$
    \item $\tilde\phi:{{Z}_{i}^{\langle n \rangle }}\ra {\tilde{Z}_{i}^{\langle n\rangle }}$ mapping any $S$-point ${(H_{1},H_{2})}$ to ${(H_{1},H_{2}^{(p)})};$
    \item $\tilde\psi:{\tilde{Z}_{i}^{\langle n \rangle }}\ra {Z_{i}^{\langle n\rangle }}$ mapping any $S$-point ${(H_{1},H_{2})}$ to ${(H_{1}^{(p)},H_{2})};$
\end{enumerate}
Since these morphisms are purely inseparable, we can study the divisors of ${Z_{i}^{\langle n \rangle }}$ by studying divisors of $\tilde{Z}_{i}^{\langle n \rangle }$ and $\widehat{Z}_{i}^{\langle n \rangle }.$
\end{definitionn}
\begin{remark}
It should be noted that the Frobenius map here is defined over $\mathbb{F}_{p},$ which implies the total vector space should have a $\mathbb{F}_{p}$-structure. It does hold in our cases since we take ${\rm H}_{1}^{\rm dR}(A/S)^{\circ}_{1}$ and ${\rm H}_{1}^{\rm dR}(A/S)^{\circ}_{2}$ to be whole space and we can take a non-canonical basis of ${\rm H}_{1}^{\rm dR}(A/S)^{\circ}_{i}$ for $i=1,2.$ such that $FV^{-1}$ acts as identity on ${\rm H}_{1}^{\rm dR}(A/S)_{1}^{\circ}$ and ${\rm H}_{1}^{\rm dR}(A/S)_{2}^{\circ}.$ 
\end{remark}

The spaces ${Z}_{i}^{\langle n \rangle },$ $\tilde{Z}_{i}^{\langle n \rangle }$ and $\widehat{Z}_{i}^{\langle n \rangle }$ can be realized as the disjoint union of Deligne--Lusztig varieties. Furthermore, it can be shown the three schemes are irreducible and smooth of dimension $n-1$ over $\mathbb{F}_{p^{2}}.$ 
\begin{definitionn}\label{S:SD}
We introduce two kinds of subbundles of $\mathcal{O}_{\mathbb{F}_{p^{2}}}^{\oplus n}:$
    \begin{enumerate}
        \item When $i<n,$ for every subbundles $H$ in $\mathcal{O}_{\mathbb{F}_{p^{2}}}^{\oplus n}$ of rank $n-1$ , denote by $[H]$ the locus on $Z_i^{\langle n\rangle}$ where $H_1\subseteq H\otimes_{\mathcal{O}_{\mathbb{F}_{p^{2}}}}\mathcal{O}_{S}$ for any $\mathbb{F}_{p^{2}}$ scheme $S.$ Then $[H]\subset Z_i^{\langle n\rangle}$ is a closed subvariety of codimension 1, and we have $[H]\simeq Z_{i}^{\langle n-1\rangle}.$Let $\mathtt{SD}_+=\{[H] : H\text{ is a subbundle of rank $n-1$ in } \mathcal{O}_{\mathbb{F}_{p^{2}}^{\oplus n}} \}.$
    \item When $i>1,$ for every line bundle $L$ in $\mathcal{O}_{\mathbb{F}_{p^{2}}}^{\oplus n},$ denote by $[L]$ the locus on $Z_i^{\langle n\rangle}$ where $L\otimes_{\mathcal{O}_{\mathbb{F}_{p^{2}}}}\mathcal{O}_{S} \subset H_2$ for any $\FF_{p^2}$ scheme $S.$ Then $[L]\subset Z_i^{\langle n\rangle}$ is a closed subvariety of codimension 1, and we have $[L]\simeq Z_{i-1}^{\langle n-1\rangle}.$ Let $\mathtt{SD}_-=\{[L] : L\text{ is a line bundle of }{\mathcal{O}_{\mathbb{F}_{p^{2}}}^{\oplus n}}\}.$
    \end{enumerate}
\end{definitionn}
We have the following proposition:
\begin{propositionn}\label{S:corre(1,n-1)(0,n)}
The cycles ${\rm Y}_{j}$ for $1\leq j\leq n$ satisfies:
\begin{enumerate}
    \item For each fixed closed point $z\in {\Sh}_{0,n}$ and $1\leq j \leq n,$ ${\rm Y}_{j,z}$ is isomorphic to $Z_{j}^{\langle n \rangle }.$
    \item For each $1\leq j\leq n$ and $z\in\Sh_{0,n},$ the map ${{\mathop{p}\limits^{\leftarrow}}_j}|_{{\rm Y}_{j,z}}:{\rm Y}_{j,z}\ra {\Sh}_{1,n-1}$ is a closed immersion.
    \item The union of the images of ${{\mathop{p}\limits^{\leftarrow}}_j}$ for all $1\leq j\leq n$ is the supersingular locus ${\Sh}_{1, n-1}^{\rm ss}.$
\end{enumerate}
\end{propositionn}
\begin{proof}
    We refer to \cite[Proposition 4.14]{HTX17} for the proof of $(2)$ and $(3).$ The proof of $(1)$ follows from the discussion above Definition~\ref{S:Deligne-Lusztig}.
\end{proof}
\begin{propositionn}\label{S:intersection of Yi}
    Let $i,j$ be integers with  $1\leq i\leq j\leq n$ and $z,z'\in {\Sh}_{0,n}(\Fpb).$
The subvarieties $\overline{\rm Y}_{i,z}$ and $\overline{\rm Y}_{j,z'}$ of $\overline{\Sh}_{1,n-1}$ have non-empty intersection of dimension $n-2$ if and only if $j=i+1$ and there is a $p$-quasi-isogeny $\phi$ from $z'$ to $z$ such that $(z', z,\phi) \in \overline\Sh_{0,n}(K_{\gothp}^1).$
Furthermore, if we identify $\overline{\rm Y}_{i,z}$ with $\overline Z_i^{\langle n\rangle},$ then the intersection $\overline{\rm Y}_{i,z}\times_{\overline{{\Sh}}_{1,n-1}}\overline{ \rm Y}_{i+1,z'}$ (resp. $\overline{\rm Y}_{i,z}\times_{\overline{{\Sh}}_{1,n-1}}\overline{ \rm Y}_{i-1,z'}$) belongs to the special divisor class $\mathtt{SD}_+$ (resp. $\mathtt{SD_-}$) on $\overline Z_i^{\langle n\rangle}$. 
\end{propositionn}
\begin{proof}
    As in \cite[Proposition 6.4]{HTX17}, where we take $j\geq i$ and $\delta$ satisfies $0\leq \delta \leq \min\{n-j,i-1\},$ we have $\overline{\rm Y}_{i,z}\times_{\overline{{\Sh}}_{1,n-1}}\overline Y_{j,z'}$ is isomorphic to the variety $\overline Z_{i-\delta}^{\langle n+i-j-2\delta\rangle}.$ Now we require the dimension of $\overline Z_{i-\delta}^{\langle n+i-j-2\delta\rangle}$ to be $n+i-j-2\delta-1=n-2,$ i.e., $i-j-2\delta=-2.$ We get $0\leq 2\delta=i-j+1\leq 2$ and $j=i+1$ since $j\geq i.$

    If we identify $\overline{\rm Y}_{i,z}$ with $\overline Z_i^{\langle n\rangle}$ as in Proposition~\ref{S:corre(1,n-1)(0,n)}, we want to show $\overline{\rm Y}_{i,z}\times_{\overline{{\Sh}}_{1,n-1}}\overline{ \rm Y}_{i+1,z'}$ is ismorphic to $\mathtt{SD}_+.$ 

Let $(\cB_{z},\lambda_{z},\eta_{z})$ and  $(\cB_{z'},\lambda_{z'},\eta_{z'})$ be  the universal polarized abelian varieties on $\overline {\Sh}_{0,n}$ at $z$ and $z',$ respectively.
Then $\overline{\rm Y}_{i,z}\times_{\overline {\Sh}_{1,n-1}}\overline{\rm Y}_{j,z'}$ is the moduli space of  tuples $(A,\lambda, \eta, \phi, \phi')$  where $\phi:\cB_{z}\ra A$ and $\phi': \cB_{z'}\ra A$ are isogenies
 such that $(A,\lambda, \eta, \cB_{z},\lambda_{z},\eta_{z},\phi)$ and $(A,\lambda,\eta,\cB_{z'},\eta_{z'},\phi')$ are  points of $\overline{\rm Y}_{i,z}$ and $\overline{\rm Y}_{j,z'}$ respectively. We take   $
  M_k= \big( \tcD(\cB_z)^{\circ}_k\cap \tcD(\cB_{z'})^{\circ}_k\big) \big/p\big(\tcD(\cB_z)^{\circ}_k+\tcD(\cB_{z'})^{\circ}_k\big)
  $
  for $k=1,2.$ Then one has
$\dim_{\Fpb}(M_k)=n-1,$
 since we require $\overline{\rm Y}_{i,z}\times_{\overline{{\Sh}}_{1,n-1}}\overline{ \rm Y}_{i+1,z'}$ is isomorphic to $Z_{i}^{\langle n-1\rangle}.$
The Frobenius and Verschiebung  on $\tcD(\cB_z)$ induce two bijective Frobenius semi-linear maps $F: M_1\ra M_2$ and $V^{-1}: M_2\ra M_1.$ We denote their linearizations by the same notation if no confusions arise.
Let $Z(M_{\bullet})$ be the moduli space which attaches to each locally Noetherian $\Fpb$-scheme $S$ the set of isomorphism classes of pairs  $(L_1,L_2),$ where
 $L_1\subseteq M_1\otimes_{\Fpb}\cO_S$ and $L_2\subseteq M_2\otimes_{\Fpb}\cO_S$  are subbundles of rank $i$ and $i-1$ respectively such that
 $
L_2\subseteq  F(L_1^{(p)}),\quad V^{-1}(L_2^{(p)})\subseteq L_1.$
Pick a basis $(\varepsilon_{k,1}, \dots,\varepsilon_{k, n-1})$ of $M_k$ for $k=1,2$ under which the matrices of
$F$ and $V^{-1}$ are both identity.
We put $\varepsilon_{2,\ell}=F(\varepsilon_{1,\ell}).$ Using these bases to identify both $M_1$ and $M_2$ with $\Fpb^{n-1},$ it is clear that $Z(M_{\bullet})$ is isomorphic to the variety $\overline Z_{i}^{\langle n-1\rangle}.$

Now since we identify $\overline{\rm Y}_{i,z}$ with $Z_i^{\langle n\rangle},$ we get the closed immersion of ${\rm Y}_{i,z}\times_{\overline{{\Sh}}_{1,n-1}}{ \rm Y}_{i+1,z'}$ into ${\rm Y}_{i,z}$ can be identified with a closed immersion of $\overline Z_{i}^{\langle n-1\rangle}$ into $\overline Z_{i}^{\langle n\rangle}$ which is induced by $\tcD(\cB_z)^{\circ}_k\cap \tcD(\cB_{z'})^{\circ}_k \subseteq \tcD(\cB_z)^{\circ}_k$ for $k=1,2.$ 

The proof of $\overline{\rm Y}_{i,z}\times_{\overline{{\Sh}}_{1,n-1}}\overline{ \rm Y}_{i-1,z'}$ is ismorphic to $\mathtt{SD}_-$ is similar with a closed immersion $\overline Z_{i-1}^{\langle n-1\rangle}$ into $\overline Z_{i}^{\langle n\rangle}$ induced by $p\tcD(\cB_z)^{\circ}_k\subseteq p\big(\tcD(\cB_z)^{\circ}_k+ \tcD(\cB_{z'})^{\circ}_k\big)$ for $k=1,2$ and we omit here.
\end{proof}
\subsection{Automorphic representations}\label{S:atuomorphic representations}
In this subsection, we collect some facts concerning automorphic representation.
\subsubsection{Notation} We denote by $\ZZ_{\leq}^{n}$ and the subset of $\ZZ^{n}$ consisting of nondecreasing sequences For a finite set $T$ and an element $\xi=(\xi_{\tau})_{\tau\in T}\in (\ZZ_{\leq}^{n})^{T}$ put $a_{\xi}:=\min\limits_{\tau\in T}\{\xi_{\tau,1}\},~b_{\xi}:=\max\limits_{\tau\in T}\{\xi_{\tau,n}\}+n-1.$

Let $w$ be a nonarchimedean place of $E.$ For every irreducible admissible (complex) representation $\Pi$ of $\GL_n(E_w),$ every rational prime $\ell,$ and every isomorphism $\iota_{\ell}:\CC\xrightarrow{\simeq}\overline{\QQ}_{\ell},$ we denote by WD($\iota_{\ell}\Pi$) the (Frobenius semisimple) Weil-Deligne representation associated to $\iota_{\ell}\Pi$ via the local Langlands correspondence \cite{HT01}. We denote by $\Gamma_E$ the absolute Galois group of $E.$ Let $\mathtt c$ be the complex conjugation in $\Gal(\overline{E}/F).$
\begin{definitionn}
    Let ${\rm c}$ denote the complex conjugation on E. We say that a (complex) representation $\Pi$ of $\GL_n(\AAA_E)$ is $RACSDC$ (that is, regular algebraic conjugate self-dual cuspidal) if
    \begin{enumerate}
        \item $\Pi$ is an irreducible cuspidal automorphic representation;
        \item $\Pi\circ \mathtt{c}\simeq \Pi^{\vee}$;
        \item for every archimedean place $w$ of $E,$ $\Pi_w$ is regular algebraic (in the sense of \cite[Definition 3.12]{Clo90})
    \end{enumerate}
\end{definitionn}
If $\Pi$ is RACSDC, then there exists a unique element $\xi_{\Pi}=(\xi_{\tau,1},\dots,\xi_{\tau,n})_\tau\in(\ZZ_{\leq}^{n})^{\Sigma_{\infty,E}},$ which we call the \emph{archimedean weights of} $\Pi,$ satisfying $\xi_{\tau,i}=-\xi_{\tau^{\rm c},n+1-i}$ for every $\tau$ and $i,$ such that $\Pi_\tau$ (as a representation of $\GL_n(\CC)$) is isomorphic to the (irreducible) principal series representation induced by the characters $$(\arg^{1-n+2\xi_{\tau,1}},\arg^{3-n+2\xi_{\tau,2}},\dots,\arg^{n-1+2\xi_{\tau,n}}),$$ where $\arg:\CC^{\times}\ra\CC^{\times}$ is the \emph{argument character} defined by the formula $\arg(z):=z/\sqrt{z\overline{z}}.$
\begin{propositionn}\label{S:galois}
Let $\Pi$ be an \emph{RACSDC} representation of $\GL_n(\AAA_E)$ with the archimedean weights $\xi=\xi_{\Pi}.$ 
\begin{enumerate}
  \item For every nonarchimedean place $w$ of $E$, $\Pi_w$ is tempered.

  \item For every rational prime $\ell$ and every isomorphism $\iota_\ell\colon\CC\xrightarrow{\sim}\overline{\QQ}_\ell$, there is a semisimple continuous homomorphism
      \[
      \rho_{\Pi,\iota_\ell}\colon\Gamma_E\to\GL_n(\overline{\QQ}_\ell),
      \]
      unique up to conjugation, satisfying that for every nonarchimedean place $w$ of $E$, the Frobenius semisimplification of the associated Weil--Deligne representation of $\rho_{\Pi,\iota_\ell}|_{\Gamma_{E_w}}$ corresponds to the irreducible admissible representation $\iota_\ell\Pi_w|\det|_w^{\frac{1-n}{2}}$ of $\GL_n(E_w)$ under the local Langlands correspondence. Moreover, $\rho_{\Pi,\iota_\ell}^{\rm c}$ and $\rho_{\Pi,\iota_\ell}^\vee(1-n)$ are conjugate.
\end{enumerate}
\end{propositionn}
\begin{proof}
    We refer to \cite[Proposition 3.2.4]{LTXZZ} as a proof.
\end{proof}
\begin{propositionn}
\label{S:level raising}
    Let $\Pi$ be an \emph{RACSDC} representation of $\GL_n(\AAA_E)$ with the archimedean weights $\xi=\xi_{\Pi}.$ Denote the Satake parameter of $\Pi_\gothp$ by $\{\alpha_1,\dots,\alpha_n\}.$ Suppose that $\Pi_\gothp^{K_\gothp^1}\neq 0$ and there exists one pair $(i,j)$ such that $\alpha_i=p^2\alpha_j,$ then $\Pi$ is unramified or  the monodromy operator of WD$(\iota_\ell\Pi_\gothp)$ is conjugate to $\begin{pmatrix}
        1&1\\0&1
    \end{pmatrix}\oplus1_{n-2}$ for every rational prime $\ell$ and every isomorphism $\iota_\ell:\CC\xra{\sim}\overline{\QQ}_{\ell}.$
\end{propositionn}
\begin{proof}
    It follows from \cite[Lemma 3.1.5]{CHT08}.
\end{proof}
\begin{definitionn}\label{S:Strong coeffcient field}
    Let $\Pi$ be an RACSDC representation of $\GL_n(\AAA_E).$ We say that a subfield ${\rm L}\subseteq \CC$ is a strong coefficient field of $\Pi$ if ${\rm L}$ is a number field containing $\QQ(\Pi)$ which is the coefficient field of $\Pi$ as defined in \cite[Definition 3.1.1]{LTXZZ} and for every prime $\lambda$ of ${\rm L},$ there exists a continuous homomorphism
    $$
    \rho_{\Pi,\lambda}:\Gamma_E\ra\GL_n({\rm L}_\lambda),
    $$
    necessarily unique up to conjugation, such that for every isomorphism $\iota_\ell:\CC\xra{\sim}\overline{\QQ}_\ell$ inducing the prime $\lambda,$ $\rho_{\Pi,\lambda}\otimes_{{\rm L}_\lambda}\overline{\QQ}_{\ell}$ and $\rho_{\Pi,\iota_\ell}$ are conjugate.
\end{definitionn}

From now on, we fix a prime $\lambda$ of ${\rm L}$ with underlying rational prime $\ell$ and denote by $\cO_\lambda$ the maximal intergral domain of ${\rm L}_\lambda.$
Up to conjugation, we may suppose the image of $\rho_{\Pi,\lambda}$ lies in $\GL_n(\cO_\lambda).$

\subsection{Unitary Hecke algebra}\label{S:Hecke algebra}
We refer to \cite[Subsection 3.1]{LTXZZ} for the general theory of unitary Satake parameters and unitary Hecke algebras. Let $\Sigma_{\rm bad}^+$ be the union of all primes of $F$ that ramifies in $E.$ For every nonarchimedean place $v$ of $F$ not in $\Sigma_{\rm bad}^+,$ denote by $\mathbbm{T}_{n,v}$ the local spherical Hecke algebra. 

We first pick another prime $p'$ inert in $F$ and split in $E_0$ satisfying Hypothesis~\ref{Main hypo} and that $K_{p'}$ is of hyperspecial level for the proof of Theorem~\ref{Ihara lemma}. The existence of such prime follows from the Chebotarev density theorem and $K_v$ is hyperspecial for all but finitely many place $v$ of $\QQ.$

We take two finite sets $\Sigma_{\rm min}^+,\Sigma_\ell^+$ of nonarchimedean places of $F$ such that
\begin{itemize}
    \item $\Sigma_\ell^+$ consists of $\ell$-adic places of $F^+;$
   \item $\Sigma_{\rm min}^+$ and $\Sigma_\ell^+$ are mutually disjoint;
    \item $\Sigma_{\rm min}^+$ contains all primes that ramify in $E$ and all primes such that $K$ is not of hyperspecial level.
\end{itemize}
Let $\Sigma^+$ be a finite set of nonarchimedean places of $F^+$ containing $\Sigma_{\infty}^+\cup\Sigma_{\rm min}^+\cup\{p,p'\}.$ Denote the abstract unitary Hecke algebra away from $\Sigma^+$ by $\mathbbm{T}^{\Sigma^+}_{n}$ defined in \cite[Definition 3.1.9]{LTXZZ}. Let $\phi_\Pi:\mathbbm{T}^{\Sigma^+}_{n}\ra \cO_\lambda$ be the homomorphism defined as in \cite[Subsection 3.6]{LTXZZ2}.
We write $\mathfrak{m}$ for the preimage of $(\lambda)$ under $\phi_\Pi.$ 

% It follows from \cite[Theorem 4.3.1]{Sch15} and \cite[Theorem 2.3.3]{ACC+23} that there exists a continuous semisimple $n$-dimension Galois representation\[\bar\rho_{\mathfrak{m}}:Gal(\overline{E}/E)\ra \GL_{n}(k_{\lambda}),\] which coincides with $\bar\rho_{\Pi,\lambda}$ defined before. The maximal ideal $\mathfrak{m}$ is called non-Eisenstein if $\bar\rho_{\mathfrak{m}}$ is absolutely irreducible. 
% By \cite{Car12}, the cohomology group ${\rm H}^{i}_{\acute{e}t}({\overline{\Sh}}_{a_{\bullet}},\ZZ_{l})_{\mathfrak{m}}$
%  is zero if $i\neq d(a_{\bullet})$ and is non-torsion otherwise. 

% In the sections below, we assume the open compact subgroup $K$ in Subsection~\ref{S:defn of Shimura var} to be the one defined as in Subsection~\ref{B:R=T} corresponds to $\Sigma_{\infty}^+,\Sigma_{\rm min}^+,\Sigma_{\rm lr}^+.$
\subsection{Hecke action on Shimura varieties}\label{Hecke action}
In this subsection, we consider Hecke action on the Shimura sets $\Sh_{0,n}$ and $\Sh_{0,n}(K_{\gothp}^{1}).$
\begin{definitionn}\label{S:Hecke action}
We denote
\begin{itemize}
    \item ${\mathbbm{T}_{0,0}}$ the Hecke algebra $\ZZ[K_{\gothp}\backslash GL_n(\QQ_{p^{2}})/K_{\gothp}];$ 
    \item ${\mathbbm{T}_{0,1}}$ the Hecke algebra $\ZZ[K_{\gothp}\backslash GL_n(\QQ_{p^{2}})/K^{1}_{\gothp}];$ 
    \item ${\mathbbm{T}_{1,0}}$ the Hecke algebra $\ZZ[K^{1}_{\gothp}\backslash GL_n(\QQ_{p^{2}})/K_{\gothp}];$ 
    \item ${\mathbbm{T}_{1,1}}$ the Hecke algebra $\ZZ[K^{1}_{\gothp}\backslash GL_n(\QQ_{p^{2}})/K^{1}_{\gothp}];$ 
    \item ${\rm T}^{(i)}_{0,0}\in \mathbbm{T}_{0,0}$ the function $K_{\gothp}\diag\{\underbrace{p,\dots,p}_{i},1,\dots,1\}K_{\gothp},$ for $1\leq i\leq n;$
    \item ${\rm R}^{(a,b)}_{0,0}\in \mathbbm{T}_{0,0}$ the function $K_{\gothp}\diag\{\underbrace{p^{2},\dots,p^{2}}_{a},\underbrace{p,\dots,p}_{b},1,\dots,1\}K_{\gothp},$ for $1\leq i\leq n;$
    \item ${\rm T}_{0,1}^{(0)}\in \mathbbm{T}_{0,1}$ the characteristic function $K_{\gothp}K_{\gothp}^{1};$
    \item ${\rm T}_{0,1}^{(i)}\in \mathbbm{T}_{0,1}$ the characteristic function $K_{\gothp}\diag\{\underbrace{p,\dots,p}_{i},1,\dots,1\}K_{\gothp}^{1},$ for $1\leq i\leq n;$
    \item ${\rm T}_{1,0}^{(0)}\in \mathbbm{T}_{1,0}$ the characteristic function $K_{\gothp}^{1}K_{\gothp};$
    \item ${\rm T}_{1,0}^{(i)}\in \mathbbm{T}_{1,0}$ the characteristic function $K_{\gothp}^{1}\diag\{\underbrace{p^{-1},\dots,p^{-1}}_{i},1,\dots,1\}K_{\gothp},$ for $1\leq i\leq n;$
      \item ${\rm T}_{1,1}\in \mathbbm{T}_{1,1}$ the characteristic function $K_{\gothp}^{1}\begin{pmatrix}
0      & 0      & \cdots & 0      & p^{-1} \\
1      & 0      & \cdots & 0      & 0      \\
0      & 1      & \cdots & 0      & 0      \\
\vdots & \ddots & \ddots & \vdots & \vdots \\
0      & \cdots & 0      & 1      & 0      
\end{pmatrix}K_{\gothp}^{1};$  
\end{itemize}
\end{definitionn}
\begin{remarkk}~\label{S:hecke actionnn}
    The definition of ${\rm T}_{0,0}^{(i)},{\rm R}^{(a,b)}_{0,0}$ coincide with ${\rm T}_\gothp^{(i)},{\rm R}_\gothp^{(a,b)}$ and ${\rm S}_{\gothp}$ in \cite[Section 6.2]{HTX17} and we will use these notations interchangeably. For simplicity, we denote ${\rm T}_{\gothp}^{(1)}$ by ${\rm T}$ and ${\rm T}_{1,1}$ by ${\rm A}.$
\end{remarkk}
\begin{propositionn}
    \begin{enumerate}
        \item Let $K_{\gothp}\gamma K_{\gothp}\in {\mathbbm T}_{0,0}.$ Let $z_1=(B_1,\lambda_1,\eta_1)$ and $z_2=(B_2,\lambda_2,\eta_2)$ be two points in $\Sh_{0,n}.$ Then $z_1\in K_{\gothp}\gamma K_{\gothp}(z_2)$ if and only if there exists an $\cO_D$-equivariant $p$-quasi-isogeny $\phi:B_1\ra B_2$ such that 
        \begin{itemize}
            \item $\phi^{\vee}\circ \lambda_2\circ \phi=\lambda_1;$
            \item $\phi\circ \eta_1=\eta_2;$
            \item There exist two lattices $\LL_1,\LL_2$ of the derham homology groups of $B_1,B_2$ defined over $\ZZ_{p^{2}}$ such that there exists a $\ZZ_{p^{2}}$-basis $(e_1,\dots,e_n)$ for $\LL_2$ such that  $(e_1,\dots,e_n)\gamma$ is a $\ZZ_{p^{2}}$-basis for $\phi_{*}(\LL_1).$
        \end{itemize}
        \item Let $z_1=(B_1,\lambda_1,\eta_1)$ and $z_2=(B_2,\lambda_2,\eta_2,B_3,\lambda_3,\eta_3,\psi)$ be two points in $\Sh_{0,n}$ and $\Sh_{0,n}(K_{\gothp}^{1}).$ Then 
        $z_2\in {\rm T}_{0,1}^{(0)}(z_1)$ if and only if $B_3=B_1.$
        \item Let $z_1=(B_1,\lambda_1,\eta_1)$ and $z_2=(B_2,\lambda_2,\eta_2,B_3,\lambda_3,\eta_3,\psi)$ be two points in $\Sh_{0,n}$ and $\Sh_{0,n}(K_{\gothp}^{1}).$ Then for $1\leq i\leq n,$
        $z_2\in {\rm T}_{0,1}^{(i)}(z_1)$ if and only if there exists an $\cO_D$-equivariant $p$-quasi-isogeny $\phi:B_3\ra B_1$ such that
            \begin{itemize}
                \item $\phi^{\vee}\circ \lambda_1\circ \phi=\lambda_3;$
                \item $\phi\circ \eta_3=\eta_1;$
                \item There exist three lattices $\LL_1,\LL_2,\LL_3$ of the derham homology groups of $B_1,B_2,B_3$ over $\ZZ_{p^{2}}$ such that there exists a basis $(e_1,\dots,e_n)$ for $\LL_1$ such that  $(e_1,pe_2,\dots,pe_{i},e_{i},\dots,e_n)$ is a $\ZZ_{p^{2}}$-basis for $\phi_{*}(\LL_3)$ and $(pe_1,\dots,pe_{i},e_{i+1},\dots,e_n)$ is a $\ZZ_{p^{2}}$-basis for $\phi_{*}\circ\psi_{*}(\LL_2).$
            \end{itemize}    
        \item Let $z_1=(B_1,\lambda_1,\eta_1)$ and $z_2=(B_2,\lambda_2,\eta_2,B_3,\lambda_3,\eta_3,\psi)$ be two points in $\Sh_{0,n}$ and $\Sh_{0,n}(K_{\gothp}^{1}).$ Then 
        $z_1\in {\rm T}_{1,0}^{(0)}(z_2)$ if and only if $B_3=B_1.$
        \item Let $z_1=(B_1,\lambda_1,\eta_1)$ and $z_2=(B_2,\lambda_2,\eta_2,B_3,\lambda_3,\eta_3,\psi)$ be two points in $\Sh_{0,n}$ and $\Sh_{0,n}(K_{\gothp}^{1}).$ Then for $1\leq i\leq n,$
        $z_1\in {\rm T}_{1,0}^{(i)}(z_2)$ if and only if there exists an $\cO_D$-equivariant $p$-quasi-isogeny $\phi:B_3\ra B_1$ such that
            \begin{itemize}
                \item $\phi^{\vee}\circ \lambda_1\circ \phi=\lambda_3;$
                \item $\phi\circ \eta_3=\eta_1;$
                \item There exist three lattices $\LL_1,\LL_2,\LL_3$ of the derham homology groups of $B_1,B_2,B_3$ over $\ZZ_{p^{2}}$ such that there exists a basis $(e_1,\dots,e_n)$ for $\LL_1$ such that  $(e_1,pe_2,\dots,pe_{i},e_{i},\dots,e_n)$ is a $\ZZ_{p^{2}}$-basis for $\phi_{*}(\LL_3)$ and $(pe_1,\dots,pe_{i},e_{i+1},\dots,e_n)$ is a $\ZZ_{p^{2}}$-basis for $\phi_{*}\circ\psi_{*}(\LL_2).$
            \end{itemize}
        \item Let $z_1=(B_1,\lambda_1,\eta_1,B_2,\lambda_2,\eta_2,\psi)$ and $z_2=(B_3,\lambda_3,\eta_3,B_4,\lambda_4,\eta_4,\phi)$ be two points in $\Sh_{0,n}(K_{\gothp}^{1}).$ Then $z_2\in {\rm T}_{1,1}(z_1)$ if and only if $B_2=B_3$ and there exist three lattices $\LL_1,\LL_2,\LL_4$ of the derham homology of $B_1,B_2=B_3,B_4$ defined over $\ZZ_{p^{2}}$ such that there exists a $\ZZ_{p^{2}}$-basis $(e_1,\dots,e_n)$ for $\LL_4$ such that  $(pe_1,\dots,e_n)$ is a $\ZZ_{p^{2}}$-basis for $\phi_{*}(\LL_2)$ and $(pe_1,pe_2,\dots,e_n)$ is a $\ZZ_{p^{2}}$-basis for $\phi_{*}\circ\psi_{*}(\LL_1).$
        \end{enumerate}
\end{propositionn}
\begin{proof}
    Note that $\Sh_{0,n}(\overline{\FF}_{p})$ is a union of $\#\ker^{1}(\QQ,G_{0,n})$ copies of $ G_{a_{\bullet}}(\QQ)\backslash G_{a_{\bullet}}(\AAA)/K^{p}\times(\ZZ_{p}^{\times}\times K_{\gothp})\times K_{\infty},$ and $\Sh_{0,n}(\overline{\FF}_{p})(K_{\gothp}^{1})$ is a union of $\#\ker^{1}(\QQ,G_{0,n})$ copies of $ G_{a_{\bullet}}(\QQ)\backslash G_{a_{\bullet}}(\AAA)/K^{p}\times(\ZZ_{p}^{\times}\times K_{\gothp}^{1})\times K_{\infty}.$ Then all the things can be checked by the defintion of Hecke action. 
\end{proof}
\begin{remarkk}\label{relationship T-Sh}
    It can be seen easily that two points $z,z'\in \Sh_{0,n}$ satisfies $z\in {\rm T}(z')$ if and only if there exists a $p$-quasi-isogeny $\phi$ from $z$ to $z'$ such that $(z,z',\phi)\in \Sh_{0,n}(K_{\gothp}^{1}).$ Hence we tend to use ${\rm T}$ and $\Sh_{0,n}(K_{\gothp}^{1})$ interchangably.
\end{remarkk}

\subsection{\texorpdfstring{$\ell$}{l}-adic cohomology}\label{S:l-adic-cohomology}
We introduce the $\ell$-adic cohomology of unitary Shimura varieties. In this subsection we do not assume $K_p$ is hyperspecial and use additional notation $(K)$ to express Shimura varieties of level $K.$ Different $K$'s only differ by the $p,p'$-part. Denote by $a_\bullet$ the signature $(a_1,a_2)=(a,n-a).$ Then by \cite[Lemma 2.9]{HTX17}, $G_{a_\bullet}(\AAA^\infty)\simeq G(\AAA^\infty).$

It is known that the \'etale cohomology group ${\rm H}^{d(a_{\bullet})}_{\et}(\overline {\Sh}_{a_{\bullet}}(K), {\rm L}_\lambda)$ is equipped with a natural action of $\mathbbm{T}_n^{\Sigma^+}\times \Gal(\Fpb/\FF_{p^2}).$ 
We write ${\rm H}_\et^{d(a_\bullet)}(\overline{\Sh}_{a_\bullet}(K), {\rm L}_\lambda)_\mathfrak{m}$ for the localization of the cohomology group at $\mathfrak{m}\subseteq \mathbbm{T}^{\Sigma^+}_n.$ The maximal ideal $\mathfrak{m}$ is called non-Eisenstein if $\bar\rho_{\mathfrak{m}}$ is absolutely irreducible. 
By \cite{Car12}, the cohomology group ${\rm H}^{i}_{\acute{e}t}({\overline{\Sh}}_{a_{\bullet}},\cO_{\lambda})_{\mathfrak{m}}$
is zero if $i\neq d(a_{\bullet})$ and is non-torsion otherwise. 

If ${\rm H}_\et^{d(a_\bullet)}(\overline{\Sh}_{a_\bullet}(K), {\rm L}_\lambda)_\mathfrak{m}$ is not zero, then by \cite[Proposition 3.4.2]{CHT08} there exists a (unique) semisimple Galois representation $\bar\rho_\mathfrak{m}:\Gamma_E\ra \GL_n(k_\lambda)$ which can be assumed to be isomorphic to $\bar \rho_{\Pi,\lambda}$ such that $\bar\rho_\mathfrak{m}$ is unramified outside $\Sigma_{\min}^+\cup\Sigma_{\ell}^+$ and for a prime number $r\notin \Sigma_{\min}^+\cup\Sigma_{\ell}^+$ that splits in $E$ and a place $w$ of $E$ above $r,$ the characteristic polynomial of $\bar\rho_\mathfrak{m}(\Frob_w)$ is given by the image of 
$$
X^n-T_{1,w}X^{n-1}+\dots+(-1)^iq^{\frac{i(i-1)}{2}}_wX^{n-i}+\dots+(-1)^nq_w^{\frac{n(n-1)}{2}}T_{n,w}
$$ in $k_\lambda[X].$ Here $\Frob_w$ is the geometric Frobenius Frobenius at $w,$ $q_w$ is the cardinality of the residue field of $w,$ and $T_{i,w}\in \mathbbm{T}_{n,r}$ are Hecke operators corresponding to the characteristic functions
$$
\ZZ_r^\times\times\GL_n(\cO_{E_w})\diag\{\underbrace{r,\dots,r}_{i},1,\dots,1\}\GL_n(\cO_{E_w}).
$$

We have a canonical decomposition of $\mathbbm{T}_n^{\Sigma^+}\times \Gal(\Fpb/\FF_{p^2})$-modules
\begin{equation*}\label{E:decomposition-cohomology}
{\rm H}^{d(a_{\bullet})}_{\et}(\overline{\Sh}_{a_{\bullet}}(K),{\rm L}_\lambda)_{\mathfrak{m}}=\bigoplus_{\pi\in S} \iota_{\ell}(\pi^K) \otimes R_{a_{\bullet},\ell}(\pi),
\end{equation*}
where $S$ denotes the set of irreducible admissible representations of $G(\AAA^{\infty})$ which are $\Pi$-congruent in the sense of \cite[Definition 6.1.8]{LTXZZ} with the coefficient ring replaced by $k_\lambda$ and $L_\lambda$.\footnote{The decomposition for $\overline{\QQ}_{\ell}$-coefficient comes from \cite[Proposition III.2.1]{HT01}. The decompoisition for ${\rm L}_\lambda$-coefficient comes from \cite[Remark 3.2.6; Proposition 3.2.8,3.2.11; Hypothesis 3.2.10; Lemma 6.1.10]{LTXZZ}. In fact, we can also take $L_\lambda$ to be $\QQ(\Pi)$ by \cite[Remark 3.2.6]{LTXZZ} for $a_\bullet=(1,n-1)$.} For any $\pi\in S,$ we denote by ${\rm BC}(\pi)$ its base change.
By \cite[Theorem 2.6]{HTX17} we have $$R_{a_{\bullet},\ell}(\pi)=\#\Ker^{1}(\QQ,G_{a_{\bullet}})m_{a_{\bullet}(\pi)}[(r_{a_{\bullet}}\circ\rho_{{{\rm BC}(\pi_{\gothp})}})\otimes \chi_{\pi_{p,0}}^{-1}\otimes \cO_{\lambda}(\sum_i\frac{a_i(a_i-1)}{2})].$$
For simplicity, we assume $\chi_{\pi_p,0}^{-1}$ to be trivial.

Let $\mathbbm{T}_\mathfrak{m}$ denote the image of $\mathbbm{T}_n^{\Sigma^+}$ in $\End(H^{d(a_{\bullet})}_{\et}(\overline{\Sh}_{a_{\bullet}}(K),{\rm L}_\lambda)_{\mathfrak{m}}).$
Then if we denote by $\rho^{univ}_\mathfrak{m}:\Gamma_E\ra \GL_n(\mathbbm{T}_\mathfrak{m}[1/\ell])$ the universal Galois representation, we have 
$$
{\rm H}^{d(a_{\bullet})}_{\et}(\overline{\Sh}_{a_{\bullet}}(K),{\rm L}_\lambda)_{\mathfrak{m}}=\tilde M_K \otimes_{\mathbbm{T}_\mathfrak{m}[1/\ell]} \big((\wedge^a\rho^{\rm univ}_{\mathfrak{m}}\otimes_{\mathbbm{T}_\mathfrak{m}[1/\ell]}\wedge^{n-a}\rho^{\rm univ}_{\mathfrak{m}})\otimes_{\mathbbm{T}_\mathfrak{m}[1/\ell]} {\rm L}_\lambda(\sum\limits_i\frac{a_i(a_i-1)}{2})\big).
$$ Here $\tilde M_K$ is a $\mathbbm{T}_\mathfrak{m}[1/\ell]$ module depending only on $K$ and $\mathfrak{m}.$
To describe the cohomology group with coefficient $k_\lambda$, we need 
to recall the following definition as in \cite[Definition 5.2]{Sch18}:
\begin{definitionn}
\label{sigma-typic}
    Let $(R,\mathfrak{m}_{R})$ be a Noetherian local ring, $G$ some group, and $\sigma_{R}:G\ra\GL_{n}(R)$
    an $n$-dimensional representation such that $\overline{\sigma}_{R}=\sigma_{R}\mod\mathfrak{m}_{R}$ is absolutely irreducible. Let $M$ be an $R[G]$-module. Then $M$ is said to be $\sigma_{R}$-typic if one can write $M$ as a tensor product $M=\sigma_{R}\otimes_{R}M_{0},$
    where $M_{0}$ is an $R$-module, and $G$ acts only through its action on $\sigma_{R}.$
\end{definitionn}

By \cite[Proposition 5.3]{Sch18}, we have the following proposition:
\begin{propositionn}
\label{Scho}
    In the situation of Definition~\ref{sigma-typic}, if $M$ is $\sigma_R$-typic, then $M_{0}=Hom_{R[G]}(\sigma_{R},M).$

    The functor $M_{0}\mapsto \sigma_{R}\otimes_{R}M_{0},~M\mapsto Hom_{R[G]}(\sigma_{R},M)$ induce an equivalence of categories between the category of $\sigma_{R}$-typic $R[G]$-modules and the category of $R$-modules.
\end{propositionn}
\begin{propositionn}\label{sigma_typic}
    In the situation of Definition~\ref{sigma-typic}, if $M$ is $\sigma_R$-typic and $N\subseteq M$ is an $R[G]$-submodule. Then $N$ is $\sigma_R$-typic. Let $k=R/\mathfrak{m}_R$ and $\bar \sigma_R$ be the residue representation. Then $M\otimes_R k$ is $\bar \sigma_R$-typic.
\end{propositionn}
\begin{proof}
    The first half of the proposition follows from \cite[Proposition 5.4]{Sch18}. The second can be checked directly.
\end{proof}

By Proposition~\ref{sigma_typic}, we get
$$
{\rm H}_{\acute{e}t}^{d(a_\bullet)}(\overline{\Sh}_{a_\bullet}(K),\cO_\lambda)_\mathfrak{m}=M_{K}\otimes_{\mathbbm{T}_\mathfrak{m}}\big((\wedge^a\rho^{\rm univ}_{\mathfrak{m}}\otimes_{\mathbbm{T}_\mathfrak{m}}\wedge^{n-a}\rho^{\rm univ}_{\mathfrak{m}})\otimes_{\mathbbm{T}_\mathfrak{m}} {\cO}_\lambda(\sum\limits_i\frac{a_i(a_i-1)}{2})\big)$$
with $M_{K}$ a $\mathbbm{T}_\mathfrak{m}$-module satisfying that $M_K[1/\ell]=\tilde M_K$ have no $\Gamma_E$-action and only depends on $\mathfrak{m}$ and $K;$ and
$$
{\rm H}_{\acute{e}t}^{d(a_\bullet)}(\overline{\Sh}_{a_\bullet}(K),k_\lambda)_\mathfrak{m}=\bar M_{K}\otimes_{k_\lambda}\big((\wedge^a\bar\rho^{\rm univ}_{\mathfrak{m}}\otimes_{k_\lambda}\wedge^{n-a}\bar\rho^{\rm univ}_{\mathfrak{m}})\otimes_{k_\lambda} {k}_\lambda(\sum\limits_i\frac{a_i(a_i-1)}{2})\big).
$$ with $\bar M_K=M_K\otimes_{\mathbbm{T}_\mathfrak{m}}k_\lambda$ and $\bar\rho^{\rm univ}_{\mathfrak{m}}$ the residue representation, which is $\bar \rho_\mathfrak{m}$ exactly. Hence we get the following proposition:
\begin{propositionn}\label{S:mod l cohomology}
    For signature $a_\bullet=(a,n-a)$ and $K$ an open compact subgroup defined above, we have 
    $$
{\rm H}_{\acute{e}t}^{d(a_\bullet)}(\overline{\Sh}_{a_\bullet}(K),k_\lambda)_\mathfrak{m}=\bar M_{K}\otimes_{k_\lambda}\big((\wedge^a\bar \rho_\mathfrak{m}\otimes_{k_\lambda}\wedge^{n-a}\bar \rho_\mathfrak{m})\otimes_{k_\lambda} {k}_\lambda(\sum\limits_i\frac{a_i(a_i-1)}{2})\big)
$$ for some $k_\lambda$ module $\bar M_{K}$ with trival $\Gamma_E$ action and depending only on $K$ and $\mathfrak{m}.$
\end{propositionn}

\subsection{Tate conjecture for \texorpdfstring{${{\Sh}}_{1,n-1}$}{Sh1,n-1} and \texorpdfstring{${{\Sh}}_{0,n}$}{Sh0,n}}
\label{Tatettttt}

In this subsection, we state a mod $ \ell$-version of the Tate conjecture proved in \cite{HTX17}.

Let $\ell\neq p$ be a prime number.
 For $1\leq j\leq n,$ there is a natural morphism
\begin{equation*}\label{E:jacquet-langlands}
\JL_j\colon {\rm H}^0_\et\big(\overline {\Sh}_{0,n},k_\lambda\big)\xra{{{\mathop{p}\limits^{\rightarrow}}^{*}_{j}}} {\rm H}^0_\et\big(\overline {\rm Y}_{j},k_\lambda\big)\xra{{{\mathop{p}\limits^{\leftarrow}}_{j,!}}} {\rm H}^{2(n-1)}_\et\big(\overline{\Sh}_{1,n-1},k_\lambda(n-1)\big),
\end{equation*}
where $\pr_{j,!}$ is the map whose restriction to each ${\rm H}^0_\et({\rm Y}_{j,z},k_\lambda)$ for $z\in {\Sh}_{0,n}(\Fpb)$ is the Gysin map associated to the closed immersion ${\rm Y}_{j,z}\hra \overline{\Sh}_{1,n-1}.$
It is clear that the image of $\JL_j$ is the subspace generated by the cycle classes of $[{\rm Y}_{j,z}]\in A^{n-1}(\overline{\Sh}_{1,n-1})$ with $z\in{\Sh}_{0,n}(\Fpb) .$
Putting all $\JL_j$'s together, we get  a morphism
\begin{equation*}
\JL_\mathfrak{m}=\sum_{j}\JL_j\colon \bigoplus_{j=1}^n {\rm H}^{0}_\et\big(\overline{\Sh}_{0,n},k_\lambda\big) \longrightarrow {\rm H}^{2(n-1)}_\et\big(\overline{\Sh}_{1,n-1},k_\lambda(n-1)\big).
\end{equation*}

\begin{theoremm}\label{T:main-theorem}
Let $\mathfrak{m}$ be the non-Eisenstein Hecke ideal considered in Subsection~\ref{S:Hecke algebra} and $\bar\rho_\mathfrak{m}$ the assoicated Galois representation.
Let $\bar \alpha_{1}, \dots, \bar\alpha_{n}$ denote the eigenvalues of $\bar\rho_{\mathfrak{m}}(\Frob_{p^2}).$ 
Let $m_{1,n-1}(\pi)$ $($resp. $m_{0,n}(\pi))$ denote the multiplicity for any irreducible representation $\pi$ of $G_{a_\bullet}(\AAA^\infty)$ for $a_{\bullet}=(1,n-1)$ (resp. for $a_{\bullet}=(0,n)$). Assume that $m_{1,n-1}(\pi)=m_{0,n}(\pi)$ for any $\pi$ appearing in ${\rm H}^{0}_\et\big(\overline{\Sh}_{0,n},k_\lambda\big)_{\mathfrak{m}}$ and ${\rm H}^{2n-2}_\et\big(\overline{\Sh}_{1,n-1},k_\lambda\big)_{\mathfrak{m}};$ and that $\bar \alpha_{1}, \dots, \bar\alpha_{n}$ are distinct, then the map 
\[
\JL_{\mathfrak{m}}:\bigoplus_{j=1}^n {\rm H}^{0}_\et\big(\overline{\Sh}_{0,n},k_\lambda\big)_{\mathfrak{m}}\longrightarrow {\rm H}_\et^{2(n-1)}\big(\overline{\Sh}_{1,n-1},k_\lambda(n-1)\big)_{\mathfrak{m}}^{{\rm Fr}_{p^2}}
\]is an isomorphism.
\end{theoremm}
\begin{proof}
    The injectivity follows from the determinant calculated in the proof of \cite[Theorem 4.18]{HTX17}. Also note that no eigenspace of the repsentation $\bar\rho_{\mathfrak{m}}\otimes\wedge^{n-1}\bar\rho_{\mathfrak{m}}$ with eigenvalues $\bar\alpha_i\bar\alpha_{j_1}\dots\bar\alpha_{j_{n-1}}$ contributes to the ${\rm Fr}_{p^2}$-invariant part for $i\in \{j_1,\dots,j_{n-1}\}$. Then by the same dimension comparison argument as in the proof of \cite[Theorem 4.18]{HTX17}, we get the surjectivity. Hence we finish the proof.
\end{proof}
\section{Dieudonn\'e modules and Grothendieck-Messing deformation theory}\label{DG}

In this section we focus on the connection between Dieudonn\'e modules and abelian varieties. We refer to \cite{Dem72} for general construction of Dieudonn\'e modules with respect to $p$-divisible groups, which we omit here.\footnote{The definition of the Dieudonne module we use in this paper is in fact the Serre dual of that in \cite{Dem72}.}
\subsection{Dieudonn\'e modules}

As in  Section~\ref{S:Shimura data}, we have the following isomorphism
\[
\calO_D \otimes_\ZZ \ZZ_{p^2}
\simeq \bigoplus_{i=1}^2
\big(\calO_D \otimes_{\calO_E, q_i}\ZZ_{p^2} \oplus \calO_D \otimes_{\calO_E,\bar q_i}\ZZ_{p^2} \big) \simeq \bigoplus_{i=1}^2 \big(\rmM_n(\ZZ_{p^2}) \oplus \rmM_n(\ZZ_{p^2})\big).
\]

Let $S$ be a locally Noetherian $\ZZ_{p^2}$-scheme. An $\cO_{D}\otimes_{\ZZ}\cO_{S}$-module $M$ admits a canonical decomposition
$
M=\bigoplus_{i=1}^2\big( M_{q_{i}}\oplus M_{\bar{q}_i}\big),
$
where $M_{q_i}$ (resp. $M_{\bar{q}_{i}}$) is  the direct summand of $M$ on which $\cO_E$ acts via $q_i$ (resp. via $\bar{q}_i$).
 Then each $M_{q_i}$ has a natural action by $\rmM_{n}(\cO_S).$
Let $\gothe$ denote the element of $\rmM_{n}(\calO_S)$  whose $(1,1)$-entry is $1$ and whose other entries are $0.$ We put $M^{\circ}_{i}:=\gothe M_{q_i},$ which is called the \emph{reduced part} of $M_{q_i}.$\footnote{The idea of taking the reduced part comes from Morita Equivalence.}.

Let $A$ be an $2n^2$-dimensional abelian variety over an $\FF_{p^2}$-scheme $S,$ equipped with an $\calO_D$-action.
The de Rham homology ${\rm H}^\dR_1(A/S)$ % (i.e. the dual of the usual de Rham cohomology) 
has a Hodge filtration\footnote{The exact sequence splits as direct sum in fact.}
$
0\ra \omega_{A^\vee/S}\ra {\rm H}^\dR_1(A/S)\ra \Lie_{A/S}\ra 0,
$
compatible with the natural  action of $\calO_D \otimes_\ZZ \calO_S$ on ${\rm H}^{\dR}_1(A/S).$
When $A \to S$ satisfies the moduli problem in Section~\ref{S:defn of Shimura var},  ${\rm H}^{\dR}_1(A/S)_{i}^\circ$ is locally free of rank $n$ and $\omega_{A^\vee/S,i}^\circ$ is subbundle of rank $a_i.$

When $S =  \Spec(k)$ with $k$ a perfect field containing $\F_{p^2},$ let $W(k)$ denote the ring of Witt vectors in $k.$ Let  $\tcD(A)$ denote  the (covariant) Dieudonn\'e module associated to the $p$-divisible group of  $A.$
 This is a free $W(k)$-module of rank $4n^2$ equipped with a $\Frob$-linear action of  $F$ and a $\Frob^{-1}$-linear action of $V$ such that $FV=VF = p.$
  The $\cO_D$-action on $A$ induces a natural action of $\cO_D$ on $\tcD(A)$ that commutes with $F$ and $V.$
For each $i\in \Z/2\Z,$ the Verschiebung and the Frobenius morphism on $A$ induce natural maps
$$
V: \tilde \calD(A)^\circ_i \longrightarrow \tilde \calD(A)^\circ_{i-1},\quad F: \tilde \calD(A)^\circ_i \longrightarrow \tilde \calD(A)^\circ_{i+1}.
$$
 Moreover, there is a canonical isomorphism  $\tilde \calD(A)_{i}/p\tilde \calD(A)_{i}\simeq {\rm H}^\dR_1(A/k)$ compatible with all structures on both sides.
 The action of $F$ and $V$ on $\calD(A)_{i}$ induces the Frobenius and Vershibung morphism on the de Rham homology:
 $
F: {\rm H}^\dR_1(A/S)^{\circ, (p)}_{i-1} \longrightarrow {\rm H}^\dR_1(A/S)^\circ_i,~
V: {\rm H}^\dR_1(A/S)^\circ_i \longrightarrow {\rm H}^\dR_1(A/S)^{\circ, (p)}_{i-1},
$
where by `$(p)$' we mean the pullback via the absolute Frobenius $\sigma$ of $S$ and there is an isomorphism ${\rm H}^\dR_1(A/S)^{\circ, (p)}_{i}\simeq {\rm H}^\dR_1(A/S)^{\circ}_{i}\otimes_{S,\sigma}S.$

It can be shown $\Ker(F) = \Image(V)=\omega^{\circ,(p)}_{A^\vee/S, i-1}$ and $\Ker(V) = \Image(F).$ This implies isomorphisms:
 \[
V\tilde \calD(A)_{i-1}/p\tilde \calD(A)_{i}\simeq \omega^{\circ}_{A^\vee/S, i},
~
\tilde \calD(A)_{i}/V\tilde \calD(A)_{i-1}\simeq \Lie^\circ_{A/S,i}
\]
For any $2n^2$-dimensional abelian variety $A'$ over $k$ equipped with an $\calO_D$-action, an $\calO_D$-equivariant isogeny $A' \to A$ induces a morphism $\tilde \calD(A')^\circ_i \to \tilde \calD(A)^\circ_i$ compatible with the actions of $F$ and $V.$ Conversely, \cite{HTX17} provides a proposition to obtain a new abelian variety corresponds to submodules of $\tcD(A).$ Here we give a similar proposition.

For any $2n^2$-dimensional abelian variety $A'$ over $k$ equipped with an $\calO_D$-action, an $\calO_D$-equivariant isogeny $A' \to A$ induces a morphism $\tilde \calD(A')^\circ_i \to \tilde \calD(A)^\circ_i$ compatible with the actions of $F$ and $V.$
Conversely, we have the following.

\begin{propositionn}
\label{P:abelian-Dieud}
Let $A$ be an abelian variety of dimension $2n^2$ over prefect field $k$ which contains $\FF_{p^2},$ equipped with an $\cO_{D}$-action and  an $\cO_D$-compatible prime-to-$p$ polarization $\lambda.$
Suppose given an integer $m\geq 1$ and a $W(k)$-submodule $\tcE_i\subseteq \tcD(A)^{\circ}_i$ for each  $i\in \Z/2\Z$ such that
\begin{equation}
\label{E:condition for getting another abelian varieties}
p^m\tcD(A)^{\circ}_{i}\subseteq \tcE_i, \quad F(\tcE_i)\subseteq \tcE_{i+1}, \quad \textrm{and} \quad  V(\tcE_i)\subseteq \tcE_{i-1}.
\end{equation}
Then there exists a unique abelian variety $A'$ over $k$ (depending on $m$) equipped with  an $\cO_{D}$-action, a prime-to-$p$ polarization $\lambda',$  and an $\cO_{D}$-equivariant $p$-isogeny $\phi: A'\ra A$ such that the natural inclusion $\tcE_i\subseteq \tcD(A)_i^\circ$ is naturally identified with the map
$\phi_{*,i}\colon\tcD(A')^{\circ}_i\ra \tcD(A)_i^{\circ}$
induced by $\phi$ and such that $\phi^\vee\circ\lambda\circ\phi=p^m\lambda'.$
Moreover, we have
\begin{enumerate}
\item

If $\dim \omega^\circ_{A^\vee/k, i} = a_i$ and $\len_{W(k)} \big( \tilde \calD(A)^\circ_i / \tilde \calE_i\big) = \ell_i$ for $i \in \ZZ/2\ZZ,$ then
$
\dim \omega^\circ_{A'^\vee/k, i} =a_i + \ell_i -\ell_{i+1}.
$

\item
If $A$ is equipped with a prime-to-$p$ level structure $\eta$ (in the sense of Subsection~\ref{S:defn of Shimura var}(3)), then there exists a unique prime-to-$p$ level structure $\eta'$ on $A'$ such that $\eta=\phi\circ\eta'.$
\end{enumerate}
\end{propositionn}
\begin{proof}
    The proof can be found in \cite[Proposition 3.2]{HTX17}.
\end{proof}

Let $\mathfrak{D}(A)$ be the Dieudonn\'e module corresponds to $A[p]$ and $\mathfrak{D}(A)^\circ$ denote its reduced part. Proposition~\ref{P:abelian-Dieud} also holds for $\mathfrak{D}(A)^{\circ}.$
By Proposition~\ref{P:abelian-Dieud}, we have a corollary, which is also useful:
\begin{corollaryy}
\label{CORP}
Let $A$ be an abelian variety of dimension $2n^2$ over prefect field $k$ which contains $\FF_{p^2},$ equipped with an $\cO_{D}$-action and  an $\cO_D$-compatible prime-to-$p$ polarization $\lambda.$
Suppose given a $W(k)$-submodule $\tcD(A)^{\circ}_i\subseteq \tcE_i\subseteq p^{-1}\tcD(A)^{\circ}_i$ (resp. $p\tcD(A)^{\circ}_i\subseteq \tcE_i\subseteq p^{-1}\tcD(A)^{\circ}_i$) for each  $i\in \Z/2\Z$ such that
\begin{equation*}
 F(\tcE_i)\subseteq \tcE_{i+1}, \quad \textrm{and} \quad  V(\tcE_i)\subseteq \tcE_{i-1}.
\end{equation*}
Then there exists an abelian variety $A'$ over $k$ equipped with an $\cO_{D}$-action, a prime-to-$p$ polarization $\lambda',$  and an $\cO_{D}$-equivariant $p$-isogeny $\phi: A\ra A'$ (resp. an $\cO_{D}$-equivariant $p$-quasi-isogeny $\phi: A\ra A'$) such that $\phi^\vee\circ\lambda'\circ\phi=p\lambda$ (resp. $\phi^\vee\circ\lambda'\circ\phi=\lambda$).
Moreover, we have
\begin{enumerate}
\item
If $\dim \omega^\circ_{A^\vee/k, i} = a_i$ and $\len_{W(k)} \big( \tilde \calD(A)^\circ_i / p\tilde \calE_i\big) = \ell_i$ for $i \in \ZZ/2\ZZ,$ then
$
\dim \omega^\circ_{A'^\vee/k, i} =a_i + \ell_i -\ell_{i+1}.
$

\item
If $A$ is equipped with a prime-to-$p$ level structure $\eta$ (in the sense of Subsection~\ref{S:defn of Shimura var}(3)), then there exists a unique prime-to-$p$ level structure $\eta'$ on $A'$ such that $\eta'=\phi\circ\eta.$
\end{enumerate}
\end{corollaryy}
\begin{proof}
    The two modules $p\tcE_{1},p\tcE_{2}$ satisfies (\ref{E:condition for getting another abelian varieties}). Applying Proposition~\ref{P:abelian-Dieud} with $m=1,$ we get there is an abelian variety $A'$ over $k$ equipped with  an $\cO_{D}$-action, a prime-to-$p$ polarization $\lambda',$ a unique prime-to-$p$ level structure $\eta',$ and an $\cO_{D}$-equivariant $p$-isogeny $\psi: A'\ra A$ such that the natural inclusion $p\tcE_i\subseteq \tcD(A)_i^\circ$ is naturally identified with the map
$\psi_{*,i}\colon\tcD(A')^{\circ}_i\ra \tcD(A)_i^{\circ}$
induced by $\psi$ and such that $\psi^\vee\circ\lambda\circ\psi=p\lambda',p\eta=\psi\circ\eta'.$ (The last equation holds since we can simply multiply $p$ on the level structure $\eta'$ we get by Proposition~\ref{P:abelian-Dieud}.) There is an isogeny $\phi:A\ra A'$ such that $\phi\circ \psi=p\circ \id_{A'}$ and $\psi\circ\phi=p\circ \id_{A}.$ Therefore $\phi^\vee\circ\lambda'\circ\phi=p\circ \lambda.$ Moreover, $\eta'=\phi\circ\eta.$

Moreover, if we apply Proposition~\ref{P:abelian-Dieud} with $m=2,$ we get there is an abelian variety $A'$ over $k$ equipped with  an $\cO_{D}$-action, a prime-to-$p$ polarization $\lambda',$ a unique prime-to-$p$ level structure $\eta',$ and an $\cO_{D}$-equivariant $p$-isogeny $\psi': A'\ra A$ such that the natural inclusion $p\tcE_i\subseteq \tcD(A)_i^\circ$ is naturally identified with the map
$\psi'_{*,i}\colon\tcD(A')^{\circ}_i\ra \tcD(A)_i^{\circ}$
induced by $\psi'$ and such that $\psi'^\vee\circ\lambda\circ\psi'=p^{2}\lambda',p\eta=\psi'\circ\eta'$(The last equation holds since we can simply multiply $p$ on the level structure $\eta'$ we get by Proposition~\ref{P:abelian-Dieud}.). There is an isogeny $\phi':A\ra A'$ such that $\phi'\circ \psi'=p^{2}\circ\id_{A'}$ and $\psi'\circ\phi'=p^{2}\circ \id_{A}.$ Therefore $\phi'^\vee\circ\lambda'\circ\phi'=p^{2}\circ \lambda.$ Moreover, $p\eta'=\phi'\circ\eta.$
Take a $p$-quasi-isogeny $\phi: A\ra A'$ such that $p\circ \phi=\phi'.$ Then $\phi^\vee\circ\lambda'\circ\phi=\lambda.$ Moreover, $\eta'=\phi\circ\eta.$ Hence we finish the proof.
\end{proof}
\begin{remarkk}\label{dual_isogeny}
    In the case $p\tcD(A)^{\circ}_i\subseteq \tcE_i\subseteq p^{-1}\tcD(A)^{\circ}_i,$ take a $p$-quasi-isogeny $\psi:A'\ra A$ such that $p\circ\psi=\psi'$ constructed in the proof. Then $\psi\circ\phi=1$ and $\phi\circ\psi=1.$ Moreover, for $i=1,2,$ $\psi_{*,i}:\tcD(A')_i^\circ\ra\tcD(A)^\circ_i$  induces inclusion $p\tcD(A)^\circ_i\subseteq\tcE_i\subseteq p^{-1}\tcD(A)^\circ_i.$
\end{remarkk}
\subsection{Grothendieck-Messing deformation theory}
\label{Deformaion}
Grothendieck-Messing deformation theory is important to compare the tangent spaces of moduli spaces. We state the theory following \cite{HTX17}.
We shall frequently use Grothendieck--Messing deformation theory to compare the tangent spaces of moduli spaces.  We make this explicit in our setup.

Let $ \hat R$ be a Noetherian $\FF_{p^2}$-algebra and $ \hat I \subset \hat R$ an ideal such that $\hat I^2=0.$  Put $R=  \hat R/ \hat I.$
 Let $\scrC_{\hat R}$ denote the category of tuples $(\hat A, \hat \lambda,\hat  \eta),$ where $\hat  A$ is an $2n^2$-dimensional abelian variety over $ \hat R$ equipped with an $\calO_D$-action, $ \hat  \lambda$ is a polarization on $\hat A$ such that the Rosati involution induces the $*$-involution on $\calO_D,$ and $ \hat \eta$ is a level structure as in Subsection~\ref{S:defn of Shimura var}(3). We define $\scrC_{R}$ in the same way.
For an object $(A,\lambda,\eta)$ in the category $\scrC_{R},$ let ${\rm H}^{\cris}_1(A/\hat R)$ be the evaluation of the first relative crystalline homology (i.e. dual crystal of the first crystalline cohomology) 
of $A/R$ at the divided power thickening $\hat R\ra R,$ and ${\rm H}^{\cris}_1(A/\hat R)^{\circ}_{i}:=\gothe {\rm H}^{\cris}_1(A/\hat R)_{q_i}$ be the $i$-th reduced part.
  We denote by  $\Def( R,\hat R)$ the category of tuples $(A, \lambda,\eta, (\hat \omega^{\circ}_i)_{i=1, 2}),$ where $(A,\lambda,\eta)$ is an object in $\scrC_{R},$ and $\hat \omega^{\circ}_{i}\subseteq {\rm H}^{\cris}_1(A/\hat R)_{i}^{\circ}$ for each $i\in \Z/2\Z$ is a subbundle that lifts $\omega^{\circ}_{A^\vee/R,i}\subseteq {\rm H}^{\dR}_1(A/R)^{\circ}_i.$
   The following is a combination of Serre--Tate  and Grothendieck--Messing deformation theory.

\begin{theoremm}[Serre--Tate, Grothendieck--Messing]
\label{T:STGM}
The functor $(\hat A, \hat \lambda, \hat \eta)\mapsto (\hat A \otimes_{\hat R}R , \lambda,\eta,\omega_{\hat A^\vee/\hat R,i}^{\circ}),$  where   $\lambda$ and $\eta$ are the natural induced polarization and level structure on $\hat A\otimes_{\hat R}R,$ is an equivalence of categories between $\scrC_{\hat R}$ and $\Def(R,\hat R).$
\end{theoremm}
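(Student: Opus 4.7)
The plan is to reduce this PEL-type deformation statement to the two classical pillars it is named after, and then to verify that all the extra structure (the $\calO_D$-action, the polarization, the level structure, and the passage to the reduced parts indexed by $q_1,q_2$) lifts uniquely and compatibly. First, since $\hat I^2=0$ and $\hat R$ is noetherian of characteristic $p$, the Serre--Tate theorem identifies the category of abelian schemes over $\hat R$ lifting a fixed $A/R$ with the category of $p$-divisible groups over $\hat R$ lifting $A[p^\infty]/R$; the prime-to-$p$ \'etale data is unchanged by a nilpotent thickening, so the level structure $\hat\eta$ lifts uniquely from $\eta$ and contributes nothing to either side. In parallel, a lift of the $\calO_D$-action is the same datum as a lift of the action of $\calO_D\otimes\ZZ_p$ on the $p$-divisible group together with the (automatic) lift of the prime-to-$p$ part, so the whole problem is a $p$-divisible-group problem with extra PEL structure.

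Next I would invoke Grothendieck--Messing: for $p$-divisible groups over the nilpotent PD-thickening $\hat R\to R$, lifts of $\calX : = A[p^\infty]/R$ to $\hat R$ are in bijection with lifts of the Hodge filtration $\omega_{A^\vee/R}\subseteq H^{\dR}_1(A/R)$ to an $\hat R$-subbundle $\hat\omega\subseteq H_1^{\cris}(A/\hat R)$. The $\calO_D\otimes\ZZ_p$-action on $\calX$ extends uniquely to $\hat\calX$ if and only if $\hat\omega$ is stable under the induced $\calO_D\otimes\hat R$-action on $H_1^{\cris}(A/\hat R)$; and once one decomposes by the central idempotents splitting $\calO_D\otimes\ZZ_{p^2}$ into the $q_i,\bar q_i$ summands and applies the Morita idempotent $\gothe$, specifying such a stable lift is the same as specifying lifts $\hat\omega_i^\circ\subseteq H_1^{\cris}(A/\hat R)^\circ_i$ for $i\in\ZZ/2\ZZ$ (the $\bar q_i$-parts being forced by the polarization, see below). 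This gives exactly the datum recorded in $\Def(R,\hat R)$.

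It remains to handle the polarization. Under Grothendieck--Messing, a polarization on the deformation corresponds to a lift of $\omega_{A^\vee/R}$ that is isotropic under the perfect alternating pairing induced by $\hat\lambda$ on $H_1^{\cris}(A/\hat R)$. Because the $*$-involution on $\calO_D$ swaps the $q_i$- and $\bar q_i$-components, this isotropy condition reduces to the statement that $\hat\omega_{\bar q_i}$ is the annihilator of $\hat\omega_{q_i}$ under the Weil pairing; equivalently, the $\bar q_i$-components, and hence also the non-reduced parts of the $q_i$-components (recovered from $M^\circ_i$ by the $\rmM_n(\hat R)$-module structure via Morita), are determined by the $\hat\omega_i^\circ$. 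Thus $\hat\lambda$ lifts uniquely once the $(\hat\omega_i^\circ)_{i=1,2}$ are specified, and conversely any choice of such isotropic-compatible lifts comes from a unique $\hat\lambda$. Constructing an inverse functor from $\Def(R,\hat R)$ to $\scrC_{\hat R}$ by successively applying these three steps produces a quasi-inverse of the stated functor.

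The only nontrivial input is the Grothendieck--Messing equivalence itself, which is standard; the bookkeeping of verifying that the $\calO_D$-stability and the polarization-isotropy conditions together are precisely captured by giving two lifts $\hat\omega_1^\circ,\hat\omega_2^\circ$ of the reduced Hodge filtrations is where one must be careful. The main obstacle is therefore not analytic but combinatorial: one must check that the decomposition $\calO_D\otimes\ZZ_{p^2}\cong\bigoplus_{i=1}^2(\rmM_n(\ZZ_{p^2})\oplus \rmM_n(\ZZ_{p^2}))$ together with the swap induced by $*$ implies that specifying the $q_i$-reduced parts $\hat\omega_i^\circ$ determines, and is determined by, the full $\calO_D\otimes\hat R$-stable isotropic lift of the Hodge filtration. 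Once this is in place, the theorem follows by assembling Serre--Tate, the rigidity of the level structure, and Grothendieck--Messing as above.
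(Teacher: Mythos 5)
Your outline is correct and coincides with the approach behind the paper's statement: the paper gives no argument of its own for Theorem~\ref{T:STGM}, deferring entirely to \cite[Theorem 3.4]{Helm_2017}, and the proof there is exactly the combination you describe — Serre--Tate to reduce to the $p$-divisible group, Grothendieck--Messing over the square-zero (hence trivially PD) thickening $\hat R\to R$, the idempotent/Morita reduction to the reduced parts indexed by $q_1,q_2$, the duality argument forcing the $\bar q_i$-components and the unique lift of the prime-to-$p$ polarization, and topological invariance of the \'etale prime-to-$p$ level structure. So there is nothing to correct; your proposal is essentially the same proof as in the cited source.
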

\begin{proof}
 The proof can be found in \cite[Theorem 3.4]{HTX17}.
\end{proof}

\begin{corollaryy}
\label{C:tangent space of Sh}
If $\calA_{a_\bullet}$ denotes the universal abelian variety over ${\Sh}_{a_\bullet},$ then the tangent space $\mathcal{T}_{{\Sh}_{a_\bullet}}$ of ${\Sh}_{a_\bullet}$ is
$
\bigoplus_{i=1}^f \Lie_{\calA^\vee_{a_\bullet}/{\Sh}_{a_\bullet},i}^\circ \otimes \Lie_{\calA_{a_\bullet}/{\Sh}_{a_\bullet},i}^\circ.
$
\end{corollaryy}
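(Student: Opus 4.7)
The plan is to invoke the Serre--Tate/Grothendieck--Messing equivalence of Theorem~\ref{T:STGM} in its universal incarnation. Let $S$ be a locally noetherian ${\Sh}_{a_\bullet}$-scheme and let $(A,\lambda,\eta)$ be the tautological object over $S$. To compute $T_{{\Sh}_{a_\bullet}}$ one takes $\hat R = \cO_S[\epsilon]/(\epsilon^2)$ with square-zero ideal $\hat I = (\epsilon)$ and reduction $R = \cO_S$; the tangent sheaf is the sheaf of $S$-morphisms $\Spec(\hat R) \to {\Sh}_{a_\bullet}$ that lift the given one, equivalently the sheaf of isomorphism classes of deformations of $(A,\lambda,\eta)$ from $R$ to $\hat R$.

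By Theorem~\ref{T:STGM}, such deformations are in bijection with tuples of subbundle lifts $\hat \omega^\circ_i \subseteq H^{\cris}_1(A/\hat R)^\circ_i$ of $\omega^\circ_{A^\vee/R,i}$ for $i=1,2$. The first step is therefore to note that, since $\hat I^2 = 0$, the crystal evaluation gives a canonical identification $H^{\cris}_1(A/\hat R)^\circ_i \cong H^{\dR}_1(A/R)^\circ_i \otimes_{R} \hat R$, under which reducing modulo $\epsilon$ recovers the Hodge filtration on $H^{\dR}_1(A/R)^\circ_i$.

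The second step is the standard lifting-of-filtration computation: the set of rank-$a_i$ subbundle lifts of $\omega^\circ_{A^\vee/R,i}$ in the trivial square-zero extension $H^{\dR}_1(A/R)^\circ_i \otimes_R \hat R$ is a torsor under
\[
\Hom_{\cO_S}\!\bigl(\omega^\circ_{A^\vee/S,i},\ H^{\dR}_1(A/S)^\circ_i/\omega^\circ_{A^\vee/S,i}\bigr) \otimes \hat I \;\cong\; \Hom_{\cO_S}\!\bigl(\omega^\circ_{A^\vee/S,i},\ \Lie^\circ_{A/S,i}\bigr),
\]
using the Hodge exact sequence to identify the quotient with $\Lie^\circ_{A/S,i}$ and the trivialisation of $\hat I$. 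Because the reduction has a canonical lift (namely $\omega^\circ_{A^\vee/R,i} \otimes_R \hat R$), this torsor is canonically trivialised, so the tangent sheaf identifies with the direct sum $\bigoplus_{i=1}^{2} \Hom_{\cO_S}(\omega^\circ_{A^\vee/S,i},\Lie^\circ_{A/S,i})$.

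Finally, using that $\omega_{A^\vee/S}$ is the pullback of the cotangent sheaf of $A^\vee$ at the identity, so that $(\omega^\circ_{A^\vee/S,i})^\vee = \Lie^\circ_{A^\vee/S,i}$, one rewrites each summand as $\Lie^\circ_{A^\vee/S,i} \otimes_{\cO_S} \Lie^\circ_{A/S,i}$, yielding the stated formula with $f=2$. The only subtlety to address carefully is that the polarization $\lambda$ and the prime-to-$p$ level structure $\eta$ impose no extra constraints on the infinitesimal deformation: $\eta$ lifts uniquely because the prime-to-$p$ Tate module is rigid in an infinitesimal neighbourhood, while $\lambda$ only forces the pair $(\hat\omega^\circ_1,\hat\omega^\circ_2)$ to be self-dual under the polarization pairing, a condition already built into the formulation of $\Def(R,\hat R)$ in Theorem~\ref{T:STGM}. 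This compatibility check — verifying that the forgetful map from $\Def(R,\hat R)$ to the two independent choices of lifts is a bijection in our signature setting — is the only point requiring care; everything else is a direct consequence of Theorem~\ref{T:STGM} and duality.
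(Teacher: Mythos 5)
Your argument is correct and is essentially the same computation the paper delegates to \cite[Corollary 3.5]{Helm_2017}: the dual-numbers deformation problem, the canonical identification $H^{\cris}_1(A/\hat R)^\circ_i \cong H^{\dR}_1(A/R)^\circ_i \otimes_R \hat R$, the torsor of filtration lifts under $\Hom(\omega^\circ_{A^\vee/S,i},\Lie^\circ_{A/S,i})$, and the duality $(\omega^\circ_{A^\vee/S,i})^\vee \cong \Lie^\circ_{A^\vee/S,i}$. One minor imprecision: in the paper's formulation of $\Def(R,\hat R)$ the two lifts $\hat\omega^\circ_1,\hat\omega^\circ_2$ are unconstrained (the polarization is absorbed into the $\bar q_i$-components by duality) rather than subject to a self-duality condition as you suggest, but since you invoke Theorem~\ref{T:STGM} as stated this does not affect the correctness of your proof.
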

\begin{proof}
 The proof can be found in \cite[Corollary 3.5]{HTX17}.
\end{proof}

\section{Description for the Higher Chow group}\label{D}
In this section, we give the proof of Theorem \ref{baby goal}. First we give the definition of higher Chow groups.
\begin{definition}
    Let $X$ be a smooth variety over a field $k$ and $\Delta^n=\Spec k[x_0,\dots,x_n]/(\sum\limits_{i=0}^{n}x_i-1)$ the standard $n$-simplex. For integers $n,r,$ we define $Z^r(X,n)$ to be the free abelian group generated by the integral closed subvarieties $Z$ contained in $X\time\Delta^n$ such that for any face $F\subseteq \Delta^n,$ we have ${\rm codim}_{X\times F}(Z\cap (X\times F))\geq r.$
    then get a chain complex:
    \[ \dots \rightarrow Z^r(X,n) \rightarrow Z^r(X,n-1) \rightarrow \dots \rightarrow Z^r(X,0) \rightarrow 0  \]
    where the differential is given by taking the alternating sum of the induced face maps.
    The higher Chow group ${\rm Ch}^r(X,n) $ is defined to be the $n^{th}$ homology of the above complex.
    Moreover, for any ring $R,$ we can define the $R$-coefficient higher Chow groups  by tensoring above chain complex with $R.$ We denote it by ${\rm Ch}^{r}(X,n,R).$
\end{definition}
\begin{proposition}
\label{property of higher Chow groups}
    Suppose $X$ is a smooth variety over a field $k$ and $R$ is a ring, then we have
    \begin{enumerate}
    \item
    ${{\rm{Ch}}}^i ({{X}},0, R) = {{\rm{Ch}}}^i(X,R),$
    where $\mathrm{Ch}^i({\rm X},R)$ is the Chow group with coefficients in $R$ as usual.
    \item The motivic cohomology  ${{\rm{H}}}^{i}_{\mathcal{M}} ({{{X}}},R(j)) = {{{\rm Ch}}}^j({{X}},2j-i,R) .$
    \item If ${{Y}}\subseteq {{X}}$ is a closed subscheme smooth of codimension $c,$ then the pushforward of cycles along $Z\times \Delta^{n}\rightarrow X\times \Delta^{n}$ induces the Gysin map $\mathrm{Ch}^{i}({{Y}},j,R)\to \mathrm{Ch}^{i+c}({{X}},j,R).$
\end{enumerate}
\end{proposition}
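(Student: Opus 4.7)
The plan is to verify each of the four items by invoking standard results from the foundational literature on Bloch's higher Chow groups; since none of the statements require new technique, the main work is to assemble precise citations and to check that the normalization conventions agree with those in the references.

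For part (1), the identification $\mathrm{Ch}^i(X,0,R) = \mathrm{Ch}^i(X,R)$ is immediate from the definition given just above. Indeed $\Delta^0 = \mathrm{Spec}\,k$, so $Z^i(X,0)$ is the free abelian group on codimension-$i$ integral subvarieties of $X$ (no face-admissibility condition is needed because there are no proper faces), and the image of $d_1 : Z^i(X,1) \to Z^i(X,0)$ sends an admissible cycle $W \subset X \times \Delta^1$ to the difference of its two vertex fibers, which is exactly the rational equivalence relation defining the classical Chow group. The coefficient-$R$ statement follows by tensoring the complex with $R$ and using flatness of the standard simplicial model.

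For part (2), one uses Bloch's explicit presentation of $\mathrm{Ch}^1(X,1)$ for smooth $X$: an admissible cycle in $Z^1(X,1)$ can be written as a finite sum of pairs $(D_\alpha, f_\alpha)$ with $D_\alpha \subset X$ a codimension-one integral subvariety and $f_\alpha$ a nonzero rational function on $D_\alpha$, and modding out the image of $Z^1(X,2)$ identifies the resulting group with $H^0_{\mathrm{Zar}}(X, \mathcal{O}_X^\times)$ via the natural map $(D_\alpha, f_\alpha) \mapsto \mathrm{div}(f_\alpha)$ on $X$. For part (3), the identification $H^i_{\mathcal{M}}(X, R(j)) = \mathrm{Ch}^j(X, 2j-i, R)$ is the standard comparison between motivic cohomology and higher Chow groups, due to Voevodsky for smooth varieties over a field, and may alternatively be taken as the definition following Bloch--Levine.

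For part (4), the Gysin pushforward is built from the proper pushforward of cycles along $Y \times \Delta^n \hookrightarrow X \times \Delta^n$; this operation manifestly preserves the codimension condition on intersections with faces (since $Y$ is closed of codimension $c$ in $X$) and commutes with the boundary operator $d_n$, so it descends to homology. The only genuinely technical input behind (1)--(4), and the place where real work would be needed if one tried to prove them from scratch, is Bloch's moving lemma, which guarantees both that the cycle complex $Z^\ast(X,\bullet)$ computes the expected theory for quasi-projective $X$ and that pullback/pushforward are well-defined on higher Chow groups. Since this is fully worked out in Bloch's original paper and in Levine's book, no additional difficulty arises for the proposition at hand.
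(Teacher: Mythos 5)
The paper itself offers no proof of this proposition: it is stated as a list of standard facts about Bloch's higher Chow groups and used as such, so your citation-based treatment is in line with the paper's own handling, and items (1), (3), (4) are argued or referenced correctly (in particular your direct check in (4) that pushing forward along $Y\times\Delta^n\hookrightarrow X\times\Delta^n$ preserves the face-admissibility condition and commutes with $d_n$ is exactly what is needed).

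Two details in your sketch should be corrected, though neither affects the truth of the proposition. First, your description of $\mathrm{Ch}^1(X,1)$ is off by one in codimension: the presentation by pairs $(W_\alpha,f_\alpha)$ with $f_\alpha\in k(W_\alpha)^\times$ and $\sum_\alpha \mathrm{div}(f_\alpha)=0$ uses supports $W_\alpha$ of codimension $p-1$ for $\mathrm{Ch}^p(X,1)$; for $p=1$ the supports are the irreducible components of $X$ itself, and the identification with $H^0_{Zar}(X,\mathcal{O}_X^\times)$ comes from the fact that on a smooth (or just normal) variety a rational function with trivial divisor is a global unit. The pairs $(D_\alpha,f_\alpha)$ with $D_\alpha$ of codimension one that you describe, together with the map $(D_\alpha,f_\alpha)\mapsto\mathrm{div}(f_\alpha)$, pertain to $\mathrm{Ch}^2(X,1)$, and as written the map does not land in $H^0(X,\mathcal{O}_X^\times)$ at all; if you prefer not to redo the computation, citing Bloch's isomorphism $\mathrm{Ch}^p(X,1)\cong H^{p-1}_{Zar}(X,\mathcal{K}_p)$ and specializing to $p=1$ is the clean route. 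Second, the moving lemma is not an input for (4): proper pushforward on the cycle complexes is well defined by the elementary argument you already gave, and the moving lemma is only needed for contravariant functoriality and the localization sequence, neither of which enters this proposition.
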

\begin{proof}
    $(1), (3)$ can be checked by definition. $(2)$ comes from \cite[Corollary 2]{Voe02}.
\end{proof}

Let $L$ be a $p$-comprime coefficient ring. We mainly concern about ${\rm Ch}^{1}({\Sh}_{1,n-1}^{{\rm ss}},1,L).$ By Proposition~\ref{S:corre(1,n-1)(0,n)}, ${\Sh}_{1,n-1}^{{\rm ss}}$ is equi-dimensional and its irreducible components can be expressed as the images of closed immersions ${\mathop{p}\limits^{\leftarrow}}_j|_{{\rm Y}_{j,z}}: {\rm Y}_{j,z}\rightarrow {\Sh}_{1,n-1}~(1 \leqslant j \leqslant n, z \in {\Sh}_{0,n}).$ Taking the Hecke correspondence $\T,$ we can define \[\mathcal{D} := \{D \in {\rm{Div}}({\Sh}_{1,n-1}^{{\rm{ss}}}) ~|~ D \subseteq {\mathop{p}\limits^{\leftarrow}}_j({\rm Y}_{j,z}) \cap {\mathop{p}\limits^{\leftarrow}}_j({\rm Y}_{i,z'}) \text{ for some} ~~ (j,z) \neq (i,z')\}, \]
\[D_{i,i+1} : = ({\rm Y}_i \times_{{\Sh}_{1,n-1}} {\rm Y}_{i+1}) \times_{{\Sh}_{0,n} \times {\Sh}_{0,n}} \Sh_{0,n}(K_\gothp^1) =  \coprod \limits_{z' \in \T(z)} ({\rm Y}_{i,z} \times_{{\Sh}_{1,n-1}} Y_{i+1,z'}).\] 
Then ${\rm{H}}^0(D_{i,i+1},L) \simeq {\rm{H}}^0(\Sh_{0,n}(K_\gothp^1),L)$ and by Proposition~\ref{S:intersection of Yi}, $\coprod_{D\in \mathcal{D}} D = \coprod_{i=1}^{n-1} D_{i,i+1}.$ By Nart \cite{Nar89}, we can get the following expression of ${\rm Ch}^{1}({\Sh}_{1,n-1}^{{\rm ss}},1,L)$:
\begin{proposition}
    \label{Chow}
    Let ${\rm Y}_i^{\circ} = {\mathop{p}\limits^{\leftarrow}}_i({\rm Y}_i) \backslash {\mathop{p}\limits^{\leftarrow}}_j(\cup_{j\neq i} {\rm Y}_j) ~(1 \leqslant i \leqslant n),$ then 
    \begin{equation*}
        {\rm{Ch}}^1({\Sh}_{1,n-1}^{{\rm ss}},1,L) 
        = \Ker \left(\bigoplus_i R({\rm Y}_{i}^{\circ})^{*} \stackrel{div}{\longrightarrow} \bigoplus_{i=1}^{n-1}  {\rm H}^0(\Sh_{0,n}(K_\gothp^1),L)\right),
    \end{equation*}
    where $R({\rm Y}_{i}^{\circ})^{*}:=(\cO({\mathop{p}\limits^{\leftarrow}}_i({\rm Y}_i^\circ))\otimes L)^*$ and $div: \bigoplus\limits_i R({\rm Y}_{i}^{\circ})^{*} \rightarrow \bigoplus_{i=1}^{n-1}  {\rm H}^0(\Sh_{0,n},L),$ is induced by the $div$ map on each ${\mathop{p}\limits^{\leftarrow}}_i({\rm Y}_i^{\circ})$ as usual.
\end{proposition}
\begin{proof}
    By~\cite[Corollary 1.2]{Nar89}, the higher Chow group ${\rm Ch}^{1}({\Sh}_{1,n-1}^{\rm ss},1,L)=\Ker(R({\Sh}_{1,n-1}^{\rm ss})^{*}\xra{div} Z^{1}({\Sh}_{1,n-1}^{\rm ss})).$ Here $R({\Sh}_{1,n-1}^{\rm ss})$ stands for the ring of rational functions of ${\Sh}_{1,n-1}^{\rm ss}.$ $Z^{1}({\Sh}_{1,n-1}^{\rm ss})$ is the group of $1$-codimensional cycles. Since $\{{\mathop{p}\limits^{\leftarrow}}_i({\rm Y}_{i,z})~|~ 1\leq i\leq n, z\in {\Sh_{0,n}(\overline{\FF}_{p}})\}$ are irreducible components of ${\rm Sh}_{1,n-1}^{\rm ss},$ we get further \[{\rm Ch}^{1}({\Sh}_{1,n-1}^{\rm ss},1,L)=\Ker(\bigoplus\limits_{i=1}^{n}R({\mathop{p}\limits^{\leftarrow}}_i({\rm Y}_{i}))^{*}\xra{div} Z^{1}({\Sh}_{1,n-1}^{\rm ss})).\]
    Since ${\rm Y}_{i}^{\circ}$ is an dense open subset of ${\mathop{p}\limits^{\leftarrow}}_i({\rm Y}_{i}),$ the ring of rational functions $R({\rm Y}_{i}^{\circ})=R({\mathop{p}\limits^{\leftarrow}}_i({\rm Y}_{i})).$ For any $1\leq i\leq n$ and any $f\in R({\rm Y}_{i}^{\circ})^{*}\bigcap {\rm Ch}^{1}({\Sh}_{1,n-1}^{\rm ss},1,L),$ the principal divisor $div(f)$ can be expressed sums of divisors contained in ${\mathop{p}\limits^{\leftarrow}}_i({\rm Y}_{i,z})\bigcap {\mathop{p}\limits^{\leftarrow}}_j({\rm Y}_{j,z'})$ for some $1\leq i,j\leq n$ and $z,z'\in {\rm Sh}_{1,n-1}$ such that $(i,z)\neq (j,z').$ Since each summation in $div(f)$ has codimension $1,$ we get $z'\in \T(z)$ with $j=i+1$ or $z\in \T(z')$ with $j=i-1$ from Proposition~\ref{S:intersection of Yi}. Furthermore, we have
    \begin{equation*}
        \begin{aligned}
            {\rm{Ch}}^1({\Sh}_{1,n-1}^{{\rm ss}},1,L) &= \Ker \left(\bigoplus_i R({\rm Y}_{i}^{\circ})^{*} \stackrel{div}{\longrightarrow} \bigoplus_{D \in \mathcal{D}}  L\right)\\
            &= \Ker \left(\bigoplus_i R({\rm Y}_{i}^{\circ})^{*} \stackrel{div}{\longrightarrow} \bigoplus_{i=1}^{n-1}  {\rm H}^0(D_{i,i+1},L)\right)\\
            &= \Ker \left(\bigoplus_i R({\rm Y}_{i}^{\circ})^{*} \stackrel{div}{\longrightarrow} \bigoplus_{i=1}^{n-1}  {\rm H}^0(\Sh_{0,n}(K_\gothp^1),L)\right).
        \end{aligned}
    \end{equation*}
\end{proof}
Hence we need to consider the principal divisors on ${{\Sh}}_{1,n-1}.$ More explicitly, if we identify ${\rm Y}_{i,z}$ with $Z_{i}^{\langle n \rangle},$ we need to consider when a divisor in $\mathbb{Q}[\mathtt{SD}_-]\oplus \mathbb{Q}[\mathtt{SD}_+]$ can be expressed as a principal divisor of ${\rm Y}_{i}^{\circ}.$

In fact, we have the following proposition:
\begin{proposition}
\label{ding}
      For all $1 \leqslant i \leqslant n-1,$ the divisor $\sum \limits_{[L]\in\mathtt{SD}_-}a_{L}[L]+\sum \limits_{[H]\in\mathtt{SD}_+}b_H[H]\in \mathbb{Q}[\mathtt{SD}_-]\oplus \mathbb{Q}[\mathtt{SD}_+]$ in  $Z_i^{\langle n \rangle}$ is principal if and only if:
      $ \sum \limits_{[L]\in\mathtt{SD}_-} a_L = 0, $
      and
      $ b_H = p^{i+1-n} \cdot\sum \limits_{\substack{L\subseteq H \\ [L] \in \mathtt{SD}_-}}a_L~,~ \forall [H] \in \mathtt{SD}_+ ;$
      or equvialently:
      $ \sum \limits_{[H]\in\mathtt{SD}_+} b_H = 0, $
      and
      $ a_L = p^{1-i} \cdot\sum \limits_{\substack{L\subseteq H \\ [H] \in \mathtt{SD}_+}}b_H~,~ \forall [L] \in \mathtt{SD}_- .$
\end{proposition}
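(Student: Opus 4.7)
The plan is to split the proof into sufficiency (constructing enough rational functions to realize every combination satisfying the stated identities) and necessity (ruling out other combinations via intersection with test curves), using a Picard-group computation for $Z_i^{\langle n\rangle}$ together with a careful local analysis of the Frobenius twists.

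First I would compute $\mathrm{Pic}(Z_i^{\langle n\rangle})\otimes\QQ$; using the Deligne--Lusztig stratification of Definition~\ref{special dl} together with the blow-up description of $\widetilde Z_i^{\langle n\rangle}$ and $\widehat Z_i^{\langle n\rangle}$ in the proof of Proposition~\ref{blowingggg}, I expect this group to have rank two, generated by $A:=\pi_1^{\ast}c_1(\cO_{\Gr(n,i)}(1))=c_1(\det\calH_1^\vee)$ and $B:=\pi_2^{\ast}c_1(\cO_{\Gr(n,i-1)}(1))=c_1(\det\calH_2^\vee)$. In this group $[H]=A$ for every hyperplane $H$, while $[L]$ equals a definite combination $\lambda A+\mu B$ dictated by the Frobenius-compatibility isomorphism $(\det\calH_2)^{(p)}\otimes(\calH_1/\calH_2^{(p)})\cong\det\calH_1$ (and its twin coming from $\calH_2\subseteq\calH_1^{(p)}$). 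For sufficiency I would then, for each nonzero $\alpha\in(\FF_{p^2}^n)^\vee$ cutting out a hyperplane $H_\alpha$, take the section of $\det\calH_1^\vee$ induced by $\alpha$ via the composite $\det\calH_1\hookrightarrow\Lambda^i\cO^n\ra\cO$, and for each nonzero $v\in\FF_{p^2}^n$ cutting out a line $L_v$, combine $v\colon\cO\ra\cO^n$ with the inclusions $\calH_2\hookrightarrow\calH_1^{(p)}$ and $\calH_2^{(p)}\hookrightarrow\calH_1$ to produce a section vanishing along $[L_v]$ and, with prescribed multiplicities, along each $[H]$ containing $L_v$. Suitable products of such sections, after passing to tensor powers to clear rational exponents, will realize any divisor of the stated form.

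For necessity, the Pic-computation by itself only yields the global relation $\sum a_L=0$, so the support-wise identity $b_H=p^{i+1-n}\sum_{L\subseteq H}a_L$ requires more. I would construct, for each pair $(L_0,H_0)$ with $L_0\not\subseteq H_0$, a one-parameter test curve $C_{L_0,H_0}\subseteq Z_i^{\langle n\rangle}$ that meets exactly the two divisors $[L_0]$ and $[H_0]$ among all elements of $\mathtt{SD}_+\cup\mathtt{SD}_-$, transversely at one point each. The vanishing of $D\cdot C_{L_0,H_0}$ for a principal $D=\sum a_L[L]+\sum b_H[H]$ then produces one linear relation between $b_{H_0}$ and $a_{L_0}$, and running over all such pairs pins down each $b_{H_0}$ in terms of the $a_L$ with $L\subseteq H_0$. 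The analogous argument with the roles of $L$ and $H$ swapped yields the equivalent dual formulation.

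The main obstacle will be producing the precise Frobenius exponent $p^{i+1-n}$ (and dually $p^{1-i}$). This factor comes from a careful local analysis near a codimension-two stratum $[L]\cap[H]$, which by the technique behind Proposition~\ref{intersection of Yi} is isomorphic to a smaller $Z$-type variety: a local equation of $[L]$ is written in $H_2$-coordinates, a local equation of $[H]$ in $H_1$-coordinates, and the two are linked through the inclusions $\calH_2\subseteq\calH_1^{(p)}$ and $\calH_2^{(p)}\subseteq\calH_1$, with each Frobenius pullback multiplying vanishing orders by $p$. The delicate step is performing this chart-level computation on the dense open cell of $Z_i^{\langle n\rangle}$ where $H_2\ne H_2^{(p^2)}$ (so that the relative Frobenius maps of Definition~\ref{special dl} restrict to isomorphisms), and correctly tracking the difference between the rank-$(i-1)$ Frobenius structure on the $\calH_2$-side and the rank-$(n-1)$ structure on the hyperplane side, which is what yields the exponent $i+1-n$.
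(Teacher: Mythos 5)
There is a genuine gap, and it starts at your first step. The claim that $\mathrm{Pic}(Z_i^{\langle n\rangle})\otimes\QQ$ has rank two with every $[H]\in\mathtt{SD}_+$ equal to $\pi_1^{*}c_1(\cO_{\Gr(n,i)}(1))$ and every $[L]\in\mathtt{SD}_-$ equal to one fixed combination cannot be right: if all the $[H]$ were mutually linearly equivalent and all the $[L]$ were mutually linearly equivalent, then principality of $\sum a_L[L]+\sum b_H[H]$ would depend only on $\sum a_L$ and $\sum b_H$, whereas the proposition forces each individual $b_H$ to equal $p^{i+1-n}\sum_{L\subseteq H}a_L$. Concretely, $\{H_1\subseteq H\}$ has codimension $i$ in $\Gr(n,i)$ (it is a sub-Grassmannian, not a Pl\"ucker hyperplane section), so $[H]$ is not the restriction of $\cO(1)$; and $Z_i^{\langle n\rangle}$ is, up to the purely inseparable maps of Definition~\ref{special dl}, a blow-up of $Y_i^{\langle n\rangle}$ — for $i=n-1$ literally the blow-up of $\mathbb{P}^{n-1}$ at all its $\FF_{p^2}$-points — so the $[H]$ behave like distinct exceptional divisors and the Picard rank grows with $\#\mathbb{P}^{n-1}(\FF_{p^2})$. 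Your necessity step inherits the same problem: a complete test curve meeting exactly $[L_0]$ and $[H_0]$ (with $L_0\not\subseteq H_0$) transversely once each would force a relation of the form $\alpha a_{L_0}+\beta b_{H_0}=0$ for every principal divisor supported on $\mathtt{SD}_\pm$; but the actual principal divisors (e.g. the pullback of $\mathrm{div}(\ell/\ell')$ for two $\FF_{p^2}$-rational linear forms) violate this, so no such curves exist and the intersection-theoretic scheme cannot output the support-wise identity. (Numerical triviality would in any case only control the class, not principality.)

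The paper's proof goes the other way around and is worth internalizing: by Lemmas~\ref{pullback1} and~\ref{pullback2}, pullback along the relative Frobenius morphisms multiplies the $[L]$-coefficients (resp.\ $[H]$-coefficients) by $p$ while fixing the others, which both reduces the statement by induction on $i$ to the single case $i=n-1$ and is the actual source of the exponents $p^{i+1-n}$ and $p^{1-i}$ — they do not come from a local chart computation at $[L]\cap[H]$. In the base case, $\tilde Z_{n-1}^{\langle n\rangle}\to Y_{n-1}^{\langle n\rangle}\cong\mathbb{P}^{n-1}$ is the blow-up at the $\FF_{p^2}$-points, the $[H]$ are the exceptional divisors, and one has the divisor identity $\pi^{*}(\text{hyperplane dual to }L)=[L]+\sum_{L\subseteq H}[H]$; a divisor supported on $\mathtt{SD}_+\cup\mathtt{SD}_-$ is then principal exactly when it is $\pi^{*}$ of a degree-zero combination of $\FF_{p^2}$-rational hyperplanes, which yields precisely the stated relations. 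If you want to salvage your outline, replace the rank-two Picard guess by this blow-up description and the Frobenius pullback bookkeeping; the "sufficiency via explicit sections" part of your plan then becomes the easy direction, but the test-curve necessity argument should be discarded.
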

\begin{proof}
    The proof can be proved exactly the same as \cite[Theorem 5.3.4]{Din19} with $\hat Z_i^{\langle n\rangle}={\rm RToySht}_V^{i-1}$ and $\widetilde Z_i^{\langle n\rangle}={\rm LToySht}_V^{i}$ except that we have a `middle object' $Z_i^{\langle n\rangle}$ here and it is why here the constant is $p^{i+1-n}$ not $p^{2(i+1-n)}.$
\end{proof}

For $1< i< n,$ we have the map $div: R({\rm Y}_{i}^{\circ})^{*} \stackrel{div}{\longrightarrow} {\rm H}^0(\Sh_{0,n}(K_\gothp^1),L)^{\oplus 2}$ as in \ref{Chow}. We can express any $(x,y)\in {\rm H}^{0}(\Sh_{0,n}(K_\gothp^1),L)^{\oplus 2}$ as:$ x= \sum \limits_{z'\in {\rm T}(z)} a_{z,z'} \cdot [{\rm Y}_{i-1,z}\bigcap {\rm Y}_{i,z'}],~ y = \sum \limits_{z'' \in {\rm T}(z')}b_{z',z''}\cdot [{\rm Y}_{i,z'}\bigcap {\rm Y}_{i+1,z^{''}}].$  With Proposition~\ref{ding}, we get the following corollary about the coeffcients $a_{z,z'}$ and $b_{z',z''}$:

\begin{corollary}
\label{connectionn}
    For $1<i<n,$ an element $(x,y)\in {\rm H}^{0}({\rm Sh}_{0,n}(K_\gothp^1),L)^{\oplus 2}$ is in the image of the map $div : R({\rm Y}_i^\circ)^\times \rightarrow {\rm H}^0(\Sh_{0,n}(K_\gothp^1),L)^{\oplus 2},$ i.e the divisors corresponds to $x$ and $y$ are all principal divisors defined by rational functions on $Y_{i}^{\circ}$ if and only if 
    $ \sum \limits_{z'\in {\rm T}(z)} a_{z,z'} = 0, $
      and
      $ b_{z',z''} = p^{i+1-n} \cdot\sum \limits_{\substack{(z',z)\in {\rm A}(z'',z') \\ z'\in {\rm T}(z)}} a_{z,z'}~,~ \forall z'' \in {\rm T}(z') ,$ where ${\rm A}$ is the Hecke action on ${{\Sh}}_{1,n-1}(K_{\gothp}^{1})$ defined in Definition~\ref{S:Hecke action}.
\end{corollary}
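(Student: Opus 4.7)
The plan is to reduce the statement to a fiberwise application of Proposition~\ref{ding} along the irreducible components of $Y_i$. First I would note that, by Proposition~\ref{corre(1,n-1)(0,n)}, the irreducible components of $Y_i$ are the fibers $Y_{i,z}$ for $z\in{\Sh}_{0,n}(\overline{\FF}_p)$, each of which is isomorphic to $Z_i^{\langle n\rangle}$. Because $Y_i^{\circ}=Y_i\setminus\bigcup_{j\neq i}Y_j$ removes precisely the intersections with the other components, the open subscheme $Y_i^{\circ}$ decomposes as a disjoint union $\coprod_z Y_{i,z}^{\circ}$, so a unit $f\in\mathcal{O}(Y_i^{\circ})^{\times}\otimes\FF_l$ corresponds to a collection $(f_z)_z$ of rational functions on the $Y_{i,z}\cong Z_i^{\langle n\rangle}$'s.

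Next I would translate the data $(x,y)\in H^0(T,\FF_l)^{\oplus 2}$ to divisors on each $Y_{i,z}$. By Proposition~\ref{intersection of Yi}, for fixed $z$ the intersection $Y_{i,z}\cap Y_{i-1,z'}$ is nonempty of codimension one in $Y_{i,z}$ precisely when $(z',z)\in T$, and under the identification $Y_{i,z}\cong Z_i^{\langle n\rangle}$ it corresponds to an element of $\mathtt{SD}_-$; similarly $Y_{i,z}\cap Y_{i+1,z''}$ corresponds to an element of $\mathtt{SD}_+$ when $(z,z'')\in T$. Hence the divisor of $f_z$ on $Y_{i,z}$ is exactly
\[
\mathrm{div}(f_z)=\sum_{(z',z)\in T}a_{z',z}[L_{z',z}]+\sum_{(z,z'')\in T}b_{z,z''}[H_{z,z''}],
\]
where $[L_{z',z}]\in\mathtt{SD}_-$ and $[H_{z,z''}]\in\mathtt{SD}_+$ are the components so matched.

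Now I would apply Proposition~\ref{ding} to each $Y_{i,z}\cong Z_i^{\langle n\rangle}$: the above divisor is principal if and only if $\sum_{(z',z)\in T}a_{z',z}=0$ and $b_{z,z''}=p^{i+1-n}\sum_{L_{z',z}\subseteq H_{z,z''}}a_{z',z}$ for each $(z,z'')\in T$. The final step is to recognize the containment condition $L_{z',z}\subseteq H_{z,z''}$ as a correspondence on triples $(z',z,z'')$ with $(z',z),(z,z'')\in T$; this is precisely the correspondence $A$ alluded to in the statement (and constructed explicitly via ${\Sh}_{1,n-1}(K_\gothp^1)$ in Section~\ref{G}). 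Summing the equivalent conditions over all $z$ yields the claimed description of the image of $\mathrm{div}$.

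The only genuine content beyond bookkeeping is identifying $A$ with the incidence relation $L\subseteq H$ on the supersingular Dieudonn\'e lattices; this requires unwinding the moduli description of the double coset $T=T_\gothp^{(1)}$ in Definition~\ref{hecke action} together with the explicit model of $Y_{i,z}'\cong Y_{i,z}$ given in Proposition~\ref{corre(1,n-1)(0,n)}(1), which I expect to be the most delicate verification. Everything else is a direct translation from Proposition~\ref{ding} via the component decomposition.
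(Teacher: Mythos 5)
Your proposal is correct and follows essentially the same route as the paper: the paper derives the corollary directly from Proposition~\ref{ding}, applied fiberwise over the discrete base ${\Sh}_{0,n}$ using the identification $Y_{i,z}\cong Z_i^{\langle n\rangle}$ and the matching of the codimension-one intersections with $\mathtt{SD}_{\pm}$ from Proposition~\ref{intersection of Yi}, with the incidence $L\subseteq H$ encoded by the correspondence $A$ defined right after the statement (the paper only remarks that an alternative argument in the style of Proposition~\ref{blowingggg} is possible). Your flagged "delicate verification" identifying $A$ with the incidence relation is likewise left implicit in the paper, so your write-up is at the same level of rigor as the original.
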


\begin{theorem}
\label{Choww}
    With notations as above, we have
\begin{equation*}
    \ch=\Ker({\mathrm{H}}^{0}({\Sh}_{0,n}(K_\gothp^1),L) \xrightarrow{\alpha} {\mathrm{H}^{0}}({{\Sh}}_{0,n},L)^{\oplus n}),
\end{equation*}
where $\alpha=(\lp,\rp, \rp {\rm A}, \cdots, \rp{\rm A}^{n-2} )$ with $(\lp,\rp)$ induced by the Hecke action $\T$ on $\Sh_{0,n}$ and $\A$ the Hecke action defined as above.
\end{theorem}
\begin{proof}
    For $z\in {\rm Sh}_{0,n}$ and every $f\in R({\rm Y}_{1,z}^\circ)^\times,$ $divf=\sum\limits_{z'\in {\rm T}(z)}a_{z,z'}[{\rm Y}_{1,z}\bigcap {\rm Y}_{2,z'}]\in {\rm H}^{0}(D_{1,2},L)={\rm H}^{0}({\Sh_{0,n}(K_\gothp^1)},L)$ satisfies $\sum\limits_{z'\in {\rm T}(z)}a_{z,z'}=0.$ The converse is also true. Therefore, $R({\rm Y}_1^\circ)^\times\simeq \Ker({\mathrm{H}}^{0}({\Sh_{0,n}(K_\gothp^1)},L) \xrightarrow{\lp} {\mathrm{H}^{0}}({{\Sh}}_{0,n},L)).$
    
    Now by induction on $1<i<n,$ for $z\in {\rm Sh}_{0,n}$ and any function $f\in R({\rm Y}_{i,z}^\circ)^\times,$ the map $div: R({\rm Y}_{i}^{\circ})^{*} \rightarrow {\rm H}^0(\Sh_{0,n}(K_\gothp^1),L)^{\oplus 2}$ maps $f$ to $(x,y)$ with $ x= \sum \limits_{z\in {\rm T}(z')} a_{z,z'} \cdot [{\rm Y}_{i-1,z'}\bigcap {\rm Y}_{i,z}],~ y = \sum \limits_{z'' \in \T(z)}b_{z,z''}\cdot [{\rm Y}_{i+1,z''}\bigcap {\rm Y}_{i,z}],$ if $divf=\sum \limits_{z\in {\rm T}(z')} a_{z,z'} \cdot [{\rm Y}_{i-1,z'}\bigcap {\rm Y}_{i,z}]+\sum \limits_{z''\in \T(z)}b_{z,z''}\cdot [{\rm Y}_{i+1,z''}\bigcap {\rm Y}_{i,z}].$ Then by \ref{connection}, we have $y=Ax$ and $\rp x=0.$
    Thus by induction we get $
    \ch=\Ker({\mathrm{H}}^{0}({\Sh}_{0,n}(K_\gothp^1),L) \xrightarrow{\alpha} {\mathrm{H}^{0}}({{\Sh}}_{0,n},L)^{\oplus n}),$
where $\alpha=(\lp,\rp, \rp {\rm A}, \cdots, \rp{\rm A}^{n-2} ).$
\end{proof}

\section{Stratification of \texorpdfstring{${{\Sh}}_{1,n-1}$}{Sh1,n-1}
}\label{S}
In this section, we analyze the Ekedahl--Oort stratification and the Newton stratification of ${{\Sh}}_{1,n-1}.$ For general theory of these two stratifications of unitary Shimura varieties, we refer to \cite{VW13} as a reference.

Firstly, we analyze the Ekedahl--Oort stratification of ${{\Sh}}_{1,n-1}.$

We take $G=\Res_{\QQ_{p^{2}/\QQ}}\GL_{n}\times \GG_{m}.$ Then $G$ has Weyl group $W=S_{n}\times S_{n}$ and the $p$-Frobenius morphism of $G$ induces a map $\Psi:W\rightarrow W$ by switching the two components of the Weyl group.
Let $\underline{A}=(A,\lambda,\eta)\in {{\Sh}}_{1,n-1}(\overline{\FF}_{p})$ and let $\mathfrak{D}^{\circ}=\mathfrak{D}_{1}(A)^{\circ}\oplus \mathfrak{D}_{2}(A)^{\circ}$ be the summation of the Dieudonn\'e module of $A[p]$ of rank $2n.$ Then there is a canonical action of $G$ on $\mathfrak{D}^{\circ}.$ By applying $F,V^{-1}$ to $(0)\subseteq \mathfrak{D}^{\circ}$ until it stabilizes, we obtain an $F,V^{-1}$-stable flag of $\mathfrak{D}^{\circ},$
\[\mathcal{C}_{\bullet}:\mathcal{C}_{0}=(0)\subseteq \dots \subseteq \mathcal{C}_{n}=\mathfrak{D}^{\circ}[V]=F(\mathfrak{D}^{\circ})\subseteq \dots \subseteq \mathcal{C}_{2n}=\mathfrak{D}^{\circ},\] where $\dim \mathcal{C}_{i}=i,$ called the canonical flag of $\underline{A}.$ More details on the canonical filtration can be found in \cite[Section 2.5,4.4,6.3]{Moo01}.

Let any extension of $\mathcal{C}_{\bullet}$ to a complete $\mathcal{O}_{K}$-invariant symplectic flag of $\mathfrak{D}^{\circ}$ be called a conjugate flag of $\underline{A}.$ Let $\mathcal{C}_{\bullet}$ denote a conjugate filtration of $\underline{A}$ and let $Q=Stab(\mathcal{C}_{\bullet}).$ It is easy to see that $Q$ is a Borel group as $\mathcal{C}_{\bullet}$ is a complete filtration. 

Let $J$ be the type of $P=Stab(\mathfrak{D}^{\circ}[F]=V\mathfrak{D}^{\circ}\subseteq \mathfrak{D}^{\circ})$ which is independent on the choice of $\underline{A}$ but only dependent on the moduli problem of ${{\Sh}}_{1,n-1}.$ Now for each $\underline{A},$ we get an element $w(\underline{A})$ in $^{J}W=W_{J}\ W$ which represents the relative position of $P$ and $Q$. Since $\mathfrak{D}(A)_{1}^{\circ}/V\mathfrak{D}(A)_{1}^{\circ}$ has rank $n-1$ over $\overline{\FF}_{p}$ and $\mathfrak{D}(A)_{2}^{\circ}/V\mathfrak{D}(A)_{2}^{\circ}$ has rank $1$ over $\overline{\FF}_{p},$ we get $W_{J}=S_{n}\backslash \{s_{1}\}\times S_{n}\backslash \{s_{n-1}\}$ where $s_{1}=(1,2)$ and $s_{n-1}=(n-1,n).$ Any $(w_{1},w_{2})\in {}^{J}W$ satisfies $w^{-1}_{1}(2)< w^{-1}_{1}(3)\dots <w^{-1}_{1}(n)$ and $w^{-1}_{2}(1)<w^{-1}_{2}(2)<\dots <w^{-1}_{2}(n-1).$

There is a partial order on $^{J}W,$ denoted by $\leqslant_{\Psi}$: For any $(w_{1},w_{2}), (w_{1}',w_{2}')\in \prescript{J}{}{W},$ we say $(w_{1}',w_{2}')\leqslant_{\Psi}(w_{1},w_{2})$ if and only if there exists $y\in W_{J}$ such that $y(w_{1}',w_{2}')x\Psi(y^{-1})x^{-1}\leqslant (w_{1},w_{2}),$ where $\leqslant$ is the Bruhat order and  $x=w_{0}w_{0,\Psi(J)}$ with $w_{0}$ and $w_{0,\Psi(J)}$ to be the element of maximal length in $W$ and $W_{J}.$

\begin{definition}
    In ${{\Sh}}_{1,n-1},$ the Ekedahl--Oort stratum associated to $w\in \prescript{J}{}{W}$ is a locally closed reduced subscheme $V^{w}$ with geometric points given by $V^{w}:=\{\underline{A}\in {{\Sh}}_{1,n-1}|w(\underline{A})=w\}.$
\end{definition}

\begin{theorem}\label{VW}
By \cite[Theorem 2,3]{VW13}, we get:
    \begin{enumerate}
        \item For all $w\in \prescript{J}{}{W},$ the Ekedahl--Oort stratum $V^{w}$ is non-empty and equidimensional of dimension $\ell(w),$ which is the length of $w\in W$ and is equal to $\ell(w_{1})+\ell(w_{2})$ if $w=(w_{1},w_{2}).$
        \item The Ekedahl--Oort strata are non-singular and quasi-affine.
        \item The closure of an Ekedahl--Oort stratum is a union of Ekedahl--Oort strata with respect to the partial order $\leqslant_{\Psi}$ on $^{J}W.$ That is, \[\overline{V}^{w}=\bigsqcup\limits_{w'\leqslant_{\Psi} w}V^{w'}.\]
    \end{enumerate} 
\end{theorem}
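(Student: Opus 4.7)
The plan is to deduce the theorem by applying the general Ekedahl--Oort stratification theory for good reductions of PEL-type Shimura varieties developed by Viehmann--Wedhorn \cite{Viehmann_2013}, after carefully matching our setup with their axiomatic framework. First I would verify that the Shimura datum $(G, h)$ for $\Sh_{1,n-1}$ is of PEL-type and that $K_p = \ZZ_p^\times \times \GL_n(\cO_{E_\gothp})$ is hyperspecial so that the smooth integral model $\cSh_{1,n-1}$ falls under the scope of loc.\ cit. Then, via the decomposition $\calO_D\otimes_\ZZ\ZZ_{p^2}$ recalled in Section~\ref{DG}, the reductive group over $\ZZ_p$ governing the local geometry is identified with $G_{\Zp} \simeq \Res_{\ZZ_{p^2}/\ZZ_p}\GL_n \times \GG_m$, with Frobenius $\Psi$ permuting the two $\GL_n$ factors. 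The relevant cocharacter $\mu_h$ has type $J$ with $W_J = (S_n\setminus\{s_1\}) \times (S_n\setminus\{s_{n-1}\})$, dictated by the signatures $(1,n-1)$ and $(n-1,1)$.

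Next I would translate the universal property of the canonical filtration $\calC_\bullet$ into the language of $G$-zips / $F$-zips. Concretely, to each $\underline A \in \Sh_{1,n-1}(\overline\FF_p)$ one attaches the triple $(\gothD^\circ, F, V^{-1})$, which is naturally a $G$-zip of type $\mu$ in the sense of \cite{Viehmann_2013}. The classifying map $\zeta \colon \Sh_{1,n-1} \to [G\text{-Zip}^{\mu}]$ sends $\underline A$ to its associated zip, and the Ekedahl--Oort strata $V^\omega$ are by construction the fibers $\zeta^{-1}([\omega])$ over the geometric points of the stack $[G\text{-Zip}^\mu]$, which are parametrized by ${}^J W$.

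For assertions (1) and (2), I would invoke \cite[Theorem~2]{Viehmann_2013} which gives, under the hypothesis that $\zeta$ is smooth (a property known for good reductions of PEL-type Shimura varieties with hyperspecial level), that each $V^\omega$ is nonempty, equidimensional, smooth, and quasi-affine, with $\dim V^\omega = \ell(\omega)$. Smoothness of $\zeta$ must be checked in our specific setting: because $G$ is quasi-split at $p$ (indeed, a product of a split $\GG_m$ and a Weil restriction of $\GL_n$ from an unramified extension) and the level structure is hyperspecial, the required crystalline Dieudonn\'e theoretic computation reduces to the standard case covered in loc.\ cit. The length $\ell(\omega) = \ell(\omega_1) + \ell(\omega_2)$ follows from the product structure of $W = S_n \times S_n$. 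For assertion (3), I would directly cite \cite[Theorem~3]{Viehmann_2013}, which states that the closure relations on $[G\text{-Zip}^\mu]$ are given precisely by the twisted Bruhat order $\leqslant_\Psi$ on ${}^J W$; transporting this along the smooth map $\zeta$ yields the claimed stratification of closures.

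The main subtlety, in my view, is not the formal invocation of \cite{Viehmann_2013} but rather the unpacking of ${}^J W$ and $\leqslant_\Psi$ into concrete combinatorial data, since this is what subsequent sections require for the description of specific strata and their relation to the correspondences $Y_j$. In particular, the description of elements of ${}^J W$ as pairs $(\omega_1,\omega_2)$ with $\omega_1^{-1}(2)<\dots<\omega_1^{-1}(n)$ and $\omega_2^{-1}(1)<\dots<\omega_2^{-1}(n-1)$ needs to be verified from the definition of $W_J$; this is a direct combinatorial check on minimal coset representatives. I would also double-check that the involution $x = \omega_0 \omega_{0,\Psi(J)}$ appearing in the definition of $\leqslant_\Psi$ is computed correctly for our $J$ and $\Psi$, as this determines the exact closure relations used in later sections.
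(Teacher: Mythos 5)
Your proposal is essentially the paper's own argument: the paper simply deduces the theorem by citing Theorems 2 and 3 of Viehmann--Wedhorn \cite{Viehmann_2013} applied to this PEL-type setting with hyperspecial level at $p$, exactly as you do (your extra discussion of the $G$-zip classifying map and the combinatorial identification of ${}^J W$ is just the unpacking of that citation). No gap; the approach matches.
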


By convention, we will call the minimal Ekedahl--Oort stratum the core locus and the maximal Ekedahl--Oort stratum the $\mu$-ordinary locus.

\begin{proposition}
\label{order}
Based on Theorem~\ref{VW}, we have:
    \begin{enumerate}
        \item There are $n^{2}$ Ekedahl--Oort strata in ${{\Sh}}_{1,n-1}$ which is characterized by $w_{1}^{-1}(1)$ and $w_{2}^{-1}(n)$ for $(w_{1},w_{2})\in {}^{J}W.$ From now on, we use the notation $(w_{1},w_{2})=(a,b)$ to mean $w_{1}^{-1}(1)=a$ and $w_{2}^{-1}(n)=b.$
        \item The Ekedahl--Oort stratum corresponds to $(w_{1},w_{2})\in {}^{J}W$ has dimension $d=w^{-1}_{1}(1)-w^{-1}_{2}(n)+n-1.$
    \end{enumerate}
\end{proposition}
\begin{proof}
    Since any $(w_{1},w_{2})\in {}^{J}W$ satisfies $w^{-1}_{1}(2)< w^{-1}_{1}(3)\dots <w^{-1}_{1}(n)$ and $w^{-1}_{2}(1)<w^{-1}_{2}(2)<\dots <w^{-1}_{2}(n-1),$ we get $(1)$ easily.

    For $(2),$ we notice that $\ell(w_{1})=\sum\limits_{i=1}^{n-1}w^{-1}_{1}(i)-i$ and $\ell(w_{2})=w^{-1}_{2}(1)-1.$ Thus we can get $(2)$ directly.
\end{proof}

By \cite[Theorem 4.7]{Moo01}, we get the following explicit description of the action of $F,V$ on $\mathfrak{D}^{\circ}$:
\begin{proposition}\label{Formula}
    For every $(w_{1},w_{2})\in {}^{J}W$ and each $\underline{A}\in V^{(w_{1},w_{2})},$ we get there is a model for $\mathfrak{D}^{\circ}$ such that each $\mathfrak{D}^{\circ}_{i}$ has a basis $e_{i,1}\dots e_{i,n}$ for $i=1,2$ and $F,V$ act on $\mathfrak{D}^{\circ}$ as follows:
    \begin{equation*}
        F(e_{i,j})=\left\{
        \begin{array}{ll}
             0&w_{i}(j)\leq f(i)  \\
             e_{i+1,a}& w_{i}(j)=f(i)+a
        \end{array}
        \right.
    \end{equation*} and
    \begin{equation*}
    V(e_{i+1,j})=\left\{
    \begin{array}{ll}
         0&j\leq n-f(i)  \\
         e_{i,a}& j=n-f(i)+w_{i}(a)
    \end{array}
    \right.
    \end{equation*}
    where $i\in \Z/2\Z$ and $f(1)=1,f(2)=n-1.$
\end{proposition}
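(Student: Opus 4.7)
The plan is to deduce the result from \cite[Theorem 4.7]{Moonen2001}, which provides a standard form for the truncated Dieudonn\'e module of level one together with its PEL-structure on each Ekedahl-Oort stratum. First I would specialize Moonen's framework to our reductive group $G = \Res_{\QQ_{p^{2}}/\QQ}\GL_n \times \GG_m$ (modulo the similitude factor, which plays no role here) together with the Hodge cocharacter $\mu$ coming from the signature $(1, n-1), (n-1, 1)$. Under the decomposition $\mathfrak{D}^\circ = \mathfrak{D}_1^\circ \oplus \mathfrak{D}_2^\circ$, this cocharacter encodes that $\ker(F|_{\mathfrak{D}_i^\circ})$ has dimension $a_i = f(i)$, while $\ker(V|_{\mathfrak{D}_{i+1}^\circ}) = \mathrm{Im}(F|_{\mathfrak{D}_i^\circ})$ has dimension $n - f(i)$.

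Next I would construct the claimed basis $(e_{i,j})_{1\leq j \leq n}$ of $\mathfrak{D}_i^\circ$ compatible with the canonical filtration $\mathcal{C}_\bullet$. The key point is that Moonen's standard object for the EO type $\omega = (\omega_1, \omega_2) \in {}^J W$ comes with a preferred basis in which the canonical filtration is spanned by prefixes $(e_{i,1}, \dots, e_{i,k})$ after suitable renumbering, and in which the Hodge filtration $\ker F$ corresponds exactly to the positions $\{j : \omega_i(j) \leq f(i)\}$. The permutation $\omega_i$ records how $F$ permutes the basis vectors outside $\ker F$ onto the basis of $\mathrm{Im}(F)$, yielding the formula $F(e_{i,j}) = e_{i+1, \omega_i(j) - f(i)}$ whenever $\omega_i(j) > f(i)$.

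I would then derive the formula for $V$ from the formula for $F$. Since $\mathrm{Im}(F) = \ker(V)$ (after identifying the Frobenius twists), the kernel of $V$ on $\mathfrak{D}_{i+1}^\circ$ is spanned by $\{e_{i+1, a} : 1 \leq a \leq n - f(i)\}$, giving the condition $j \leq n - f(i)$ for $V(e_{i+1,j}) = 0$. For $j > n - f(i)$, write $j = n - f(i) + \omega_i(a)$ uniquely using that $\omega_i \in S_n$; the relations $VF = FV = 0$ on $\mathfrak{D}^\circ$ together with the bijectivity of $V$ on the quotient $\mathfrak{D}_{i+1}^\circ/\ker(V) \to \mathrm{Im}(V) \subseteq \mathfrak{D}_i^{\circ,(p)}$ force $V(e_{i+1,j}) = e_{i, a}$ up to a Frobenius untwist absorbed into the basis choice.

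The main obstacle will be bookkeeping: matching Moonen's abstract classification in terms of $W_J$-orbits on $W$ (with the Frobenius-twist $\Psi$) to the concrete indexing $(\omega_1^{-1}(1), \omega_2^{-1}(n))$ used in Proposition~\ref{order}, and verifying that the positions in the formulas respect the fact that $(\omega_1, \omega_2) \in {}^J W$ satisfies $\omega_1^{-1}(2) < \cdots < \omega_1^{-1}(n)$ and $\omega_2^{-1}(1) < \cdots < \omega_2^{-1}(n-1)$. Once this dictionary is fixed, the verification reduces to a purely combinatorial check that the prescribed $F$ and $V$ generate the canonical flag of the correct type $\omega$.
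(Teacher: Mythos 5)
Your proposal is correct and follows essentially the same route as the paper: the paper's proof is just the one-line observation that the statement is \cite[Theorem 4.7]{Moonen2001} specialized to $\mathcal{F}=\{1,2\}$ with $f(1)=1$, $f(2)=n-1$, which is exactly the reduction you carry out (your extra bookkeeping matching ${}^J W$ to Moonen's standard objects, and deriving $V$ from $F$ via $\ker V=\mathrm{Im}\,F$, is consistent with the formulas as used later in Proposition~\ref{supersingular}, where your reading $V(e_{i+1,j})=e_{i,a}$ is the intended one).
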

\begin{proof}
    It can be checked directly by taking $\mathcal{F}=\{1,2\}$ and $f(1)=1,f(2)=n-1$ in \cite[Theorem 4.7]{Moo01}.
\end{proof}

With Proposition~\ref{Formula}, we get the following proposition:
\begin{proposition}\label{supersingular}
    For $(w_{1},w_{2})=(a,b)\in {}^{J}W,$ we have $V^{(w_{1},w_{2})}\in {\mathop{p}\limits^{\leftarrow}}_{n+1-i}({\rm Y}_{n+1-i})$ if and only if $a\leq i\leq b.$ In particular, we have $V^{(w_{1},w_{2})}\in {{\Sh}}_{1,n-1}^{\rm ss}$ if and only if $a\leq b.$
\end{proposition}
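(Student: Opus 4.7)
The plan is to translate the geometric condition $V^{(\omega_1,\omega_2)} \subseteq {\mathop{p}\limits^{\leftarrow}}_{n+1-i}({\rm Y}_{n+1-i})$ into a linear-algebra question about subspaces of $\mathfrak{D}^\circ$, and then analyse it using the explicit basis provided by Proposition~\ref{Formula}. By Proposition~\ref{P:abelian-Dieud}, a point $\underline{A} = (A,\lambda,\eta) \in {\Sh}_{1,n-1}(\overline{\FF}_p)$ lies in ${\mathop{p}\limits^{\leftarrow}}_{n+1-i}({\rm Y}_{n+1-i})$ precisely when there exist $W(\overline{\FF}_p)$-submodules $\tcE_k \subseteq \tcD(A)^\circ_k$ for $k=1,2$ with $p\tcD(A)^\circ_k \subseteq \tcE_k$, stable under $F$ and $V$, of colengths $n-i$ and $n+1-i$ respectively. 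The signature-$(0,n)$ condition for the resulting $B$ is then automatic from the formula~\eqref{E:dimension of new differentials}. Reducing modulo $p$, I will seek subspaces $\bar\tcE_k \subseteq \mathfrak{D}^\circ_k$ of dimensions $i$ and $i-1$ respectively, stable under $F$ and $V$; the identity $FV = VF = 0$ on $\mathfrak{D}^\circ$ together with a rank calculation forces $\bar\tcE_2 = F(\bar\tcE_1) = V(\bar\tcE_1)$ as well as $e_{1,1}, e_{1,a} \in \bar\tcE_1$.

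For the ``if'' direction, given $a \leq i \leq b$, my ansatz is
\[
\bar\tcE_1 := \mathrm{span}(e_{1,1}, e_{1,2}, \ldots, e_{1,i}), \qquad \bar\tcE_2 := \mathrm{span}(e_{2,1}, e_{2,2}, \ldots, e_{2,i-1}).
\]
When $a \leq i$, the set $\omega_1(\{1,\ldots,i\})$ equals $\{1,\ldots,i\}$, so Proposition~\ref{Formula} yields $F(\bar\tcE_1) \subseteq \bar\tcE_2$; when $b \geq i$, one has $\omega_2^{-1}(k) = k$ for $k \in \{1,\ldots,i-1\}$, so $V(\bar\tcE_1) \subseteq \bar\tcE_2$. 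The remaining stability properties of $\bar\tcE_2$ are trivial consequences of $e_{2,n} \notin \bar\tcE_2$ and $e_{1,1} \in \bar\tcE_1$.

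For the converse, I plan to argue that if $a > i$ then any $F,V$-stable $\bar\tcE_1$ of dimension $i$ containing $e_{1,1}$ and $e_{1,a}$ has image $F(\bar\tcE_1)$ that must contain some $e_{2,k}$ with $k \geq i$, in conflict with $V(\bar\tcE_1) = F(\bar\tcE_1)$ being forced to lie inside $\mathrm{im}\,F \cap \mathrm{im}\,V \subseteq \mathrm{span}(e_{2,j}: j \ne b, j \ne n)$; a symmetric argument using the ordering of $\omega_2^{-1}$ rules out $b < i$. The main obstacle will be handling the case in which $\bar\tcE_1$ is not spanned by standard basis vectors $\{e_{1,j}\}$: I expect to resolve this either by reducing to a basis-compatible $\bar\tcE_1$ via the observation that the $F,V$-action on $\bar\tcE_1$ is rigidly determined once the two kernel lines $\langle e_{1,1}\rangle$ and $\langle e_{1,a}\rangle$ are fixed, or by enumerating the possible $(i-1)$-dimensional subspaces of $\mathfrak{D}^\circ_2$ satisfying $F(\bar\tcE_1) = V(\bar\tcE_1)$ and extracting from this the combinatorial constraints $a \leq i$ and $b \geq i$.

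Finally, the ``in particular'' statement is immediate: by Proposition~\ref{corre(1,n-1)(0,n)}(3), $\underline A \in {\Sh}_{1,n-1}^{\mathrm{ss}}$ if and only if $\underline A \in {\mathop{p}\limits^{\leftarrow}}_j({\rm Y}_j)$ for some $j \in \{1,\ldots,n\}$, which by the first half is equivalent to the existence of $i \in \{1,\ldots,n\}$ with $a \leq i \leq b$, i.e.\ to $a \leq b$.
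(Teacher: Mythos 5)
Your ``if'' direction coincides with the paper's: the same ansatz $\mathfrak{E}_1=\langle e_{1,1},\dots,e_{1,i}\rangle$, $\mathfrak{E}_2=\langle e_{2,1},\dots,e_{2,i-1}\rangle$, checked against the explicit $F,V$-action of Proposition~\ref{Formula}, then Proposition~\ref{P:abelian-Dieud} and the dimension formula \eqref{E:dimension of new differentials} to produce the point of ${\rm Y}_{n+1-i}$, and Proposition~\ref{corre(1,n-1)(0,n)}(3) for the ``in particular'' clause. The problem is the converse. Your preliminary reduction is sound (though the reason is the signature, not $FV=VF=0$: on $\mathfrak{D}^\circ_1$ the kernels of $F$ and $V$ are the lines $\langle e_{1,a}\rangle$ and $\langle e_{1,1}\rangle$, so a rank count forces $e_{1,1},e_{1,a}\in\mathfrak{E}_1$ and $F(\mathfrak{E}_1)=V(\mathfrak{E}_1)=\mathfrak{E}_2$), but the contradiction you then assert is not established. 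The claim that $F(\mathfrak{E}_1)$ ``must contain some $e_{2,k}$ with $k\geq i$'' is unjustified for a subspace $\mathfrak{E}_1$ that is not spanned by standard basis vectors, and you explicitly defer exactly that case, offering two possible strategies (``I expect to resolve this either by \dots or by \dots'') without carrying either out. Analysing the semilinear condition $F(\mathfrak{E}_1)=V(\mathfrak{E}_1)$ for an arbitrary $i$-dimensional subspace through $e_{1,1}$ and $e_{1,a}$ is precisely the hard part, so as written the ``only if'' direction is a plan rather than a proof.

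For comparison, the paper sidesteps the pointwise linear algebra entirely: by Proposition~\ref{order}(2) the stratum $V^{(a,b)}$ has dimension $a-b+n-1$, while by \cite[Proposition~6.4]{Helm_2017} (cf.\ the proof of Proposition~\ref{intersection of Yi}) an intersection ${\rm Y}_{j,z}\cap{\rm Y}_{j',z'}$ has dimension strictly decreasing in $|j-j'|$; since ${\Sh}_{0,n}$ is finite, containment of $V^{(a,b)}$ in the image of an additional ${\rm Y}_{n+1-i'}$ with $i'\notin[a,b]$, combined with the containments already proved for $i\in[a,b]$, would force a top-dimensional component of the stratum into an intersection of dimension smaller than $a-b+n-1$, a contradiction. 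So either adopt that dimension count (which is short), or complete your linear-algebra argument in full --- the latter would in fact yield a stronger, pointwise statement, since the isomorphism class of $(\mathfrak{D}^\circ,F,V)$ is constant on the stratum, but it must actually be written out.
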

\begin{proof}
    Let $k$ be a finite extension of $\FF_{p^{2}}.$ By Proposition~\ref{Formula}, we get for any $\underline{A}=(A,\lambda,\eta)\in V^{(w_{1},w_{2})}(k),$
    there is a basis of $\mathfrak{D}(A)_{1}^{\circ}\oplus \mathfrak{D}(A)_{2}^{\circ},$ denoted by $\{e_{i,j}~|~i=1,2;1\leq j\leq n\}$ such that $F,V$ act on $\mathfrak{D}(A)_{1}^{\circ}\oplus \mathfrak{D}(A)_{2}^{\circ}$ by \[
    F(e_{1,i})=\left\{\begin{array}{ll}
         e_{2,i}&\text{if } 1\leq i\leq a-1; \\
         0&\text{if }i=a;\\
         e_{2,i-1}&\text{if }i\geq a+1.
    \end{array}\right.
    F(e_{2,i})=\left\{\begin{array}{ll}
         0&\text{if } 1\leq i\leq b-1; \\
         e_{1,1}&\text{if }i=b;\\
         0&\text{if }i\geq b+1.\\
    \end{array}\right.
    \]
    \[
    V(e_{1,i})=\left\{\begin{array}{ll}
         0&\text{if } i=1; \\
         e_{2,i-1}&\text{if }2\leq i \leq b;\\
         e_{2,i}&\text{if }b+1\leq i\leq n;\\
    \end{array}\right.
    V(e_{2,i})=\left\{\begin{array}{ll}
         0&\text{if } 1\leq i\leq a-1; \\
         0&\text{if }a\leq i\leq n-1;\\
         e_{1,a}&\text{if }i=n.\\
    \end{array}\right.
    \]
    
    Therefore, for any $a\leq i\leq b,$ we let $\mathfrak{E}_1=k\{e_{1,1}, e_{1,2},\dots,e_{1,i}\}$ and $\mathfrak{E}_2=k\{e_{2,1},\dots,e_{2,i-1}\}.$
Then it can be checked that $F(\mathfrak{E}_i) \subseteq \mathfrak{E}_{3-i}$ and $V(\mathfrak{E}_i) \subseteq \mathfrak{E}_{3-i}$ for $i = 1,2.$
Applying Proposition~\ref{P:abelian-Dieud} and the remark below with $\mathfrak{E}_1,\mathfrak{E}_2,$ we get a point $(B,\lambda',\eta')$ of ${\Sh}_{0,n}$ and an isogeny $\phi: B\ra A,$ such that $(A,\lambda, \eta,B,\lambda',\eta',\phi)\in {\rm Y}_{n+1-i}.$ 

Conversely, by \cite[Proposition 6.4]{HTX17}, we get the intersection of ${\rm Y}_{i}$ and ${\rm Y}_{j}$ has dimension at most of $n+i-j$ for $1\leq i\leq j\leq n.$ This restricts the `only if' part must hold.
\end{proof}

Now we begin to analyze the Newton stratification of ${{\Sh}}_{1,n-1}.$
Via~\cite[Theorem 3.8]{RR96}, the Newton stratification of $\Sh_{1,n-1}$ coincides with the Newton polygon stratification $\Sh_{1,n-1}.$ So we study the Newton polygon stratification below and call it the Newton stratification of $\Sh_{1,n-1}.$

\begin{definition}\label{iisocrystal}
    Let $k$ be a perfect field of characteristic $p$ and $W(k)$ be the Witt vector ring corresponding to $k.$ We say a pair $(P,F)$ is a $Q(k)={\rm Frac}(W(k))$-isocrystal if $P$ is a finite-dimensional $Q(k)$-vector space together with a $\sigma$-linear automorphism $F.$ In particular, for any abelian variety $A$ over $k,$ the p-divisible group $A[p^{\infty}]$ gives an isocrystal $(\tcD^{\circ}\otimes_{W(k)}Q(k)=(\tcD(A)_{1}^{\circ}\oplus\tcD(A)_{2}^{\circ})\otimes_{W(k)}Q(k),F\otimes 1).$
\end{definition}

Following \cite{Dem72}, for each field $k$ of finite extension over $\FF_{p}$ and abelian variety $A$ over $k,$ the isocrystal $(\tcD^{\circ}\otimes_{W(k)}Q(k)=(\tcD(A)_{1}^{\circ}\oplus\tcD(A)_{2}^{\circ})\otimes_{W(k)}Q(k),F\otimes 1)$ has a unique decomposition such that $\tcD^{\circ}\otimes_{W(k)}Q(k)=\bigoplus\limits_{i=1}^{r}N(\lambda_{i}),$ where $0\leq \lambda_{1}< \dots < \lambda_{r}$ are the slopes corresponds to the isocrystal and $N(\lambda_{i})$ has pure slope $\lambda_i.$ The multiplicity of each $\lambda_i$ is equal to the dimension of $N(\lambda_i)$ over $Q(k).$ Considering the multiplicity, we use $\lambda_{\bullet}$ to denote a sequence of slopes $0\leq \lambda_{1}\leq \dots \leq \lambda_{2n}$ and $\lambda_{\bullet}(A)$ to denote the sequence of slopes of the isocrystal corresponds to $A.$ Each slope $\lambda_{\bullet}$ defines a Newton polygon by connecting the points in $x$-$y$-plane with coordinates $(i,\lambda_i)$ by line segments. Now we give the definition of Newton stratification:
\begin{definition}
    For every sequence of slopes $\lambda_{\bullet},$ the Newton stratum associated to it is a locally closed reduced subscheme of ${{\Sh}}_{1,n-1},$ denoted by $N^{\lambda_{\bullet}}$ such that
    $N^{\lambda_{\bullet}}=\{\underline{A}\in {{\Sh}}_{1,n-1}~|~\lambda_{\bullet}(\underline{A})=\lambda_{\bullet}\}.$

    For two different Netwon strata determined by $\lambda_{\bullet}$ and $\lambda_{\bullet}',$ we have $N^{\lambda_{\bullet}'}\subseteq \overline{N}^{\lambda_{\bullet}}$ if and only if the Newton polygon defined by $\lambda_{\bullet}$ is below the one of $\lambda_{\bullet}'.$
\end{definition}

\begin{proposition}\label{connection}
    There are $\frac{n(n-1)}{2}+1$ Newton strata in ${{\Sh}}_{1,n-1}.$ More explicitly, we have ${{\Sh}}_{1,n-1}^{\rm ss}$ corresponds to the Newton stratum with all the slopes to be $\frac{1}{2}.$ The others can be expressed as $N^{a,b},$ the sequence of slopes of which is $((\frac{a-1}{2a})^{2a},(\frac{1}{2})^{2b-2a},(\frac{n-b+1}{2n-2b})^{2n-2b})$ with $1\leq a\leq b\leq n-1.$ The dimension of $N_{a,b}$ is $b-a+n.$ Moreover, we have the $\mu$-ordinary locus of the Ekedahl--Oort strata is exactly $N^{1,n-1}.$
\end{proposition}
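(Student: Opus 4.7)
The plan is to reduce Proposition~\ref{connection} to a direct computation of the isocrystal attached to each Ekedahl--Oort stratum using the explicit formulas of Proposition~\ref{Formula}.

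First, I would verify that the Newton polygon of an abelian variety parametrised by $V^{(a,b)}$ depends only on $(a,b)$. Proposition~\ref{Formula} exhibits a uniform basis $\{e_{i,j}\}$ of the Dieudonn\'e module $\mathfrak{D}^\circ$ on which $F$ and $V$ act by a combinatorial recipe depending only on $(a,b)$, so the Dieudonn\'e isocrystal, and in particular the Newton polygon, is constant on each EO stratum. Consequently, each EO stratum is contained in a unique Newton stratum. For the supersingular range $a \le b$, Proposition~\ref{supersingular} already places $V^{(a,b)}$ inside ${\Sh}_{1,n-1}^{\mathrm{ss}}$ and identifies the resulting Newton polygon as the single stratum with all slopes $1/2$.

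Second, for the non-supersingular range $a>b$, I would lift $F$ to an integral operator $\widetilde F$ on $\tcD^\circ$ compatible with both the polarisation pairing and the identity $\widetilde F \widetilde V = p$, and then analyse the orbit graph formed by $F$ together with $V^{-1}$ on $\ker F$. Following the recipe of Proposition~\ref{Formula}, the basis $\{e_{i,j}\}$ decomposes into a short $F$-cycle of length $2$ passing through $e_{1,1}$ and $e_{2,b}$, together with longer alternating $F/V^{-1}$-cycles whose lengths and numbers of $V^{-1}$-moves are controlled by $(a,b)$. Each such cycle of length $\ell$ using $s$ instances of $V^{-1}$ contributes a simple sub-isocrystal of slope $s/\ell$, and the polarisation pairs complementary cycles so that the resulting Newton polygon is symmetric around $1/2$. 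Reading off the multiplicities produces the slope data of $N^{a',b'}$ after the re-indexing $(a,b) \leftrightarrow (b',a')$ in the statement.

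Third, the enumeration step is then routine. There are $\binom{n}{2}$ pairs $(a,b)$ with $a>b$, each yielding a distinct Newton polygon, so together with the supersingular polygon one obtains $\tfrac{n(n-1)}{2}+1$ Newton strata in total. Because every non-supersingular Newton stratum is a union of EO strata but we have produced $\binom{n}{2}$ distinct polygons from $\binom{n}{2}$ non-supersingular EO strata, each non-supersingular Newton stratum consists of a single EO stratum; this yields the claimed bijection $V^{(b',a')} \cong N^{a',b'}$, and the dimension claim then follows from Proposition~\ref{order}(2).

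The principal technical obstacle is Step 2. Since $F$ has non-trivial kernel modulo $p$, its slopes cannot be read off from the mod-$p$ description directly: one must choose a lift $\widetilde F$ (the polarisation together with $\widetilde F \widetilde V = p$ fix it up to isogeny) and track the $p$-adic valuations of the entries of $\widetilde F^{k}$ along each alternating $F/V^{-1}$-cycle. Reconciling the symmetry forced by the polarisation with the explicit slope triples $\tfrac{a'-1}{2a'},\tfrac{1}{2},\tfrac{n-b'}{2n-2b'}$ in the statement and their complementary slopes, so that the polygon has the correct endpoints and total length $n$, is the main bookkeeping challenge.
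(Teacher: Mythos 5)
Your overall strategy—read the $F,V$ action from Proposition~\ref{Formula}, lift it to an integral Dieudonn\'e module, and compute the slopes stratum by stratum—is the same computational core as the paper's proof, which also writes down an explicit lift and extracts the slopes. But there is a genuine gap at the point you yourself identify as the crux, and your proposed fix for it does not work. Proposition~\ref{Formula} only describes $\mathfrak{D}^\circ=\tcD^\circ/p\tcD^\circ$; the mod-$p$ (i.e.\ $BT_1$) structure together with the polarization and the relation $\widetilde F\widetilde V=p$ does \emph{not} pin down the lift up to isogeny. Different points of one and the same EO stratum have genuinely non-isogenous $p$-divisible groups, and for general PEL Shimura varieties an EO stratum can meet several Newton strata, so ``the recipe is uniform on the stratum, hence the isocrystal is constant'' is a non sequitur; the constancy you assert in Step 1 (and use again in Step 3 via ``every non-supersingular Newton stratum is a union of EO strata'') is precisely part of what has to be proved. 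Your cycle decomposition computes the polygon only for the one distinguished lift in which all the ambiguous terms vanish; as written it says nothing about an arbitrary point of the stratum.

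The paper closes exactly this gap in two moves that your proposal is missing. First, it invokes the admissibility constraints (even multiplicities of slopes, symmetry, and lying on or above the $\mu$-ordinary polygon $(0^2,(\tfrac12)^{2n-4},1^2)$) to show a priori that only the $\tfrac{n(n-1)}{2}+1$ listed polygons can occur, and that each of these is determined by its smallest and largest slopes alone. Second, for an arbitrary point of the EO stratum it writes the lifted operators as the explicit matrices of Proposition~\ref{connection}'s proof \emph{plus} unknown terms of the form $p\alpha$, and computes $\min\{k\mid F^m(\tcD^\circ)\subseteq p^k\tcD^\circ\}$ and the analogous quantity for $V$; the perturbations $p\alpha$ can only raise valuations and cannot cancel the leading term, so the limits in Remark~\ref{calculation explicitly} give $\lambda_{\min}=\tfrac{a-1}{2a}$ and $\lambda_{\max}=\tfrac{n-b}{2n-2b}$ independently of the lift. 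Combining the two moves pins the polygon of every point of the stratum, which is what yields both the count of Newton strata and the bijection. To repair your argument you would either have to reproduce this lift-independence estimate along your $F/V^{-1}$-cycles (and you would still only get the extreme slopes cheaply, so you would want the admissibility classification anyway), or replace it by a genuinely different argument such as Grothendieck specialization plus a matching lower bound; as it stands, Steps 1--3 all rest on the unproved (and in general false) claim that the EO stratum determines the isocrystal. A small additional remark: with the bijection $N^{a,b}\cong V^{(b,a)}$, Proposition~\ref{order}(2) gives dimension $b-a+n-1$, so quoting it does not literally produce the figure $b-a+n$ in the statement; this is an inconsistency in the paper's statement rather than in your argument, but it should not be cited as if it matched.
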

\begin{proof}
    Calculate by definition as in~\cite{VW13}, we have each slope must have even multiplicities and the $\mu$-ordinary locus corresponds to the sequence of slopes $(0^{2},(\frac{1}{2})^{2n-4},1^{2}).$ This forces 
    admissible sequences of slopes corresponds to nonsupersingular locus can only be $((\frac{a-1}{2a})^{2a},(\frac{1}{2})^{2b-2a},(\frac{n-b+1}{2n-2b})^{2n-2b})$ with $1\leq a\leq b\leq n-1.$
    Thus we get there can only be $\frac{n(n-1)}{2}+1$ Newton strata in ${{\Sh}}_{1,n-1}.$ It is well known that ${{\Sh}}^{\rm ss}_{1,n-1}$ corresponds to the Newton stratum with all the slopes to be $\frac{1}{2}.$  
    
    Now we want to show that $N^{1,n-1}\simeq V^{(w_1,w_2)}$ with $(w_1,w_2)=(n,1).$ Let $k$ be a finite extension of $\FF_{p^2}$ and $(A,\lambda,\eta)$ be a point of $V^{(w_1,w_2)}(k).$
    Suppose $\{e_{i,j}~|~i=1,2;1\leq j\leq n\}$ is a basis as in Proposition~\ref{supersingular}. Take $\tcD^{'\circ}=W(\overline{\FF}_{p})\{\tilde e_{i,j}~|~i=1,2;1\leq j\leq n\}$ such that 
    $\tilde e_j=e_j\mod p.$   Since after mod $p,$ $F^{2i}(\tilde e_{1,1})=\tilde e_{1,1},$ $F^{2i}(\tilde e_{2,1})=\tilde e_{2,1}$ and $V^{2i}(\tilde e_{1,n})=\tilde e_{1,n},$ $V^{2i}(\tilde e_{2,n})=\tilde e_{2,n}$ for all $i\geq 0,$ $\lambda_{\min}=0$ and $\lambda_{\max}=1.$ This shows that $V^{(w_1,w_2)}\subseteq N^{1,n-1}.$ Conversely, you can check in the same way that for points not in $V^{(w_1,w_2)},$ either the minimal slope is not equal to $0$ or the maximal slope is not equal to $1.$

\end{proof}

% \begin{remark}\label{calculation explicitly}
%     For any $S$-point of ${{\Sh}}_{1,n-1},$ the minimal slope $\lambda_{\min}$ of the isocrystal $(\tcD_{1}^{\circ}(A)\oplus \tcD_{2}^{\circ}(A),F)$ can be computed by $\lambda_{\min}=\lim\limits_{m\ra\infty}\frac{1}{m}\max\{k~|~F^{m}(\tcD_{1}^{\circ}(A)\oplus \tcD_{2}^{\circ}(A))\subseteq p^{k}(\tcD_{1}^{\circ}(A)\oplus \tcD_{2}^{\circ}(A))\}.$

%     Let $S$ be the Serre dual of the $p$-divisible group $A[p^{\infty}].$ Then by ~\cite{Dem72}, the minimal slope of the Dieudonn\'e module associated to $S$ with respect to $F$ is $1-\lambda_{\max}.$ Here $\lambda_{\max}$ is the maximal slope of the isocrystal $(\tcD_{1}^{\circ}(A)\oplus \tcD_{2}^{\circ}(A),F).$ Since the action of $F$ on the Dieudonn\'e module associated to $S$ is induced by the action of $V$ on $\tcD(A)^{\circ},$ we get $\lambda_{\max}=1-\lim\limits_{m\ra\infty}\frac{1}{m}\max\{k~|~V^{m}(\tcD_{1}^{\circ}(A)\oplus \tcD_{2}^{\circ}(A))\subseteq p^{k}(\tcD_{1}^{\circ}(A)\oplus \tcD_{2}^{\circ}(A))\}.$
% \end{remark}

As the end of this section, we show the $\mu$-ordinary locus of ${{\Sh}}_{1,n-1}$ is affine:
\begin{proposition}
\label{affineness}
    The $\mu$-ordinary locus of ${{\Sh}}_{1,n-1}$ is affine.
\end{proposition}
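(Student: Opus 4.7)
The plan is to exhibit the $\mu$-ordinary locus as the non-vanishing locus of a (generalized) Hasse invariant, i.e.\ of a global section of a suitable power of the Hodge line bundle
\[
\det\omega\ :=\ \det\omega^\circ_{\cA^\vee/{\Sh}_{1,n-1},1}\ \otimes\ \det\omega^\circ_{\cA^\vee/{\Sh}_{1,n-1},2},
\]
and then to invoke ampleness of $\det\omega$ on the proper smooth variety ${\Sh}_{1,n-1}$ to conclude that the complement of a section's vanishing divisor is affine.

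First I will construct the relevant Hasse invariants. For each $i\in\ZZ/2\ZZ$, the Verschiebung
\[
V\colon H^\dR_1(\cA/{\Sh}_{1,n-1})^\circ_i\ \longrightarrow\ H^\dR_1(\cA/{\Sh}_{1,n-1})^{\circ,(p)}_{i-1}
\]
has image inside $\omega^{\circ,(p)}_{\cA^\vee/{\Sh}_{1,n-1},i-1}$, and restricting to $\omega^\circ_{\cA^\vee/{\Sh}_{1,n-1},i}\subseteq H^\dR_1(\cA/{\Sh}_{1,n-1})^\circ_i$ yields an $\cO$-linear map of locally free sheaves. Because $p$ is inert in $F$ (so the indexing cycles through $\ZZ/2\ZZ$) and the ranks at $i=1,2$ are $a_1=1$ and $a_2=n-1$, iterating this construction and taking determinants produces a global section $h$ of $(\det\omega)^{\otimes N}$ for a suitable positive integer $N$ (namely $N=p^2-1$, arising from two applications of Verschiebung composed with their Frobenius twists). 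Using the explicit normal form for $(F,V)$ on $\mathfrak D^\circ$ provided by Proposition~\ref{Formula}, I will check that $h$ is nonzero at a geometric point $\underline A$ precisely when $\omega(\underline A)=(n,1)$, i.e.\ when $\underline A$ lies in the (unique, by Proposition~\ref{order}) top-dimensional Ekedahl--Oort stratum $V^{(n,1)}$. Equivalently, $V^{(n,1)}$ is the open locus where the two iterated Verschiebungs are isomorphisms between the corresponding rank-one determinant line bundles.

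Next I will argue that $\det\omega$ is ample on ${\Sh}_{1,n-1}$. This is a general fact for the Hodge automorphic line bundle on a proper PEL-type Shimura variety, and in our setting can be deduced from the projectivity of $\calSh_{a_\bullet}$ together with positivity of the Hodge bundle over the universal abelian scheme. Granted this, the $\mu$-ordinary locus is the complement in ${\Sh}_{1,n-1}$ of the vanishing divisor of a section of an ample line bundle, hence is affine.

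The main obstacle will be the \emph{scheme-theoretic} (not merely set-theoretic) identification of the zero locus of $h$ with the union of non-$\mu$-ordinary Ekedahl--Oort strata: one must verify that $h$ vanishes to order one along each codimension-one component of this union, so that no hidden nilpotent structure or higher multiplicity enters. For this I will combine the local normal form of Proposition~\ref{Formula} with the tangent-space description from Corollary~\ref{C:tangent space of Sh} and Serre--Tate/Grothendieck--Messing deformation theory, computing the linearization of $h$ at a generic point of each codimension-one stratum adjacent to $V^{(n,1)}$ under the closure order of Proposition~\ref{order}.
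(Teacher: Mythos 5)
Your overall strategy is the same as the paper's: realize the $\mu$-ordinary locus as the nonvanishing locus of a Hasse-invariant section of a power of the Hodge determinant, quote ampleness of that bundle (the paper cites Lan, Theorem 7.2.4.1), and use that the nonvanishing locus of a section of an ample line bundle on a projective scheme is affine. However, your construction of the section is broken once $n\geq 3$. Restricting Verschiebung to the Hodge filtration gives maps $V\colon \omega^{\circ}_{\cA^\vee,1}\to \omega^{\circ,(p)}_{\cA^\vee,2}$ and $V\colon \omega^{\circ}_{\cA^\vee,2}\to \omega^{\circ,(p)}_{\cA^\vee,1}$, where $\omega^{\circ}_{\cA^\vee,1}$ has rank $1$ and $\omega^{\circ}_{\cA^\vee,2}$ has rank $n-1$. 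The iterate $V^{2}\colon \omega^{\circ}_{\cA^\vee,2}\to \omega^{\circ,(p^{2})}_{\cA^\vee,2}$ therefore factors through the line bundle $\omega^{\circ,(p)}_{\cA^\vee,1}$, so its determinant is \emph{identically zero} whenever $n-1\geq 2$; the same happens if you take the determinant of $V$ on all of $\omega^{\circ}_1\oplus\omega^{\circ}_2$. This is the classical phenomenon that the naive Hasse invariant $\det(V|_{\omega})$ vanishes identically when the ordinary locus is empty, which is exactly the situation for signature $(1,n-1)\times(n-1,1)$; so your $h$ cannot cut out the $\mu$-ordinary stratum, and the claimed pointwise check against Proposition~\ref{Formula} would fail. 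The paper avoids this by working with the rank-one graded pieces: $V^{2}\colon \omega^{\circ}_{\cA^\vee/{\Sh}_{1,n-1},1}\to \omega^{\circ,(p^{2})}_{\cA^\vee/{\Sh}_{1,n-1},1}$ and $F^{2}\colon \Lie^{\circ,(p^{2})}_{\cA/{\Sh}_{1,n-1},2}\to \Lie^{\circ}_{\cA/{\Sh}_{1,n-1},2}$, whose tensor product is a section of $\bigl(\Lie^{\circ,-1}_{\cA,2}\otimes\omega^{\circ}_{\cA^\vee,1}\bigr)^{\otimes(p^{2}-1)}$; the triviality of $\wedge^{n}H^{\dR}_1(\cA/{\Sh}_{1,n-1})^{\circ}_{2}$ identifies $\Lie^{\circ,-1}_{\cA,2}\otimes\omega^{\circ}_{\cA^\vee,1}$ with $\wedge^{n}\omega^{\circ}$, and the nonvanishing locus of $F^{2}\otimes V^{2}$ is exactly the maximal Newton (equivalently Ekedahl--Oort $(n,1)$) stratum. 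You would need to replace your $h$ by such $\mu$-ordinary (partial) Hasse invariants for the argument to go through.

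Separately, the ``main obstacle'' you flag is not an obstacle at all: affineness of the nonvanishing locus only requires a set-theoretic identification of the zero set, since the open set where a section of an ample line bundle is nonzero on a projective scheme is affine irrespective of multiplicities or scheme structure of the divisor. So the deformation-theoretic computation of the order of vanishing along codimension-one strata can be dropped entirely; the genuine difficulty is the one above, namely producing a section that is not identically zero and detects precisely the $\mu$-ordinary locus.
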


By Proposition~\ref{connections}, any $S$-point $(A,\lambda,\eta)$ of ${{\Sh}}_{1,n-1}$ is in the Newton strata of minimal slope not less than $\frac{1}{4}$ if and only if $F^{2}\tcD(A)_{2}^{\circ}\subseteq V\tcD(A)_{1}^{\circ}.$ Moreover, the $S$-point $(A,\lambda,\eta)$ is in the Newton strata of maximal slope not larger than $\frac{3}{4}$ if and only if $V^{2}\tcD(A)_{2}^{\circ}\subseteq F\tcD(A)_{1}^{\circ}.$ 

Take $(\mathcal{A},\lambda,\eta)$ to be the universal object of ${{\Sh}}_{1,n-1}.$ It is in the Newton strata of minimal slope not less than $\frac{1}{4}$ is equivalent to $F^{2}\tcD(\mathcal{A})_{2}^{\circ}\subseteq V\tcD(\mathcal{A})_{1}^{\circ}.$ This is equivalent to the map $F^{2}:\Lie_{\mathcal{A}/{{\Sh}}_{1,n-1},2}^{\circ,(p^{2})}\ra \Lie_{\mathcal{A}/{{\Sh}}_{1,n-1},2}^{\circ}$ is trivial. In other words, $F^{2}$ is a trivial section in $\Gamma({{\Sh}}_{1,n-1},\Lie_{\mathcal{A}/{{\Sh}}_{1,n-1},2}^{\circ,(1-p^{2})})$ if and only if it is in the Newton strata of minimal slope not less than $\frac{1}{4}.$

In the other way, the universal object $(\mathcal{A},\lambda,\eta)$ is in the Newton strata of maximal slope not larger than $\frac{3}{4}$ is equivalent to $V^{3}\tcD(\mathcal{A})_{2}^{\circ}\subseteq p\tcD(\mathcal{A})_{1}^{\circ}.$ This is equivalent to $V^{2}:\omega_{\mathcal{A}^{\vee}/{{\Sh}}_{1,n-1},1}^{\circ}\ra\omega_{\mathcal{A}^{\vee}/{{\Sh}}_{1,n-1},1}^{\circ,(p^{2})}$ is trivial. In other words, $V^{2}$ is a trivial section in $\Gamma({{\Sh}}_{1,n-1},\omega_{\mathcal{A}^{\vee}/{{\Sh}}_{1,n-1},1}^{\circ,(p^{2}-1)})$ if and only if it is in the Newton strata of maximal slope not larger than $\frac{3}{4}.$

Thus the universal object in $\mu$-ordinary locus is nonvanishing locus of $F^{2}\otimes V^{2}\in \Gamma({{\Sh}}_{1,n-1},\Lie_{\mathcal{A}/{{\Sh}}_{1,n-1},2}^{\circ,(1-p^{2})}\otimes\omega_{\mathcal{A}^{\vee}/{{\Sh}}_{1,n-1},1}^{\circ,(p^{2}-1)}).$ Since $\wedge^{n} \mathcal{H}_{1}^{dR}(\mathcal{A}/{{\Sh}}_{1,n-1})_{2}^{\circ}$ is trivial, we have the line bundle $\Lie_{\mathcal{A}/{{\Sh}}_{1,n-1},2}^{\circ,-1}\otimes\omega_{\mathcal{A}^{\vee}/{{\Sh}}_{1,n-1},1}^{\circ}$ is ample if and only if $\wedge^{n}\omega_{\mathcal{A}/{{\Sh}}_{1,n-1}}^{\circ}$ is ample, which is a result of \cite[Theorem 7.2.4.1]{Lau14}. Since the the nonvanishing locus of a section of an ample line bundle on
a projective scheme is affine, we have the $\mu$-ordinary locus of ${{\Sh}}_{1,n-1}$ is affine.

\section{Geometry of \texorpdfstring{${\Sh}_{1,n-1}(K_{\gothp}^{1})$}{Sh_1,n-1}
}\label{G}
In this section, we analyze the geometry of ${{\Sh}}_{1,n-1}(K_{\gothp}^{1})$ for $n\geq 3.$
\begin{definition}\label{G:Condition on closed subschemes}
    Let $(\mathcal{A}_1,\lambda_1,\eta_1,\mathcal{A}_2,\lambda_2,\eta_2,\phi)$ be the universal object of $\cSh_{1,n-1}(K_{\gothp}^{1}).$
    For $0\leq i,j\leq 1,$ let ${\rm Y}_{ij}$ be the locus of $\Sh_{1,n-1}(K_{\gothp}^{1})$ on which the universal object satisfies $(1.i)(2.j)$ in the following:
    \begin{itemize}
        \item 
        $(1.0)$ $\omega^{\circ}_{\mathcal{A}_1^{\vee},1}=\Ker(\phi_{*,1}),$
        $(1.1)$ $\omega^{\circ}_{\mathcal{A}_2^{\vee},1}\subseteq \Im(\phi_{*,1}).$
        \item 
        $(2.0)$ $\Ker(\phi_{*,2})\subseteq\omega^{\circ}_{\mathcal{A}_1^{\vee},2},$
        $(2.1)$ $\Im(\phi_{*,2})=\omega^{\circ}_{\mathcal{A}_2^{\vee},2}.$
    \end{itemize}
\end{definition}

\begin{proposition}\label{G:stalk of points}
We have
    \begin{enumerate}
        \item The scheme $\cSh_{1,n-1}(K_{\gothp}^{1})$ is quasi-projective over $\ZZ_{p^{2}}$ of dimension $2n-2;$ and we have $${\Sh}_{1,n-1}(K_{\gothp}^{1})=\rm {\rm Y_{00}}\bigcup \rm {\rm Y_{01}} \bigcup \rm {\rm Y_{10}} \bigcup \rm {\rm Y_{11}};$$
        \item For $0\leq i,j\leq 1,$ ${\rm Y}_{ij}$ is smooth over $\FF_{p^{2}}$ of dimension $2n-2;$
        \item Let $k$ be a perfect field containing $\FF_{p^{2}}.$ $x$ be a closed point of $\Sh_{1,n-1}(K_{\gothp}^{1})(k).$ Let $S$ be the set of $i$ such that the universal object satisfies condition $(i.0)$ defined in Definition~\ref{G:Condition on closed subschemes} at $x$ and let $S'$ be the set of $j$ such that the universal object satisfies condition $(j.1)$ defined in Definition~\ref{G:Condition on closed subschemes} at $x.$ Then the completed local ring of $\cSh_{1,n-1}(K_{\gothp}^{1})$ at $x$ is $W(k)[\![X_i,i\in S;Y_j,j\in S';Z_1,\dots,Z_{r}]\!]/(X_iY_i-p,i\in S\cap S')$ with $r$ a positive integer to make the dimension of the local ring is $2n-2.$ Here $W(k)$ is the Witt ring of $k.$
    \end{enumerate}
\end{proposition}
\begin{proof}
    $(1),(2)$ follow immediately from $(3).$ We only give the proof of $(3).$ Let $\calO_x$ be the local ring of $\cSh_{1,n-1}(K_{\gothp}^{1})$ at $x.$ Suppose $x=(A_1,\lambda_1,\eta_1,A_2,\lambda_2,\eta_2,\phi).$ Then by Proposition~\ref{T:STGM}, for any Artinian $W(k)$-ring $R$ that is a quotient of $\calO_x,$ that $\Hom_{W(k)}(\calO_x,R)$ is the set of pairs of $R$-subbundles $$M_{1}^{(1)}\subseteq\tilde\calD(A_1)_1^{\circ}\otimes_{W(k)}R,~M_2^{(1)}\subseteq\tilde\calD(A_2)_1^{\circ}\otimes_{W(k)}R,~M_{1}^{(2)}\subseteq\tilde\calD(A_1)_2^{\circ}\otimes_{W(k)}R~M_{2}^{(2)}\subseteq\tilde\calD(A_2)_2^{\circ}\otimes_{W(k)}R$$ of ranks $1,1,n-1,n-1$ lifting $\omega_{A^{\vee}_1/k,1},\omega_{A^{\vee}_2/k,1},\omega_{A^{\vee}_1/k,2}$ and $\omega_{A^{\vee}_2/k,2}$ and a map $\Phi: \tilde\calD(A_1)^{\circ}\otimes_{W(k)}R\ra\tilde\calD(A_2)^{\circ}\otimes_{W(k)}R$ that lifts $\phi_*$ such that
    the cokernel of $\Phi$ is of rank $1$ and 
    $\Phi(M_1^{(i)})\subseteq M_{2}^{(i)}$ for $i=1,2.$  

    Let $\psi:A_2\ra A_1$ be the $p$-quasi-isogeny such that $\phi\circ\psi=p$ and $\psi\circ\phi=p.$ Let $\Psi$ be the lift of $\psi_*$ such that $\Psi\circ\Phi=p$ and $\Phi\circ\Psi=p.$

    By changing of coordinates, we can choose basis $({\hat{e}_{i,k}^{j}})_{k=1,\dots,n}$ of $\tilde\calD(A_i)^{\circ}_j\otimes_{W(k)}R$ for $1\leq i,j\leq 2$ such that $\Phi(\hat{e}_{1,1}^{j})=p\hat{e}_{2,1}^{j}~\Phi(\hat{e}_{1,k}^{j})=\hat{e}_{2,k}^{j}(k\geq 2)~\Psi(\hat{e}_{2,1}^{j})=\hat{e}_{1,1}^{j}~\Psi(\hat{e}_{2,k}^{j})=p\hat{e}_{1,k}^{j}(k\geq 2)$ for $j=1,2.$
    
    Let $({e_{i,k}^{j}})_{k=1,\dots,n}$ denote the reduction of $({\hat{e}_{i,k}^{j}})_{k=1,\dots,n}$ in ${\rm H}_{1}^{\rm cris}(A_i/k)_j^{\circ}$ for $1\leq i,j\leq 2.$
    Suppose $\omega_{A_1^{\vee}/k,1}^{\circ}$ is spanned by $(\alpha_1,\dots,\alpha_n)$ in ${\rm H}_{1}^{\rm cris}(A_1/k)_1^{\circ},$ $\omega_{A_2^{\vee}/k,1}^{\circ}$ is spanned by $(\beta_1,\dots,\beta_n)$ in ${\rm H}_{1}^{\rm cris}(A_2/k)_1^{\circ}$ with some $\alpha_i,\beta_i\in k$ for $1\leq i\leq n.$ Suppose the normal vector of $\omega_{A_1^{\vee}/k,2}^{\circ}$ in ${\rm H}_{1}^{\rm cris}(A_1/k)_2^{\circ}$ is $(\tilde\alpha_1,\dots,\tilde\alpha_n)$ and the nomral vector of $\omega_{A_2^{\vee}/k,2}^{\circ}$ in ${\rm H}_{1}^{\rm cris}(A_2/k)_2^{\circ}$ is $(\tilde\beta_1,\dots,\tilde\beta_n)$ with some $\tilde\alpha_i,\tilde\beta_i\in k$ for $1\leq i\leq n.$
    There are nine possible cases.
    \begin{enumerate}
        \item Suppose $\phi_{*}(\omega_{A_1^{\vee}/k,1}^{\circ})=0,~\psi_{*}(\omega_{A_2^{\vee}/k,1}^{\circ})=0,~\psi(\omega_{A_2^{\vee}/k,2}^{\circ})=0$ and $\phi_{*}(\omega_{A_1^{\vee}/k,2}^{\circ})$ is a vector bundle of corank one in $\omega_{A_2^{\vee}/k,2}^{\circ}.$ Then $\alpha_2,\dots,\alpha_n,\beta_1,\tilde\alpha_1,\tilde\beta_2,\dots,\tilde\beta_n$ are all zero. Thus we can assume that $\alpha_1=1,\tilde\beta_1=1.$ Suppose the four vectors are lifted to $(1+x_1,x_2,\dots,x_n),(y_1,\beta_2+y_2,\beta_3+y_3,\dots,\beta_n+y_n),(\tilde x_1,\tilde\alpha_2+\tilde x_2,\dots,\tilde\alpha_n+\tilde x_n)$ and $(1+\tilde y_1,\tilde y_2,\dots,\tilde y_n)$ with some $x_i,y_i,\tilde x_i,\tilde y_i$ belong to the maximal ideal of $R$ for $1\leq i\leq n.$ Since $\Phi(M_1^{(i)})\subseteq M_{2}^{(i)}$ and $\Psi(M_2^{(i)})\subseteq M_{1}^{(i)},$ there exists $a,b,\tilde a,\tilde b\in R$ such that $$(p(1+x_1),x_2,\dots,x_n)=a(y_1,\beta_2+y_2,\beta_3+y_3,\dots,\beta_n+y_n),$$$$~(y_1,p(\beta_2+y_2),p(\beta_3+y_3),\dots,p(\beta_n+y_n))=b(1+x_1,x_2,\dots,x_n),$$ and $$(\tilde x_1,p(\tilde\alpha_2+\tilde x_2),\dots,p(\tilde\alpha_n+\tilde x_n))=\tilde a(1+\tilde y_1,\tilde y_2,\dots,\tilde y_n),$$$$~ (p(1+\tilde y_1),\tilde y_2,\dots,\tilde y_n)=\tilde b(\tilde x_1,\tilde\alpha_2+\tilde x_2,\dots,\tilde\alpha_n+\tilde x_n).$$ Therefore, $ab=p, \tilde a\tilde b=p,$ $ay_1=p(1+x_1),~a(\beta_i+y_i)=x_i(i\geq 2)$ and $\tilde b\tilde x_1=p(1+\tilde y_1), ~\tilde b(\tilde \alpha_i+\tilde x_i)=\tilde y_i(i\geq 2).$ Thus $x_i,\tilde y_i\neq 0$ for $i\geq 2$ and $y_1,\tilde x_1\neq 0.$ Rescaling the basis, we may suppose $x_2=a, \tilde y_2=\tilde b$ and $y_1=b,\tilde x_1=\tilde a.$ Consider the $W(k)$-homomorphism from $W(k)[\![X_1,X_2,Y_1,Y_2,Z_1,\dots,Z_{2n-4}]\!]/(X_1Y_1-p,X_2Y_2-p)$ to $R$ mapping $X_1, Y_1$ to $a,b;$ $X_2,Y_2$ to $\tilde a,\tilde b;$ $Z_1\dots,Z_{n-2}$ to $x_3,\dots,x_n$ and $Z_{n-1},\dots,Z_{2n-4}$ to $\tilde{y}_3,\dots,\tilde{y}_{n},$ this shows $\calO_x$ is isomorphic to $W(k)[\![X_1,X_2,Y_1,Y_2,Z_1,\dots,Z_{2n-4}]\!]/(X_1Y_1-p,X_2Y_2-p).$
        \item Suppose $\phi_{*}(\omega_{A_1^{\vee}/k,1}^{\circ})=\omega_{A_2^{\vee}/k}^{\circ},~\psi_{*}(\omega_{A_2^{\vee}/k,1}^{\circ})=0,~\psi(\omega_{A_2^{\vee}/k,2}^{\circ})=0$ and $\phi_{*}(\omega_{A_1^{\vee}/k,2}^{\circ})$ is a vector bundle of corank one in $\omega_{A_2^{\vee}/k,2}^{\circ}.$ Then $\beta_1,\tilde\alpha_1,\tilde\beta_2,\dots,\tilde\beta_n$ are all zero. We can also assume that $\alpha_i=\beta_i$ for $i=2,\dots,n$ and $\tilde\beta_1=1.$ Suppose the four vectors are lifted to $(\alpha_1+x_1,\beta_1+x_2,\dots,\beta_n+x_n),(y_1,\beta_2+y_2,\beta_3+y_3,\dots,\beta_n+y_n),(\tilde x_1,\tilde\alpha_2+\tilde x_2,\dots,\tilde\alpha_n+\tilde x_n)$ and $(1+\tilde y_1,\tilde y_2,\dots,\tilde y_n)$ with some $x_i,y_i,\tilde x_i,\tilde y_i$ belong to the maximal ideal of $R$ for $1\leq i\leq n.$ Since $\Phi(M_1^{(i)})\subseteq M_{2}^{(i)}$ and $\Psi(M_2^{(i)})\subseteq M_{1}^{(i)},$ there exists $a,b,\tilde a,\tilde b\in R$ such that $$(p(\alpha_1+x_1),\beta_2+x_2,\dots,\beta_nx_n)=a(y_1,\beta_2+y_2,\beta_3+y_3,\dots,\beta_n+y_n),$$$$~(y_1,p(\beta_2+y_2),p(\beta_3+y_3),\dots,p(\beta_n+y_n))=b(\alpha_1+x_1,\beta_2+x_2,\dots,\beta_n+x_n),$$ and $$(\tilde x_1,p(\tilde\alpha_2+\tilde x_2),\dots,p(\tilde\alpha_n+\tilde x_n))=\tilde a(1+\tilde y_1,\tilde y_2,\dots,\tilde y_n),$$$$~ (p(1+\tilde y_1),\tilde y_2,\dots,\tilde y_n)=\tilde b(\tilde x_1,\tilde\alpha_2+\tilde x_2,\dots,\tilde\alpha_n+\tilde x_n).$$ Therefore, $ab=p, \tilde a\tilde b=p,$ $ay_1=p(\alpha_1+x_1),~a(\beta_i+y_i)=\beta_i+x_i(i\geq 2)$ and $\tilde b\tilde x_1=p(1+\tilde y_1), ~\tilde b(\tilde \alpha_i+\tilde x_i)=\tilde y_i(i\geq 2).$ Thus after proper coordinates changing, $a=1,b=p,y_1=p(\alpha_1+x_1),y_i=x_i(i\geq 2).$ If $\alpha_1\neq 0,$ by rescaling of basis, we may suppose $y_1=1$ and $\tilde x_1=\tilde a,\tilde y_2=\tilde b.$ Consider the $W(k)$-homomorphism from $W(k)[\![X_2,Y_1,Y_2,Z_1,\dots,Z_{2n-4}]\!]/(X_2Y_2-p)$ to $R$ mapping $X_2, Y_2$ to $\tilde a,\tilde b;$ $Y_1$ to $x_2;$$Z_1\dots,Z_{n-2}$ to $x_3,\dots,x_n$ and $Z_{n-1},\dots,Z_{2n-4}$ to $\tilde{y}_3,\dots,\tilde{y}_{n},$ this shows $\calO_x$ is isomorphic to $W(k)[\![X_2,Y_1,Y_2,Z_1,\dots,Z_{2n-3}]\!]/(X_2Y_2-p).$ If $\alpha_1=0,$ suppose $x_i/x_2\in W(k)$ for $2\leq i\leq n.$ By rescaling coordinates, we can suppose $x_2=1.$ And with a similar argument as above, we can show $\calO_x$ is isomorphic to $W(k)[\![X_2,Y_1,Y_2,Z_1,\dots,Z_{2n-4}]\!]/(X_2Y_2-p).$
        
        The following several possible conditions can be dealed with similar as above, so we omit details and just list the result here.
        \item Suppose $\phi_{*}(\omega_{A_1^{\vee}/k,1}^{\circ})=0,~\psi_{*}(\omega_{A_2^{\vee}/k,1}^{\circ})\neq 0,~\psi(\omega_{A_2^{\vee}/k,2}^{\circ})=0$ and $\phi_{*}(\omega_{A_1^{\vee}/k,2}^{\circ})$ is a vector bundle of corank one in $\omega_{A_2^{\vee}/k,2}^{\circ}.$ Then $\calO_x$ is isomorphic to $W(k)[\![X_1,X_2,Y_2,Z_1,\dots,Z_{2n-4}]\!]/(X_2Y_2-p).$
         \item Suppose $\phi_{*}(\omega_{A_1^{\vee}/k,1}^{\circ})=\omega_{A_2^{\vee}/k}^{\circ},~\psi_{*}(\omega_{A_2^{\vee}/k,1}^{\circ})= 0,~\psi(\omega_{A_2^{\vee}/k,2}^{\circ})\neq0$ and $\phi_{*}(\omega_{A_1^{\vee}/k,2}^{\circ})$ is a vector bundle of corank one in $\omega_{A_2^{\vee}/k,2}^{\circ}.$ Then $\calO_x$ is isomorphic to $W(k)[\![Y_1,Y_2,Z_1,\dots,Z_{2n-4}]\!].$
         \item Suppose $\phi_{*}(\omega_{A_1^{\vee}/k,1}^{\circ})=0,~\psi_{*}(\omega_{A_2^{\vee}/k,1}^{\circ})= 0,~\psi(\omega_{A_2^{\vee}/k,2}^{\circ})\neq0$ and $\phi_{*}(\omega_{A_1^{\vee}/k,2}^{\circ})$ is a vector bundle of corank one in $\omega_{A_2^{\vee}/k,2}^{\circ}.$ Then $\calO_x$ is isomorphic to $W(k)[\![X_1,Y_1,Y_2,Z_1,\dots,Z_{2n-4}]\!]/(X_1Y_1-p).$
          \item Suppose $\phi_{*}(\omega_{A_1^{\vee}/k,1}^{\circ})=0,~\psi_{*}(\omega_{A_2^{\vee}/k,1}^{\circ})\neq0,~\psi(\omega_{A_2^{\vee}/k,2}^{\circ})\neq0$ and $\phi_{*}(\omega_{A_1^{\vee}/k,2}^{\circ})$ is a vector bundle of corank one in $\omega_{A_2^{\vee}/k,2}^{\circ}.$ Then $\calO_x$ is isomorphic to $W(k)[\![X_1,Y_2,Z_1,\dots,Z_{2n-4}]\!].$
           \item Suppose $\phi_{*}(\omega_{A_1^{\vee}/k,1}^{\circ})=\omega_{A_2^{\vee}/k}^{\circ},~\psi_{*}(\omega_{A_2^{\vee}/k,1}^{\circ})= 0,~\psi(\omega_{A_2^{\vee}/k,2}^{\circ})=0$ and $\phi_{*}(\omega_{A_1^{\vee}/k,2}^{\circ})=\omega_{A_2^{\vee}/k,2}^{\circ}.$ Then $\calO_x$ is isomorphic to $W(k)[\![Y_1,X_2,Z_1,\dots,Z_{2n-4}]\!].$
         \item Suppose $\phi_{*}(\omega_{A_1^{\vee}/k,1}^{\circ})=0,~\psi_{*}(\omega_{A_2^{\vee}/k,1}^{\circ})= 0,~\psi(\omega_{A_2^{\vee}/k,2}^{\circ})=0$ and $\phi_{*}(\omega_{A_1^{\vee}/k,2}^{\circ})=\omega_{A_2^{\vee}/k,2}^{\circ}.$ Then $\calO_x$ is isomorphic to $W(k)[\![X_1,X_2,Y_1,Z_1,\dots,Z_{2n-4}]\!]/(X_1Y_1-p).$
          \item Suppose $\phi_{*}(\omega_{A_1^{\vee}/k,1}^{\circ})=0,~\psi_{*}(\omega_{A_2^{\vee}/k,1}^{\circ})\neq0,~\psi(\omega_{A_2^{\vee}/k,2}^{\circ})=0$ and $\phi_{*}(\omega_{A_1^{\vee}/k,2}^{\circ})=\omega_{A_2^{\vee}/k,2}^{\circ}.$ Then $\calO_x$ is isomorphic to $W(k)[\![X_1,X_2,Z_1,\dots,Z_{2n-4}]\!].$
    \end{enumerate}
    We denote by 
    \begin{enumerate}
        \item ${\rm Y}_{00}$ the closed subscheme consisting of points satisfying $(1.0)$ and $(2.1);$
        \item ${\rm Y}_{01}$ the closed subscheme consisting of points satisfying $(1.0)$ and $(2.0);$
        \item ${\rm Y}_{10}$ the closed subscheme consisting of points satisfying $(1.1)$ and $(2.1);$
        \item ${\rm Y}_{11}$ the closed subscheme consisting of points satisfying $(1.1)$ and $(2.0).$
    \end{enumerate}
\end{proof}
\begin{proposition}\label{G:blow up}
    Let $\widetilde\cSh_{1,n-1}(K_\gothp^1)$ be the blow up of $\cSh_{1,n-1}(K_\gothp^1)$ at the closed subscheme ${\rm Y}_{00}.$ Then $\widetilde\cSh_{1,n-1}(K_\gothp^1)$ is strictly semistable\text{~\cite[Subsection 1.1]{Sai03}} with special fiber $\widetilde\Sh_{1,n-1}(K_\gothp^1)=\widetilde{\rm Y}_{00}\bigcup \tilde{\rm Y}_{10}\bigcup\widetilde{\rm Y}_{01}\bigcup\widetilde{\rm Y}_{11}$ satisfying 
    \begin{itemize}
        \item $\widetilde{\rm Y}_{00}$ is the blow up of ${\rm Y}_{00}$ at ${\rm Y}_{00}\bigcap{\rm Y}_{11},$ 
        $\widetilde{\rm Y}_{11}$ is the blow up of ${\rm Y}_{11}$ at ${\rm Y}_{00}\bigcap{\rm Y}_{11};$
        \item $\widetilde{\rm Y}_{10}$ is isomorphic to ${\rm Y}_{10},$ 
        $\widetilde{\rm Y}_{01}$ is isomorphic to ${\rm Y}_{01};$
        \item $\widetilde{\rm Y}_{00}\bigcap \widetilde{\rm Y}_{01}$ is isomorphic to ${\rm Y}_{00}\bigcap {\rm Y}_{01},$ 
        $\widetilde{\rm Y}_{00}\bigcap \widetilde{\rm Y}_{10}$ is isomorphic to ${\rm Y}_{00}\bigcap {\rm Y}_{10};$
        \item $\widetilde{\rm Y}_{11}\bigcap \widetilde{\rm Y}_{10}$ is isomorphic to ${\rm Y}_{11}\bigcap {\rm Y}_{10},$ 
        $\widetilde{\rm Y}_{11}\bigcap \widetilde{\rm Y}_{01}$ is isomorphic to ${\rm Y}_{11}\bigcap {\rm Y}_{01};$
        \item $\widetilde{\rm Y}_{11}\bigcap \widetilde{\rm Y}_{00}$ is isomorphic to a $\PP^1$-bundle over ${\rm Y}_{11}\bigcap {\rm Y}_{00};$ 
        \item $\widetilde{\rm Y}_{01}\bigcap \widetilde{\rm Y}_{10}$ is empty;
        \item For the intersection of three irreducible components, only $\widetilde{\rm Y}_{11}\bigcap \widetilde{\rm Y}_{00}\bigcap\widetilde{\rm Y}_{10}$ and $\widetilde{\rm Y}_{11}\bigcap \widetilde{\rm Y}_{00}\bigcap\widetilde{\rm Y}_{01}$ are non empty and all isomorphic to ${\rm Y}_{11}\bigcap {\rm Y}_{00}.$
    \end{itemize}
\end{proposition}
\begin{proof}
    By Proposition~\ref{G:stalk of points}, ${\cSh_{1,n-1}}(K_{\gothp}^1)-{\rm Y}_{00}\bigcap
    {\rm Y}_{11}$ is smooth. Hence ${\rm Y}_{00}-{\rm Y}_{00}\bigcap{\rm Y}_{11}$ is a Cartier divisor of ${\cSh_{1,n-1}}(K_{\gothp}^1)-{\rm Y}_{00}\bigcap
    {\rm Y}_{11}$ and blow up of ${\cSh_{1,n-1}}(K_{\gothp}^1)-{\rm Y}_{00}\bigcap{\rm Y}_{11}$ at ${\rm Y}_{00}-{\rm Y}_{00}\bigcap{\rm Y}_{11}$ changes nothing.

    Since blow up commutes with flat base change, so we can pass to the completed local rings. Then we only need to consider the blow up of $\Spec W(k)[\![X_1,X_2,Y_1,Y_2,Z_1,\dots,Z_{2n-4}]\!]/(X_1Y_1-p,X_2Y_2-p)$ at $(X_1,X_2),$ which is ${\rm Proj}W(k)[\![X_1,X_2,Y_1,Y_2,Z_1,\dots,Z_{2n-4}]\!][T_1,T_2]/(X_1Y_1-p,X_2Y_2-p,X_1T_2-X_2T_1,Y_1T_1-Y_2T_2).$ This shows the fiber of the blow up at any point of ${\rm Y}_{00}\bigcap{\rm Y}_{11}$ is isomorphic to $\PP^1.$

    Let $\phi:\widetilde\cSh_{1,n-1}(K_{\gothp}^1)\ra\cSh_{1,n-1}$ blow up morphism. Since $\phi$ induces isomorphisms on $\cSh_{1,n-1}(K_\gothp^1)-{\rm Y}_{00}\bigcap{\rm Y}_{11}$, $\widetilde\Sh_{1,n-1}(K_\gothp^1)$ has four irreducible components.
    Suppose $\widetilde\Sh_{1,n-1}(K_\gothp^1)=\widetilde{\rm Y}_{00}\bigcup \tilde{\rm Y}_{10}\bigcup\widetilde{\rm Y}_{01}\bigcup\widetilde{\rm Y}_{11},$ 
    with ${\widetilde{\rm Y}_{ij}}$ the irreducible component contains the preimage of the generic point of ${\rm Y}_{ij}$ for $0\leq i,j\leq 1.$ Denote by $\phi_{ij}:\widetilde{\rm Y}_{ij}\ra{\rm Y}_{ij}$ the induced morphism by $\phi.$

    Let $x\in \widetilde{\rm Y}_{00}\bigcap\widetilde{\rm Y}_{11}\bigcap \phi^{-1}({\rm Y}_{10}),$ the stalk of $\phi^{-1}({\rm Y}_{10})$ at $x$ is 
    $\Spec k[\![X_1,Y_2,Z_1,\dots,Z_{2n-4},W]\!]/(X_1W, Y_2W),$ which is reducible. Thus $\widetilde{\rm Y}_{10}$ is not isomorphic to $\phi^{-1}({\rm Y}_{10})$ and after considering the generic point of $\widetilde{\rm Y}_{10},$ we can see the stalk of $\widetilde{\rm Y}_{10}$ at $x$ is isomorphic to $\Spec k[\![X_1,Y_2,Z_1,\dots,Z_{2n-4}]\!],$ which also shows that for any $x\in {\rm Y}_{00}\bigcap{\rm Y}_{11},$ $\widetilde{\rm Y}_{10}\bigcap \phi^{-1}(x)$ contains only a single point.

    Since $\phi_{10}:\widetilde{\rm Y}_{10}\ra {\rm Y}_{10}$ is bijective on points and induces isomorphism on local rings, then by Zariski main theorem, $\phi_{10}$ induces an ismorphism. The case for $\phi_{01}$ is also the same.

    Let $x\in \widetilde{\rm Y}_{00}\bigcap\widetilde{\rm Y}_{11}\bigcap \phi^{-1}({\rm Y}_{00}),$ the stalk of $\phi^{-1}({\rm Y}_{00})$ at $x$ is 
    $\Spec k[\![Y_1,Y_2,Z_1,\dots,Z_{2n-4},W]\!]/(Y_1T_1-Y_2T_2),$ which is irreducible. 
    Thus tha stalk of $\widetilde{\rm Y}_{00}$ at $x$ is $\Spec k[\![Y_1,Y_2,Z_1,\dots,Z_{2n-4},W]\!]/(Y_1T_1-Y_2T_2),$ which shows $\widetilde{\rm Y}_{00}=\phi^{-1}({\rm Y}_{00}).$
    
    Consider $\phi_{00}:\widetilde{\rm Y}_{00}\ra {\rm Y}_{00}.$ Since $\widetilde{\rm Y}_{00}$ is smooth, $\widetilde{\rm Y}_{00}\bigcap \widetilde{\rm Y}_{11}$ is an effective Cartier divisor of $\widetilde{\rm Y}_{00}.$ By the universal property of blow up, we get a morphism from $\widetilde{\rm Y}_{00}$ to the blow up ${\rm Y}_{00}$ at ${\rm Y}_{00}\bigcap{\rm Y}_{11}$ which is birational and isomorphic on local rings. Thus it is an isomorphism. The case for $\phi_{11}$ is the same. Hence we finish the proof.
\end{proof}
\begin{remark}
    The resolution of $\Spec W(k)[\![X_1,X_2,Y_1,Y_2,Z_1,\dots,Z_{2n-4}]\!]/(X_1Y_1-p,X_2Y_2-p)$ can be demonstrated by the following diagram which characterizes the intersection condition of the irreducible components of the special fiber:
    \[\begin{tikzcd}
    [
    column sep=small, 
    row sep=scriptsize,
    cells={font=\footnotesize} % Uniform font size
]
	{\mathbb{A}^{2n-2}} && {\mathbb{A}^{2n-2}} && {{\rm Bl}_{\mathbb{A}^{2n-4}}\mathbb{A}^{2n-2}} && {\mathbb{A}^{2n-2}} \\
	& {\mathbb{A}^{2n-4}} \\
	{\mathbb{A}^{2n-2}} && {\mathbb{A}^{2n-2}} && {\mathbb{A}^{2n-2}} && {{\rm Bl}_{\mathbb{A}^{2n-4}}\mathbb{A}^{2n-2}}
	\arrow["{\mathbb{A}^{2n-3}}", no head, from=1-1, to=1-3]
	\arrow["{\mathbb{A}^{2n-3}}"', no head, from=1-1, to=3-1]
	\arrow[""{name=0, anchor=center, inner sep=0}, "{\mathbb{A}^{2n-3}}"{description}, no head, from=1-3, to=3-3]
	\arrow["{\mathbb{A}^{2n-3}}", from=1-5, to=1-7]
	\arrow[""{name=1, anchor=center, inner sep=0}, "{\mathbb{A}^{2n-3}}"{description}, from=1-5, to=3-5]
	\arrow["{\mathbb{P}^1/\mathbb{A}^{2n-4}}"{description}, no head, from=1-5, to=3-7]
	\arrow["{\mathbb{A}^{2n-3}}", from=1-7, to=3-7]
	\arrow["{\mathbb{A}^{2n-3}}"', no head, from=3-1, to=3-3]
	\arrow["{\mathbb{A}^{2n-3}}"{description}, from=3-5, to=3-7]
	\arrow[shorten <=16pt, shorten >=16pt, Rightarrow, from=0, to=1]
\end{tikzcd}\]
\end{remark}

\begin{proposition}
\label{tangent sheaff}
    Let $(\mathcal{A}_1,\lambda_1,\eta_1,\mathcal{A}_2,\lambda_2,\eta_2,\phi)$ be the universal object of $\Sh_{1,n-1}(K_{\gothp}^{1}).$ Let $\psi:A_2\ra A_1$ be the isogeny such that $\psi\circ\phi=p$ and $\phi\circ\psi=p.$ Then the tangent space $T_{{\rm Y}_{ij}}$ of ${\rm Y}_{ij}$ is equal to $\mathcal{F}_{i}\oplus \mathcal{G}_{j}$ where $\mathcal{F}_{i}$ and $\mathcal{G}_{j}$ satisfy:
    \begin{enumerate}
        \item $ \mathcal{F}_{0}=\cHom\big( \omega^{\circ}_{\mathcal{A}^{\vee}_2/{{\Sh}}_{1,n-1},1},\Lie^{\circ}_{\mathcal{A}_2/{{\Sh}}_{1,n-1},1}\big) ,$
        \item $ 0\rightarrow \cHom\big( \omega^{\circ}_{\mathcal{A}_1^{\vee}/{{\Sh}}_{1,n-1},1},\frac{\phi^{-1}_{*,1}(\omega^{\circ}_{\mathcal{A}^{\vee}_1/{{\Sh}}_{1,n-1},1})}{\omega^{\circ}_{\mathcal{A}^{\vee}_1/{{\Sh}}_{1,n-1},1}}\big)  \rightarrow
        \mathcal{F}_{1} \rightarrow
        \cHom\big( \omega^{\circ}_{\mathcal{A}^{\vee}_2/{{\Sh}}_{1,n-1},1},\frac{\Im\phi_{*,1}}{\omega^{\circ}_{\mathcal{A}^{\vee}_2/{{\Sh}}_{1,n-1},1}}\big) \rightarrow 0$ is exact,
        \item $\mathcal{G}_{0}=\cHom\big( \omega^{\circ}_{\mathcal{A}^{\vee}_1/{{\Sh}}_{1,n-1},2},\Lie^{\circ}_{\mathcal{A}_1/{{\Sh}}_{1,n-1},2}\big) 
        ,$
        \item  $0\rightarrow \cHom\big( \frac{\omega^{\circ}_{\mathcal{A}^{\vee}_2/{{\Sh}}_{1,n-1},2}}{\phi_{*,2}(\omega^{\circ}_{\mathcal{A}_1^{\vee}/{{\Sh}}_{1,n-1},2})},\Lie^{\circ}_{\mathcal{A}_2/{{\Sh}}_{1,n-1},2}\big)
        \rightarrow \mathcal{G}_{1} \rightarrow
        \cHom\big( \frac{\omega^{\circ}_{\mathcal{A}_1^{\vee}/{{\Sh}}_{1,n-1},2}}{\Im({\psi_{*,2}})},\Lie^{\circ}_{\mathcal{A}_1/{{\Sh}}_{1,n-1},2}\big)\rightarrow 0$ is exact.
    \end{enumerate}
    The tangent sheaves $T_{{\rm Y}_{ij}}$ are all locally free of rank $2n-2.$ This also shows that ${\rm Y}_{ij}$ are all of dimension $2n-2.$
\end{proposition}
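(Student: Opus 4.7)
The plan is to mirror the proof of Proposition~\ref{tangent sheaf} (the $n=2$ case) by applying Serre--Tate and Grothendieck--Messing deformation theory (Theorem~\ref{T:STGM}) to each of the four closed subschemes $Y_{ij}$. Let $S$ be an affine noetherian $\FF_{p^2}$-scheme, put $\hat S = S \times_{\Spec \FF_{p^2}} \Spec \FF_{p^2}[t]/t^2$, and fix an $S$-point $y=(A,\lambda,\eta,A',\lambda',\eta',\phi)$ of $Y_{ij}$. By Grothendieck--Messing, deforming $y$ to $\hat S$ amounts to choosing, for each $k \in \{1,2\}$, subbundle lifts $\hat\omega^\circ_{A^\vee,k} \subseteq H_1^{\cris}(A/\hat S)^\circ_k$ and $\hat\omega^\circ_{A'^\vee,k} \subseteq H_1^{\cris}(A'/\hat S)^\circ_k$ of the Hodge filtrations, compatible with the quasi-isogeny in the sense that $\hat\phi_{*,k}(\hat\omega^\circ_{A^\vee,k}) \subseteq \hat\omega^\circ_{A'^\vee,k}$, and satisfying the incidence conditions $(1.i)$ and $(2.j)$ defining $Y_{ij}$. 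Since the constraints at $k=1$ and $k=2$ decouple, the tangent space factors as $T_{Y_{ij}} = \mathfrak{F}_i \oplus \mathfrak{G}_j$, and it suffices to analyse each index separately.

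For $\mathfrak{F}_0$ and $\mathfrak{G}_0$, the argument is essentially identical to the one for $Y_{00}$ in Proposition~\ref{tangent sheaf}: conditions $(1.0)$ and $(2.0)$ rigidify one of the two Hodge filtrations against $\ker(\hat\phi_{*,k})$, leaving a single free lift that produces the stated $\cHom$ bundle. The new phenomenon for $n \geq 3$ appears in $\mathfrak{F}_1$ and $\mathfrak{G}_1$: because the signature $(1,n-1)$ makes $\omega^\circ_{A'^\vee,1}$ have rank $1$ while $\Im(\phi_{*,1})$ has rank larger than $1$, condition $(1.1)$ becomes a genuine inclusion $\omega^\circ_{A'^\vee,1} \subseteq \Im(\phi_{*,1})$ rather than an equality. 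The deformation space then acquires two intertwined degrees of freedom: the lift $\hat\omega^\circ_{A^\vee,1}$ may vary inside $\phi^{-1}_{*,1}(\omega^\circ_{A'^\vee,1})$, while $\hat\omega^\circ_{A'^\vee,1}$ may independently vary inside $\Im(\phi_{*,1})$. Filtering by those sub-deformations that keep $\hat\omega^\circ_{A'^\vee,1}$ undeformed will produce the claimed short exact sequence for $\mathfrak{F}_1$; dually, using the co-isogeny $\psi$ with $\psi\circ\phi = p\cdot\id$, one obtains the exact sequence for $\mathfrak{G}_1$.

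The main technical point will be verifying exactness of these two short sequences. Concretely, the subspace of deformations keeping $\hat\omega^\circ_{A'^\vee,1}$ undeformed corresponds, via Grothendieck--Messing, to deformations of $\hat\omega^\circ_{A^\vee,1}$ trapped inside $\phi^{-1}_{*,1}(\omega^\circ_{A'^\vee,1})$, parametrised by $\cHom(\omega^\circ_{A^\vee,1}, \phi^{-1}_{*,1}(\omega^\circ_{A'^\vee,1})/\omega^\circ_{A^\vee,1})$; the quotient map records the residue class of $\hat\omega^\circ_{A'^\vee,1}$ in $\cHom(\omega^\circ_{A'^\vee,1}, \Im(\phi_{*,1})/\omega^\circ_{A'^\vee,1})$. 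The subtler half is surjectivity of this quotient map, which I plan to establish by lifting any prescribed class of $\hat\omega^\circ_{A'^\vee,1}$ and then constructing a compatible $\hat\omega^\circ_{A^\vee,1}$ by working in local bases adapted to the three-step filtration $\omega^\circ_{A^\vee,1} \subseteq \phi^{-1}_{*,1}(\omega^\circ_{A'^\vee,1}) \subseteq H_1^{\cris}(A/\hat S)^\circ_1$. Once the tangent sheaves are identified as claimed, local freeness and a direct rank count (using the signature $(1,n-1)$ and the rank-one cokernels built into the moduli problem of ${\Sh}_{1,n-1}(K_\gothp^1)$) give constant rank $2n-2$, and smoothness of each $Y_{ij}$ of the asserted dimension then follows from the standard criterion for representable deformation problems with a free tangent sheaf.
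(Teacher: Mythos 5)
Your proposal takes essentially the same route as the paper: the paper's own proof consists of the statement that the computation is identical to the $n=2$ case (Serre--Tate/Grothendieck--Messing lifting of the two Hodge filtrations subject to the conditions $(1.i)$, $(2.j)$), together with the remark that $\mathfrak{F}_{1}$ and $\mathfrak{G}_{1}$ fail to be direct sums precisely because the two filtration lifts cannot be chosen simultaneously/independently --- which is exactly the extension structure you identify and elaborate. So your argument is correct and matches the paper's (omitted) proof, merely filling in the filtration, surjectivity, and rank-count details that the paper leaves out.
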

\begin{proof}
    We compute the tangent sheaf using deformation theory. Consider a closed immersion $S\hookrightarrow\hat S$ in $\Sh_{1,n-1}(K_{\gothp}^{1})$ defined by an ideal sheaf $\mathcal{I}$ satisfying $\mathcal{I}^{2}=0.$ 
    \begin{enumerate}
        \item Take a point $x=(A_1,\lambda_1,\eta_1,A_2,\lambda_2,\eta_2,\phi)\in {\rm Y}_{00}(S).$ Lifting $x$ to $\hat{S}$ is equivalent to 
        \begin{itemize}
            \item lifting $\omega_{A_2^{\vee}/S,1}^{\circ}$ to $\omega_{\hat{A}_{2}^{\vee}/S,1}$ such that  $\omega_{\hat{A}_{2}^{\vee}/S,1}$ is contained in ${\rm H}_1^{\rm cris}(A_2/\hat{S})^{\circ}_1;$
            \item lifting $\omega_{A_1^{\vee}/S,2}^{\circ}$ to $\omega_{\hat{A}_{1}^{\vee}/S,2}$ such that  $\omega_{\hat{A}_{1}^{\vee}/S,2}$ is contained in ${\rm H}_1^{\rm cris}(A_1/\hat{S})^{\circ}_2.$
        \end{itemize}
        Thus after passing to the universal object, $T_{\rm Y_{00}}=\mathcal{F}_{0}\oplus \mathcal{G}_{0}.$
        \item Take a point $x=(A_1,\lambda_1,\eta_1,A_2,\lambda_2,\eta_2,\phi)\in {\rm Y}_{01}(S).$ Lifting $x$ to $\hat{S}$ is equivalent to 
        \begin{itemize}
            \item lifting $\omega_{A_2^{\vee}/S,1}^{\circ}$ to $\omega_{\hat{A}_{2}^{\vee}/S,1}^{\circ}$ such that  $\omega_{\hat{A}_{2}^{\vee}/S,1}^{\circ}$ is contained in ${\rm H}_1^{\rm cris}(A_2/\hat{S})_1^{\circ};$
            \item lifting $\omega_{A_1^{\vee}/S,2}^{\circ}$ to $\omega_{\hat{A}_{1}^{\vee}/S,2}^{\circ}$ such that  $\omega_{\hat{A}_{1}^{\vee}/S,2}^{\circ}$ is contained in ${\rm H}_1^{\rm cris}(A_1/\hat{S})_{2}^{\circ}$ and contains $\psi_{*,2}({\rm H}_1^{\rm cris}(A_2/\hat{S})^{\circ}_2)$
            \item lifting $\omega_{A_2^{\vee}/S,2}^{\circ}$ to $\omega_{\hat{A}_{2}^{\vee}/S,2}^{\circ}$ such that  $\omega_{\hat{A}_{2}^{\vee}/S,2}^{\circ}$ is contained in ${\rm H}_1^{\rm cris}(A_2/\hat{S})_{2}^{\circ}$ and contains $\phi_{*,2}(\omega_{\hat{A}_{1}^{\vee}/S,2}^{\circ}).$
        \end{itemize}
        Thus after passing to the universal object, $T_{\rm Y_{01}}=\mathcal{F}_{0}\oplus \mathcal{G}_{1}.$
        \item Take a point $x=(A_1,\lambda_1,\eta_1,A_2,\lambda_2,\eta_2,\phi)\in {\rm Y}_{10}(S).$ Lifting $x$ to $\hat{S}$ is equivalent to 
        \begin{itemize}
            \item lifting $\omega_{A_2^{\vee}/S,1}^{\circ}$ to $\omega_{\hat{A}_{2}^{\vee}/S,1}$ such that  $\omega_{\hat{A}_{2}^{\vee}/S,1}$ is contained in ${\rm H}_1^{\rm cris}(A_1/\hat{S})^{\circ}_1;$
            \item lifting $\omega_{A_1^{\vee}/S,1}^{\circ}$ to $\omega_{\hat{A}_{1}^{\vee}/S,1}^{\circ}$ such that  $\omega_{\hat{A}_{1}^{\vee}/S,1}^{\circ}$ is contained in $\phi_{*,1}^{-1}(\omega_{\hat{A}_{2}^{\vee}/S,1});$
            \item lifting $\omega_{A_1^{\vee}/S,2}^{\circ}$ to $\omega_{\hat{A}_{1}^{\vee}/S,2}$ such that  $\omega_{\hat{A}_{1}^{\vee}/S,2}$ is contained in ${\rm H}_1^{\rm cris}(A_1/\hat{S})^{\circ}_2.$
        \end{itemize}
        Thus after passing to the universal object, $T_{\rm Y_{10}}=\mathcal{F}_{1}\oplus \mathcal{G}_{0}.$
        \item Take a point $x=(A_1,\lambda_1,\eta_1,A_2,\lambda_2,\eta_2,\phi)\in {\rm Y}_{11}(S).$ Lifting $x$ to $\hat{S}$ is equivalent to 
        \begin{itemize}
            \item lifting $\omega_{A_2^{\vee}/S,1}^{\circ}$ to $\omega_{\hat{A}_{2}^{\vee}/S,1}$ such that  $\omega_{\hat{A}_{2}^{\vee}/S,1}$ is contained in ${\rm H}_1^{\rm cris}(A_1/\hat{S})^{\circ}_1;$
            \item lifting $\omega_{A_1^{\vee}/S,1}^{\circ}$ to $\omega_{\hat{A}_{1}^{\vee}/S,1}^{\circ}$ such that  $\omega_{\hat{A}_{1}^{\vee}/S,1}^{\circ}$ is contained in $\phi_{*,1}^{-1}(\omega_{\hat{A}_{2}^{\vee}/S,1});$
             \item lifting $\omega_{A_1^{\vee}/S,2}^{\circ}$ to $\omega_{\hat{A}_{1}^{\vee}/S,2}^{\circ}$ such that  $\omega_{\hat{A}_{1}^{\vee}/S,2}^{\circ}$ is contained in ${\rm H}_1^{\rm cris}(A_1/\hat{S})_{2}^{\circ}$ and contains $\psi_{*,2}({\rm H}_1^{\rm cris}(A_2/\hat{S})^{\circ}_2)$
            \item lifting $\omega_{A_2^{\vee}/S,2}^{\circ}$ to $\omega_{\hat{A}_{2}^{\vee}/S,2}^{\circ}$ such that  $\omega_{\hat{A}_{2}^{\vee}/S,2}^{\circ}$ is contained in ${\rm H}_1^{\rm cris}(A_2/\hat{S})_{2}^{\circ}$ and contains $\phi_{*,2}(\omega_{\hat{A}_{1}^{\vee}/S,2}^{\circ}).$
        \end{itemize}
        Thus after passing to the universal object, $T_{\rm Y_{11}}=\mathcal{F}_{1}\oplus \mathcal{G}_{1}.$
    \end{enumerate}
    
\end{proof}

We introduce some new correspondences now.

\begin{definition}\label{G:irreducible components of supsingular locus of Sh_1.n-1}
    For $1\leq i \leq n-1,$ let $C_{i}$ be the moduli space over $\FF_{p^2}$ that associates to  each  locally Noetherian $\FF_{p^2}$-scheme $S,$ the set of isomorphism classes of  $(A,\lambda,\eta, A',\lambda',\eta',\phi, B,\lambda'',\eta'',\psi,\psi'),$ where
    \begin{itemize}
        \item $(A, \lambda , \eta , A', \lambda',\eta', \phi)\in {{\Sh}}_{1,n-1}(K_{\gothp}^{1}),$
        \item $(B,\lambda'',\eta'')\in {{\Sh}}_{0,n},$
        \item $\psi$ is an isogeny from $B$ to $A$ such that $(A,\lambda,\eta,B,\lambda'',\eta'',\psi)\in {\rm Y}_{i}$ and $\psi'$ is an isogeny from $B$ to $A$ such that $(A',\lambda',\eta',B,\lambda'',\eta'',\psi')\in {\rm Y}_{i+1}.$
        \item $\phi\circ\psi=\psi'$
    \end{itemize}
    
    We also let $C_{n}$ be the moduli space over $\FF_{p^2}$ that associates to  each  locally Noetherian $\FF_{p^2}$-scheme $S,$ the set of isomorphism classes of tuples  $(A,\lambda,\eta, A',\lambda',\eta',\phi, B_1,\lambda''_{1},\eta''_{1},B_2,\lambda''_{2},\eta''_{2},\psi,\psi'),$ where
    \begin{itemize}
        \item $(A, \lambda , \eta , A', \lambda',\eta', \phi)\in {{\Sh}}_{1,n-1}(K_{\gothp}^{1}),$
        \item $(B_1,\lambda''_{1},\eta''_{1}),(B_2,\lambda''_{2},\eta''_{2})\in {{\Sh}}_{0,n},$
        \item $\psi$ is an isogeny from $B_1$ to $A$ such that $(A,\lambda,\eta,B_1,\lambda''_1,\eta''_1,\psi)\in {\rm Y}_{n}$ and $\psi'$ is an isogeny from $B_2$ to $A'$ such that $(A',\lambda',\eta',B_2,\lambda''_2,\eta''_2,\psi')\in {\rm Y}_{1}.$
        \item $B_{1}\in {\rm S}_\gothp(B_{2}).$ Here ${\rm S}_\gothp$ is the Hecke action defined in Definition~\ref{S:Hecke action} and denote by $\phi_{12}:B_1\ra B_2$ the corresponding $p$-quasi-isogeny.
        \item $\phi\circ\psi=\psi'\circ\phi_{12}.$
    \end{itemize}
\end{definition}

For $1\leq i\leq n,$ $C_{i}$ is representable by a smooth and projective scheme over ${\Sh}_{0,n}.$ For $1\leq i\leq n-1,$  let $\pr'_i$ denote the projection from $C_{i}$ to ${{\Sh}}_{0,n}$ mapping $(A,\lambda,\eta, A',\lambda',\eta',\phi, B,\lambda'',\eta'',\psi,\psi')$ to $(B,\lambda'',\eta'')$ and  $\pr_{i}$ denote the projection from $C_{i}$ to ${{\Sh}}_{1,n-1}(K_{\gothp}^{1})$ with image to be $(A,\lambda,\eta, A',\lambda',\eta',\phi).$
Let $\pr'_n$ denote the projection from $C_n$ to ${\Sh}_{0,n}$ mapping $(A,\lambda,\eta, A',\lambda',\eta',\phi, B_1,\lambda''_{1},\eta''_{1},B_2,\lambda''_{2},\eta''_{2},\psi,\psi')$ to $(B_2,\lambda''_{2},\eta''_{2},\psi,\psi')$ and $\pr_n$ denote the projection from $C_n$ to ${\Sh}_{1,n-1}$ with image $(A,\lambda,\eta, A',\lambda',\eta',\phi).$

The moduli problem for $C_{i}$ for $1\leq i\leq n$ is slightly complicated. We will introduce a more explict moduli space and then show they are isomorphic.

For $1\leq i\leq n-1,$ consider the functor $\underline{C}'_i$ which associates to each locally Noetherian $\F_{p^2}$-scheme $S,$ the set of isomorphism classes of tuples $(B, \lambda'', \eta'', H_1,H_2,H'_1,H'_2),$ where
\begin{itemize}
\item $(B,\lambda'',\eta'')$ is an $S$-valued point of ${\Sh}_{0,n}$;

\item $H_1,H'_1\subset {\rm H}^{\dR}_1(B/S)^{\circ}_{1}$ are $\calO_S$-subbundles of rank $i$ and $i+1$ respectively and  $H_2,H'_2\subset {\rm H}^{\dR}_1(B/S)_{2}^{\circ}$ are  $\calO_S$-subbundles of rank $i-1$ and $i$ respectively. They satisfy:

\begin{enumerate}
    \item $V^{-1}(H_2^{(p)})\subseteq V^{-1}(H_2'^{(p)})\bigcap H_{1},V^{-1}(H_2'^{(p)})\bigcup H_{1}\subseteq H'_{1},$
    \item $H_2\subseteq H'_2\bigcap F(H_{1}^{(p)}),H_2'\bigcup F(H_{1}^{(p)})\subseteq F(H_{1}'^{(p)}).$
\end{enumerate}
Here, $F: {\rm H}^{\dR}_1(B/S)^{\circ,(p)}_1\xra{\sim} {\rm H}^{\dR}_1(B/S)_{2}^{\circ}$ and $V:{\rm H}^{\dR}_1(B/S)^{\circ}_{1}\xra{\sim} {\rm H}^{\dR}_1(B/S)_{2}^{\circ, (p)} $ are the Frobenius and Verschiebung homomorphisms respectively, which are actually isomorphisms because of the signature condition on ${\Sh}_{0,n}.$
\end{itemize}

There is a natural projection  $\pi'_i\colon \underline{C}'_i\ra {\Sh}_{0,n}$ given by $(B,\lambda',\eta',H_1,H_2,H'_1,H'_2)\mapsto (B,\lambda',\eta').$
%We shall later prove that $\uY'_j$ is represented by the varieties $Y_j$ introduced in the previous section.

\begin{proposition}\label{P:Yj-prime}
For $1\leq i \leq n-1,$ the functor $\underline{C}'_i$ is representable by a scheme $C'_i$ smooth and projective over ${\Sh}_{0,n}$ of  dimension $n.$
 Moreover, if $(\calB,\lambda'',\eta'',\cH_1,\cH_2,\cH'_1,\cH'_2)$ denotes the universal object over $C'_i,$ then  the tangent bundle of $C'_i$ is
$
T_{Y'_{j},y_0}\simeq \mathcal{F}\oplus\mathcal{G},
$ where $\mathcal{F}, \mathcal{G}$ satisfies:
\begin{itemize}
    \item $0\ra \calH om\big(\calH_1/V^{-1}(\calH_2^{(p)}),\calH'_{1}/\calH_1\big)\ra \mathcal{F}$
    $\ra \calH om\big(\calH'_1/V^{-1}(\calH_2'^{(p)}),\calH^{\dR}_1(\calB/{{\Sh}}_{0,n})^{\circ}_{1}/\calH'_1\big) 
    \ra 0$ is exact.
    \item $0\ra \calH om\big(\calH'_2/\calH_{2}, F(\calH^{',(p)}_1)/\calH'_2\big)
\ra \mathcal{G}\ra \calH om\big(\calH_2, F(\calH^{(p)}_1)/\calH_2\big)\ra 0$ is exact.
\end{itemize}
\end{proposition}
\begin{proof}
The proof is exactly the same as \cite[Proposition 4.4]{HTX17} for $\underline{Y}_j'.$
\end{proof}

To construct a morphism from $C_{i}$ to $C'_{i}$ for $1\leq i\leq n-1,$ we need the following lemma:
\begin{lemma}\label{L:omega-F}
Let $(A,\lambda, \eta, B,\lambda',\eta',\phi)$ be an $S$-point of $Y_{j}.$
 Then the image of $\phi_{*,1}$ contains both $\omega^{\circ}_{A^\vee/S,1}$ and $F\big({\rm H}^{\dR}_1(A/S)^{\circ,(p)}_{2}\big),$ and the image of $\phi_{*,2}$ is contained in $\omega^{\circ}_{A^\vee/S,2}$ and $F\big({\rm H}^{\dR}_1(A/S)^{\circ,(p)}_{1}\big).$
\end{lemma}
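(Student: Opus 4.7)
The plan is to use three inputs: (a) the dual quasi-isogeny $\psi: A \to B$ satisfying $\psi \circ \phi = p\cdot\mathrm{id}_B$ and $\phi\circ\psi = p\cdot\mathrm{id}_A$, which gives the identifications $\mathrm{Ker}(\phi_{*,i}) = \mathrm{Im}(\psi_{*,i})$ and $\mathrm{Ker}(\psi_{*,i}) = \mathrm{Im}(\phi_{*,i})$ recalled at the beginning of Subsection~4.1; (b) the fact that any $\calO_D$-equivariant isogeny preserves the Hodge filtration on reduced de Rham homology, so $\phi_{*,i}(\omega^\circ_{B^\vee/S,i}) \subseteq \omega^\circ_{A^\vee/S,i}$ and similarly for $\psi$; (c) the signature $(0,n)$ of $B$, which forces $\omega^\circ_{B^\vee/S,1} = 0$ and $\omega^\circ_{B^\vee/S,2} = H^\dR_1(B/S)^\circ_2$, and therefore (using $\mathrm{Ker}(F) = \omega^{\circ,(p)}_{B^\vee/S,i-1}$) that the Frobenius $F\colon H^\dR_1(B/S)^{\circ,(p)}_1 \xra{\sim} H^\dR_1(B/S)^\circ_2$ is an isomorphism while $F\colon H^\dR_1(B/S)^{\circ,(p)}_2 \to H^\dR_1(B/S)^\circ_1$ is identically zero.

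With these tools, the first and third assertions are immediate. For $\omega^\circ_{A^\vee/S,1} \subseteq \mathrm{Im}(\phi_{*,1})$, I would apply $\psi_{*,1}$: by (b) the image $\psi_{*,1}(\omega^\circ_{A^\vee/S,1})$ lies in $\omega^\circ_{B^\vee/S,1} = 0$, so $\omega^\circ_{A^\vee/S,1} \subseteq \mathrm{Ker}(\psi_{*,1}) = \mathrm{Im}(\phi_{*,1})$. Dually, for $\mathrm{Im}(\phi_{*,2}) \subseteq \omega^\circ_{A^\vee/S,2}$, the source $H^\dR_1(B/S)^\circ_2$ equals $\omega^\circ_{B^\vee/S,2}$ by (c), and (b) sends it into $\omega^\circ_{A^\vee/S,2}$.

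The remaining two assertions are the interesting ones and use the Frobenius compatibility $\phi_{*,i+1}\circ F = F\circ \phi_{*,i}^{(p)}$ (and analogously for $\psi$). To prove $F(H^\dR_1(A/S)^{\circ,(p)}_2) \subseteq \mathrm{Im}(\phi_{*,1})$, I would compose with $\psi_{*,1}$ and rewrite
\[
\psi_{*,1}\bigl(F(H^\dR_1(A/S)^{\circ,(p)}_2)\bigr) = F\bigl(\psi_{*,2}^{(p)}(H^\dR_1(A/S)^{\circ,(p)}_2)\bigr) \subseteq F\bigl(H^\dR_1(B/S)^{\circ,(p)}_2\bigr) = 0
\]
by (c); hence the image lies in $\mathrm{Ker}(\psi_{*,1}) = \mathrm{Im}(\phi_{*,1})$. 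For $\mathrm{Im}(\phi_{*,2}) \subseteq F(H^\dR_1(A/S)^{\circ,(p)}_1)$, every element of the source $H^\dR_1(B/S)^\circ_2$ is of the form $F(y)$ for some $y\in H^\dR_1(B/S)^{\circ,(p)}_1$, again by (c); then $\phi_{*,2}(F(y)) = F(\phi_{*,1}^{(p)}(y))$ manifestly lies in $F(H^\dR_1(A/S)^{\circ,(p)}_1)$.

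There is no real obstacle here: everything reduces to bookkeeping with the identities $\mathrm{Ker}(\phi_*) = \mathrm{Im}(\psi_*)$, functoriality of $F$ under isogenies, and the explicit description of the Hodge filtration and $\Ker F$ on the signature-$(0,n)$ side $B$. The only point to be careful about is keeping track of Frobenius twists $(p)$ so that the compositions $\phi_{*,\bullet}\circ F$ and $F\circ \phi^{(p)}_{*,\bullet}$ are formed between the correct reduced summands, but this is routine.
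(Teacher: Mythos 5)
Your argument is correct and is essentially the proof of the result the paper relies on: the paper gives no argument of its own for this lemma beyond the citation \cite[Lemma 4.6]{Helm_2017}, and the standard proof proceeds exactly as you do — via the dual quasi-isogeny $\psi$, functoriality of the Hodge filtration and of $F$ under isogenies, and the signature-$(0,n)$ facts $\omega^{\circ}_{B^\vee/S,1}=0$, $\omega^{\circ}_{B^\vee/S,2}=H^{\dR}_1(B/S)^{\circ}_2$ (so that $F\colon H^{\dR}_1(B/S)^{\circ,(p)}_1\to H^{\dR}_1(B/S)^{\circ}_2$ is an isomorphism and $F\colon H^{\dR}_1(B/S)^{\circ,(p)}_2\to H^{\dR}_1(B/S)^{\circ}_1$ vanishes). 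The only bookkeeping caveat is the identity $\Ker(\psi_{*,i})=\im(\phi_{*,i})$ that you invoke: the paper's display in Subsection 4.1 repeats $\Ker(\phi_{*,i})=\im(\psi_{*,i})$ twice (a typo), but the intended statement includes the direction you need, so your use of it is legitimate.
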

\begin{proof}
    See \cite[Lemma 4.6]{HTX17}.
\end{proof}

There is a natural  morphism $\alpha: C_i\ra C'_i$ for $1\leq j\leq n-1$ defined as follows. For a locally Noetherian $\FF_{p^2}$-scheme $S$ and an $S$-point $(A,\lambda,\eta, A',\lambda',\eta',\phi, B,\lambda'',\eta'',\psi,\psi')$ of $C_i,$  we define
\begin{equation*}
\label{EE:definition of H12}
H_1:=\psi_{*,1}^{-1}(\omega^{\circ}_{A^\vee/S,1})\subseteq {\rm H}^{\dR}_1(B/S)^{\circ}_{1},
\quad \text{and}\quad
 H_2:=p\psi_{*,2}^{'-1}(\omega^{\circ}_{A^\vee/S,2})\subseteq {\rm H}^{\dR}_{1}(B/S)^{\circ}_{2}.
\end{equation*}
\begin{equation*}
    \label{EE:definition of H'12}
H'_1:=\psi_{*,1}^{'-1}(\omega^{\circ}_{A'^{\vee}/S,1})\subseteq {\rm H}^{\dR}_1(B/S)^{\circ}_{1},
\quad \text{and}\quad
 H'_2:=p\psi_{*,2}^{'-1}(\omega^{\circ}_{A'^\vee/S,2})\subseteq {\rm H}^{\dR}_{1}(B/S)^{\circ}_{2}.
\end{equation*}
In particular, $H_1,H'_1,H_{2}$ and  $H'_2$ are $\calO_S$-subbundles of rank $i,i+1,i-1$ and $i,$ respectively.
By \ref{L:omega-F}, we can easily see $\alpha$ is well-defined.

Therefore, we have a well-defined map $\alpha\colon C_i\ra C'_i$ given by
 \[
 \alpha\colon (A,\lambda,\eta, A',\lambda',\eta',\phi, B,\lambda'',\eta'',\psi,\psi')\longmapsto (B,\lambda'',\eta'',H_1,H_2,H'_1,H'_2).
 \]
Moreover, it is clear from the definition that $\pi'_i \circ \alpha = \pr'_i.$

 \begin{proposition}\label{P:isom-Yj}
The morphism $\alpha$ is an isomorphism.
\end{proposition}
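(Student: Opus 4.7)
The plan is to mimic the strategy used to prove the isomorphism $Y_j \cong Y'_j$ in \cite[Proposition 4.8]{Helm_2017}: construct an explicit inverse $\beta\colon C'_i\to C_i$ via the Dieudonn\'e module functor in Proposition~\ref{P:abelian-Dieud}, check that $\alpha$ and $\beta$ are mutually inverse on points, and then upgrade the point-wise bijection to a scheme-theoretic isomorphism by a tangent space comparison.

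First, I would construct $\beta$ on points. Given an $S$-point $y=(B,\lambda'',\eta'',H_1,H_2,H'_1,H'_2)$ of $C'_i$, working locally and passing to Dieudonn\'e modules via the identification $\tilde{\calD}(B)^\circ_k/p\tilde{\calD}(B)^\circ_k\cong H^{\dR}_1(B/S)^\circ_k$, I lift $H_k$ and $H'_k$ to $W$-submodules $\tilde{\calE}_k,\tilde{\calE}'_k\subseteq \tilde{\calD}(B)^\circ_k$ containing $p\tilde{\calD}(B)^\circ_k$. The relations $V^{-1}(H_2^{(p)})\subseteq H_1$ and $H_2\subseteq F(H_1^{(p)})$ (and the primed analogues) imply the stability conditions $F(\tilde{\calE}_k)\subseteq \tilde{\calE}_{k+1}$, $V(\tilde{\calE}_k)\subseteq \tilde{\calE}_{k-1}$ of Proposition~\ref{P:abelian-Dieud}. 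Applying that proposition twice produces abelian schemes $A,A'$ over $S$ with $\calO_D$-action, polarizations $\lambda,\lambda'$, level structures $\eta,\eta'$, and isogenies $\psi\colon B\to A$, $\psi'\colon B\to A'$; the dimension formula \eqref{E:dimension of new differentials} together with the ranks of $H_k$ and $H'_k$ gives the signature $(1,n-1)$ on both $A$ and $A'$ and identifies $(A,\lambda,\eta,B,\lambda'',\eta'',\psi)\in Y_i$ and $(A',\lambda',\eta',B,\lambda'',\eta'',\psi')\in Y_{i+1}$. The inclusions $\tilde{\calE}_k\subseteq \tilde{\calE}'_k$ coming from $H_k\subseteq H'_k$ induce a unique $\calO_D$-equivariant $p$-quasi-isogeny $\phi\colon A\to A'$ with $\phi\circ\psi=\psi'$, and the cokernel rank conditions in Definition~\ref{shii} are satisfied by construction. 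Set $\beta(y)$ to be this tuple.

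Second, I would verify $\alpha\circ\beta=\id$ and $\beta\circ\alpha=\id$ on points. The former follows by unwinding Lemma~\ref{L:omega-F} together with the construction of $A,A'$ from $\tilde{\calE}_k,\tilde{\calE}'_k$: the subbundles $\psi_{*,1}^{-1}(\omega^\circ_{A^\vee,1})$ and $p\psi_{*,2}^{-1}(\omega^\circ_{A^\vee,2})$ reproduce $H_1$ and $H_2$ exactly, and similarly for the primed pair. The reverse composition uses uniqueness in Proposition~\ref{P:abelian-Dieud} to recover $(A,\lambda,\eta,A',\lambda',\eta',\phi,\psi,\psi')$ from the Hodge data on $B$.

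Third, I would promote the bijection to an isomorphism. Both $C_i$ and $C'_i$ are smooth and proper over ${\Sh}_{0,n}$ of the same relative dimension $n$: this follows from Proposition~\ref{P:Yj-prime} on the target side, and from Proposition~\ref{tangent sheaff} combined with a direct deformation-theoretic analysis of the forgetful map $C_i\to{\Sh}_{1,n-1}(K^1_\gothp)$ on the source side (its fibres being identified with suitable auxiliary Dieudonn\'e lattices, of complementary dimension to the $n-1$-dimensional supersingular locus). Since $\alpha$ is a proper morphism between smooth varieties of the same dimension and is bijective on geometric points, it suffices by Zariski's main theorem to check that the differential $d\alpha$ is an isomorphism at each closed point; properness then forces $\alpha$ to be an isomorphism. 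The tangent-space comparison uses the canonical identifications $H_1/V^{-1}(H_2^{(p)})\xrightarrow{\sim}\omega^\circ_{A^\vee,1}$ and $F(H_1^{(p)})/H_2\xrightarrow{\sim}\Lie^\circ_{A,2}$ (and analogues for the primed data) to match the two short exact sequences in Proposition~\ref{tangent sheaff}(2),(4) with the two short exact sequences in Proposition~\ref{P:Yj-prime}. The main obstacle will be precisely this matching: one must verify that the extension classes governing how the tangent bundle on $C_i$ interpolates between the deformation of $(A,\phi,\psi)$ and of $(A',\phi,\psi')$ agree under $\alpha$ with the extension classes on $C'_i$ governing how $H'_1,H'_2$ extend $H_1,H_2$ inside $H^{\dR}_1(B/S)^\circ$. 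This is ultimately a formal computation, but it requires careful bookkeeping of the Frobenius and Verschiebung maps on the various quotients.
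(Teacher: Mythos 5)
Your overall strategy is the same as the paper's: build a set-theoretic inverse $\beta$ on points by lifting the Hodge-type data on $B$ to Dieudonn\'e lattices and invoking Proposition~\ref{P:abelian-Dieud} (the paper actually runs this through Corollary~\ref{CORP}, applied to $p^{-1}\tcE_k$, and sets $\phi=\psi'\circ\psi^{-1}$), check $\alpha$ and $\beta$ are mutually inverse, and then upgrade to an isomorphism by a Grothendieck--Messing tangent-space comparison, using that $B$ deforms only trivially, that the image of $\pr_i$ lands in ${\rm Y}_{11}$ (via Lemma~\ref{L:omega-F}), and matching the extension descriptions of Proposition~\ref{tangent sheaff} with those of Proposition~\ref{P:Yj-prime}; your appeal to properness/ZMT is a harmless variant of the paper's ``bijective on points, isomorphism on tangent spaces, target smooth'' conclusion.

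There is, however, one concrete step that fails as written: you take $\tcE_k,\tcE'_k$ to be the preimages in $\tcD(B)^\circ_k$ of $H_k,H'_k$ themselves and claim the moduli conditions give $F(\tcE_k)\subseteq\tcE_{k+1}$, $V(\tcE_k)\subseteq\tcE_{k-1}$. They do not: with that choice, $F(\tcE_1)\bmod p$ is $F(H_1^{(p)})$, which by hypothesis \emph{contains} $H_2$ (rank $i$ versus $i-1$), so $F(\tcE_1)\not\subseteq\tcE_2$; numerically the lengths come out as $\ell_1=n-i$, $\ell_2=n-i+1$, and the dimension formula \eqref{EE:dimension of new differentials} would produce a ``signature'' with a negative entry rather than $(1,n-1)$. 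The correct lattices --- as in the paper and in the $Y_j\cong Y'_j$ argument of Helm--Tian--Xiao it imitates --- are the preimages of $V^{-1}(H_2^{(p)})$ and $F(H_1^{(p)})$ (and the primed analogues); with these, the containments $V^{-1}(H_2^{(p)})\subseteq H_1$ and $H_2\subseteq F(H_1^{(p)})$ do yield $F,V$-stability, and the length count gives exactly the signature $(1,n-1)$ for $A$ and $A'$ and places $(A,\ldots,\psi)$ in ${\rm Y}_i$ and $(A',\ldots,\psi')$ in ${\rm Y}_{i+1}$. Once this substitution is made, the rest of your argument goes through and coincides with the paper's proof.
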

\begin{proof}
The proof is similar to \cite[Proposition 4.6]{HTX17} and we omit here.
\end{proof}
In the sequel, we will always identify $C_i$ with $C'_i$ and $\pr'_i$ with $\pi'_i$ for $1\leq i\leq n-1.$
Recall $pr_n$ as a morphism from $C_n$ to $\Sh_{1,n-1},$ Since $(A,\lambda,\eta,B_1,\lambda''_1,\eta''_1,\psi)\in Y_{n}$ and $(A',\lambda',\eta',B_2,\lambda''_2,\eta''_2,\psi')\in Y_{1},$ we get $\psi_{*,1}({\rm H}_{1}^{\rm dR}(B_{1}/S)_{1}^{\circ})=\omega_{A^{\vee},1}^{\circ},$ $\psi_{*,2}({\rm H}_{1}^{\rm dR}(B_{1}/S)_{2}^{\circ})=0,$ and $\psi'_{*,1}({\rm H}_{1}^{\rm dR}(B_{2}/S)_{1}^{\circ})={\rm H}_{1}^{\rm dR}(A'/S)_{1}^{\circ}$ and $\psi'_{*,2}({\rm H}_{1}^{\rm dR}(B_{2}/S)_{2}^{\circ})=\omega_{A'^{\vee},2}^{\circ}.$ Since $B_{1}\in {\rm S}_\gothp(B_{2}),$ we get $\omega^{\circ}_{A^{\vee},1}=\Ker(\phi_{*,1})$ and $\Im(\phi_{*,2})=\omega^{\circ}_{A^{'\vee},2}.$ Thus the image of $C_{n}$ is contained in ${\rm Y}_{00}.$

\begin{proposition}
    The morphism $\pr_{n}:C_{n}\ra {\rm Y}_{00}$ is an isomorphism.
\end{proposition}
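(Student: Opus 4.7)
The plan is to imitate the proof of Proposition~\ref{P:isom-Yj}: I will construct a set-theoretic inverse $\beta\colon {\rm Y}_{00}\to C_n$ via an explicit Dieudonn\'e module construction, and then upgrade to a scheme isomorphism by a tangent space comparison.

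Given a perfect field $k\supseteq\FF_{p^2}$ and a $k$-point $y=(A,\lambda,\eta,A',\lambda',\eta',\phi)\in{\rm Y}_{00}(k)$, I will set
\[
\tcE_1 := V\tcD(A)^\circ_2,\quad \tcE_2 := p\tcD(A)^\circ_2,\quad \tcE'_1 := \tcD(A')^\circ_1,\quad \tcE'_2 := V\tcD(A')^\circ_1,
\]
and apply Proposition~\ref{P:abelian-Dieud} with $m=1$ to each of the pairs $(\tcE_1,\tcE_2)$ and $(\tcE'_1,\tcE'_2)$ to produce abelian varieties $B_1,B_2$ over $k$, together with induced prime-to-$p$ polarizations $\lambda''_i$, level structures $\eta''_i$ and $\cO_D$-equivariant isogenies $\psi_1\colon B_1\to A$, $\psi_2\colon B_2\to A'$. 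The dimension formula of Proposition~\ref{P:abelian-Dieud} will give $\dim\omega^\circ_{B_i^\vee,1}=0$ and $\dim\omega^\circ_{B_i^\vee,2}=n$, so the $(B_i,\lambda''_i,\eta''_i)$ are points of ${\Sh}_{0,n}$, and comparing the cokernel ranks of $\psi_{i,*,\bullet}$ modulo $p$ will place the resulting triples in $Y_n(k)$ and $Y_1(k)$ respectively; this is essentially the construction used in the proof of Proposition~\ref{intersection of Yi}(3) for $n=2$, extended to arbitrary $n$.

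The delicate step will be verifying $B_1\in S_\gothp(B_2)$. My plan is to use the natural $p$-quasi-isogeny $\phi':=\psi_2^{-1}\circ\phi\circ\psi_1\colon B_1\to B_2$ and translate the $Y_{00}$ identities $\omega^\circ_{A^\vee,1}=\Ker\phi_{*,1}$ and $\Ker\phi_{*,2}=\omega^\circ_{A^\vee,2}$ into integral Dieudonn\'e relations via the canonical isomorphism $V\tcD(A)_{i-1}/p\tcD(A)_i\cong\omega^\circ_{A^\vee,i}$. Combined with the identifications of $\tcD(B_i)^\circ_\bullet$ displayed above and a rank count, this should yield $\phi'_*\tcD(B_1)^\circ_i=p\tcD(B_2)^\circ_i$ for $i=1,2$; intersecting with the $F^2=p$-sublattice of $\tcD^\circ_1$ then recovers the lattice equality $\phi'_*(\LL_{B_1})=p\LL_{B_2}$ required by Definition~\ref{hecke action}, so that $B_1\in S_\gothp(B_2)$.

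For $\beta\circ\pr_n=\id$, I will argue uniqueness: for any $(B_1,\psi_1)$ appearing in a $C_n$-point over $y$, the $Y_n$-condition forces $\psi_{1,*,2}=0$ on $H^\dR$ and hence $\tcD(B_1)^\circ_2=p\tcD(A)^\circ_2$ by a rank count, while Lemma~\ref{L:omega-F} combined with the rank of the image of $\psi_{1,*,1}$ forces this image to equal $\omega^\circ_{A^\vee,1}$, so $\tcD(B_1)^\circ_1=V\tcD(A)^\circ_2$; a symmetric argument pins down $B_2$. Finally, the same Dieudonn\'e construction is functorial under $k[\epsilon]/(\epsilon^2)$-thickenings by Serre--Tate and Grothendieck--Messing (Theorem~\ref{T:STGM}), so $d\pr_n$ is a bijection at every closed point; together with the pointwise bijectivity and the smoothness of ${\rm Y}_{00}$ (Proposition~\ref{tangent sheaff}) and of $C_n$, this forces $\pr_n$ to be an isomorphism. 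The main obstacle will be the translation in step three: the $Y_{00}$ identities are statements modulo $p$, and care is required to promote them to the lattice-level equalities needed to recognize $\phi'$ as a coset representative in $S_\gothp$.
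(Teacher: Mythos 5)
Your overall strategy --- an explicit Dieudonn\'e-lattice construction of a set-theoretic inverse followed by a Grothendieck--Messing tangent-space comparison --- is the same as the paper's, and your choice of lattices $(V\tcD(A)^\circ_2,\,p\tcD(A)^\circ_2)$ and $(\tcD(A')^\circ_1,\,V\tcD(A')^\circ_1)$ is exactly the one used for $n=2$ in Proposition~\ref{geometry1} (not Proposition~\ref{intersection of Yi}). However, there is a genuine gap at the very first step. Proposition~\ref{P:abelian-Dieud} requires the chosen submodules to be $F$- and $V$-stable, and for your two pairs this is \emph{not} automatic: $V(V\tcD(A)^\circ_2)\subseteq p\tcD(A)^\circ_2$ is equivalent to $F\tcD(A)^\circ_2=V\tcD(A)^\circ_2$, and $F\tcD(A')^\circ_1\subseteq V\tcD(A')^\circ_1$ is equivalent to $F\tcD(A')^\circ_1=V\tcD(A')^\circ_1$. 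These equalities say precisely that $A$ lies on the stratum ${\mathop{p}\limits^{\leftarrow}}_{n}({\rm Y}_{n})$ and $A'$ on ${\mathop{p}\limits^{\leftarrow}}_{1}({\rm Y}_{1})$; they are the real content of the proposition and must be extracted from the ${\rm Y}_{00}$-membership \emph{before} $B_1$ and $B_2$ can be constructed at all. Your plan assumes them silently (the ``dimension formula will give\dots'' step) and postpones all use of the ${\rm Y}_{00}$-condition to the later $S_\gothp$-verification, at which point the objects whose relation you are checking need not yet exist.

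Moreover, the translation you propose does not produce what is needed. The condition $\omega^\circ_{A^\vee,1}=\Ker(\phi_{*,1})$ does yield the integral identity $\phi_{*,1}(V\tcD(A)^\circ_2)=p\tcD(A')^\circ_1$, but the second identity you need, $\phi_{*,2}(\tcD(A)^\circ_2)=V\tcD(A')^\circ_1$ (which, given the first identity, is equivalent to $F\tcD(A')^\circ_1=V\tcD(A')^\circ_1$, hence also forces $F\tcD(A)^\circ_2=V\tcD(A)^\circ_2$), is not obtained from the kernel condition on $\phi_{*,2}$ ``by a rank count'': that condition only constrains a length-one kernel and is strictly weaker --- one can write down $F,V$-stable lattices satisfying both kernel identities with $A$ not supersingular, so an image-type identity must genuinely be invoked. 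This is exactly the identity the paper's own proof builds into its lattices when it sets $\tcE'_2=\phi_{*,2}\tcD(A)^\circ_2$ (and $\tcE_1=\phi_{*,1}V\tcD(A)^\circ_2=p\tcD(A')^\circ_1$), so that existence of the $\Sh_{0,n}$-point, its membership in ${\rm Y}_n$ resp.\ ${\rm Y}_1$, and the $S_\gothp$-relation all drop out of those two displayed equalities. Supplying that conversion of the ${\rm Y}_{00}$-condition into the two lattice identities is the crux, and it is missing from your plan; once it is in place, your $S_\gothp$-step is immediate ($\tcD(B_1)^\circ_i=p\tcD(B_2)^\circ_i$ inside the common isocrystal), and your uniqueness and deformation-theoretic steps are at the same level of detail as the paper's (though the tangent-space comparison should be phrased via crystalline homology and Theorem~\ref{T:STGM} rather than ``Dieudonn\'e modules over $k[\epsilon]$'', which are not defined).
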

\begin{proof}
    Let $k$ be a perfect field containing $\F_{p^2}.$ We first prove that $\pr_{n}$ induces a bijection of points $\pr_{n}: C_n(k)\xra{\sim} {\rm Y}_{00}(k).$
It suffices to show that there exists  a morphism of sets $\beta: {\rm Y}_{00}(k)\ra C_n(k)$ inverse to $\pr_n.$
Let $y=(A,\lambda,\eta, A',\lambda',\eta',\phi)\in {\rm Y}_{00}(k).$
Define $\beta(y)=(A,\lambda,\eta, A',\lambda',\eta',\phi, B_1,\lambda_1'',\eta_1'', B_2,\lambda_2'',\eta_2'',\psi,\psi')$ as follows.
Let $\tcE_1=V\tcD(A)_{2},\tcE'_1=p\tcD(A)_{2}$ and $\tcE_2=\tcD(A')_{2},\tcE'_2=V\tcD(A')_{2}.$
Applying Proposition~\ref{P:abelian-Dieud} with $m=1,$ we get two points $(B_1,\lambda_1'',\eta_1'')$ and $(B_2,\lambda_2'',\eta_2'')$ of ${\Sh}_{0,n}$ such that there exist isogenies $\psi: B_1\ra A,\psi': B_2\ra A'.$ It can be checked easily that $B_1={\rm S}_\gothp(B_2).$
This finishes the construction of $\beta(y).$
 It is direct to check that $\beta$ is the set theoretic inverse to $\pr_{n}: C_n(k)\ra {\rm Y}_{00}(k).$

By a simple argument on Serre--Tate and Grothendieck--Messing deformation theory, we can show that $\pr_{n}$ induces an isomorphism on the tangent spaces at each closed point; it follows that $\pr_{n}$ is an isomorphism.
\end{proof}

As a corollary, we can write the correspondence between ${{\Sh}}_{0,n}$ and ${{\Sh}}_{1,n-1}(K_{\gothp}^{1})$ determined by $C_{i}$ for $1\leq i\leq n$ in the following diagrams:
\begin{corollary}
\label{Corresss}
    \begin{enumerate}
    \item For $1\leq i\leq n-1,$ we have the following diagram:
    \[\begin{tikzcd}
	{{\rm Y}_{11}} && {C_i} && {{\rm Sh}_{0,n}} \\
	\\
	{{\rm Sh_{1,n-1}}\times{\rm Sh_{1,n-1}}} && {{\rm Y}_{i}\times {\rm Y}_{i+1}} && {{\rm Sh}_{0,n}\times {\rm Sh}_{0,n}}
	\arrow[from=1-1, to=3-1]
	\arrow["{\pr_{i}}"', from=1-3, to=1-1]
	\arrow["{\pr'_{i}}", from=1-3, to=1-5]
	\arrow[from=1-3, to=3-3]
	\arrow["{{(1,1)}}", from=1-5, to=3-5]
	\arrow["{{{\mathop{p}\limits^{\leftarrow}}_{i}\times {\mathop{p}\limits^{\leftarrow}}_{i+1}}}"', from=3-3, to=3-1]
	\arrow["{{{\mathop{p}\limits^{\rightarrow}}_{i}\times {\mathop{p}\limits^{\rightarrow}}_{i+1}}}", from=3-3, to=3-5]
    \end{tikzcd}\] where the first two vertical arrows are induced by natrual projections.
    \item For $C_{n},$ we have the following diagram:
    \[\begin{tikzcd}
	{{\rm Y}_{00}} && {C_n} && {{\rm Sh}_{0,n}} \\
	\\
	{{\rm Sh}_{1,n}\times{\rm Sh}_{1,n}} && {{\rm Y}_{n}\times {\rm Y}_{1}} && {{\rm Sh}_{0,n}\times {\rm Sh}_{0,n}}
	\arrow[from=1-1, to=3-1]
	\arrow["{\pr_{n},\simeq}"', from=1-3, to=1-1]
	\arrow["{\pr'_{n},{\mathbb{P}^{n-1}\times \mathbb{P}^{n-1} bundle}}", from=1-3, to=1-5]
	\arrow[from=1-3, to=3-3]
	\arrow["{{(1,{\rm S}_\gothp)}}", from=1-5, to=3-5]
	\arrow["{{{\mathop{p}\limits^{\leftarrow}}_{n}\times {\mathop{p}\limits^{\leftarrow}}_{1}}}"', from=3-3, to=3-1]
	\arrow["{{{\mathop{p}\limits^{\rightarrow}}_{n}\times {\mathop{p}\limits^{\rightarrow}}_{1}}}", from=3-3, to=3-5]
\end{tikzcd}\] where the first two vertical arrows are induced by natrual projections.
    \end{enumerate}
\end{corollary}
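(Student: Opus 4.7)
The plan is to observe that Corollary~\ref{Corresss} is essentially a diagrammatic repackaging of the preceding results of Section~\ref{G}, with the only genuinely new content being the $\mathbb{P}^{n-1}\times\mathbb{P}^{n-1}$ bundle structure for $\pr'_n$ and the interpretation of the right-hand vertical in part (2) as the graph of the Hecke correspondence $S_\gothp$. I would organize the proof around three checks: (a) identifying the images of $\pr_i$ and $\pr_n$, (b) commutativity of both squares, and (c) producing the projective bundle structure in part (2).

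For (a), in part (1) I would lift from the proof of Proposition~\ref{P:isom-Yj} the observation that any $S$-point $(A,\lambda,\eta,A',\lambda',\eta',\phi,B,\lambda'',\eta'',\psi,\psi')$ of $C_i$ with $1\leq i\leq n-1$ has its image under $\pr_i$ in $\rm Y_{11}$: applying Lemma~\ref{L:omega-F} to $(A,B,\psi)\in{\rm Y}_i$ and $(A',B,\psi')\in{\rm Y}_{i+1}$ together with the relation $\phi\circ\psi=\psi'$ gives the inclusions $V\tcD(A')_2^\circ\subseteq\psi'_{*,1}\tcD(B)_1^\circ\subseteq\phi_{*,1}\tcD(A)_1^\circ$ and $p\tcD(A')_2^\circ\subseteq\psi'_{*,2}\tcD(B)_2^\circ\subseteq\phi_{*,2}V\tcD(A)_1^\circ$, which by a dimension count force $\omega_{A'^\vee,1}^\circ=\Im(\phi_{*,1})$ and $\Im(\phi_{*,2})=\omega_{A'^\vee,2}^\circ$, i.e.\ conditions $(1.1)(2.1)$. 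For part (2), the isomorphism $\pr_n\colon C_n\xrightarrow{\sim}{\rm Y}_{00}$ has already been proved. Commutativity of the two squares is then tautological: the top horizontal arrows record $(A,A')$ on the left and $(B_1,B_2)$ on the right (with $B_1=B_2=B$ in part (1)), which matches the bottom composition.

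For (c), I would first note that $S_\gothp=T_\gothp^{(n)}$ corresponds to $\diag(p,\dots,p)$, so by Definition~\ref{hecke action} the double coset $K_\gothp\diag(p,\dots,p)K_\gothp$ reduces to a single coset and $S_\gothp$ is an automorphism of ${\rm Sh}_{0,n}$. Thus $(1,S_\gothp)\colon{\rm Sh}_{0,n}\to{\rm Sh}_{0,n}\times{\rm Sh}_{0,n}$ is an honest morphism, and for $B_1\in{\rm Sh}_{0,n}(\overline\F_p)$ the partner $B_2=S_\gothp^{-1}(B_1)$ is uniquely determined. The fibre of $\pr'_n$ at $B_1$ then decomposes as (fibre of ${\rm Y}_n\to{\rm Sh}_{0,n}$ at $B_1$)$\times$(fibre of ${\rm Y}_1\to{\rm Sh}_{0,n}$ at $B_2$). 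By Proposition~\ref{corre(1,n-1)(0,n)}(1) these are $Z_n^{\langle n\rangle}$ and $Z_1^{\langle n\rangle}$, which from Definition~\ref{DL} identify with $\mathbf{Gr}(n,n-1)\cong\mathbb{P}^{n-1}$ and $\mathbf{Gr}(n,1)\cong\mathbb{P}^{n-1}$ (the incidence conditions $H_2\subseteq H_1^{(p)}$ and $H_2^{(p)}\subseteq H_1$ being vacuous when $\dim H_1=n$ or $\dim H_2=0$). The global projective bundle structure follows by realizing both ${\rm Y}_n$ and ${\rm Y}_1$ as closed subschemes of relative Grassmannians attached to the universal de Rham homology of the universal abelian scheme over ${\rm Sh}_{0,n}$, exactly as in the proof of Proposition~\ref{P:Yj-prime}.

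The main (mild) obstacle is the observation that $S_\gothp$ acts on ${\rm Sh}_{0,n}$ as an honest automorphism rather than a multi-valued correspondence, which is what legitimates the right-hand vertical of part (2) as a morphism. Once this is noted, every remaining claim reduces to diagram-chasing plus the applications of Propositions~\ref{corre(1,n-1)(0,n)} and \ref{P:isom-Yj} and the established isomorphism $C_n\cong{\rm Y}_{00}$.
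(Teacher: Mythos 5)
Your checks (a) and (b) follow the same scaffolding as the paper: the containment $\pr_i(C_i)\subseteq{\rm Y}_{11}$ is precisely the Lemma~\ref{L:omega-F} computation already carried out inside the proof of Proposition~\ref{P:isom-Yj}, the isomorphism $\pr_n\colon C_n\xrightarrow{\sim}{\rm Y}_{00}$ is quoted, commutativity is formal, and your observation that $S_\gothp$ is given by a single (central) coset, hence is an automorphism of ${\Sh}_{0,n}$, is correct and is indeed what makes $(1,S_\gothp)$ a morphism. Where you genuinely diverge is the bundle statement in (2). The paper never decomposes the fibres of $\pr'_n$: it uses $C_n\cong{\rm Y}_{00}$ and writes down a single global morphism $\alpha\colon{\rm Y}_{00}\to\Gr\big(H^\dR_1(\calB/{\Sh}_{0,n})^\circ_1,n-1\big)\times\Gr\big(H^\dR_1(\calB'/{\Sh}_{0,n})^\circ_2,1\big)$ sending $(A,\lambda,\eta,A',\lambda',\eta',\phi)$ to $(\omega^\circ_{A^\vee,2},\omega^\circ_{A'^\vee,1})$, and shows it is an isomorphism by the same pointwise-bijection-plus-Grothendieck--Messing argument as in Proposition~\ref{P:isom-Yj}; the $\mathbb{P}^{n-1}\times\mathbb{P}^{n-1}$-bundle structure is then automatic because the target is a product of relative Grassmannians. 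Your route instead identifies $C_n$ with a fibre product of ${\rm Y}_n$ and (the $S_\gothp$-twist of) ${\rm Y}_1$ and invokes their known Grassmannian descriptions ${\rm Y}'_n$, ${\rm Y}'_1$; conceptually this is a legitimate and arguably more transparent alternative.

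However, as written there is a gap at exactly the step carrying the new content. You assert that the fibre of $\pr'_n$ over $B_1$ ``decomposes'' as ${\rm Y}_{n,B_1}\times{\rm Y}_{1,B_2}$, but a point of $C_n$ also records the quasi-isogeny $\phi$, which in the moduli definition of $C_n$ is not tied to $(\psi,\psi')$ by any explicit relation (unlike the case $i\le n-1$, where $\phi\circ\psi=\psi'$ is imposed). So you must prove that for every pair $\big((A,\psi),(A',\psi')\big)$ there exists a unique $\phi$ with $(A,\lambda,\eta,A',\lambda',\eta',\phi)\in{\Sh}_{1,n-1}(K_{\gothp}^{1})$: uniqueness follows from the rigidity hypothesis on $K^p$, but existence is a real verification --- one takes a suitable $p$-power rescaling of $\psi'\circ\theta^{-1}\circ\psi^{-1}$ (with $\theta$ the quasi-isogeny realizing $B_1\in S_\gothp(B_2)$) and checks the polarization and level compatibilities and the rank-one cokernel conditions on $\phi_{*,1},\phi_{*,2}$ via Dieudonn\'e modules, e.g.\ through Proposition~\ref{P:abelian-Dieud}. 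Moreover, a bijection of geometric fibres does not by itself yield a projective bundle: you need the scheme-theoretic identification of $C_n$ with the fibre product over ${\Sh}_{0,n}$ (pointwise bijection plus a tangent-space computation in the style of Theorem~\ref{T:STGM} and Proposition~\ref{P:isom-Yj}), after which your appeal to Proposition~\ref{corre(1,n-1)(0,n)} and the relative Grassmannian realizations of ${\rm Y}_n,{\rm Y}_1$ does finish the proof. With those two verifications supplied your argument is complete; the paper's choice of the map $(\omega^\circ_{A^\vee,2},\omega^\circ_{A'^\vee,1})$ avoids both by never leaving ${\rm Y}_{00}$.
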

\begin{proof}
    What remains to show is that $\pr_{n}'$ makes $C_{n}$ be a $\mathbb{P}^{n-1}\times \mathbb{P}^{n-1}$ bundle over ${{\Sh}}_{0,n}.$ To see this, we construct a morphism $\alpha$ from ${\rm Y}_{00}$ to $\Gr\big({\rm H}^{\dR}_1(\calB/{\Sh}_{0,n})^{\circ}_{1}, 1\big)\times \Gr\big({\rm H}^{\dR}_1(\calB/{\Sh}_{0,n})^{\circ}_{2},n-1\big)$ with $(\calB,\lambda'',\eta'')$ the universal object of ${{\Sh}}_{0,n}.$ More explicitly, for any $S$-point $y=(A,\lambda,\eta, A',\lambda',\eta',\phi)\in {\rm Y}_{00},$ $\alpha(y)$ is defined to be the $S$-point $(\omega_{A'^\vee/S,1}^\circ,\omega_{A^\vee/S,2}^\circ).$

    By a similar argument as we done for $C_i, 1\leq i\leq n-1$ in Proposition~\ref{P:isom-Yj}, we can show $\alpha$ is an isomorphism. Hence we finish the proof.
\end{proof}

The correspondences $C_1,\dots,C_n$ can help us to analyze structures of ${\rm Y}_{00}, {\rm Y}_{11}.$ We are now going to analyze the strucures of ${\rm Y}_{01}, {\rm Y}_{10}.$ First, we need the following lemma:
\begin{lemma}
\begin{enumerate}
    \item The $S$-point $(A,\lambda,\eta)\in {{\Sh}}_{1,n-1}$ is in ${\mathop{p}\limits^{\leftarrow}}_{n}({\rm Y}_{n})$ if and only if $F\tcD(A)_{2}^{\circ}=V\tcD(A)_{2}^{\circ}.$
    \item The $S$-point $(A,\lambda,\eta)\in {{\Sh}}_{1,n-1}$ is in ${\mathop{p}\limits^{\leftarrow}}_{1}({\rm Y}_{1})$ if and only if $F\tcD(A)_{1}^{\circ}=V\tcD(A)_{1}^{\circ}.$
\end{enumerate}
\end{lemma}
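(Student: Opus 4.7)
The plan is to translate each membership condition into a statement about sublattices of Dieudonn\'e modules at geometric points and then invoke Proposition~\ref{P:abelian-Dieud} to either produce or recognize the desired witness, mirroring the strategy in the proof of Proposition~\ref{P:isom-Yj}. Since ${\mathop{p}\limits^{\leftarrow}}_j({\rm Y}_j)$ is a closed subset of ${{\Sh}}_{1,n-1}$ (being the image of the proper map ${\mathop{p}\limits^{\leftarrow}}_j$) and the lattice-equality locus is itself closed, it suffices to verify both equivalences at an arbitrary $\Fpb$-point. The fundamental input from the signature of ${{\Sh}}_{0,n}$ is that for any $(B,\lambda',\eta') \in {{\Sh}}_{0,n}(\Fpb)$, the vanishing of $\omega^{\circ}_{B^{\vee},1}$ and the full rank of $\omega^{\circ}_{B^{\vee},2}$ force $V\tcD(B)^{\circ}_2 = p\tcD(B)^{\circ}_1$ and $V\tcD(B)^{\circ}_1 = \tcD(B)^{\circ}_2$, hence (by $FV=VF=p$) also $F\tcD(B)^{\circ}_1 = \tcD(B)^{\circ}_2$ and $F\tcD(B)^{\circ}_2 = p\tcD(B)^{\circ}_1$.

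For part~(1), write $W := W(\Fpb)$. If $(A,\lambda,\eta,B,\lambda',\eta',\phi) \in {\rm Y}_n(\Fpb)$, the cokernel conditions read $\phi_{*,2}(\tcD(B)^{\circ}_2) = p\tcD(A)^{\circ}_2$ and $\phi_{*,1}(\tcD(B)^{\circ}_1) = p\tcD(A)^{\circ}_1 + W\cdot e$, with $e$ reducing to a generator of $\omega^{\circ}_{A^{\vee},1} = V\tcD(A)^{\circ}_2/p\tcD(A)^{\circ}_1$. Writing $e = \phi_{*,1}(y)$, the compatibility $V\phi_{*,1} = \phi_{*,2}V$ combined with $V\tcD(B)^{\circ}_1 = \tcD(B)^{\circ}_2$ yields
\[
Ve \;=\; \phi_{*,2}(Vy) \;\in\; \phi_{*,2}(\tcD(B)^{\circ}_2) \;=\; p\tcD(A)^{\circ}_2,
\]
so $Ve = pg$ for some $g \in \tcD(A)^{\circ}_2$; then $pe = FVe = pFg$ forces $e = Fg \in F\tcD(A)^{\circ}_2$. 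This gives the inclusion $V\tcD(A)^{\circ}_2 = p\tcD(A)^{\circ}_1 + We \subset F\tcD(A)^{\circ}_2$, which becomes an equality by a rank count: both quotients $V\tcD(A)^{\circ}_2/p\tcD(A)^{\circ}_1 = \omega^{\circ}_{A^{\vee},1}$ and $F\tcD(A)^{\circ}_2/p\tcD(A)^{\circ}_1 \cong \tcD(A)^{\circ}_2/V\tcD(A)^{\circ}_1 = \Lie^{\circ}_{A,2}$ are $1$-dimensional over $\Fpb$ (from the signature $(1,n-1)$). Conversely, assume $F\tcD(A)^{\circ}_2 = V\tcD(A)^{\circ}_2$ and set $\tcE_1 := V\tcD(A)^{\circ}_2$, $\tcE_2 := p\tcD(A)^{\circ}_2$. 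Then $F\tcE_1 = V\tcE_1 = \tcE_2$ by the assumed equality, and $F\tcE_2,V\tcE_2 \subset \tcE_1$ follow from $FV=VF=p$; Proposition~\ref{P:abelian-Dieud} with $m=1$ then produces $B$ and $\phi\colon B \to A$. The dimension formula~\eqref{E:dimension of new differentials} applied with $(\ell_1,\ell_2) = (n-1,n)$ gives $(\dim \omega^{\circ}_{B^{\vee},1},\dim \omega^{\circ}_{B^{\vee},2}) = (0,n)$, so $B \in {{\Sh}}_{0,n}$, and inspecting cokernels places the tuple in ${\rm Y}_n$.

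Part~(2) proceeds analogously and is structurally cleaner. Membership in ${\rm Y}_1$ forces the rank-$0$ cokernel condition to make $\phi_{*,1}\colon \tcD(B)^{\circ}_1 \to \tcD(A)^{\circ}_1$ an isomorphism of lattices, after which the two compatibilities $F\phi_{*,1} = \phi_{*,2}F$ and $V\phi_{*,1} = \phi_{*,2}V$, combined with $F\tcD(B)^{\circ}_1 = V\tcD(B)^{\circ}_1 = \tcD(B)^{\circ}_2$, simultaneously yield $F\tcD(A)^{\circ}_1 = \phi_{*,2}(\tcD(B)^{\circ}_2) = V\tcD(A)^{\circ}_1$. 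Conversely, from $F\tcD(A)^{\circ}_1 = V\tcD(A)^{\circ}_1$ one sets $\tcE_1 := \tcD(A)^{\circ}_1$ and $\tcE_2 := F\tcD(A)^{\circ}_1$, verifies $F$- and $V$-stability together with the containment $p\tcD(A)^{\circ}_2 \subset F\tcD(A)^{\circ}_1$ (via $FV=p$), and applies Proposition~\ref{P:abelian-Dieud}. The only genuinely delicate step in the whole argument is the rank-$1$ identification in part~(1) that upgrades the inclusion $V\tcD(A)^{\circ}_2 \subset F\tcD(A)^{\circ}_2$ to an equality; the remainder of the proof is a formal application of the Dieudonn\'e dictionary and Proposition~\ref{P:abelian-Dieud}.
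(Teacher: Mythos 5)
Your proof is correct and follows essentially the same route as the paper: for the construction direction you take $\tcE_1=V\tcD(A)_2^{\circ}$, $\tcE_2=p\tcD(A)_2^{\circ}$ and apply Proposition~\ref{P:abelian-Dieud} with the dimension formula, and for the other direction you exploit the lattice relations coming from membership in ${\rm Y}_n$ (resp.\ ${\rm Y}_1$), exactly as the paper does. The only imprecision is your claim that "the cokernel conditions read" $\phi_{*,1}(\tcD(B)^{\circ}_1)=p\tcD(A)^{\circ}_1+W\cdot e$ with $e$ reducing to a generator of $\omega^{\circ}_{A^\vee,1}$: the cokernel ranks alone only say the image mod $p$ is a line, and identifying that line with $\omega^{\circ}_{A^\vee,1}$ is precisely Lemma~\ref{L:omega-F} (which the paper's proof cites at this very step), or alternatively follows from your own divisibility trick run with $F$ in place of $V$; either way the step should be justified rather than asserted.
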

\begin{proof}
    The proof of $(1)$ and $(2)$ are similar. For simplicity, we only give the proof of $(1).$

    For the `if' part, we take $\tcE_1=V\tcD(A)_{2}$ and $\tcE_{2}=p\tcD(A)_{2}.$ Applying Proposition~\ref{P:abelian-Dieud} with $m=1,$ we get a point $(B,\lambda'',\eta'')$ of ${\Sh}_{0,n}$ and an isogeny $\phi: B\ra A$ such that $(A,\lambda, \eta,B,\lambda'', \eta'',\phi)$ is a point of ${\rm Y}_{n}.$
    
    % where $B$ is an abelian variety over $k$ with an action of $\cO_D,$ $\lambda''$ is a prime-to-$p$ polarization on $B,$ and $\eta''$ is a prime-to-$p$ level structure on $B,$ such that $\phi^\vee\circ\lambda\circ\phi=p\lambda'',$ $ \eta = \phi \circ \eta''$ and such that $\phi_{*,i}: \tcD(B)^{\circ}_i\ra \tcD(A)^{\circ}_i$ are naturally identified with the inclusion $\tcE_i\hra \tcD(A)^{\circ}_i$ for $i=1,2.$
    % Moreover, the dimension formula \eqref{E:dimension of new differentials} implies that $\omega^{\circ}_{B^\vee/k,1}$ has dimension $0,$ and $\omega^{\circ}_{B^\vee/k,2}$ has dimension $n.$
    % Therefore, $(B,\lambda'', \eta'')$ is a point  and $(A,\lambda, \eta,B,\lambda'', \eta'',\phi)$ is a point of ${\rm Y}_{n}.$

    Conversely, if$(A,\lambda, \eta,B,\lambda'', \eta'',\phi)$ is an $S$-point of ${\rm Y}_{n},$ then by the defintion of ${\rm Y}_{n},$ we see that $\phi_{*,2}\tcD(B)_{2}^{\circ}=p\tcD(A)_{2}^{\circ}$ and $\phi_{*,1}\tcD(B)_{1}^{\circ}$ has corank $n-1$ contained in $\tcD(A)_{1}^{\circ}.$ By Lemma~\ref{L:omega-F}, $V\tcD(A)_{2}^{\circ}\subseteq \phi_{*,1}\tcD(B)_{1}^{\circ}.$ It forces that $V\tcD(A)_{2}^{\circ}\subseteq \phi_{*,1}\tcD(B)_{1}^{\circ}.$ By $F\tcD(B)_{2}^{\circ}=V\tcD(B)_{2}^{\circ}=p\tcD(B)_{1}^{\circ},$ $F\tcD(A)_{2}^{\circ}=V\tcD(A)_{2}^{\circ}.$ Thus we finish the proof.
\end{proof}

\begin{proposition}
\label{bloww up}
    \begin{enumerate}
        \item There is a natural morphism $\pi_{10}:{\rm Y}_{10}\ra {{\Sh}}_{1,n-1}$ mapping $(A,\lambda,\eta, A',\lambda',\eta',\phi)$ to $(A,\lambda,\eta).$ The closed subscheme ${\rm Y}_{10}$ of ${{\Sh}}_{1,n-1}(K_{\gothp}^{1})$ is isomorphic to ${\rm Bl}_{{\mathop{p}\limits^{\leftarrow}}_{n}({\rm Y}_{n})}{{\Sh}}_{1,n-1}$ with $\pi_{10}$ to be exacly the blowing-up map. The exceptional divisor is ${\rm Y}_{00}\bigcap {\rm Y}_{10}.$
        \item There is a natural morphism $\pi_{01}:{\rm Y}_{01}\ra {{\Sh}}_{1,n-1}$ by mapping any point $(A,\lambda,\eta, A',\lambda',\eta',\phi)$ to $(A',\lambda',\eta').$ The closed subscheme ${\rm Y}_{01}$ of ${{\Sh}}_{1,n-1}(K_{\gothp}^{1})$ is isomorphic to ${\rm Bl}_{{\mathop{p}\limits^{\leftarrow}}_{1}({\rm Y}_{1})}{{\Sh}}_{1,n-1}$ with $\pi_{01}$ to be exacly the blowing-up map. The exceptional divisor is ${\rm Y}_{00}\bigcap {\rm Y}_{01}.$
    \end{enumerate}
\end{proposition}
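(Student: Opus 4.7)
The plan is to prove (1); part (2) follows by a symmetric argument with the two Dieudonn\'e components exchanged. Write $M_i = \tcD(A)_i^\circ$ for brevity. The first step is to show that $\pi_{10}$ is an isomorphism over the open locus $U := {{\Sh}}_{1,n-1} \setminus {\mathop{p}\limits^{\leftarrow}}_n({\rm Y}_n)$, which by the preceding lemma is exactly the locus where $FM_2 \neq VM_2$. Given $(A,\lambda,\eta)$ over such $U$, the conditions $(1.1)$ and $(2.0)$ defining ${\rm Y}_{10}$ translate, via Corollary \ref{CORP}, to a choice of sub-lattice $\tcE_\bullet \subseteq p^{-1}M_\bullet$ with $F\tcE_i \subseteq \tcE_{i+1}$ and $V\tcE_i \subseteq \tcE_{i-1}$; the non-degeneracy $FM_2 \neq VM_2$ is precisely what forces $\tcE_1$ to be uniquely determined once $\tcE_2$ is fixed by condition $(2.0)$. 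Applying Corollary \ref{CORP} to $\tcE_\bullet$ produces a canonical preimage $(A',\lambda',\eta',\phi)$, inverse to $\pi_{10}$ over $U$.

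The second step is to analyze the fiber of $\pi_{10}$ over the supersingular locus ${\mathop{p}\limits^{\leftarrow}}_n({\rm Y}_n)$. When $FM_2 = VM_2$, the constraint which uniquely determined $\tcE_1$ degenerates, so the set of admissible $\tcE_1$ is parameterized by the projectivization of a rank-$(n-1)$ bundle, namely (a twist of) the cokernel $M_1/(FM_2 + pM_1) \cong \Lie^{\circ,(p)}_{A,1}$. Globalizing, the reduced preimage $\pi_{10}^{-1}({\mathop{p}\limits^{\leftarrow}}_n({\rm Y}_n))_{\mathrm{red}}$ is a $\mathbb{P}^{n-2}$-bundle over ${\rm Y}_n$, of total dimension $(n-1)+(n-2)=2n-3$, giving codimension one in the smooth variety ${\rm Y}_{10}$ (Proposition \ref{tangent sheaff}). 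A direct check on Dieudonn\'e modules shows that $(A,\lambda,\eta) \in {\mathop{p}\limits^{\leftarrow}}_n({\rm Y}_n)$ together with $(1.1),(2.0)$ forces $(1.0)$ also to hold, so this reduced preimage coincides set-theoretically with ${\rm Y}_{00}\cap{\rm Y}_{10}$; smoothness of ${\rm Y}_{10}$ then promotes this to scheme-theoretic equality as a Cartier divisor.

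The third step is to invoke the universal property of the blowup. The Cartier condition just established makes $\pi_{10}^{-1}(\mathcal{I}_{{\mathop{p}\limits^{\leftarrow}}_n({\rm Y}_n)}) \cdot \mathcal{O}_{{\rm Y}_{10}}$ invertible, giving a unique factorization $\pi_{10} = b\circ \tilde{\pi}$ with $\tilde{\pi}\colon {\rm Y}_{10}\to \mathrm{Bl}_{{\mathop{p}\limits^{\leftarrow}}_n({\rm Y}_n)}{{\Sh}}_{1,n-1}$. The map $\tilde{\pi}$ is a proper birational morphism of smooth irreducible varieties of the same dimension $2n-2$, an isomorphism over $U$, and restricts to a morphism between two $\mathbb{P}^{n-2}$-bundles over ${\rm Y}_n$; by Zariski's Main Theorem (or by a direct tangent-space check using Proposition \ref{tangent sheaff} and the standard description of the tangent bundle of the blowup), $\tilde{\pi}$ is an isomorphism. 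The main difficulty is step two, specifically matching the moduli-theoretic $\mathbb{P}^{n-2}$-bundle structure on ${\rm Y}_{00}\cap{\rm Y}_{10}$ with the projectivized normal bundle $\mathbb{P}(\mathcal{N}_{{\mathop{p}\limits^{\leftarrow}}_n({\rm Y}_n)/{{\Sh}}_{1,n-1}})$; this requires computing that normal bundle via Corollary \ref{C:tangent space of Sh} applied to both ${{\Sh}}_{1,n-1}$ and ${\rm Y}_n$, and identifying the resulting rank-$(n-1)$ quotient with the Dieudonn\'e-theoretic cokernel $M_1/(FM_2+pM_1)$ that governs the moduli choices of $\tcE_1$.
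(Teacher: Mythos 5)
Your proposal is correct in outline but follows a genuinely different route from the paper. The paper never argues directly with $\pi_{10}$ and the universal property of the blowup: instead it introduces an auxiliary moduli space $C_{10}$ over ${\Sh}_{1,n-1}$ parametrizing pairs $(A,\lambda,\eta,H)$ with $H$ a subbundle of the reduced de Rham homology satisfying Frobenius/Verschiebung incidence conditions, proves ${\rm Y}_{10}\cong C_{10}$ by the same points-plus-tangent-spaces method as for the $C_i$'s, and then reads the blowup structure off the moduli description of $C_{10}$: away from ${\mathop{p}\limits^{\leftarrow}}_{n}({\rm Y}_{n})$ the subbundle $H$ is forced to be $F\big(H^{\dR}_1(A/S)^{\circ,(p)}_2\big)+\omega^{\circ}_{A^\vee/S,2}$, while over ${\mathop{p}\limits^{\leftarrow}}_{n}({\rm Y}_{n})$ it moves in a projective space, and the exceptional divisor is identified with ${\rm Y}_{00}\cap{\rm Y}_{10}$ exactly as you do. What the paper's detour through $C_{10}$ buys is a moduli problem phrased in de Rham terms, hence valid over arbitrary locally noetherian bases, so the blowup identification does not rest on pointwise Dieudonn\'e constructions; what your route buys is the elimination of the auxiliary space, at the cost of having to do the deformation-theoretic work yourself. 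Two cautions on that work. First, your step 1 produces an inverse only on points over perfect fields (Corollary~\ref{CORP} lives there); to get an isomorphism of schemes over $U$ you still need the tangent-space comparison (Proposition~\ref{tangent sheaff} against Corollary~\ref{C:tangent space of Sh}), i.e.\ exactly the kind of check the paper performs for $\alpha:{\rm Y}_{10}\to C_{10}$. Second, the bare appeal to Zariski's Main Theorem in step 3 is not yet complete: a proper birational map between smooth varieties can contract the exceptional divisor, and "a morphism between two $\mathbb{P}^{n-2}$-bundles over ${\rm Y}_n$" does not by itself give quasi-finiteness; what actually closes the argument is the alternative you name, namely the identification of ${\rm Y}_{00}\cap{\rm Y}_{10}$ with $\mathbb{P}(\mathcal{N}_{{\mathop{p}\limits^{\leftarrow}}_{n}({\rm Y}_{n})/{\Sh}_{1,n-1}})$ via the normal-bundle computation and the rank-$(n-1)$ Dieudonn\'e cokernel, which you correctly flag as the main difficulty. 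With those two checks written out, your argument is a valid (and arguably more self-contained) substitute for the paper's proof.
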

\begin{proof}
    The proof of $(1)$ and $(2)$ are similar. For simplicity, we only give the proof of $(1).$
    
    First, we introduce a new scheme $C_{10}$ and show it is isomorphic both to ${\rm Bl}_{{\mathop{p}\limits^{\leftarrow}}_{n}({\rm Y}_{n})}{{\Sh}}_{1,n-1}$ and to ${\rm Y}_{10}.$

    Let $\underline{C}_{10}$ be the moduli space over $\FF_{p^{2}}$ that associates to each locally Noetherian $\FF_{p^{2}}$-scheme $S,$ the set of isomorphism classes of tuples $(A,\lambda,\eta,H),$ where
    \begin{itemize}
        \item $(A,\lambda,\eta)\in {{\Sh}}_{1,n-1},$
        \item $H$ is a subbundle contained in $V^{-1}\big({\rm H}_{1}^{\rm dR}(A/S)_{2}^{\circ,(p)}\big)$ of rank $2.$
        \item $H$ satisfies $F\big({\rm H}_{1}^{\rm dR}(A/S)_{2}^{\circ,(p)}\big)\bigcup \omega_{A^\vee/S,2}^\circ\subseteq H.$
    \end{itemize}

    With a similar argument as we done for $\underline{C}_{i}, 1\leq i\leq n-1,$ we can show $\underline{C}_{10}$ is represented by a smooth, projective scheme over ${{\Sh}}_{1,n-1}$ of dimension $2(n-1).$ We denote it by $C_{10}.$ There is natural morphism $\alpha$ from ${\rm Y}_{10}$ to $C_{10}$ by mapping any point $(A,\lambda,\eta, A',\lambda',\eta',\phi)$ to $(A,\lambda,\eta, \omega_{A'^\vee,1}^{\circ}).$ It can be checked $\alpha$ is well defined and is an isomorphism using a method similar to Proposition~\ref{P:isom-Yj}.

    Given any $S$-point $(A,\lambda,\eta,H)\in C_{10},$ we can see by dimension counting that the subbundle $H=F\big({\rm H}_{1}^{\rm dR}(A/S)_{2}^{\circ,(p)}\big)\bigcup \omega_{A^\vee/S,2}$ if $F\big({\rm H}_{1}^{\rm dR}(A/S)_{2}^{\circ,(p)}\big)\neq \omega_{A^\vee/S,2}.$ The morphism $\beta:C_{10}\rightarrow {{\Sh}}_{1,n-1}$ mapping $(A,\lambda,\eta,H)$ to $(A,\lambda,\eta)$ inducing the isomorhism $C_{10}\simeq {\rm Bl}_{{\mathop{p}\limits^{\leftarrow}}_{n}({\rm Y}_{n})}{{\Sh}}_{1,n-1}.$ Under the isomorphism, the exceptional divisor consists of points $(A,\lambda,\eta, A',\lambda',\eta',\phi)\in {\rm Y}_{10}$ such that $(A,\lambda,\eta)\in {\mathop{p}\limits^{\leftarrow}}_{n}({\rm Y}_{n}),$ which is exactly ${\rm Y}_{00}\bigcap {\rm Y}_{10}.$ Thus we finish the proof. 
\end{proof}

Now we begin the computation of cohomology groups. Let $L$ be a $p$-coprime coefficient ring.
\begin{proposition} We have 
\label{cohomologyy}
\begin{enumerate}
    \item ${\rm H}^{*}_{\acute{e}t}({\rm Y}_{00}\bigcap {\rm Y}_{10},L)={\rm H}^{*}_{\acute{e}t}({\rm Y}_{00}\bigcap {\rm Y}_{01},L)={\rm H}^{*}_{\acute{e}t}(\mathbb{P}^{n-1}/{{\Sh}}_{0,n},L)\otimes {\rm H}^{*}_{\acute{e}t}(\mathbb{P}^{n-2}/\Sh_{0,n},L);$
    \item ${\rm H}^{2n-4}_{\acute{e}t}({\rm Y}_{00}\bigcap {\rm Y}_{11},L)={\rm H}^{0}({{\Sh}}_{0,n},L(2-n))^{\oplus n-2}\oplus {\rm H}^{0}_{\acute{e}t}({{\Sh}}_{0,n}(K_{\gothp}^{1}),L(2-n));$
    \item ${\rm H}^{2n-2}_{\acute{e}t}({\rm Y}_{00}\bigcap {\rm Y}_{11},L)={\rm H}^{0}({{\Sh}}_{0,n},L(1-n))^{\oplus n-2}$ 
    \item ${\rm H}^{i}_{\acute{e}t}({\rm Y}_{00}\bigcap {\rm Y}_{11},L)=0,$ for $i$ odd;
    \item There exists an injective map $\phi:{H^{0}_{\acute{e}t}(\Sh_{0,n},L)\ra H^0_{\acute{e}t}(\Sh_{0,n}(K_\gothp^1),L)}$ induced by the restriction $$H^{2n-4}_{\acute{e}t}({\rm Y}_{00}\bigcap{\rm Y}_{10})\ra H^{2n-4}_{\acute{e}t}({\rm Y}_{00}\bigcap{\rm Y}_{11})\footnote{Even this is not enough to decide what $\phi$ is, but it is all we need in the proof later.}$$
    such that
    \begin{itemize}
        \item ${\rm H}^{2n-3}_{c}({\rm Y}_{00}\bigcap {\rm Y}_{01}-{\rm Y}_{00}\bigcap {\rm Y}_{11},L)={\rm H}^{2n-3}_{c}({\rm Y}_{00}\bigcap {\rm Y}_{10}-{\rm Y}_{00}\bigcap {\rm Y}_{11},L)=\coker\phi;$
        \item ${\rm H}^{2n-4}_{c}({\rm Y}_{00}\bigcap {\rm Y}_{01}-{\rm Y}_{00}\bigcap {\rm Y}_{11},L)=0;$
        \item ${\rm H}^{2n-2}_{c}({\rm Y}_{00}\bigcap {\rm Y}_{01}-{\rm Y}_{00}\bigcap {\rm Y}_{11},L)={\rm H}^{0}_{\acute{e}t}({{\Sh}}_{0,n},L(1-n)).$
    \end{itemize}
    \item If $L$ is a finite extension of $\FF_\ell$ and $\ell \nmid \frac{p^{2n-2}-1}{p^2-1},$ then $${\rm H}^{2n-3}_{c}({\rm Y}_{00}\bigcap {\rm Y}_{01}-{\rm Y}_{00}\bigcap {\rm Y}_{11},L)={\rm H}^{2n-3}_{c}({\rm Y}_{00}\bigcap {\rm Y}_{10}-{\rm Y}_{00}\bigcap {\rm Y}_{11},L)={\rm H}^{2n-2}_{\acute{e}t}(\Sh_{1,n-1},\rho_{n-1,1}).$$
    Here $\rho_{n-1,1}$ is the nontrivial factor in the parabolic induction ${\rm Ind}_{K_\gothp^1}^{K_\gothp}\mathbbm{1}.$
\end{enumerate}
\end{proposition}
\begin{proof}
    Let $S$ be a scheme defined over $\FF_{p^{2}}.$
    Under the isomorphism ${\rm Y}_{00}\simeq \Gr\big({\rm H}^{\dR}_1(\calB/{\Sh}_{0,n})^{\circ}_{1}, n-1\big)\times \Gr\big({\rm H}^{\dR}_1(\calB'/{\Sh}_{0,n})^{\circ}_{2},1\big)$ as in Proposition~\ref{Corresss}, we have
    \begin{enumerate}
    \item  ${\rm Y}_{00}\bigcap {\rm Y}_{11}$ is isomorphic to a closed subscheme of $\Gr\big({\rm H}^{\dR}_1(\calB/{\Sh}_{0,n})^{\circ}_{1}, 1\big)\times \Gr\big({\rm H}^{\dR}_1(\calB/{\Sh}_{0,n})^{\circ}_{2},n-1\big)$ satisfying that for any $S$-point $(H_1,H_2),$ $H_1\subseteq V^{-1}(H_2^{(p)})\subseteq {\rm H}^{\dR}_1(\calB'/{\Sh}_{0,n})^{\circ}_{1}, \quad F(H_1^{(p)})\subseteq H_2\subseteq {\rm H}^{\dR}_1(\calB/{\Sh}_{0,n})^{\circ}_{2}.$
    \item ${\rm Y}_{00}\bigcap {\rm Y}_{01}$ is isomorphic to a closed subscheme of $\Gr\big({\rm H}^{\dR}_1(\calB/{\Sh}_{0,n})^{\circ}_{1}, 1\big)\times \Gr\big({\rm H}^{\dR}_1(\calB/{\Sh}_{0,n})^{\circ}_{2},n-1\big)$ satisfying that for any $S$-point $(H_1,H_2),$ $F(H_1^{(p)})\subseteq H_2\subseteq {\rm H}^{\dR}_1(\calB/{\Sh}_{0,n})^{\circ}_{2}.$
    \item ${\rm Y}_{00}\bigcap {\rm Y}_{10}$ is isomorphic to a closed subscheme of $\Gr\big({\rm H}^{\dR}_1(\calB/{\Sh}_{0,n})^{\circ}_{1}, 1\big)\times \Gr\big({\rm H}^{\dR}_1(\calB/{\Sh}_{0,n})^{\circ}_{2},n-1\big)$ satisfying that for any $S$-point $(H_1,H_2),$ $H_1\subseteq V^{-1}(H_2^{(p)})\subseteq {\rm H}^{\dR}_1(\calB/{\Sh}_{0,n})^{\circ}_{1}.$
\end{enumerate}

Let $(\mathcal{H}_1,\mathcal{H}_2)$ be the universal bundles in each closed subscheme and denote the first chern classes $c_1(\mathcal{H}_1)$ and $c_1(\mathcal{H}_2)$ by $\eta_1,\eta_2$ respectively.
Define $\phi$ to be the morphism mapping any $S$-point $(H_1,H_2)$ to $(H_1,V^{-1}(H_2^{(p)})).$ Then ${\rm Y}_{00}\bigcap {\rm Y}_{10}$ is mapped to the closed subscheme $\tilde Y$ of $\Gr\big({\rm H}^{\dR}_1(\calB/{\Sh}_{0,n})^{\circ}_{1}, 1\big)\times \Gr\big({\rm H}^{\dR}_1(\calB/{\Sh}_{0,n})^{\circ}_{1},n-1\big)$ consisting of line bundles $(H_1,H_2)$ satisfying $H_1\subseteq H_2\subseteq {\rm H}^{\dR}_1(\calB/{\Sh}_{0,n})^{\circ}_{1}.$ It is a $\mathbb{P}^{n-2}$-bundle over $\Gr\big({\rm H}^{\dR}_1(\calB/{\Sh}_{0,n})^{\circ}_{1},1\big)=\mathbb{P}^{n-1}/{{\Sh}}_{0,n},$ which consists of points $H_1$ such that $H_1\subseteq {\rm H}^{\dR}_1(\calB/{\Sh}_{0,n})^{\circ}_{1}.$ We denote it by $X\simeq \mathbb{P}^{n-1}/\Sh_{0,n}.$ Since $\phi$ induces an isomorphism on cohomology groups, by Kunn\'eth formula, we have ${\rm H}^{*}_{\acute{e}t}({\rm Y}_{00}\bigcap {\rm Y}_{10},L)={\rm H}_{\acute{e}t}^{*}(\mathbb{P}^{n-1}/{\Sh_{0,n}},L)\otimes {\rm H}^{*}_{\acute{e}t}(\mathbb{P}_{n-2}/{\Sh_{0,n}},L)$ to be a graded ring generated by $\eta_1$ and $\eta_2$ over $H_{\acute{e}t}^{0}(\Sh_{0,n},L).$ The same result holds for ${\rm H}^{*}_{\acute{e}t}({\rm Y}_{00}\bigcap {\rm Y}_{01},L).$

Under the morphism $\phi$, ${\rm Y}_{00}\bigcap {\rm Y}_{11}$ is mapped to a closed subscheme $W$ of $\Gr\big({\rm H}^{\dR}_1(\calB/{\Sh}_{0,n})^{\circ}_{1}, 1\big)\times \Gr\big({\rm H}^{\dR}_1(\calB/{\Sh}_{0,n})^{\circ}_{1},n-1\big)$ consisting of points $(H_1,H_2)$ satisfying \[H_1\subseteq H_2\subseteq {\rm H}^{\dR}_1(\calB/{\Sh}_{0,n})^{\circ}_{1},\quad F(V^{-1}(H_1^{(p)})^{(p)})\subseteq H_2\subseteq {\rm H}^{\dR}_1(\calB/{\Sh}_{0,n})^{\circ}_{1}.\] Blowing up at points satisfying $H_1=F(V^{-1}(L^{(p)})^{(p)})$ and denoting the locus with $T,$ we get a closed subscheme $Z$ of $\Gr\big({\rm H}^{\dR}_1(\calB/{\Sh}_{0,n})^{\circ}_{1}, 1\big)\times \Gr\big({\rm H}^{\dR}_1(\calB/{\Sh}_{0,n})^{\circ}_{1},2\big)\times
\Gr\big({\rm H}^{\dR}_1(\calB/{\Sh}_{0,n})^{\circ}_{2},n-1\big)
$ with points $(H_1,H_1',H_2)$ such that $H_1\cup F(V^{-1}(H_1^{(p)})^{(p)}) \subseteq H_1'\subseteq H_2\subseteq {\rm H}^{\dR}_1(\calB/{\Sh}_{0,n})^{\circ}_{2}.$ We denote the exceptional divisor by $E,$ consisting of points $(H_1,H_1',H_2)$ such that $F(V^{-1}(H_1^{(p)})^{(p)})=H_1.$
Denote the universal object by $(\mathcal{H}_1,\mathcal{H}_1',\mathcal{H}_2).$
$Z$ is a $\mathbb{P}^{n-3}$-bundle over the closed subscheme $Y$ of $\Gr\big({\rm H}^{\dR}_1(\calB/{\Sh}_{0,n})^{\circ}_{1}, 1\big)\times \Gr\big({\rm H}^{\dR}_1(\calB/{\Sh}_{0,n})^{\circ}_{1},2\big)$ consisting of points $(H_1,H_1')$ such that $H_1\cup F(V^{-1}(H_1^{(p)})^{(p)}) \subseteq H_1'\subseteq  {\rm H}^{\dR}_1(\calB/{\Sh}_{0,n})^{\circ}_{2},$ which is the blowing-up of $\Gr\big({\rm H}^{\dR}_1(\calB/{\Sh}_{0,n})^{\circ}_{1},1\big)=\mathbb{P}^{n-1}/{{\Sh}}_{0,n}.$ It is just the scheme $X$ we have defined above. We also denote the rational locus of $X$ to be $T''$ which is isomorphic to $\mathbb{P}^{n-1}(\FF_{p^{2}})/\Sh_{0,n}.$ We denote the exceptional divisor by $T',$ which consists of points $(H_1,H_1')$ such that $H_1=F(V^{-1}(H_1^{(p)})^{(p)}).$ Then $T'\simeq \bigsqcup\limits_{\#\mathbb{P}^{n-1}(\FF_{p^{2}})}\mathbb{P}^{n-2}/{{\Sh}}_{0,n}.$ Considering the $\GL_n$-action on $\PP^{n-1}(\FF_{p^2}),$ $T'$ is identified with a $\PP^{n-2}$-bundle over ${\Sh}_{0,n}(K_\gothp^1).$
Then $E$ is a $\mathbb{P}^{n-3}$-bundle over $T'$ corresponding to $\mathcal{H}_2/\mathcal{H}_1'.$

By blowing-up exact sequence for the pair $(X,Y,T'',T')$ we have 
${{\rm H}^{2n-2}_{\acute{e}t}(Y,L)}={{\rm H}^{2n-2}_{\acute{e}t}(X,L)},$ ${\rm H}^{0}_{\acute{e}t}(Y,L)={{\rm H}^{0}_{\acute{e}t}({{\Sh}}_{0,n},L)}$ and $H^{2i}_{\acute{e}t}(Y,L)={H^{2i}_{\acute{e}t}(X,L)}\oplus H^{2i}_{\acute{e}t}(T',L)={{\rm H}^{0}_{\acute{e}t}({{\Sh}}_{0,n},L(-i))}\oplus {\rm H}^{0}_{\acute{e}t}({{\Sh}}_{0,n}(K_{\gothp}^{1}),L(-i))$ for $1\leq i\leq n-2.$ 

Since $Z$ is the $\mathbb{P}^{n-3}$-bundle over $Y$ corresponding to ${\rm H}^{\dR}_1(\calB/{\Sh}_{0,n})^{\circ}_{1}/\mathcal{H}_2$ we have $H^{*}_{\acute{e}t}(Z,L)=H^{*}_{\acute{e}t}(Y,L)\otimes H^{*}_{\acute{e}t}(\mathbb{P}^{n-3}/\Sh_{0,n},L).$ In paritcular, ${\rm H}^{2n-4}_{\acute{e}t}(Z,L(n-2))=H_{\acute{e}t}^{0}(\Sh_{0,n},L)^{\oplus (n-2)}\oplus H_{\acute{e}t}^{2n-4}(E,L(n-2)).$

By blowing-up exact sequence, we have \[\begin{tikzcd}
[
    column sep=small, 
    row sep=scriptsize,
    cells={font=\footnotesize} % Uniform font size
]
	{} && \cdots && {{\rm H}^{2n-5}_{\acute{e}t}(E,L)} \\
	{{\rm H}^{2n-4}_{\acute{e}t}(W,L)} && {{\rm H}^{2n-4}_{\acute{e}t}(Z,L)\oplus {\rm H}^{2n-4}_{\acute{e}t}(T,L)} && {{\rm H}^{2n-4}_{\acute{e}t}(E,L)} \\
	{H^{2n-3}_{\acute{e}t}(W,L)} && {H^{2n-3}_{\acute{e}t}(Z,L)\oplus H^{2n-3}_{\acute{e}t}(T,L)} && \cdots
	\arrow[from=1-3, to=1-5]
	\arrow[from=1-5, to=2-1]
	\arrow[from=2-1, to=2-3]
	\arrow[from=2-3, to=2-5]
	\arrow[from=2-5, to=3-1]
	\arrow[from=3-1, to=3-3]
	\arrow[from=3-3, to=3-5]
\end{tikzcd}\] where ${\rm H}^{2n-5}_{\acute{e}t}(E,L)=0$ and ${H^{2n-3}_{\acute{e}t}(Z,L)\oplus H^{2n-3}_{\acute{e}t}(T,L)}=0.$ Since $T=\bigsqcup\limits_{\# \mathbb{P}^{n-1}(\FF_{p^{2}})} \mathbb{P}^{n-2}/{{\Sh}}_{0,n},$ the $\GL_n$-action on $T$ by switching the points in $\mathbb{P}^{n-1}(\FF_{p^{2}})$ identifies $T$ with a $\PP^{n-2}$-bundle over $\Sh_{0,n}(K_\gothp^1).$ Then $E$ is a $\PP^{n-3}$-bundle over $T$ corresponding to $\mathcal{H}_1'/\mathcal{H}_1.$

Thus the map ${{\rm H}^{2n-4}_{\acute{e}t}(Z,L(n-2))\oplus {\rm H}^{2n-4}_{\acute{e}t}(T,L(n-2))} \ra {{\rm H}^{2n-4}_{\acute{e}t}(E,L(n-2))}$ is surjective and the cohomology group ${\rm H}^{2n-4}_{\acute{e}t}(W,L(n-2))=H_{\acute{e}t}^{0}(\Sh_{0,n},L)^{\oplus (n-2)}\bigoplus H_{\acute{e}t}^{2n-4}(T,L(n-2)).$ Since $\phi$ induces isomorphism on cohomology groups, $H_{\acute{e}t}^{2n-4}({\rm Y}_{00}\bigcap{\rm Y}_{11},L(n-2))=H_{\acute{e}t}^{0}(\Sh_{0,n},L)^{\oplus n-2}\oplus H_{\acute{e}t}^{0}(\Sh_{0,n}(K_\gothp^1),L).$

By excision sequence  
${\rm H}^{2n-3}_{c}({\rm Y}_{00}\bigcap {\rm Y}_{10}-{\rm Y}_{00}\bigcap {\rm Y}_{11},L(n-2))$ is the cokernel of ${\rm H}^{2n-4}_{\acute{e}t}({\rm Y}_{00}\bigcap {\rm Y}_{10},L(n-2))\ra  {\rm H}^{2n-4}_{\acute{e}t}({\rm Y}_{00}\bigcap {\rm Y}_{11},L(n-2))$ induced by restriction.
Up to isomorphism induced by $\phi,$ it is isomorphic to the cokernel of the map ${\rm H}^{2n-4}_{\acute{e}t}(\tilde Y,L(n-2))\ra  {\rm H}^{2n-4}_{\acute{e}t}(W,L(n-2))$ induced by restriction.
Hence it is the cokernel of an injective map $H^{0}_{\acute{e}t}(\Sh_{0,n},L)\ra H^0_{\acute{e}t}(\Sh_{0,n}(K_\gothp^1),L),$ which we do not specify here. Moreover, it is easy to see that the map is injective and hence ${\rm H}^{2n-4}_{c}({\rm Y}_{00}\bigcap {\rm Y}_{10}-{\rm Y}_{00}\bigcap {\rm Y}_{11},L(n-1))=0.$

Similarly, be excision sequence, we have ${\rm H}^{2n-2}_{c}({\rm Y}_{00}\bigcap {\rm Y}_{10}-{\rm Y}_{00}\bigcap {\rm Y}_{11},L(n-1))={\rm H}^{0}_{\acute{e}t}({\rm {\Sh}}_{0,n},L).$
\end{proof}

We have shown that for any point $(A,\lambda,\eta,A',\lambda',\eta',\phi)\in {\rm Y}_{00},$ the abelian varieties $A,A'$ are supersingular. Now we are going to give an equivalent condition of when points in ${{\Sh}}_{1,n-1}(K_{\gothp}^{1})$ are supersingular
First we give an equivalent condition of when points lie in ${\rm Y}_{11}\backslash ({\rm Y }_{00}\bigcup {\rm Y}_{10}\bigcup {\rm Y}_{01}).$
\begin{lemma}\label{In y11}
     For any $S$-point $(A,\lambda,\eta, A',\lambda',\eta',\phi),$ it lies in $ {\rm Y}_{11}\backslash ({\rm Y }_{00}\bigcup {\rm Y}_{10}\bigcup {\rm Y}_{01})$ if and only if there exists $\alpha \in p\tcD(A')_{1}^{\circ}$ such that:
     \begin{itemize}
         \item $p\phi_{*,1}\tcD(A)_{1}^{\circ}+W(\overline{\FF}_{p})\alpha=p\phi_{*,1}\tcD(A')_{1}^{\circ},$
         \item 
         $F\alpha-V\alpha\in p\phi_{*,2}\tcD(A)_{2}^{\circ},$
         \item 
         $\alpha\in \phi_{*,1}\tcD(A)_{1}^{\circ}\bigcap \phi_{*,1}F^{-1}V\tcD(A)_{1}^{\circ}\bigcap \phi_{*,1}V^{-1}F\tcD(A)_{1}^{\circ}$
         \item 
         $\alpha\notin \phi_{*,1}V\tcD(A)_{2}^{\circ},\phi_{*,1}F\tcD(A)_{2}^{\circ}.$
     \end{itemize}
\end{lemma}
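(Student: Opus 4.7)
The plan is to translate the conditions $(1.1)\wedge(2.1)\wedge\neg(1.0)\wedge\neg(2.0)$ that characterize ${\rm Y}_{11}\setminus({\rm Y}_{00}\cup{\rm Y}_{10}\cup{\rm Y}_{01})$ into a single generator-and-Frobenius compatibility statement on the Dieudonn\'e modules. I work throughout with the rational Dieudonn\'e modules: since $\phi$ is a $p$-quasi-isogeny we may regard all lattices as sitting inside $\tcD(A)^{\circ}\otimes\QQ_p=\tcD(A')^{\circ}\otimes\QQ_p$, and $\phi_{*,1}\tcD(A)_1^{\circ}\subseteq \tcD(A')_1^{\circ}$ with the quotient locally free of rank one, by the moduli description in Definition~\ref{shii}. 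The element $\alpha$ in the statement should be thought of as a generator, modulo $p\phi_{*,1}\tcD(A)_1^{\circ}$, of the one-dimensional quotient $p\tcD(A')_1^{\circ}/p\phi_{*,1}\tcD(A)_1^{\circ}$, subject to additional constraints that detect failure of membership in ${\rm Y}_{00}$, ${\rm Y}_{10}$, ${\rm Y}_{01}$.

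For the forward direction, assume $(A,\lambda,\eta,A',\lambda',\eta',\phi)\in{\rm Y}_{11}\setminus({\rm Y}_{00}\cup{\rm Y}_{10}\cup{\rm Y}_{01})$. Condition $(1.1)$ gives $\omega^{\circ}_{A'^\vee,1}=\Im(\phi_{*,1})$, which via the reduction $\tcD(A')_1^{\circ}/V\tcD(A')_2^{\circ}=\omega^{\circ}_{A'^\vee,1}$ lets me pick a lift of a generator of the quotient; call this element $\alpha$ (after multiplication by $p$ to land in $p\tcD(A')_1^{\circ}$). The first bullet is then just the assertion that $\alpha$ is a generator modulo the smaller lattice. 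The second bullet $F\alpha-V\alpha\in p\phi_{*,2}\tcD(A)_2^{\circ}$ records the compatibility of $\alpha$ with both the Frobenius and Verschiebung maps on $\tcD(A')^{\circ}$, which is forced by the fact that $\tcD(A')^{\circ}$ is itself Frobenius and Verschiebung stable. The third bullet encodes that the rank-one quotient actually comes from an element in the indicated intersection (a consequence of $(1.1)\wedge(2.1)$ together with Lemma~\ref{L:omega-F}). Finally, the two non-containments in the fourth bullet correspond precisely to $\neg(1.0)$ and $\neg(2.0)$: if $\alpha\in\phi_{*,1}V\tcD(A)_2^{\circ}$ then $\omega^{\circ}_{A^\vee,1}=\Ker(\phi_{*,1})$ (putting us in ${\rm Y}_{01}$), and if $\alpha\in\phi_{*,1}F\tcD(A)_2^{\circ}$ then $\Ker(\phi_{*,2})=\omega^{\circ}_{A^\vee,2}$ (putting us in ${\rm Y}_{10}$). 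Both are excluded by hypothesis.

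For the reverse direction, given $\alpha$ satisfying the four bullets, I reconstruct the lattice $\phi_{*,1}\tcD(A')_1^{\circ}$ as $\phi_{*,1}\tcD(A)_1^{\circ}+W(\overline{\FF}_p)\alpha$ divided by $p$, and analogously for the index-$2$ component via the relation $F\alpha-V\alpha\in p\phi_{*,2}\tcD(A)_2^{\circ}$; this Frobenius-Verschiebung compatibility ensures the resulting $W(\overline{\FF}_p)$-module is closed under $F,V$, so by Proposition~\ref{P:abelian-Dieud} it corresponds to a well-defined abelian variety, which must agree with $A'$ by uniqueness. One then reads off the signatures $\dim\omega^{\circ}_{A'^\vee,i}=n-1$ from the dimension formula, verifies $(1.1)$ and $(2.1)$ directly, and uses the third-bullet intersection condition to ensure the images actually equal the $\omega$'s. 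The two non-containments in the fourth bullet immediately give $\neg(1.0)$ and $\neg(2.0)$, placing the point outside ${\rm Y}_{00}\cup{\rm Y}_{10}\cup{\rm Y}_{01}$.

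The main obstacle is bookkeeping: the four bullets interact in a subtle way because $\alpha$ simultaneously records data about $\phi_{*,1}$ and $\phi_{*,2}$ through the Frobenius--Verschiebung relation $F\alpha-V\alpha\in p\phi_{*,2}\tcD(A)_2^{\circ}$. The delicate point is to verify that the reconstruction from $\alpha$ in the reverse direction recovers $A'$ on the nose rather than just some isogenous variety, and that the pairs of non-containments really do correspond one-to-one to the strata ${\rm Y}_{10}$ and ${\rm Y}_{01}$; this requires a careful dimension count using formula \eqref{E:dimension of new differentials}, combined with the containments from Lemma~\ref{L:omega-F} applied to the universal isogenies underlying ${\rm Y}_i$. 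Once this dictionary is established, the equivalence is essentially formal.
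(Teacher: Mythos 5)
Your overall route is the same as the paper's: in one direction you take $\alpha$ to be a generator of the rank-one quotient $p\tcD(A')_1^{\circ}/p\phi_{*,1}\tcD(A)_1^{\circ}$ and read the strata conditions off the lattice chain, and in the other direction you rebuild the chain $\tcE_1=\tcD(A)_1^{\circ}+W(\overline{\FF}_p)\,\phi_{*,1}^{-1}(p^{-1}\alpha)$, $\tcE_2=W(\overline{\FF}_p)\,V^{-1}(\phi_{*,1}^{-1}(\alpha))+\tcD(A)_2^{\circ}$ and invoke the Dieudonn\'e-to-abelian-variety dictionary (Proposition~\ref{P:abelian-Dieud}/Corollary~\ref{CORP}) plus the dimension formula. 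That skeleton is fine.

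The genuine gap is in the forward (``only if'') direction, at the second bullet. You assert that $F\alpha-V\alpha\in p\phi_{*,2}\tcD(A)_2^{\circ}$ is ``forced by the fact that $\tcD(A')^{\circ}$ is Frobenius and Verschiebung stable.'' Stability only gives $F\alpha,V\alpha\in p\tcD(A')_2^{\circ}$, whereas the bullet demands that the difference land in the \emph{corank-one} sublattice $p\phi_{*,2}\tcD(A)_2^{\circ}\subsetneq p\tcD(A')_2^{\circ}$; for an arbitrary generator $\alpha$ of the rank-one quotient this simply fails, and in fact arranging it is the real content of the lemma. The paper handles this by a normalization: writing the discrepancy as $x$ times a generator of the one-dimensional quotient, it replaces $\alpha$ by $y\alpha$ for a unit $y\in W(\overline{\FF}_p)$, which changes the discrepancy $\sigma$-semilinearly to $((x+1)y^{\sigma^{-1}}-y^{\sigma})$ times the generator, and then solves the resulting Artin--Schreier-type congruence $(x+1)y^{\sigma^{-1}}-y^{\sigma}\equiv 0 \bmod p$, which is possible because the residue field $\overline{\FF}_p$ is algebraically closed. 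Note also that rescaling $\alpha$ by a unit leaves your first, third and fourth bullets intact, which is why the normalization is harmless; your sketch never addresses this compatibility issue, so as written the existence of an $\alpha$ satisfying all four bullets simultaneously is not established. The remaining bookkeeping (the dictionary between the two non-containments and $\neg(1.0)$, $\neg(2.0)$, and the identification of the reconstructed lattice with $\tcD(A')^{\circ}$ via the first bullet) is in line with the paper and is only a matter of care.
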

\begin{proof}
    If there exists such $\alpha \in p\tcD(A')_{1}^{\circ},$
    take $\tcE_1=\tcD(A)_{1}+W(\overline{\FF}_{p})\phi_{*,1}^{-1}(p^{-1}\alpha)$ and $\tcE_{2}=W(\overline{\FF}_{p})(V^{-1}(\phi_{*,1}^{-1}(\alpha)))+\tcD(A)_{2}.$ Applying Corollary~\ref{CORP}, we get a point $(A',\lambda',\eta')$ of ${\Sh}_{1,n-1}$ and 
    a p-quasi-isogeny $\phi: A\ra A',$ such that $(A,\lambda, \eta,A',\lambda', \eta',\phi)$ is a point of ${\rm Y}_{11}\backslash ({\rm Y }_{00}\bigcup {\rm Y}_{10}\bigcup {\rm Y}_{01}).$ since  $\omega^{\circ}_{A^{\vee},1}\neq \Ker(\phi_{*,1})$ and $\Im(\phi_{*,2})\neq \omega^{\circ}_{A^{\vee},2}.$
    
    % where $A'$ is an abelian variety over $k$ with an action of $\cO_D,$ $\lambda'$ is a prime-to-$p$ polarization on $A',$ and $\eta'$ is a prime-to-$p$ level structure on $A',$ such that $\phi^\vee\circ\lambda'\circ\phi=\lambda,$ $\eta' = \phi \circ \eta$ and such that $\phi_{*,i}: \tcD(A)^{\circ}_i\ra \tcD(A')^{\circ}_i$ are naturally identified with the inclusion $ \tcD(A)^{\circ}_i\ra \tcE_{i}$ for $i=1,2.$
    % Moreover, $\omega^{\circ}_{A'^\vee/k,1}$ has dimension $1,$ and $\omega^{\circ}_{A'^\vee/k,2}$ has dimension $n-1.$
    % Therefore, $(A',\lambda', \eta')$ is a point of ${\Sh}_{1,n-1}$ and
    %  $(A,\lambda, \eta,A',\lambda', \eta',\phi)$ is a point of ${\rm Y}_{11}\backslash ({\rm Y }_{00}\bigcup {\rm Y}_{10}\bigcup {\rm Y}_{01})$ since  $\omega^{\circ}_{A^{\vee},1}\neq \Ker(\phi_{*,1})$ and $\Im(\phi_{*,2})\neq \omega^{\circ}_{A^{\vee},2}.$

    Conversely, if $(A,\lambda, \eta,A',\lambda', \eta',\phi)$ is a point of ${\rm Y}_{11}\backslash ({\rm Y }_{00}\bigcup {\rm Y}_{10}\bigcup {\rm Y}_{01}),$ we can take $\alpha\in p\tcD(A')_{1}^{\circ}$ such that $p\phi_{*,1}\tcD(A)_{1}^{\circ}+W(\overline{\FF}_{p})\alpha=p\phi_{*,1}\tcD(A')_{1}^{\circ}.$ Then since $\omega^{\circ}_{A^{\vee},1}\neq \Ker(\phi_{*,1})$ and $\Im(\phi_{*,2})\neq \omega^{\circ}_{A^{\vee},2},$ we have $p\tcD(A')_{1}\neq \phi_{*,1}V\tcD(A)_{2}$ and $V\tcD(A')_{1}\neq \phi_{*,2}\tcD(A)_{2}.$ Thus $W(\overline{\FF}_{p})\alpha+\phi_{*,1}(V\tcD(A)_{2})=V\tcD(A')_{2}^{\circ},W(\overline{\FF}_{p})(p^{-1}\alpha)+\phi_{*,1}\tcD(A)_{1}=\tcD(A')_{1}^{\circ}$ and  $W(\overline{\FF}_{p})(F\alpha)+\phi_{*,2}(p\tcD(A)_{2})=p\tcD(A')_{2}^{\circ}.$ Moreover, we have $W(\overline{\FF}_{p})(F^{-1}\alpha)+\phi_{*,2}(V\tcD(A))_{1}=V\tcD(A')_{1}^{\circ}$ and $W(\overline{\FF}_{p})(V^{-1}\alpha)+\phi_{*,2}\tcD(A)_{2}=\tcD(A')_{2}^{\circ}.$ Moreover, we have $F^{-1}\alpha-V^{-1}\alpha=xV^{-1}\alpha$ for some $x\in W(\overline{\FF}_{p})$ since $V\tcD(A')_{1}\neq \phi_{*,2}\tcD(A)_{2}$ and $\phi_{*,2}\tcD(A)_{2}$ has corank $1$ in $\tcD(A')_{2}.$ If $x\notin pW(\overline{\FF}_{p}),$ we can modify $\alpha$ by $y\alpha$ for some $y\in W(\overline{\FF}_{p})$ such that $F^{-1}(y\alpha)-V^{-1}(y\alpha)=y^{\sigma^{-1}}F^{-1}\alpha-y^{\sigma}V^{-1}\alpha= ((x+1)y^{\sigma^{-1}}-y^{\sigma})V^{-1}\alpha.$ Take $y$ such that $(x+1)y^{\sigma^{-1}}-y^{\sigma}\in pW(\overline{\FF}_{p})$ we get $F^{-1}(y\alpha)-V^{-1}(y\alpha)\in p\phi_{*,2}\tcD(A)_{2}^{\circ}.$ Substituting $\alpha$ with $y\alpha,$ we finish the proof.
\end{proof}

To give a judgement when points in ${{\Sh}}_{1,n-1}(K_{\gothp}^{1})$ are supersingular, we need some definitions.
\begin{definition}
    Let $k$ be a perfect field of characteristic $p$ and $W(k)$ be the Witt vector ring corresponding to $k.$ Supoose $(P,F)$ is a $Q(k)=Frac(W(k))$-isocrystal defined in Definition~\ref{iisocrystal}. We say $(P,F)$ is average of slope 0 if there exist (thus for every) some (full) lattice $H$ in $P$ such that $\ell(H/H\bigcap F(H))=\ell(F(H)/H\bigcap F(H))\leq 1.$ We say $(P,F)$ is pure of slope $0$ if $P$ admits a $F$-invariant (full) lattice.
\end{definition}
\begin{definition}
     Let $k$ be a perfect field of characteristic $p$ and $W(k)$ be the Witt vector ring corresponding to $k.$ Supoose $(P,F)$ is a $Q(k)=Frac(W(k))$-isocrystal average of slope $0$ and $H\subseteq P$ is a sublattice. Suppose $\ell(H/H\bigcap F(H))=\ell( F(H)/H\bigcap F(H))\leq 1.$ Let $Lat_{\leq 1}(P)$ to be the set of $H$ satisfies the above conditions. For $i\geq 0,$ we define $S_{i}(H)=\sum\limits_{j=0}^{i}F^{j}(H)$ and $T_{i}(H)=\bigcap\limits_{j=0}^{i}F^{j}(H).$ Moreover, we define $S_{\infty}(H)=\lim\limits_{i\ra \infty}S_{i}(H)$ and $T_{\infty}(H)=\lim\limits_{i\ra \infty}T_{i}(H).$ We define $s(H)=\inf\{s~|~S_{s}(H)=S_{\infty}(H)\}$ and $t(H)=\inf\{t~|~T_{t}(H)=T_{\infty}(H)\}.$
\end{definition}

\begin{lemma}\label{corr}
    Assume $(P,F)$ is a $Q(k)=Frac(W(k))$-isocrystal average of slope $0.$ Let $H\in Lat_{\leq 1}(P).$ Then we have:
    \begin{enumerate}
        \item $s(H)=0\iff t(H)=0 \iff H=F(H),$
        \item $S_{i}, T_{i}$ commute with $F$ and mulitplication by $p.$ So $s(H)=s(F(H))=s(pH),$
        \item For $0\leq i,j<\infty,$ $S_{i}(H),T_{i}(H)\in Lat_{\leq 1}(P)$ and $S_i(S_j(H))=S_{i+j}(H),T_i(T_j(H))=T_{i+j}(H),$
        \item If $(P,F)$ is pure of slope $0,$ then $s(H),t(H)\leq \rank(P)-1.$ Otherwise, $s(H)=t(H)=\infty$
        \item Let $0\leq i,j<\infty.$ Then
        \[T_{j}(S_{i}(H))=
        \left\{ 
        \begin{array}{ll}
             S_{\infty}(H),& \text{if } i\geq s(H); \\
             F^{j}(S_{i-j}(H)),& \text{if } j\leq i<s(H); \\
             F^{i}(T_{j-i}(H)),&\text{if } i<s(H) \text{ and } i<j<i+t(H);\\
             T_{\infty}(H),& \text{if } i<s(H) \text{ and } j\geq i+t(H);\\
        \end{array}
        \right.
        \]
        
        So $t(S_{i}(H))=t(H)+i$ if $0\leq i<s(H).$
    \end{enumerate}
\end{lemma}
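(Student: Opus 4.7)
The plan is to prove the five parts in order, with each building on the previous. The underlying organizing principle is that the condition $H\in Lat_{\leq 1}(P)$ turns the bi-infinite sequence $\{\pi^{j}(H)\}_{j\in\mathbb{Z}}$ into a chain of lattices in which any two consecutive terms differ by at most one elementary step; this chain structure governs every identity in the lemma.

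Part (1) is immediate: $s(H)=0$ means $\pi(H)\subseteq H$, and then the length balance $\ell(H/H\cap\pi(H))=\ell(\pi(H)/H\cap\pi(H))$ in the definition of $Lat_{\leq 1}(P)$ forces $\pi(H)=H$; symmetrically for $t(H)$. Part (2) is formal because $\pi$ and multiplication by the central element $p$ commute with finite sums and intersections of $\pi$-iterates. For part (3), I would first verify that $S_{1}(H),T_{1}(H)\in Lat_{\leq 1}(P)$ by an explicit length computation: writing $S_{1}(H)=H+\pi(H)$ and $\pi(S_{1}(H))=\pi(H)+\pi^{2}(H)$, apply the modular law together with the fact that consecutive $\pi^{j}(H)$ differ by at most one step to pin down $\ell(S_{1}(H)/S_{1}(H)\cap \pi S_{1}(H))\leq 1$ and symmetrically; the general case $S_{i}(H),T_{i}(H)\in Lat_{\leq 1}(P)$ then follows by induction on $i$, and the semigroup identities $S_{i}\circ S_{j}=S_{i+j}$ and $T_{i}\circ T_{j}=T_{i+j}$ drop out of unfolding the definitions.

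For part (4), in the pure slope $0$ case I would choose a $\pi$-stable lattice $H_{0}$; trapping $p^{a}H_{0}\subseteq H\subseteq p^{-a}H_{0}$ for some $a\geq 0$, the strictly increasing chain $S_{0}(H)\subsetneq S_{1}(H)\subsetneq\cdots$ must stabilize in at most $\rank(P)-1$ steps because each inclusion raises the total length by exactly one and the ambient interval has bounded length. In the non-pure case, a Newton-slope argument on the asymptotic growth rates of $\ell(S_{i}(H)/H)$ and $\ell(H/T_{i}(H))$ shows they must agree, forcing $s(H)=t(H)$: any imbalance would produce a net slope different from the average. Part (5) is the combinatorial heart and I would prove it by induction on $\min(i,j)$, splitting into the four cases of the formula and invoking parts (2), (3) repeatedly; each case describes the transition between a ``growth'' phase (where $S_{i}$ is still enlarging the lattice) and a ``shrinkage'' phase (where $T_{j}$ begins cutting back).

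The main obstacle I anticipate is twofold: the non-pure case of part (4), where the average-zero hypothesis needs to be converted into a precise matching of growth rates without any a priori $\pi$-stable decomposition of $P$; and the book-keeping in part (5), where each of the four cases requires a careful modular-law calculation and one must ensure the boundaries between cases are consistent. My strategy for both will be to pass to the associated graded of the filtration $\cdots\subseteq T_{i+1}(H)\subseteq T_{i}(H)\subseteq\cdots\subseteq S_{i}(H)\subseteq S_{i+1}(H)\subseteq\cdots$, so that the length-one jumps become visible as one-dimensional quotients on which $\pi$ acts by a well-defined shift, reducing the combinatorial claims to tracking these one-dimensional pieces.
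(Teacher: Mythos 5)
Your treatment of (1), (2), (3) and (5) is essentially the paper's route: the content is the induction showing $\ell(S_{i+1}(H)/S_i(H))=1$ for $i<s(H)$, whose by-product $S_i(H)\cap\pi(S_i(H))=\pi(S_{i-1}(H))$ (i.e.\ $T_1(S_i(H))=\pi(S_{i-1}(H))$) is exactly the identity that, iterated, gives all four cases of (5); your ``associated graded / one-dimensional jumps'' bookkeeping is the same mechanism, just stated more vaguely.

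The genuine gaps are in (4). In the pure case you claim the chain stabilizes ``in at most $\rank(P)-1$ steps because \ldots the ambient interval has bounded length'': trapping $p^aH_0\subseteq H\subseteq p^{-a}H_0$ only bounds the number of length-one jumps by the length of that interval, which is of size roughly $2a\cdot\rank(P)$ and depends on $H$; it gives finiteness of $s(H)$, not the stated bound $s(H),t(H)\leq\rank(P)-1$. That sharper statement is that $S_{\rank(P)-1}(H)$ is already $\pi$-stable, which the paper gets by citing a known result (its Proposition 2.17 reference) and which does not follow from your volume estimate. In the non-pure case, ``the asymptotic growth rates of $\ell(S_i(H)/H)$ and $\ell(H/T_i(H))$ agree'' does not force $s(H)=t(H)$: both rates are eventually $0$ whenever both quantities are finite, so rate-matching cannot distinguish, say, $s(H)=2$ from $t(H)=5$. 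The actual point, which your sketch never uses, is that if either chain stabilized at a finite stage, the stable term would satisfy $\pi(S_\infty(H))\subseteq S_\infty(H)$, and the balanced-colength (average slope $0$) condition upgrades this to $\pi(S_\infty(H))=S_\infty(H)$, producing a $\pi$-invariant lattice and contradicting non-purity; hence $s(H)=t(H)=\infty$, which is the (much shorter) argument the paper gives. As written, your (4) proves neither the bound in the pure case nor the equality in the non-pure case, though both can be repaired along the lines above.
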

\begin{proof}
    It is easy to check $(1)$-$(2).$ For $(4),$ if $(P,F)$ is pure of slope $0,$ then $S_{\rank(P)-1}(H)$ is $F$-invariant by \cite[Proposition 2.17]{RZ96}, so $s(H)\leq \rank(P)-1.$ Otherwise, by our definition, $P$ has no $F$-invariant lattice, so $S_{i}(H)\subsetneq S_{i+1}(H)$ for every $i<\infty,$ that is, $s(H)=\infty.$

    For $(3),$ first prove by induction on $0\leq i<s(H)$ that $\ell(S_{i+1}(H)/S_{i}(H))=1.$ For $i=0,$it follows from $H\in Lat_{\leq 1}(P).$ For $0<i<s(H),$ $S_{i}(H)\neq F(S_{i}(H))$ and $F(S_{i-1}(H))\subseteq S_{i}(H)\cap F(S_{i-1}(H))\subsetneq F(S_{i}(H)).$ By the inductive bypothesis, $\ell(S_{i}(H)/S_{i-1}(H))=1$ and thus $\ell(F(S_{i}(H))/F(S_{i-1}(H)))=1.$ It forces that\[F(S_{i-1}(H))=S_{i}(H)\cap F(S_{i}(H)) \text{ and }\ell(F(S_{i}(H))/S_{i}(H)\cap(S_{i}(H)))=1.\]
    So $\ell(F(S_{i+1}(H))/F(S_{i}(H)))=\ell(F(S_{i}(H))/S_{i}(H)\cap(S_{i}(H)))=1.$ This completes the induction. It follows immediately that $S_{i}(H)\in Lat_{\leq 1}(P).$ The other assertion of $(3)$ is clear.

    We have seen $F(S_{i-1}(H))=T_{1}(S_{i}(H))$ for $0<i<s(H).$ So $T_{i}(S_{j}(H))=F^{j}(S_{i-j}(H))$ for $j\leq i<s(H).$ In particular, $T_{i}(S_{i}(H))=F^{i}(H)$ for $i<s(H).$ So for $i<j<\infty,$ $T_{j}(S_{i}(H))=T_{j-i}(T_{i}S_{i}(H))=T_{i-j}(F^{i}(H))=F^{i}(T_{j-i}(H)).$ So we get $(5).$
\end{proof}

\begin{proposition}\label{key lemma}
    For any $S$-point $(A,\lambda,\eta, A',\lambda',\eta',\phi)\in {{\Sh}}_{1,n-1}(K_{\gothp}^{1})^{\rm ss},$ if 
   $S_{\infty}(V\tcD(A')_{2}^{\circ})\subseteq T_{\infty}(\phi_{*,1}\tcD(A)_{1}^{\circ})$ or
$S_{\infty}(p\tcD(A')_{2}^{\circ})\subseteq T_{\infty}(\phi_{*,2}(V\tcD(A)_{1}^{\circ})),$
      then the point $(A,\lambda,\eta, A',\lambda',\eta',\phi)$ is contained in $\pr_n(C_{n})\bigcup \bigcup\limits_{i=1}^{n-1}\pr_{i}(C_i).$ 
\end{proposition}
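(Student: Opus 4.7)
The plan is to translate each of the two hypotheses into a statement about a suitable isocrystal and then apply Lemma~\ref{corr} to produce a $\pi$-invariant lattice, which will force $A'$ (and hence $A$) to be supersingular.

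For the first hypothesis, I would consider the $\sigma^2$-linear isocrystal $P := \tcD(A')_1^\circ \otimes_{W(\overline\FF_p)} W(\overline\FF_p)[1/p]$ equipped with the endomorphism $\pi := p^{-1} F^2$, where $F^2$ is the composition $\tcD(A')_1^\circ \xrightarrow{F} \tcD(A')_2^\circ \xrightarrow{F} \tcD(A')_1^\circ$. Since $A'$ is supersingular if and only if all Newton slopes of $F$ on $\tcD(A')^\circ \otimes \QQ$ equal $1/2$, this is equivalent to $(P,\pi)$ being pure of slope $0$. I would then verify that $(P,\pi)$ is average of slope $0$ and that both $V\tcD(A')_2^\circ$ and $\phi_{*,1}\tcD(A)_1^\circ$ lie in $\mathrm{Lat}_{\leq 1}(P)$. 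For this, the signature $(1,n-1)$ gives that $V$ and $F$ between $\tcD(A')_1^\circ$ and $\tcD(A')_2^\circ$ have cokernel of length $1$, and the moduli condition for ${\Sh}_{1,n-1}(K_\gothp^1)$ gives that $\phi_{*,1}\tcD(A)_1^\circ$ is corank $1$ in $\tcD(A')_1^\circ$. The key calculation is $\pi(V\tcD(A')_2^\circ) = p^{-1} F^2 V \tcD(A')_2^\circ = F\tcD(A')_2^\circ$, so $V\tcD(A')_2^\circ$ and $F\tcD(A')_2^\circ$ are two corank-$1$ sublattices of $\tcD(A')_1^\circ$ with balanced length, putting $V\tcD(A')_2^\circ$ in $\mathrm{Lat}_{\leq 1}(P)$; a similar calculation handles $\phi_{*,1}\tcD(A)_1^\circ$.

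Next, under the hypothesis $S_\infty(V\tcD(A')_2^\circ) \subseteq T_\infty(\phi_{*,1}\tcD(A)_1^\circ)$, the right-hand side is contained in the proper sublattice $\phi_{*,1}\tcD(A)_1^\circ \subsetneq P$. If $(P,\pi)$ were not pure of slope $0$, then Lemma~\ref{corr}(4) gives $s(V\tcD(A')_2^\circ) = \infty$; combined with the fact that $(P,\pi)$ is average but not pure of slope $0$, its Newton decomposition contains a component of strictly positive slope, on which the $\pi$-iterates of $V\tcD(A')_2^\circ$ grow without bound, forcing $S_\infty(V\tcD(A')_2^\circ) = P$. This contradicts the bounded containment, so $(P,\pi)$ must be pure of slope $0$, whence $A'$ is supersingular. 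Since $\phi$ is a $p$-quasi-isogeny, $A$ is also supersingular. The second hypothesis is handled by the analogous argument on $P' := \tcD(A')_2^\circ \otimes \QQ$ with $\pi' := p^{-1} V^2$.

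The hard part will be the $\mathrm{Lat}_{\leq 1}$ verification (careful lattice-length bookkeeping via the signature and the moduli data), the rigorous justification that in the non-pure case $S_\infty$ actually exhausts $P$ (via a Newton slope decomposition and iteration argument), and reconciling the $\sigma^2$-linear setup of $(P,\pi)$ with the $\sigma$-linear framework underlying Lemma~\ref{corr} (either by viewing $W(\overline\FF_p)$ as a $\ZZ_{p^2}$-module with $\sigma^2$ as its Frobenius, or by a direct adaptation of the arguments of Section~\ref{G} to the $\sigma^2$-linear case).
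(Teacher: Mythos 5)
Your proposal is correct in substance, but it takes a genuinely different route from the paper's. The paper does not argue on Newton slopes at all: from the hypothesis it chooses a $\pi$-stable lattice $\tcE_1$ with $S_\infty(V\tcD(A')_2^{\circ})\subseteq\tcE_1\subseteq T_\infty(\phi_{*,1}\tcD(A)_1^{\circ})$, sets $\tcE_2=V\tcE_1$, feeds $(\tcE_1,\tcE_2)$ into Proposition~\ref{P:abelian-Dieud} to manufacture a point $(B,\lambda'',\eta'')\in{\Sh}_{0,n}$ with isogenies exhibiting $(A,B)\in {\rm Y}_i$ and $(A',B)\in {\rm Y}_{i+1}$, and then quotes Proposition~\ref{corre(1,n-1)(0,n)}(3) (the images of the ${\rm Y}_j$ exhaust the supersingular locus). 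That construction is stronger than the bare statement and is what gets reused later (``by the proof of Proposition~\ref{key lemma}'' in Proposition~\ref{supersingular case}, to place the point in $\pr_i(C_i)$); your slope-theoretic argument yields exactly the stated supersingularity and nothing more. What your route buys is a cleaner and non-circular proof of the statement itself: the paper's write-up literally opens with ``since $A,A'$ are supersingular \dots pure of slope $0$'', whereas you derive purity from the hypothesis — the chain $S_i(V\tcD(A')_2^{\circ})$ is increasing and trapped inside the lattice $\phi_{*,1}\tcD(A)_1^{\circ}$, hence stabilizes, so $S_\infty$ is a $\pi$-stable lattice; since the slopes of $\pi=FV^{-1}=p^{-1}F^2$ on $\tcD(A')_1^{\circ}\otimes Q(k)$ sum to $0$ for signature $(1,n-1)$, this forces $\pi$ isoclinic of slope $0$, i.e.\ $F$ of slope $\tfrac12$, so $A'$ and hence $A$ (quasi-isogenous) are supersingular.

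Three small repairs to your plan. (a) Bookkeeping: $V\tcD(A')_2^{\circ}$ and $\pi(V\tcD(A')_2^{\circ})=F\tcD(A')_2^{\circ}$ have colength $n-1$ (not $1$) in $\tcD(A')_1^{\circ}$; what puts $V\tcD(A')_2^{\circ}$ in $\mathrm{Lat}_{\leq 1}(P)$ is that both contain $p\tcD(A')_1^{\circ}$ with quotient of length $1$. (b) In the non-pure case the iterates $\pi^j(H)$ grow on the \emph{negative}-slope part, and $S_\infty(H)=P$ need not hold; but neither claim is needed — once $s(H)=\infty$ (Lemma~\ref{corr}(4), i.e.\ $S_i\subsetneq S_{i+1}$ for all $i$), containment of all $S_i$ in the fixed lattice $\phi_{*,1}\tcD(A)_1^{\circ}$ is already contradictory by a length count; in particular the $\mathrm{Lat}_{\leq1}$ verification for $\phi_{*,1}\tcD(A)_1^{\circ}$ can be dropped. (c) Make explicit that ``a $\pi$-stable lattice exists'' upgrades to ``isoclinic of slope $0$'' only because the total slope is $0$ (or argue symmetrically with $T_\infty$); with that in place, your $\sigma^2$-linear normalization (treating $\tcD(A')_1^{\circ}\otimes Q(k)$ as a $\QQ_{p^2}$-isocrystal) is indeed the right way to reconcile the paper's $\sigma$-linear phrasing of Lemma~\ref{corr} with the operator $FV^{-1}$.
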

\begin{proof}
 Consider the isocrystals  $(\tcD(A)_{i}^{\circ}\otimes_{W(k)}Q(k),FV^{-1}\otimes 1),(\tcD(A')_{i}^{\circ}\otimes_{W(k)}Q(k),FV^{-1}\otimes 1).$ Since $A,A'$ are supersingular, they are pure of slope $0.$
 Let $F'=FV^{-1}.$ If $S_{\infty}(V\tcD(A')_{2}^{\circ})\subseteq T_{\infty}(\phi_{*,1}\tcD(A)_{1}^{\circ}),$ we take $\tcE_1$ to be an $F'$-invariant lattice such that $S_{\infty}(V\tcD(A')_{2}^{\circ})\subseteq \tcE_1\subseteq T_{\infty}(\phi_{*,1}\tcD(A)_{1}^{\circ}).$ Suppose $\ell(\phi_{*,1}(\tcD(A)_{1}^{\circ})/\tcE_1)=i-1.$ Let $\tcE_{2}=V\tcE_1.$ Applying Proposition~\ref{P:abelian-Dieud} with $m=1,$ we get a point $(B,\lambda'',\eta'')$ of $\Sh_{0,n}$ and isogenies $\psi: B\ra A,\psi': B\ra A',$ such that $(A,\lambda,\eta,B,\lambda'', \eta'',\psi)\in {\rm Y}_{i},(A',\lambda',\eta',B,\lambda'', \eta'',\psi')\in {\rm Y}_{i+1}.$ By Proposition~\ref{S:corre(1,n-1)(0,n)}, we get $A,A'$ are supersingular.
The case $S_{\infty}(p\tcD(A')_{2}^{\circ})\subseteq T_{\infty}(\phi_{*,2}(V\tcD(A)_{1}^{\circ}))$ is similar and we omit here. Hence we finish the proof.

\end{proof}

\begin{proposition}
\label{supersingular case}
    With notations as above, we have ${{\Sh}}_{1,n-1}(K_{\gothp}^{1})^{\rm ss}=\pr_n(C_{n})\bigcup \bigcup\limits_{i=1}^{n-1}\pr_{i}(C_i)={\rm Y}_{00}\bigcup \bigcup\limits_{i=1}^{n-1}\pr_{i}(C_i).$
\end{proposition}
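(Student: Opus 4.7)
The plan is to establish the two inclusions separately, with the forward direction being essentially formal from the definitions and the reverse direction requiring the heavy machinery of Proposition~\ref{key lemma}.

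For the easy inclusion $\pr_n(C_n) \cup \bigcup_{i=1}^{n-1}\pr_i(C_i) \subseteq {{\Sh}}_{1,n-1}(K_{\gothp}^{1})^{ss}$, I would argue as follows. A point of $C_i$ with $1 \leq i \leq n-1$ carries by definition an abelian variety $B \in {\Sh}_{0,n}$ together with isogenies $\psi: B \to A$ and $\psi': B \to A'$ witnessing $(A, \lambda, \eta, B, \lambda'', \eta'', \psi) \in {\rm Y}_i$ and $(A', \lambda', \eta', B, \lambda'', \eta'', \psi') \in {\rm Y}_{i+1}$. Proposition~\ref{corre(1,n-1)(0,n)}(3) then forces both $A$ and $A'$ to lie in ${\Sh}_{1,n-1}^{ss}$, so the underlying point of ${{\Sh}}_{1,n-1}(K_{\gothp}^{1})$ is supersingular. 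The case $i=n$ is handled by the already-established identification $\pr_n(C_n) = {\rm Y}_{00}$, combined with the observation (made just after Definition~\ref{shii}) that every point of ${\rm Y}_{00}$ has supersingular underlying abelian varieties.

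For the reverse inclusion, I take a supersingular point $x = (A, \lambda, \eta, A', \lambda', \eta', \phi)$. If $x \in {\rm Y}_{00}$, then $x \in \pr_n(C_n)$ and we are done. Otherwise, the strategy is to produce $(B, \lambda'', \eta'') \in {\Sh}_{0,n}$ with compatible isogenies to $A$ and $A'$ realizing $x \in \pr_i(C_i)$ for some $1 \leq i \leq n-1$. Proposition~\ref{key lemma} reduces this task to verifying at least one of the two $\pi$-invariant lattice inclusions
\[
S_\infty(V\tcD(A')_2^\circ) \subseteq T_\infty(\phi_{*,1}\tcD(A)_1^\circ), \qquad S_\infty(p\tcD(A')_2^\circ) \subseteq T_\infty(\phi_{*,2}V\tcD(A)_1^\circ),
\]
where $\pi = FV^{-1}$. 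Since $A$ and $A'$ are supersingular, the isocrystals $\tcD(A)_i^\circ[1/p]$ and $\tcD(A')_i^\circ[1/p]$ are pure of slope $0$ under $\pi$ (this follows, for instance, from the signature $(0,n)$ computation that $\pi$ preserves $\tcD(B)_1^\circ$ for supersingular $B \in {\Sh}_{0,n}$, transported along $\phi$). By Lemma~\ref{corr}(4), for any $H \in \mathrm{Lat}_{\leq 1}$ we then have $s(H), t(H) \leq n - 1$, so $S_\infty$ and $T_\infty$ are both honest finite $\pi$-stable lattices.

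The main obstacle is the verification of one of these inclusions, and the plan is to carry out a case analysis on whether $x$ lies in ${\rm Y}_{10}$, ${\rm Y}_{01}$, or ${\rm Y}_{11} \setminus ({\rm Y}_{00} \cup {\rm Y}_{01} \cup {\rm Y}_{10})$. In the first two cases, Proposition~\ref{bloww up} realizes ${\rm Y}_{10}$ and ${\rm Y}_{01}$ as blow-ups of ${\Sh}_{1,n-1}$ along ${\mathop{p}\limits^{\leftarrow}}_n({\rm Y}_n)$ and ${\mathop{p}\limits^{\leftarrow}}_1({\rm Y}_1)$ respectively; since the image $(A, \lambda, \eta)$ (or $(A', \lambda', \eta')$) is supersingular and hence in $\bigcup_j {\mathop{p}\limits^{\leftarrow}}_j({\rm Y}_j)$ by Proposition~\ref{corre(1,n-1)(0,n)}(3), one extracts a concrete $B$ and transports it along $\phi$ to check the lattice inclusion by a direct length count. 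The genuinely subtle case is $x \in {\rm Y}_{11} \setminus ({\rm Y}_{00} \cup {\rm Y}_{01} \cup {\rm Y}_{10})$: here I would invoke Lemma~\ref{In y11} to obtain the distinguished element $\alpha$ controlling the discrepancy between $\phi_{*,1}\tcD(A)_1^\circ$ and $\tcD(A')_1^\circ$, and then use the congruence $F\alpha - V\alpha \in p\phi_{*,2}\tcD(A)_2^\circ$ together with Lemma~\ref{corr}(5) to show that the iterates $\pi^k \alpha$ stabilize after at most $n - 1$ steps inside one of the target lattices. This forces $S_\infty(V\tcD(A')_2^\circ) \subseteq T_\infty(\phi_{*,1}\tcD(A)_1^\circ)$ (or its second-component analogue) and completes the verification of the hypothesis of Proposition~\ref{key lemma}, yielding the desired $B$ and placing $x$ in $\pr_i(C_i)$.
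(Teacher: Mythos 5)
Your overall architecture does match the paper's: the easy inclusion comes from Proposition~\ref{corre(1,n-1)(0,n)}(3) together with $\pr_n(C_n)={\rm Y}_{00}$, and the hard inclusion is a case analysis over ${\rm Y}_{10}$, ${\rm Y}_{01}$ and ${\rm Y}_{11}\setminus({\rm Y}_{00}\cup{\rm Y}_{01}\cup{\rm Y}_{10})$ in which one verifies the lattice inclusion $S_\infty(V\tcD(A')_2^\circ)\subseteq T_\infty(\phi_{*,1}\tcD(A)_1^\circ)$ (or its second-component analogue) and then extracts the point of $\pr_i(C_i)$ from the proof of Proposition~\ref{key lemma}. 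The problem is that both of your key verifications are gaps as written. In the ${\rm Y}_{10}$/${\rm Y}_{01}$ case, ``extract a concrete $B$ from supersingularity of $(A,\lambda,\eta)$ and transport it along $\phi$, then do a direct length count'' does not deliver the needed inclusion: a $B$ witnessing $A\in{\mathop{p}\limits^{\leftarrow}}_j({\rm Y}_j)$ only produces a $\pi$-stable lattice inside $\tcD(A)_1^\circ[1/p]$, whereas the inclusion to be checked concerns the \emph{relative} position of $\tcD(A')$ and $\phi_*\tcD(A)$, and the composite $B\to A\to A'$ need not have the cokernel ranks required to land in ${\rm Y}_{j\pm1}$. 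The paper instead works directly from the moduli equations of ${\rm Y}_{10}$ ($V\tcD(A')_2^\circ\subseteq\phi_{*,1}\tcD(A)_1^\circ$, $V\tcD(A)_1^\circ=\phi_{*,2}\tcD(A)_2^\circ$), splits on whether $p\tcD(A')_1^\circ=\phi_{*,1}V\tcD(A)_2^\circ$, and runs a bookkeeping argument with the invariants $s(\cdot),t(\cdot)$ of Lemma~\ref{corr} (e.g.\ $t(\phi_{*,1}\tcD(A)_1^\circ)=t(\phi_{*,1}V\tcD(A)_2^\circ)-1$ and $s(V\tcD(A')_2^\circ)=s(\phi_{*,1}V\tcD(A)_2^\circ)-1$, combined with the bound $s,t\leq n-1$ from purity) to force $S_\infty\subseteq T_\infty$; the blow-up description of Proposition~\ref{bloww up} plays no role and gives you nothing here.

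The ${\rm Y}_{11}$ case has a more serious gap. Lemma~\ref{corr}(5) is a statement about lattices, not about the single vector $\alpha$, and ``the iterates $\pi^k\alpha$ stabilize after at most $n-1$ steps'' is not the statement you need: what must be shown is $\alpha\in T_\infty(\phi_{*,1}\tcD(A)_1^\circ)=\bigcap_{j\geq 0}\pi^j(\phi_{*,1}\tcD(A)_1^\circ)$, together with the previously established inclusion $S_\infty(\phi_{*,1}V\tcD(A)_2^\circ)\subseteq T_\infty(\phi_{*,1}\tcD(A)_1^\circ)$. The single congruence $F\alpha-V\alpha\in p\,\phi_{*,2}\tcD(A)_2^\circ$ is far from sufficient for this. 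In the paper the argument requires putting $\mathfrak{D}(A)^\circ$ into the Ekedahl--Oort normal form of Propositions~\ref{Formula} and~\ref{supersingular} for the stratum $(a,b)$ containing $(A,\lambda,\eta)$, deriving from all the conditions of Lemma~\ref{In y11} the congruences on the coordinates $x_i$ of $\alpha$ (namely $x_n,x_{b+1}\in pW(\overline{\FF}_p)$ and $x_i^\sigma\equiv x_{i+1}^{\sigma^{-1}}$, $x_i^\sigma\equiv x_i^{\sigma^{-1}}$ in the appropriate ranges), and computing explicitly $T_\infty(\phi_{*,1}\tcD(A)_1^\circ)=W(\overline{\FF}_p)\{e_{1,i},\,i\leq b;\ pe_{1,i},\,b+1\leq i\leq n\}$, from which $\alpha\in T_\infty$ can be read off. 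Your proposal skips all of this, so the decisive step of the hard inclusion is a missing idea rather than a routine detail; the skeleton is right, but the proof is not yet there.
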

\begin{proof}
    We have seen ${\rm Y}_{00}\subseteq {{\Sh}}_{1,n-1}(K_{\gothp}^{1})^{\rm ss}.$  For any $S$-point $(A,\lambda,\eta, A',\lambda',\eta',\phi)\in {\rm Y}_{10}$ or $ {\rm Y}_{01},$ we show if $A,A'$ are supersingular, $(A,\lambda,\eta, A',\lambda',\eta',\phi)\in \pr_n(C_{n})\bigcup\bigcup\limits_{i=1}^{n-1}\pr_{i}(C_i).$ For simplicity, we only prove for ${\rm Y}_{10}.$ The case for ${\rm Y}_{01}$ is quite the same. 
    
    Take a supersingular point $(A,\lambda,\eta, A',\lambda',\eta',\phi)\in {\rm Y}_{10},$ then $\omega^{\circ}_{A^{'\vee},1}=\Im(\phi_{*,1})$ and $\Ker(\phi_{*,2})=\omega^{\circ}_{A^{\vee},2}.$ Thus $V\tcD(A')_{2}^{\circ}\subseteq \phi_{*,1}\tcD(A)_{1}^{\circ}$ and $V\tcD(A)_{1}^{\circ}=\phi_{*,2}(\tcD(A)_{2}^{\circ}).$ If $p\tcD(A')_{1}^{\circ}=\phi_{*,1}V\tcD(A)_{2}^{\circ},$ then $F\tcD(A)_{2}^{\circ}=V\tcD(A)_{2}^{\circ}.$ That is, $A\in {\rm Y}_{n}$ and $A'\in {\rm Y}_{1}.$
    If $p\tcD(A')_{1}^{\circ}\neq\phi_{*,1}V\tcD(A)_{2}^{\circ},$ we have $\phi_{*,1}V\tcD(A)_{2}^{\circ}\in Lat_{\leq 1}(\tcD(A)_{1}^{\circ}\otimes_{W(k)}Q(k))$ and $\phi_{*,1}(p\tcD(A)_{1}^{\circ})=\phi_{*,1}(V\tcD(A)_{2}^{\circ})\bigcap\phi_{*,1}(F\tcD(A)_{2}^{\circ}).$ Thus $t(\phi_{*,1}(\tcD(A)_{1}^{\circ}))=t(\phi_{*,1}(p\tcD(A)_{1}^{\circ}))=t(\phi_{*,1}(V\tcD(A)_{2}^{\circ}))-1.$ Hence $T_{t(\phi_{*,1}(\tcD(A)_{1}^{\circ}))}(\phi_{*,1}(V\tcD(A)_{2}^{\circ}))\subseteq T_{t(\phi_{*,1}(\tcD(A)_{1}^{\circ}))}(\phi_{*,1}(\tcD(A)_{1}^{\circ})).$ Therefore $$S_{\infty}(\phi_{*,1}(V\tcD(A)_{2}^{\circ}))=S_{\infty}(T_{t(\phi_{*,1}(\tcD(A)_{1}^{\circ}))}(\phi_{*,1}(V\tcD(A)_{2}^{\circ})))\subseteq S_{\infty}(T_{t(\phi_{*,1}(\tcD(A)_{1}^{\circ}))}(\phi_{*,1}(\tcD(A)_{1}^{\circ})))= T_{\infty}(\phi_{*,1}(\tcD(A)_{1}^{\circ})).$$ This shows $s(\phi_{*,1}(V\tcD(A)_{2}^{\circ}))+t(\phi_{*,1}(\tcD(A)_{1}^{\circ}))\leq n-1.$ Hence $s(\phi_{*,1}(V\tcD(A)_{2}^{\circ}))+t(\phi_{*,1}(V\tcD(A)_{2}^{\circ}))\leq n.$ Since $V\tcD(A')_{2}^{\circ}=\phi_{*,1}(V\tcD(A)_{2}^{\circ})\bigcup\phi_{*,1}(F\tcD(A)_{2}^{\circ}),$ we can easily get $s(V\tcD(A')_{2}^{\circ})=s(\phi_{*,1}(V\tcD(A)_{2}^{\circ}))-1.$ Hence $s(V\tcD(A')_{2}^{\circ})+t(\phi_{*,1}(\tcD(A)_{1}^{\circ}))\leq n-2,$ which means $S_{\infty}(V\tcD(A')_{2}^{\circ}))\subseteq T_{\infty}(\phi_{*,1}(\tcD(A)_{1}^{\circ})).$ There exists $1\leq i\leq n-1$ such that $(A,\lambda,\eta, A',\lambda',\eta',\phi)\in \pr_{i}(C_i)$ by the proof of Proposition~\ref{key lemma}.

    For any $S$-point $(A,\lambda,\eta, A',\lambda',\eta',\phi)\in {\rm Y}_{11}\backslash ({\rm Y }_{00}\bigcup {\rm Y}_{10}\bigcup {\rm Y}_{01}),$ we show $(A,\lambda,\eta, A',\lambda',\eta',\phi)\in \bigcup\limits_{i=1}^{n-1}\pr_{i}(C_i),$ if $A,A'$ are supersingular. There is a morphism $\delta: {{\Sh}}_{1,n-1}(K_{\gothp}^{1})\ra {{\Sh}}_{1,n-1}$ mapping $(A,\lambda,\eta, A',\lambda',\eta',\phi)$  to $(A,\lambda,\eta).$ Suppose the image $(A,\lambda,\eta)\in V^{(\omega_{1},\omega_{2})},$ with $(\omega_1,\omega_2)=(a,b)$ and $a\leq b.$ Then there is a basis of $\tcD(A)_{1}^{\circ}\oplus \tcD(A)_{2}^{\circ},$ denoted by $\{e_{i,j}|i=1,2;1\leq j\leq n\}$ such that $F,V$ act on $\tcD(A)_{1}^{\circ}\oplus \tcD(A)_{2}^{\circ}$ mod $p$ by \[
    F(e_{1,i})=\left\{\begin{array}{ll}
         e_{2,i}&\text{if } 1\leq i\leq a-1; \\
         0&\text{if }i=a;\\
         e_{2,i-1}&\text{if }i\geq a+1.
    \end{array}\right.
    F(e_{2,i})=\left\{\begin{array}{ll}
         0&\text{if } i\leq 1\leq b-1; \\
         e_{1,1}&\text{if }i=b;\\
         0&\text{if }i\geq b+1.\\
    \end{array}\right.
    \]
    \[
    V(e_{1,i})=\left\{\begin{array}{ll}
         0&\text{if } i=1; \\
         e_{2,i-1}&\text{if }2\leq i \leq b;\\
         e_{2,i}&\text{if }b+1\leq i\leq n;\\
    \end{array}\right.
    V(e_{2,i})=\left\{\begin{array}{ll}
         0&\text{if } i\leq 1\leq a-1; \\
         0&\text{if }a\leq i\leq n-1;\\
         e_{1,a}&\text{if }i=n.\\
    \end{array}\right.
    \]

    Suppose $\alpha$ in Lemma~\ref{In y11} to be written as $\alpha=\sum\limits_{i=1}^{n}x_{i}e_{1,i}$(Here we identify $\tcD(A)_{i}^{\circ}$ with its image in $\tcD(A')_{i}^{\circ}$ by $\phi_{*,i}$ for $i=1,2.$). Checking the conditions in Lemma~\ref{In y11} directly, we get $x_{n},x_{b+1}\in pW(\overline{\FF}_{p})$ and there exists $i\neq 1,a$ such that $x_{i}\notin pW(\overline{\FF}_{p}).$ Moreover, we have \[
    \left\{
    \begin{array}{ll}
         x_{i}^{\sigma}\equiv x_{i+1}^{\sigma^{-1}}\text{ } mod \text{ } p& \text{if } 1\leq i\leq a-1;  \\
         x_{i}^{\sigma}\equiv x_{i}^{\sigma^{-1}} \text{ } mod \text{ }p&\text{if } a+1\leq i\leq b;\\
         x_{i}\equiv 0 \text{ } mod \text{ }p&\text{if } b+1\leq i\leq n.\\
    \end{array}\right.
    \]

    Note that $p\phi_{*,1}(\tcD(A)_{2}^{\circ})\subseteq S_{\infty}(\phi_{*,1}(V\tcD(A)_{1}^{\circ}))\subseteq T_{\infty}(\phi_{*,1}\tcD(A)_{1}^{\circ}).$
    To show $S_{\infty}(V\tcD(A')_{2}^{\circ})\subseteq T_{\infty}(\phi_{*,1}\tcD(A)_{1}^{\circ}),$ it suffices to check that $\alpha \in \frac{T_{\infty}(\phi_{*,1}\tcD(A)_{1}^{\circ})}{p\phi_{*,1}(\tcD(A)_{2}^{\circ})}.$ First consider $\frac{FV^{-1}(\phi_{*,1}\tcD(A)_{1}^{\circ})}{p\phi_{*,1}(\tcD(A)_{2}^{\circ})}=\frac{\frac{1}{p}F^2(\phi_{*,1}\tcD(A)_{1}^{\circ})}{p\phi_{*,1}(\tcD(A)_{2}^{\circ})}.$ 

    The action $F,V$ on $\tcD(A)_1^\circ$ can be described as \[
    F(e_{1,i})=\left\{\begin{array}{ll}
         e_{2,i}+pa_{2,i}&\text{if } 1\leq i\leq a-1; \\
         pe_{2,n}+pa_{2,a}&\text{if }i=a;\\
         e_{2,i-1}+pa_{2,i}&\text{if }i\geq a+1.
    \end{array}\right.
    V(e_{1,i})=\left\{\begin{array}{ll}
         pe_{2,b}+pb_{2,1}&\text{if } i=1; \\
         e_{2,i-1}+pb_{2,i}&\text{if }2\leq i \leq b;\\
         e_{2,i}+pb_{2,i}&\text{if }b+1\leq i\leq n;\\
    \end{array}\right.
    \]
    Here $a_{2,i}$ and $b_{2,i}$ are element in $\tcD(A)_{2}^\circ.$ Then we have \[
    F(e_{2,i})=\left\{\begin{array}{ll}
         pe_{1,i+1}-pFb_{2,i+1}&\text{if } 1\leq i\leq b-1; \\
         e_{1,1}-Fb_{2,1}&\text{if }i=b;\\
         pe_{1,i}-pFb_{2,i}&\text{if }i\geq b+1.
    \end{array}\right.\]
    Now consider the action $F$ on $\tcD(A)_{2}^\circ\mod p,$ we can see easily that $Fb_{2,i}=d_{2,i}e_{1,1}+pc_{2,i}$ for some $c_{2,i}$ in $\tcD(A)_{1}^\circ$ and $d_{2,i}\in W(\overline{\FF}_p).$ Thus  
    \[
    F(e_{2,i})=\left\{\begin{array}{ll}
         pe_{1,i+1}-pd_{2,i}e_{1,1}-p^2c_{2,i+1}&\text{if } 1\leq i\leq b-1; \\
         e_{1,1}-d_{2,b}e_{1,1}-pc_{2,b}&\text{if }i=b;\\
         pe_{1,i}-pd_{2,i}e_{1,1}-p^2c_{2,i}&\text{if }i\geq b+1.
    \end{array}\right.\] 
    This shows $\frac{FV^{-1}(\phi_{*,1}\tcD(A)_{1}^{\circ})}{p\phi_{*,1}(\tcD(A)_{2}^{\circ})}=\overline{\FF}_p\{\frac{1}{p}e_{1,1},e_{1,2},\cdots,e_{1,n-1}\}.$
    
    By induction, $\frac{T_{\infty}(\phi_{*,1}\tcD(A)_{1}^{\circ})}{p\phi_{*,1}(\tcD(A)_{2}^{\circ})}=\overline{\FF}_{p}\{e_{1,i}, i\leq b\}.$ Thus we can see obviously that $\alpha \in T_{\infty}(\phi_{*,1}\tcD(A)_{1}^{\circ}).$ Hence we finish the proof.
\end{proof}

For the morphisms from ${\rm Y_{01}}$ and ${\rm Y_{10}}$ to ${{\Sh}}_{1,n-1}$ induced by isomorphisms in Proposition~\ref{bloww up} and their relations with stratification on ${{\Sh}}_{1,n-1},$ we have the following proposition:
\begin{proposition}
\label{connections}
    \begin{enumerate}
        \item The morphism $\pi_{10}$ from ${\rm Y}_{10}$ to ${{\Sh}}_{1,n-1}$ is surjective and maps ${\rm Y}_{10}\bigcap {\rm Y}_{11}$ to the complement of union of Newton strata of first slope less than $\frac{1}{4},$ that is, equal to $0.$ We denote it by $N_{10}.$ Moreover the morphism maps $\pr_{i}(C_{i})\bigcap {\rm Y}_{10}$ to $Y_{i}$ surjectively for $1\leq i\leq n.$
        \item The morphism from ${\rm Y}_{10}$ to ${{\Sh}}_{1,n-1}$ is surjective and maps ${\rm Y}_{10}\bigcap {\rm Y}_{11}$ to the complement of union of Newton strata of last slope bigger than $\frac{3}{4},$ that is, equal to $1.$ We denote it by $N_{01}.$ Moreover the morphism maps $\pr_{i}(C_{i})\bigcap {\rm Y}_{10}$ to ${\rm Y}_{i}$ surjectively.
    \end{enumerate}
\end{proposition}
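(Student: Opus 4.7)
The plan is to prove part (1) in detail; part (2) follows by the symmetric argument applied to $\pi_{01}: {\rm Y}_{01} \to {{\Sh}}_{1,n-1}$, interchanging the roles of $F$ and $V$ and replacing the Hasse invariant $F^2$ by $V^2$. First, the surjectivity of $\pi_{10}$ is immediate from Proposition~\ref{bloww up}, which identifies $\pi_{10}$ with the blow-up of ${{\Sh}}_{1,n-1}$ along ${\mathop{p}\limits^{\leftarrow}}_n({\rm Y}_n)$, hence a proper birational and a fortiori surjective morphism.

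Next, to show $\pi_{10}({\rm Y}_{10} \cap {\rm Y}_{11}) = N_{10}$, the plan is to translate the defining conditions of ${\rm Y}_{10} \cap {\rm Y}_{11}$ into Dieudonn\'e module data. For a geometric point $(A,\lambda,\eta,A',\lambda',\eta',\phi)$ of ${\rm Y}_{10} \cap {\rm Y}_{11}$ over a perfect field $k \supseteq \FF_{p^2}$, combining conditions $(1.1), (2.0)$ and $(2.1)$ gives $\phi_{*,1}(\tcD(A)^\circ_1) + p\tcD(A')^\circ_1 = V\tcD(A')^\circ_2$, together with $\phi_{*,2}(V\tcD(A)^\circ_1) \subseteq p\tcD(A')^\circ_2$ and $\phi_{*,2}(\tcD(A)^\circ_2) + p\tcD(A')^\circ_2 = V\tcD(A')^\circ_1$. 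For the inclusion $\supseteq$, when the Hasse invariant condition $F^2\tcD(A)^\circ_2 \subseteq V\tcD(A)^\circ_1$ from Section~\ref{S} holds (equivalently, the first slope of $A$ is $\geq 1/4$ or $A$ is supersingular), I would construct explicit candidate lattices $\tcE_1 \subseteq \tcD(A)^\circ_1$ and $\tcE_2 \subseteq \tcD(A)^\circ_2$ and apply Corollary~\ref{CORP} to produce $(A',\phi)$ with all the required properties. For the reverse inclusion $\subseteq$, I would argue stratum-by-stratum: on each Ekedahl-Oort stratum $V^{(1,b)}$ contained in a Newton stratum of first slope $0$, the explicit description of $F$ and $V$ on the canonical basis given in Proposition~\ref{Formula}, combined with the Dieudonn\'e module conditions above, should force a contradiction.

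For the assertion about $\pr_i(C_i) \cap {\rm Y}_{10}$, the case $i=n$ is immediate from Corollary~\ref{Corresss}: since $\pr_n: C_n \xrightarrow{\sim} {\rm Y}_{00}$, the intersection $\pr_n(C_n) \cap {\rm Y}_{10}$ equals the exceptional divisor ${\rm Y}_{00} \cap {\rm Y}_{10}$ of $\pi_{10}$, which maps surjectively onto the center ${\mathop{p}\limits^{\leftarrow}}_n({\rm Y}_n)$ of the blow-up. For $1 \leq i \leq n-1$, the moduli description of $C_i$ ensures $\pr_i(C_i) \subseteq {\rm Y}_{11}$, so $\pr_i(C_i) \cap {\rm Y}_{10} \subseteq {\rm Y}_{10} \cap {\rm Y}_{11}$; the composition $\pi_{10}\circ \pr_i$ sends a point $(A,\lambda,\eta,A',\lambda',\eta',\phi,B,\lambda'',\eta'',\psi,\psi')$ to $(A,\lambda,\eta) \in {\mathop{p}\limits^{\leftarrow}}_i({\rm Y}_i)$ via the isogeny $\psi: B \to A$ with $(A,\lambda,\eta,B,\lambda'',\eta'',\psi) \in {\rm Y}_i$. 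Surjectivity onto ${\mathop{p}\limits^{\leftarrow}}_i({\rm Y}_i)$ then reduces to showing that any such datum $(A,\lambda,\eta,B,\lambda'',\eta'',\psi)$ can be completed to a point of $C_i$ lying in ${\rm Y}_{10}$, which I would verify by a direct Dieudonn\'e module construction modeled on the proof of Proposition~\ref{P:isom-Yj}.

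The hard part will be the inclusion $\pi_{10}({\rm Y}_{10} \cap {\rm Y}_{11}) \subseteq N_{10}$: ruling out the Ekedahl-Oort strata $V^{(1,b)}$ requires careful bookkeeping on how the three lattice containments above constrain the iterated $F$- and $V$-actions on the isocrystal of $A$. A clean route is to match these local Dieudonn\'e obstructions with the global closed subscheme structure of $N_{10}$ cut out by the Hasse invariant $F^2 \in \Gamma({{\Sh}}_{1,n-1}, \Lie^{\circ,(1-p^2)}_{\mathcal{A}/{{\Sh}}_{1,n-1},2})$, thereby giving a uniform description of the image across all Ekedahl-Oort strata of first slope $0$.
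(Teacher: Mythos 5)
There is a genuine gap, and it sits exactly where your sketch leans hardest on unexecuted constructions. For the containment $N_{10}\subseteq\pi_{10}({\rm Y}_{10}\cap{\rm Y}_{11})$ you propose to produce, at \emph{every} point of $N_{10}$ satisfying the Hasse-invariant condition, lattices $\tcE_1,\tcE_2$ and then apply Corollary~\ref{CORP}; but you never say what these lattices are, and away from the generic stratum of $N_{10}$ there is no evident uniform recipe. The missing idea is the paper's mechanism: one only constructs the preimage over the single Ekedahl--Oort stratum $V^{(\omega_1,\omega_2)}$ with $(\omega_1,\omega_2)=(n,2)$, where Proposition~\ref{Formula} provides an explicit basis $\{e_{i,j}\}$ (take $\tcE_i$ spanned by $p^{-1}e_{i,1}$ and the remaining $e_{i,j}$, then apply Corollary~\ref{CORP}), and then one uses that $\pi_{10}|_{{\rm Y}_{10}\cap{\rm Y}_{11}}$ is proper, so its image is closed and therefore contains $\overline{V}^{(n,2)}=N_{10}$ by the closure relations of the stratification. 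Without this properness-plus-closure step your pointwise construction carries the entire burden and is not justified. The same gap recurs for $1\le i\le n-1$ in your treatment of $\pr_i(C_i)\cap{\rm Y}_{10}\to{\mathop{p}\limits^{\leftarrow}}_{i}({\rm Y}_i)$: you reduce to the strictly stronger claim that every datum $(A,\lambda,\eta,B,\lambda'',\eta'',\psi)\in{\rm Y}_i$ completes to a point of $C_i$ lying in ${\rm Y}_{10}$ and defer the construction, whereas the paper simply notes (Proposition~\ref{supersingular case}) that the stratum $V^{(n+1-i,n+1-i)}$ lies in the image and again concludes by taking closures. Your $i=n$ case (exceptional divisor onto the blow-up center) and the surjectivity of $\pi_{10}$ are correct and agree with the paper.

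A secondary but telling point: you have the relative difficulty of the two inclusions backwards. The inclusion $\pi_{10}({\rm Y}_{10}\cap{\rm Y}_{11})\subseteq N_{10}$, which you flag as "the hard part" and plan to settle stratum by stratum via Proposition~\ref{Formula}, is in the paper a short direct lattice computation requiring no case analysis: the conditions defining ${\rm Y}_{10}\cap{\rm Y}_{11}$ give $\phi_{*,1}(F\tcD(A)^\circ_2)=p\tcD(A')^\circ_1\subseteq V\tcD(A')^\circ_2\subseteq\phi_{*,1}(\tcD(A)^\circ_1)\cap\phi_{*,1}(F^{-1}V\tcD(A)^\circ_1)$, hence $F^2\tcD(A)^\circ_2\subseteq V\tcD(A)^\circ_1$ and so $F^4\tcD(A)^\circ_2\subseteq p\tcD(A)^\circ_2$, which already rules out first slope $0$. (If you do pursue the stratum-by-stratum route, note also that in the paper's labeling the strata to exclude are $V^{(b,1)}$, not $V^{(1,b)}$, and that Proposition~\ref{Formula} only gives the mod-$p$ canonical form, so the integral lifts of $F$ and $V$ would need extra bookkeeping.) In short, the tools you cite are the right ones, but the two surjectivity statements are closed in the paper by the same properness/closure-of-one-stratum argument, which your proposal never invokes and whose absence leaves both main steps unproven.
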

\begin{proof}
    By symmetry, we only prove $(1).$
    Since ${\rm Y}_{10}\simeq {\rm Bl}_{{\mathop{p}\limits^{\leftarrow}}_{n}({\rm Y}_{n})}{{\Sh}}_{1,n-1}$ by Proposition~\ref{bloww up}, the morphism is surjective obviously. 
    
    For any $S$-point $(A,\lambda,\eta,A',\lambda',\eta',\phi)\in {\rm Y}_{10}\bigcap {\rm Y}_{11},$ we have $V\tcD(A')_{2}^{\circ}\subseteq \phi_{*,1}(\tcD(A)_{1}^{\circ}),$ $p\tcD(A')_{2}^{\circ}\subseteq \phi_{*,2}(V\tcD(A)_{1}^{\circ})$ and $V\tcD(A')_{1}^{\circ}=\phi_{*,2}(\tcD(A)_{2}^{\circ}).$ Therefore $\phi_{*,1}(F\tcD(A)_{2}^{\circ})=p\tcD(A')_{1}^{\circ}\subseteq V\tcD(A')_{2}^{\circ}\subseteq \phi_{*,1}(\tcD(A)_{1}^{\circ})\bigcap \phi_{*,1}(F^{-1}V\tcD(A)_{1}^{\circ}).$ This shows $F^4\tcD(A)_2^\circ\subseteq pF\tcD(A)_1^\circ\subseteq p\tcD(A)_2^\circ$ and $F^4\tcD(A)_1\subseteq F^3\tcD(A)_2^\circ\subseteq p\tcD(A)_1^\circ.$
    Hence we have shown the image of ${\rm Y}_{10}\bigcap {\rm Y}_{11}$ is contained in $N_{10}.$ 
    
    Conversely, to show $N_{10}$ is contained in the image of ${\rm Y}_{10}\bigcap {\rm Y}_{11},$ it suffices to show $V^{(\omega_{1},\omega_{2})}$ is contained in the image of ${\rm Y}_{10}\bigcap {\rm Y}_{11}$ where $(\omega_{1},\omega_{2})=(n,2).$ Then we can get $N_{10}=\overline{V}^{(\omega_{1},\omega_{2})}$ from the morphism is proper. 
 For any $S$-point $(A,\lambda,\eta)\in V^{(\omega_{1},\omega_{2})},$ by Proposition~\ref{Formula}, $\tcD(A)_{1}^{\circ}\bigoplus \tcD(A)_{2}^{\circ}$ has a basis $\{e_{i,j}|i=1,2 \text{ and } 1\leq j\leq n\}$ such that $F,V$ act on $\tcD(A)_{1}^{\circ}\oplus \tcD(A)_{2}^{\circ}$ mod $p$ by\[
    F(e_{1,i})=\left\{\begin{array}{ll}
         e_{2,i}&\text{if } i\leq 1\leq n-1; \\
         0&\text{if }i=n;\\
    \end{array}\right.
    F(e_{2,i})=\left\{\begin{array}{ll}
         0&\text{if } i=1; \\
         e_{1,1}&\text{if }i=2;\\
         0&\text{if }3\leq i\leq n.\\
    \end{array}\right.
    \]
    \[
    V(e_{1,i})=\left\{\begin{array}{ll}
         0&\text{if } i=1; \\
         e_{2,1}&\text{if } i=2;\\
        e_{2,i}&\text{if } 3\leq i\leq n;\\
    \end{array}\right.
    V(e_{2,i})=\left\{\begin{array}{ll}
         0&\text{if } 1\leq i\leq n-1; \\
         e_{1,n}&\text{if }i=n.\\
    \end{array}\right.
    \]

    We take $\tcE_1=W(\overline{\FF}_{p})\{\frac{1}{p}e_{1,1}, e_{1,2},\dots, e_{1,n}\}$ and $\tcE_{2}=W(\overline{\FF}_{p})\{\frac{1}{p}e_{2,1}, e_{2,2},\dots, e_{2,n}\}.$
Applying Corollary \ref{CORP}, we get a point $(A',\lambda',\eta')$ of ${\Sh}_{1,n-1}$ and 
an isogeny $\phi: A\ra A',$ such that $(A,\lambda, \eta,A',\lambda', \eta',\phi)$ is a point of ${\rm Y}_{10}\bigcap {\rm Y}_{11}.$

To show the morphism maps $\pr_{i}(C_{i})\bigcap {\rm Y}_{10}$ to ${\rm Y}_{i}$ surjectively, we note that $\pr_{i}(C_{i})\bigcap {\rm Y}_{10}$ is contained in ${\rm Y}_{i}.$ By Proposition~\ref{supersingular case}, we see that $V^{(\omega_{1},\omega_{2})}$ with $(\omega_{1},\omega_{2})=(n+1-i,n+1-i)$ is contained in $\pr_{i}(C_{i})\bigcap {\rm Y}_{10}.$ Hence ${\rm Y}_{i},$ as its closure is contained in $\pr_{i}(C_{i})\bigcap {\rm Y}_{10}.$ We finish the proof.
\end{proof}

\begin{construction}\label{frob:Y_10 Y_01}
To illustrate the relation between ${\rm Y}_{10}$ and ${\rm Y}_{01},$ we note there are two morphisms between ${\rm Y}_{10}$ and ${\rm Y}_{01},$ which are called `essential Frobenius' as in \cite{Zho23}. Let $S$ be a locally Noetherian $\FF_{p^2}$-scheme $S.$

We first construct $Fr':{\rm Y}_{10}\ra {\rm Y}_{01}.$ For  any $S$-point $y=(A,\lambda,\eta, A',\lambda',\eta',\phi)$ satisfies $V\tcD(A')_{1}^{\circ}=\phi_{*,2}(\tcD(A)_{2}^{\circ})$ and $V\tcD(A')_{2}^{\circ}\subseteq\phi_{*,1}(\tcD(A)_{1}^{\circ}).$  
We take $\tcE_1=FV^{-1}\tcD(A)_{1}$ and $\tcE_{2}=FV^{-1}\tcD(A)_{2}^{\circ}.$
Applying Corollary \ref{CORP}, we get $(A'',\lambda'', \eta'')$ a point of ${\Sh}_{1,n-1}$ and an isogeny $\phi'': A\rightarrow A''$ such that $(A'',\lambda'', \eta'',A',\lambda', \eta',\phi\circ \phi''^{-1})$ is a point of ${{\Sh}}_{1,n-1}(K_{\gothp}^{1}).$ It can be checked that $(A'',\lambda'', \eta,A',\lambda', \eta',\phi\circ \phi''^{-1})\in {\rm Y}_{01}.$ Thus we let $Fr'(y)=(A'',\lambda'', \eta,A',\lambda', \eta',\phi'')$ and finish constructing $Fr'.$

% an triple $(A'',\lambda'',\eta'')$ and an $\calO_D$-equivariant isogeny $\phi'': A\rightarrow A'' ,$ where $A''$ is an abelian variety over $k$ with an action of $\cO_D,$ $\lambda''$ is a prime-to-$p$ polarization on $A'',$ and $\eta''$ is a prime-to-$p$ level structure on $A'',$ such that $\phi''^\vee\circ\lambda''\circ\phi''=\lambda,$ $\eta'' = \phi'' \circ \eta.$
% Moreover, $\omega^{\circ}_{A''^\vee/k,1}$ has dimension $1,$ and $\omega^{\circ}_{A''^\vee/k,2}$ has dimension $n-1.$
% Therefore, $(A'',\lambda'', \eta'')$ is a point of ${\Sh}_{1,n-1}$ and it can be checked that 
% $(A'',\lambda'', \eta'',A',\lambda', \eta',\phi\circ \phi''^{-1})$ is a point of ${{\Sh}}_{1,n-1}(K_{\gothp}^{1}).$ 

Next, we construct $Fr'':{\rm {\rm Y_{01}}}\ra {\rm {\rm Y_{10}}}.$ For any $S$-point $y=(A,\lambda,\eta, A',\lambda',\eta',\phi)$ satisfies $p\tcD(A')_{2}^{\circ}\subseteq V\tcD(A)_{1}^{\circ}$ and $p\tcD(A')_{1}^{\circ}=V\tcD(A)_{2}^{\circ}.$ We take $\tcE_1=FV^{-1}\tcD(A')_{1}$ and $\tcE_{2}=FV^{-1}\tcD(A')_{2}^{\circ}.$
Applying Corollary \ref{CORP}, we get 
$(A'',\lambda'', \eta'')$ a point of ${\Sh}_{1,n-1}$ and an isogeny $\phi'': A'\ra A''$ such that
$(A,\lambda, \eta,A'',\lambda'', \eta'',\phi''\circ \phi)$ is a point of ${{\Sh}}_{1,n-1}(K_{\gothp}^{1}).$ It can be checked that $(A,\lambda, \eta,A'',\lambda'', \eta'',\phi''\circ \phi)\in {\rm Y}_{10}.$ Thus we let $Fr''(y)=(A,\lambda, \eta,A'',\lambda'', \eta'',\phi''\circ \phi)$ and finish constructing $Fr''.$

% an triple $(A'',\lambda'',\eta'')$ and an $\calO_D$-equivariant isogeny $\phi'': A'\ra A'',$ where $A''$ is an abelian variety over $k$ with an action of $\cO_D,$ $\lambda''$ is a prime-to-$p$ polarization on $A'',$ and $\eta''$ is a prime-to-$p$ level structure on $A'',$ such that $\phi''^{\vee}\circ\lambda''\circ\phi''=\lambda',$ $\eta'' = \phi'' \circ \eta'.$
% Moreover, $\omega^{\circ}_{A''^\vee/k,1}$ has dimension $1,$ and $\omega^{\circ}_{A''^\vee/k,2}$ has dimension $n-1.$
% Therefore, $(A'',\lambda'', \eta'')$ is a point of ${\Sh}_{1,n-1}$ and it can be checked that $(A,\lambda, \eta,A'',\lambda'', \eta'',\phi''\circ \phi)$ is a point of ${{\Sh}}_{1,n-1}(K_{\gothp}^{1}).$ It can be checked that $(A,\lambda, \eta,A'',\lambda'', \eta'',\phi''\circ \phi)\in {\rm Y}_{10}.$ Thus we let $Fr''(y)=(A,\lambda, \eta,A'',\lambda'', \eta'',\phi''\circ \phi)$ and finish constructing $Fr''.$
\end{construction}

It can be checked directly that $Fr'\circ Fr''=\Frob_{p^2}$ on ${\rm Y_{01}}$ and $Fr''\circ Fr'=\Frob_{p^2}$ on ${\rm Y_{10}}.$ Furthermore, for the action of $Fr'$ and $Fr'',$ we have the following proposition:

\begin{proposition}
\label{image}
    \begin{enumerate}
        \item For any $1\leq i\leq n-1,$ the morphism $Fr'$ induces a morphism from $\pr_{i}(C_{i})\bigcap {\rm Y}_{10}$ to $\pr_{i}(C_{i})\bigcap {\rm Y}_{01}.$ Moreover, it induces a morphism from $\pr_{n}(C_{n})\bigcap {\rm Y}_{10}={\rm Y}_{00}\bigcap {\rm Y}_{10}$ to ${\rm Y}_{00}\bigcap {\rm Y}_{01}$ and a morphism from $\pr_{n}(C_{n})\bigcap {\rm Y}_{11}={\rm Y}_{00}\bigcap {\rm Y}_{11}$ to itself.
        \item For any $1\leq i\leq n-1,$ the morphism $Fr''$ induces a morphism from $\pr_{i}(C_{i})\bigcap {\rm Y}_{01}$ to $\pr_{i}(C_{i})\bigcap {\rm Y}_{10}.$ It also induces a morphism from $\pr_{n}(C_{n})\bigcap {\rm Y}_{01}={\rm Y}_{00}\bigcap {\rm Y}_{01}$ to ${\rm Y}_{00}\bigcap {\rm Y}_{10}$ and a morphism from $\pr_{n}(C_{n})\bigcap {\rm Y}_{11}={\rm Y}_{00}\bigcap {\rm Y}_{11}$ to itself.
    \end{enumerate}
\end{proposition}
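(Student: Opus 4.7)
The strategy is to reduce everything to manipulations on Dieudonn\'e modules via Proposition~\ref{P:abelian-Dieud} and Proposition~\ref{CORP}. The key observation is that the operator $FV^{-1}$ acts as an automorphism of each isocrystal $\tcD(A)_j^\circ\otimes_{W}Q(k)$, so it preserves the length of any subquotient of a lattice. Applying $FV^{-1}$ to the witness datum of a point in $\pr_i(C_i)$ will produce a new witness datum showing that its image under $Fr'$ (respectively $Fr''$) lies in the corresponding intersection with ${\rm Y}_{01}$ (respectively ${\rm Y}_{10}$). I will prove assertion $(1)$; the proof of $(2)$ is symmetric by exchanging the roles of $A$ and $A'$.

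Fix $1\leq i\leq n-1$ and take a point $y=(A,\lambda,\eta,A',\lambda',\eta',\phi)\in \pr_i(C_i)\cap {\rm Y}_{10}$ with witness $(B,\lambda_B,\eta_B,\psi,\psi')$, where $\psi\colon B\to A$ realizes $(A,B,\psi)\in Y_i$ and $\psi'=\phi\circ\psi\colon B\to A'$ realizes $(A',B,\psi')\in Y_{i+1}$. Apply Proposition~\ref{CORP} to the submodules $\tcE_j:=FV^{-1}\tcD(B)_j^\circ$; the hypotheses $F(\tcE_j)\subseteq\tcE_{3-j}$ and $V(\tcE_j)\subseteq\tcE_{3-j}$ hold because $F,V$ commute with $FV^{-1}$. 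This produces $\tilde B\in {\rm Sh}_{0,n}$ equipped with a $p$-quasi-isogeny $\sigma\colon B\to \tilde B$, and the inclusions $FV^{-1}\tcD(B)_j^\circ\subseteq FV^{-1}\tcD(A)_j^\circ=\tcD(A'')_j^\circ$ promote $\psi$ to an honest $\cO_D$-equivariant isogeny $\tilde\psi\colon\tilde B\to A''$. The crucial compatibility $FV^{-1}\psi'_*(\tcD(B)_j^\circ)\subseteq \tcD(A')_j^\circ$, needed to promote $\psi'$ to $\tilde\psi'\colon\tilde B\to A'$, is obtained by unwinding the ${\rm Y}_{10}$-conditions $(1.1)$ and $(2.0)$, which precisely govern the relation between $FV^{-1}\tcD(A)_j^\circ$ and $\tcD(A')_j^\circ$. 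The corank conditions placing $(A'',\tilde B,\tilde\psi)$ in $Y_i$ and $(A',\tilde B,\tilde\psi')$ in $Y_{i+1}$ are inherited from the original witness because $FV^{-1}$ preserves lengths of subquotients, and the relation $(\phi\circ\phi''^{-1})\circ\tilde\psi=\tilde\psi'$ follows from $\phi\circ\psi=\psi'$ by functoriality.

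For the case $i=n$, i.e. ${\rm Y}_{00}\cap {\rm Y}_{10}\to {\rm Y}_{00}\cap {\rm Y}_{01}$, use the identification ${\rm Y}_{00}\cong {\rm Gr}(H_1^{dR}(\calB/{\rm Sh}_{0,n})_1^\circ,n-1)\times_{{\rm Sh}_{0,n}}{\rm Gr}(H_1^{dR}(\calB'/{\rm Sh}_{0,n})_2^\circ,1)$ from Corollary~\ref{Corresss}, where the two copies of ${\rm Sh}_{0,n}$ are linked by $S_\gothp$. Under this identification $Fr'$ corresponds to the partial Frobenius on the Grassmannian factors, which by the explicit computations of Proposition~\ref{cohomologyy} sends the ${\rm Y}_{10}$-locus to the ${\rm Y}_{01}$-locus. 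For the stratum ${\rm Y}_{00}\cap {\rm Y}_{11}$, the description given in the proof of Proposition~\ref{cohomologyy} as the closed subscheme of pairs $(H,L)$ with $L\subseteq F(H^{(p)})$ and $V^{-1}(L^{(p)})\subseteq H$ is manifestly preserved by the simultaneous application of $FV^{-1}$ underlying $Fr'$, so this intersection is mapped into itself.

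The main obstacle will be verifying the inclusion $FV^{-1}\psi'_*(\tcD(B)_j^\circ)\subseteq \tcD(A')_j^\circ$ required to define $\tilde\psi'$: this requires the precise interplay between the ${\rm Y}_{10}$-conditions, the definition $\tcD(A'')_j^\circ=FV^{-1}\tcD(A)_j^\circ$, and the factorization $\psi'=\phi\circ\psi$, and must be checked case-by-case on each graded piece of the Dieudonn\'e module. A secondary technical point is that $FV^{-1}$ is $\sigma^2$-linear rather than $W$-linear, so the resulting $\tilde B$ is really a $\sigma^2$-twist of $B$; this twist is invisible at the level of the moduli functor for ${\rm Sh}_{0,n}$ over $\FF_{p^2}$, but one should verify that the polarization and prime-to-$p$ level structure transport correctly through Proposition~\ref{CORP}.
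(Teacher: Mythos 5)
Your strategy for $1\leq i\leq n-1$ (transport the $C_i$-witness through $FV^{-1}$) is workable and close in spirit to the paper, which instead passes to the bundle model $C_i\cong C_i'$ of Proposition~\ref{P:isom-Yj} and simply records the effect of $Fr'$ on $(B,H_1,H_2,H_1',H_2')$: the base point $B$ and $(H_1',H_2')$ are untouched and only $(H_1,H_2)$ move, after which membership in $C_i'\cap{\rm Y}_{01}$ is a one-line incidence check. However, the step you yourself single out as ``the main obstacle'', namely the inclusion $FV^{-1}\psi'_{*}(\tcD(B)^{\circ}_{j})\subseteq\tcD(A')^{\circ}_{j}$, is both misdiagnosed and left unproven. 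It requires no case-by-case analysis of the ${\rm Y}_{10}$-conditions: $\psi'_{*}$ commutes with $F$ and $V$, and because $B$ has signature $(0,n)$ one has $F\tcD(B)^{\circ}_{1}=V\tcD(B)^{\circ}_{1}=\tcD(B)^{\circ}_{2}$ and $F\tcD(B)^{\circ}_{2}=V\tcD(B)^{\circ}_{2}=p\tcD(B)^{\circ}_{1}$, so $FV^{-1}\tcD(B)^{\circ}_{j}=\tcD(B)^{\circ}_{j}$. Hence your $\tilde B$ is canonically $B$, $\tilde\psi'$ is just $\psi'$, and the cleanest form of your argument is: keep $(B,\lambda'',\eta'',\psi')$ untouched, replace $\psi$ by $\phi''\circ\psi$, and invoke your (correct) length-preservation remark for the two coranks. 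As written, though, the decisive verification of your own construction is deferred rather than carried out.

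The more serious gap is in the $\pr_n(C_n)$ cases. You assert that, under the identification of ${\rm Y}_{00}$ with the Grassmannian bundle of Corollary~\ref{Corresss}, $Fr'$ ``corresponds to the partial Frobenius on the Grassmannian factors'' and that the defining conditions of ${\rm Y}_{00}\cap{\rm Y}_{11}$ are ``manifestly preserved by the simultaneous application of $FV^{-1}$''. Neither claim is established, and the second misdescribes $Fr'$: it modifies only $A$ and fixes $A'$, so only the factor $H=\omega^{\circ}_{A^{\vee},2}$ moves while $L=\omega^{\circ}_{A'^{\vee},1}$ stays put; nothing is applied simultaneously, and Proposition~\ref{cohomologyy} identifies the loci $L\subseteq F(H^{(p)})$ and $V^{-1}(L^{(p)})\subseteq H$ but computes nothing about $Fr'$. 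What actually must be checked, and is the content of the paper's proof, is that on ${\rm Y}_{00}\cap{\rm Y}_{10}$ the conditions force $F\tcD(A)^{\circ}_{2}=V\tcD(A)^{\circ}_{2}$ and $F\tcD(A')^{\circ}_{1}=V\tcD(A')^{\circ}_{1}$, from which one verifies directly that replacing $\tcD(A)^{\circ}$ by $FV^{-1}\tcD(A)^{\circ}$ preserves the ${\rm Y}_{00}$-relations with $\tcD(A')^{\circ}$ and lands the image in ${\rm Y}_{00}\cap{\rm Y}_{01}$; and similarly, for a point additionally in ${\rm Y}_{11}$ one uses $V\tcD(A')^{\circ}_{2}\subseteq\phi_{*,1}(\tcD(A)^{\circ}_{1})$ and $p\tcD(A')^{\circ}_{2}\subseteq\phi_{*,1}(V\tcD(A)^{\circ}_{1})$ to see the image stays in ${\rm Y}_{00}\cap{\rm Y}_{11}$. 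Without these Dieudonn\'e-module verifications your proposal does not prove the statements about ${\rm Y}_{00}\cap{\rm Y}_{10}$ and ${\rm Y}_{00}\cap{\rm Y}_{11}$.
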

\begin{proof}
    It is just a direct check with Proposition~\ref{P:isom-Yj} and Proposition~\ref{G:stalk of points}.
\end{proof}

\section{Ihara lemma for \texorpdfstring{$n\geq 3$}{n3}
}\label{I3}
In this section we prove the Ihara lemma for $n\geq 3.$ Let $k$ be a finite extension of $\FF_{p^2}$ or ${\QQ}_{p^2}.$
For any proper scheme $X$ over $k,$ let $\overline{X}$ be the geometric fiber of $X.$
\begin{theorem}
\label{IIhara lemma}
Under the Hypothesis~\ref{Main hypo}, we have: 
    \begin{enumerate}
        \item (Definite Ihara) The map 
        \begin{equation*}
        {\mathrm{H}}_{\acute{e}t}^{0}(\overline{\Sh}_{0,n}(K_{\gothp}^{1}),k_\lambda)_{\mathfrak{m}} \xrightarrow{\psi} {\mathrm{H}^{0}_{\acute{e}t}}({\overline{\Sh}}_{0,n},k_\lambda)^{\oplus n}_{\mathfrak{m}}
        \end{equation*} is surjective, where $\psi=(\lp,\rp, \rp {\A}, \cdots, \rp {\A}^{n-2}).$         
        \item (Indefinite Ihara) The map
        \begin{equation*}
        {\rm H}_{\acute{e}t}^{2(n-1)}({\overline{Sh}}_{1,n-1}(K_{\gothp}^{1}),k_\lambda(n))_{\mathfrak{m}} \xrightarrow{\psi} {\rm H^{2(n-1)}_{\acute{e}t}}({\overline{Sh}}_{1,n-1},k_\lambda(n))^{\oplus n}_{\mathfrak{m}}
        \end{equation*} is surjective with $\overline{Sh}_{1,n-1}, \overline{Sh}_{1,n-1}(K_{\gothp}^{1})$ the geometric generic fibers of $\calS h_{1,n-1},\calS h_{1,n-1}(K_{\gothp}^{1})$ and $\psi=(\lp,\rp, \rp {\A}, \cdots, \rp {\rm A}^{n-2})$ defined similarly. 
    \end{enumerate}
\end{theorem}

The calculation in Proposition~\ref{cohomologyy} still works out to the geometric fiber.
We have the following diagram:
\[\begin{tikzcd}
[
    column sep=small, 
    row sep=scriptsize,
    cells={font=\footnotesize} % Uniform font size
]
	{\overline{{\rm Y}}_{00}\bigcap\overline{{\rm Y}}_{10}} & {} & {\overline{{\rm Y}}_{10}} & {} & {\overline{{\rm Y}}_{10}\bigcap \overline{\rm Y}_{11}} && {\overline{{\rm Y}}_{00}\bigcap \overline{\rm Y}_{11}} \\
	{\overline{\rm Y}_{n}} & {} & {\overline{{\Sh}}_{1,n-1}} && {\overline{N}_{10}} && {\overline{\rm Y}_{n}}
	\arrow[hook, from=1-1, to=1-3]
	\arrow[from=1-1, to=2-1]
	\arrow[from=1-3, to=2-3]
	\arrow[hook', from=1-5, to=1-3]
	\arrow[from=1-5, to=2-5]
	\arrow[hook', from=1-7, to=1-5]
	\arrow[from=1-7, to=2-7]
	\arrow[hook, from=2-1, to=2-3]
	\arrow[hook', from=2-5, to=2-3]
	\arrow[hook', from=2-7, to=2-5]
\end{tikzcd}\] where the vertical morphisms are induced by the blowing-up under ${\rm Bl}_{{\mathop{p}\limits^{\leftarrow}}_{n}(\overline{\rm Y}_{n})}{\overline{\Sh}}_{1,n-1}$ in Proposition~\ref{bloww up} and $\overline N_{10}$ is the complement of union of Newton strata of first slope equal to $0$ defined in Proposition~\ref{connections}.

Take $\overline{\mathcal{U}}_{10}=\overline{{\rm Y}}_{10}-\overline{{\rm Y}}_{10}\bigcap \overline{\rm Y}_{11},$ we have the following excision exact sequence:
\[\begin{tikzcd}
[
    column sep=small, 
    row sep=scriptsize,
    cells={font=\footnotesize} % Uniform font size
]
	& \cdots & {{\rm H}^{2n-5}(\overline{{\rm Y}}_{10}\bigcap \overline{\rm Y}_{11},k_\lambda)} \\
	{{\rm H}^{2n-4}_{c}(\overline{\mathcal{U}}_{10},k_\lambda)} & {{\rm H}^{2n-4}(\overline{{\rm Y}}_{10},k_\lambda)} & {{\rm H}^{2n-4}(\overline{{\rm Y}}_{10}\bigcap \overline{\rm Y}_{11},k_\lambda)} \\
	{{\rm H}^{2n-3}_{c}(\overline{\mathcal{U}}_{10},k_\lambda)} & \cdots
	\arrow[from=1-2, to=1-3]
	\arrow[from=1-3, to=2-1]
	\arrow[from=2-1, to=2-2]
	\arrow["Res", from=2-2, to=2-3]
	\arrow["\partial"', from=2-3, to=3-1]
	\arrow[from=3-1, to=3-2]
\end{tikzcd}\]

Therefore the sequence ${{\rm H}^{2n-4}(\overline{{\rm Y}}_{10},k_\lambda)} \xra{Res} {{\rm H}^{2n-4}(\overline{{\rm Y}}_{10}\bigcap \overline{\rm Y}_{11},k_\lambda)}\xra{\partial} {{\rm H}^{2n-3}_{c}(\overline{\mathcal{U}}_{10},k_\lambda)}$ is exact. Moreover, the closed immersion of $\overline{{\rm Y}}_{00}\bigcap \overline{\rm Y}_{11}$ and $\pr_{i}(\overline{C}_{i})\bigcap \overline{{\rm Y}}_{10}$ into $\overline{{\rm Y}}_{10}\bigcap\overline{\rm Y}_{11}$ induces Gysin maps \[{\rm H}^{2n-6}_{c}(\overline{{\rm Y}}_{00}\bigcap \overline{\rm Y}_{11},k_\lambda(n-3))\oplus \bigoplus\limits_{i=1}^{n-1} {\rm H}^{0}(\overline{C}_{i}\bigcap \overline{{\rm Y}}_{10},k_\lambda)\xra{Gys} {\rm H}^{2n-4}(\overline{{\rm Y}}_{10}\bigcap \overline{\rm Y}_{11},k_\lambda(n-2)).\] Putting together, we have the following diagram with the vertical sequence exact:
\[\begin{tikzcd}
[
    column sep=small, 
    row sep=scriptsize,
    cells={font=\footnotesize} % Uniform font size
]
	{{\rm H}^{2n-3}_{c}(\overline{\mathcal{U}}_{10},k_\lambda(n-2))} \\
	{{\rm H}^{2n-4}(\overline{{\rm Y}}_{10}\bigcap \overline{\rm Y}_{11},k_\lambda(n-2))} & {H^{(2n-6)}(\overline{{\rm Y}}_{00}\bigcap \overline{\rm Y}_{11},k_\lambda(n-3))\oplus \bigoplus\limits_{i=1}^{n-1} {\rm H}^{0}(\overline{C}_{i}\bigcap \overline{{\rm Y}}_{10},k_\lambda)} \\
	{{\rm H}^{2n-4}(\overline{{\rm Y}}_{10},k_\lambda(n-2))}
	\arrow["\partial", from=2-1, to=1-1]
	\arrow["{{{Gys,\pr_{i,!}}}}", from=2-2, to=2-1]
	\arrow["Res", from=3-1, to=2-1]
\end{tikzcd}\]
\begin{proposition}
    The map $\partial\circ \text{Gys}: \bigoplus\limits_{i=1}^{n-1} {\rm H}^{0}_{\acute{e}t}(\overline{C}_{i}\bigcap \overline{{\rm Y}}_{10},k_\lambda)_{\mathfrak{m}} \ra {{\rm H}^{2n-3}_{c}(\overline{\mathcal{U}}_{10},k_\lambda(n-2))}_{\mathfrak{m}}$ is injective.
\end{proposition}
\begin{proof}
    Consider the closed immersion of $\overline{\rm Y}_{00}\bigcap\overline{\rm Y}_{10}\hookrightarrow \overline{\rm Y}_{10},$ we have the following diagram
    \[\begin{tikzcd}
	{{\rm H}^{2n-4}_{\acute{e}t}(\overline{{\rm Y}}_{10},k_\lambda(n-2))} && {{\rm H}^{2n-4}_{\acute{e}t}(\overline{{\rm Y}}_{00}\bigcap \overline{{\rm Y}}_{10},k_\lambda(n-2))} \\
	{{\rm H}^{2n-6}_{\acute{e}t}(\overline{{\rm Y}}_{00}\bigcap \overline{{\rm Y}}_{10},k_\lambda(n-3))}
	\arrow["Res", from=1-1, to=1-3]
	\arrow["Gys", from=2-1, to=1-1]
	\arrow["{{\cup c_1(\mathcal{N}_{\overline{{\rm Y}}_{00}\bigcap\overline{{\rm Y}}_{10}/\overline{{\rm Y}}_{10}})}}"', from=2-1, to=1-3]
\end{tikzcd}\] Since $\overline{\rm Y}_{00}\bigcap\overline{\rm Y}_{10}$ is the exceptional divisor of $\overline{\rm Y}_{10},$ the cup product with $c_1(\mathcal{N}_{\overline{{\rm Y}}_{00}\bigcap\overline{{\rm Y}}_{10}/\overline{{\rm Y}}_{10}})$ is an injection and the image is the same as the image of ${{\rm H}^{2n-4}_{\acute{e}t}(\overline{{\rm Y}}_{10},k_\lambda(n-2))}$ in ${{\rm H}^{2n-4}_{\acute{e}t}(\overline{{\rm Y}}_{00}\bigcap \overline{{\rm Y}}_{10},k_\lambda(n-2))}.$ Since $\overline{\rm Y}_{10}\simeq {\rm Bl}_{{\rm Y}_{n}}\Sh_{1,n-1},$ we have ${\rm H}^{2n-4}_{\acute{e}t}(\overline{\rm Y}_{10},k_\lambda(n-2))_\mathfrak{m}={\rm H}^0_{\acute{e}t}(\overline{\Sh}_{0,n},k_\lambda)_\mathfrak{m}^{\oplus n-2}.$
Thus by the commutativity of the following diagram: \[\begin{tikzcd}
[
    column sep=small, 
    row sep=scriptsize,
    cells={font=\footnotesize} % Uniform font size
]
	{{\rm H}^{2n-3}_{c}(\overline{\mathcal{U}}_{10},k_\lambda(n-2))}_{\mathfrak{m}} \\
	{{\rm H}^{2n-4}_{\acute{e}t}(\overline{{\rm Y}}_{10}\bigcap \overline{\rm Y}_{11},k_\lambda(n-2))_{\mathfrak{m}}} & {{\rm H}_{\acute{e}t}^{2n-6}(\overline{{\rm Y}}_{00}\bigcap \overline{\rm Y}_{11},k_\lambda(n-3))_{\mathfrak{m}}\oplus \bigoplus\limits_{i=1}^{n-1} {\rm H}_{\acute{e}t}^{0}(\overline{C}_{i}\bigcap \overline{{\rm Y}}_{10},k_\lambda)_{\mathfrak{m}}} \\
	{{\rm H}^{2n-4}_{\acute{e}t}(\overline{{\rm Y}}_{10},k_\lambda(n-2))_{\mathfrak{m}}} & {{\rm H}^{2n-6}_{\acute{e}t}(\overline{{\rm Y}}_{00}\bigcap \overline{{\rm Y}}_{10},k_\lambda(n-3))_{\mathfrak{m}}}
	\arrow["\partial", from=2-1, to=1-1]
	\arrow["Gys", from=2-2, to=2-1]
	\arrow["Res", from=3-1, to=2-1]
	\arrow["Res", from=3-2, to=2-2]
	\arrow["Gys", from=3-2, to=3-1]
\end{tikzcd}.\] Therefore, the image of ${{\rm H}^{2n-4}_{\acute{e}t}(\overline{{\rm Y}}_{10},k_\lambda(n-2))_{\mathfrak{m}}}$ in ${{\rm H}_{\acute{e}t}^{2n-4}(\overline{{\rm Y}}_{10}\bigcap \overline{\rm Y}_{11},,k_\lambda(n-2))_{\mathfrak{m}}}$ induced by the restriction map is contained in the image of ${\rm H}^{2n-6}_{\acute{e}t}(\overline{{\rm Y}}_{00}\bigcap \overline{\rm Y}_{11},,k_\lambda(n-3))_{\mathfrak{m}}$ induced by the Gysin map. 

Now, by Proposition~\ref{connections} and Theorem~\ref{T:main-theorem}, we have the following commutative diagram: \[\begin{tikzcd}
[
    column sep=small, 
    row sep=scriptsize,
    cells={font=\footnotesize} % Uniform font size
]
	{{\rm H}^{2n-2}_{\acute{e}t}(\overline{{\Sh}}_{1,n-1},k_\lambda(n-1))_{\mathfrak{m}}} & {{\rm H}^{2n-2}_{\acute{e}t}(\overline{{\rm Y}}_{10},k_\lambda(n-1))_{\mathfrak{m}}} \\
	& {{\rm H}^{2n-4}_{\acute{e}t}(\overline{{\rm Y}}_{00}\bigcap \overline{{\rm Y}}_{10},k_\lambda(n-2))_{\mathfrak{m}}} \\
	\\
	{{\rm H}^{0}(\overline{\rm Y}_{n})_{\mathfrak{m}}\oplus\bigoplus\limits_{i=1}^{n-1}{\rm H}^{0}({\rm Y}_{i})_{\mathfrak{m}}} & {{\rm H}^{2n-6}_{\acute{e}t}(\overline{{\rm Y}}_{00}\bigcap \overline{\rm Y}_{11},k_\lambda(n-3))_{\mathfrak{m}}\oplus\bigoplus\limits_{i=1}^{n-1}{\rm H}^{0}_{\acute{e}t}(\overline{C}_{i}\bigcap \overline{{\rm Y}}_{10},k_\lambda)_{\mathfrak{m}}}
	\arrow["\delta"', from=1-2, to=1-1]
	\arrow["Gys"', from=2-2, to=1-2]
	\arrow["{{{{{{\mathop{p}\limits^{\rightarrow}}_{i,!}}}}}}", from=4-1, to=1-1]
	\arrow["{{(Gys,pr_{i,!})}}"', from=4-2, to=2-2]
	\arrow["{{{(\alpha,\beta)}}}", from=4-2, to=4-1]
\end{tikzcd}\]where the vertical maps are Gysin maps induced by the blowing-up. It is easy to see  ${\rm H}^{0}_{\acute{e}t}(\overline{C}_{i}\bigcap \overline{{\rm Y}}_{10},k_\lambda)_{\mathfrak{m}}={\rm H}^{0}(\overline{\rm Y}_{i},k_\lambda)_{\mathfrak{m}}.$ If there is $(x,y)\in {{\rm H}^{2n-3}_{c}(\overline{{\rm Y}}_{00}\bigcap \overline{\rm Y}_{11},k_\lambda(n-3))_{\mathfrak{m}}\oplus\bigoplus\limits_{i=1}^{n-1}{\rm H}^{0}_{\acute{e}t}(\overline{C}_{i}\bigcap \overline{{\rm Y}}_{10},k_\lambda)_{\mathfrak{m}}}$ such that the image of $(x,y)$ through the horizontal map is zero, then ${{{{\mathop{p}\limits^{\rightarrow}}_{i,!}}}}\circ(\alpha(x),\beta(y))=0$ for any $1\leq i\leq n.$ By the injectivity of ${{{{\mathop{p}\limits^{\rightarrow}}_{i,!}}}},$ we see $\beta(y)=0.$ Hence $y=0.$ This shows the image of $\bigoplus\limits_{i=1}^{n-1}{\rm H}^{0}_{\acute{e}t}({\rm Y}_{i},k_\lambda)_{\mathfrak{m}}$ in ${{\rm H}^{2n-4}_{\acute{e}t}(\overline{{\rm Y}}_{10}\bigcap \overline{\rm Y}_{11},k_\lambda(n-2))}_{\mathfrak{m}}$ has trivial intersection with the image of ${{\rm H}^{2n-4}_{\acute{e}t}(\overline{{\rm Y}}_{10},k_\lambda)}_{\mathfrak{m}}.$ Thus we finish the proof.
\end{proof}

Symmetrically, we define $\overline{\mathcal{U}}_{01}=\overline{\rm Y}_{01}-\overline{\rm Y}_{01}\bigcap\overline{\rm Y}_{11}.$ By Proposition~\ref{image}, we have $Fr''(\overline{\mathcal{U}}_{01})$ contains $\overline{C}_{i}\bigcap \overline{{\rm Y}}_{10}$ for $1\leq i\leq n.$ Thus we have a simlar diagram as above:
\[\begin{tikzcd}
[
    column sep=small, 
    row sep=scriptsize,
    cells={font=\footnotesize} % Uniform font size
]
	{\overline{{\rm Y}}_{00}\bigcap\overline{{\rm Y}}_{10}} & {} & {\overline{{\rm Y}}_{10}} & {} & {\overline{{\rm Y}}_{10}\backslash{Fr''(\overline{\mathcal{U}}_{01})}} && {\overline{{\rm Y}}_{00}\bigcap \overline{\rm Y}_{11}} \\
	{\overline{\rm Y}_{n}} & {} & {\overline{{\Sh}}_{1,n-1}} && \overline{N}_{01} && {\overline{\rm Y}_{n}}
	\arrow[hook, from=1-1, to=1-3]
	\arrow[from=1-1, to=2-1]
	\arrow[from=1-3, to=2-3]
	\arrow[hook', from=1-5, to=1-3]
	\arrow[from=1-5, to=2-5]
	\arrow[hook', from=1-7, to=1-5]
	\arrow[from=1-7, to=2-7]
	\arrow[hook, from=2-1, to=2-3]
	\arrow[hook', from=2-5, to=2-3]
	\arrow[hook', from=2-7, to=2-5]
\end{tikzcd}.\] we also have the following diagram with the vertical sequence exact:
\[\begin{tikzcd}
[
    column sep=small, 
    row sep=scriptsize,
    cells={font=\footnotesize} % Uniform font size
]
	{{\rm H}^{2n-3}_{c}(Fr''(\overline{\mathcal{U}}_{01}),k_\lambda(n-2))} \\
	{{\rm H}^{2n-4}_{\acute{e}t}({\overline{{\rm Y}}_{10}\backslash{Fr''(\overline{\mathcal{U}}_{01})}},k_\lambda(n-2))} & {H^{2n-6}_{\acute{e}t}(\overline{{\rm Y}}_{00}\bigcap \overline{\rm Y}_{11},k_\lambda(n-3))\oplus \bigoplus\limits_{i=1}^{n-1} {\rm H}^{0}_{\acute{e}t}(\overline{C}_{i}\bigcap \overline{{\rm Y}}_{10},k_\lambda)} \\
	{{\rm H}^{2n-4}_{\acute{e}t}(\overline{{\rm Y}}_{10},k_\lambda(n-2))}
	\arrow["\partial", from=2-1, to=1-1]
	\arrow["{{Gys,pr_{i,!}}}", from=2-2, to=2-1]
	\arrow["Res", from=3-1, to=2-1]
\end{tikzcd}\] 
Similar as above, we have the following proposition:
\begin{proposition}
The map $\partial\circ \text{Gys}: \bigoplus\limits_{i=1}^{n-1} {\rm H}^{0}_{\acute{e}t}(\overline{C}_{i}\bigcap \overline{{\rm Y}}_{10},k_\lambda)_{\mathfrak{m}} \ra {{\rm H}^{2n-3}_{c}(Fr''(\overline{\mathcal{U}}_{01}),k_\lambda(n-2))_{\mathfrak{m}}}$ is injective.    
\end{proposition}

Consider $Fr''(\overline{\mathcal{U}}_{01})\bigcup \overline{\mathcal{U}}_{10}$ with the Mayer-Vietoris sequence, we have the following diagram:
\[\begin{tikzcd}
[
    column sep=small, 
    row sep=scriptsize,
    cells={font=\footnotesize} % Uniform font size
]
	&& {{{\rm H}^{2n-3}_{c}(Fr''(\overline{\mathcal{U}}_{01})\bigcup \overline{\mathcal{U}}_{10},k_\lambda(n-2))_{\mathfrak{m}}}} \\
	{\bigoplus\limits_{i=1}^{n-1} {\rm H}^{0}_{\acute{e}t}(\overline{C}_{i}\bigcap \overline{{\rm Y}}_{10},k_\lambda)_{\mathfrak{m}}} && {{{\rm H}^{2n-3}_{c}(Fr''(\overline{\mathcal{U}}_{01}),k_\lambda(n-2))_{\mathfrak{m}}}\bigoplus{{\rm H}^{2n-3}_{c}(\overline{\mathcal{U}}_{10},k_\lambda(n-2))_{\mathfrak{m}}}} \\
	&& {{{\rm H}^{2n-3}_{c}(Fr''(\overline{\mathcal{U}}_{01})\bigcap \overline{\mathcal{U}}_{10},k_\lambda(n-2))_{\mathfrak{m}}}}
	\arrow["\Phi", from=2-1, to=2-3]
	\arrow["\phi"', dashed, from=2-1, to=3-3]
	\arrow[from=2-3, to=1-3]
	\arrow["i", from=3-3, to=2-3]
\end{tikzcd}\] Since two parts of the last horizontal map has different signs, the composition of the last horizontal map with $\Phi$ is zero. Hence $\Phi$ factors through the fisrt horizontal map: $\Phi=i\circ \phi.$ Thus $\phi$ is injective.

We claim that $\phi$ gives the desired map appeared in Theorem~\ref{IIhara lemma}. First, we note that the morphism from $\overline{C}_{i}$ to ${\rm Y}_{i}$ induced by the blowing-up for $1\leq i\leq n-1$ gives the isomorphism ${\rm H}^{0}_{\acute{e}t}(\overline{C}_{i}\bigcap \overline{{\rm Y}}_{10},k_\lambda)={\rm H}^{0}_{\acute{e}t}({\rm Y}_{i},k_\lambda).$ Since ${\rm H}^{0}_{\acute{e}t}({\rm Y}_{i},k_\lambda)_{\mathfrak{m}}={\rm H}^{0}_{\acute{e}t}(\overline{{\Sh}}_{0,n},k_\lambda)_{\mathfrak{m}},$ we have ${\rm H}^{0}_{\acute{e}t}(\overline{C}_{i}\bigcap \overline{{\rm Y}}_{10},k_\lambda)_{\mathfrak{m}}={\rm H}^{0}_{\acute{e}t}(\overline{{\Sh}}_{0,n},k_\lambda)_{\mathfrak{m}}.$ Moreover, $Fr''(\overline{\mathcal{U}}_{01})\bigcup \overline{\mathcal{U}}_{10}=V^{(\omega_{1},\omega_{2})}\bigsqcup (\overline{{\rm Y}}_{00}\bigcap \overline{{\rm Y}}_{10}-\overline{{\rm Y}}_{00}\bigcap \overline{\rm Y}_{11}),$ with $(\omega_{1},\omega_{2})=(n,1),$ that is, $V^{(\omega_{1},\omega_{2})}$ is the $\mu$-ordinary locus of $\overline{{\Sh}}_{1,n-1}.$ By Proposition~\ref{affineness}, $V^{(\omega_{1},\omega_{2})}$ is affine. Thus this shows ${\rm H}^{2n-3}_{c}(V^{(\omega_{1},\omega_{2})},k_\lambda)_\mathfrak{m}=0.$ This gives an injective map $i:{{\rm H}^{2n-3}_{c}(Fr''(\overline{\mathcal{U}}_{01})\bigcap \overline{\mathcal{U}}_{10},k_\lambda)_{\mathfrak{m}}}\ra {{\rm H}^{2n-3}_{c}(\overline{{\rm Y}}_{00}\bigcap \overline{{\rm Y}}_{10}-\overline{{\rm Y}}_{00}\bigcap \overline{\rm Y}_{11},k_\lambda)_{\mathfrak{m}}}$ by excision sequence. Composite these maps together, the following map is injecitve:
\[\begin{tikzcd}
[
    column sep=small, 
    row sep=scriptsize,
    cells={font=\footnotesize} % Uniform font size
]
	{\bigoplus\limits_{i=1}^{n-1} {\rm H}^{0}_{\acute{e}t}(\overline{{\Sh}}_{0,n},k_\lambda)_{\mathfrak{m}}} & {\bigoplus\limits_{i=1}^{n-1} {\rm H}^{0}_{\acute{e}t}(\overline{C}_{i}\bigcap \overline{{\rm Y}}_{10},k_\lambda)_{\mathfrak{m}}} & {{{\rm H}^{2n-3}_{c}(Fr''(\overline{\mathcal{U}}_{01})\bigcap \overline{\mathcal{U}}_{10},k_\lambda(n-2))_{\mathfrak{m}}}} \\
	&& {{{\rm H}^{2n-3}_{c}(\overline{{\rm Y}}_{00}\bigcap \overline{{\rm Y}}_{10}-\overline{{\rm Y}}_{00}\bigcap \overline{\rm Y}_{11},k_\lambda(n-2))_{\mathfrak{m}}}}
	\arrow[from=1-1, to=1-2]
	\arrow["\phi", from=1-2, to=1-3]
	\arrow["i", from=1-3, to=2-3]
\end{tikzcd}\] where the last map is an identity following Proposition~\ref{cohomologyy}. 

Recall our calculation in Proposition~\ref{cohomologyy}, 
${\rm H}^{2n-3}_{c}(\overline{{\rm Y}}_{00}\bigcap \overline{{\rm Y}}_{10}-\overline{{\rm Y}}_{00}\bigcap \overline{\rm Y}_{11},k_\lambda(n-2))_{\mathfrak{m}}$ can be identified with a direct summand of ${\rm H}^{2n-4}_{c}(\overline{{\rm Y}}_{00}\bigcap \overline{\rm Y}_{11},k_\lambda(n-2))_{\mathfrak{m}}.$ Considering the direct sum with the map  $\phi:{\rm H}^0_{\acute{e}t}(\overline{\Sh}_{0,n},k_\lambda)_\mathfrak{m}\ra {\rm H}^0_{\acute{e}t}(\overline{\Sh}_{0,n}(K_\gothp^1),k_\lambda)_{\mathfrak{m}}$ in Proposition~\ref{cohomologyy}, we get an injective map
\begin{equation}\label{Ihara constructed}
    {\bigoplus\limits_{i=1}^{n} {\rm H}^{0}_{\acute{e}t}(\overline{{\Sh}}_{0,n},k_\lambda)_{\mathfrak{m}}}\ra {{\rm H}^{0}_{\acute{e}t}(\overline{{\Sh}}_{0,n}(K_{\gothp}^{1}),k_\lambda)}_{\mathfrak{m}}
\end{equation}
We denote the map by $\Psi.$ Now it suffices to check the relation of $\Psi$ with the dual map of that in Theorem~\ref{IIhara lemma}. 

The dual map of that in Theorem~\ref{IIhara lemma} is 
$${\mathrm{H}^{0}}({{\Sh}}_{0,n},k_\lambda)^{\oplus n} \xrightarrow{\alpha} {\mathrm{H}}^{0}({\Sh_{0,n}(K_\gothp^1)},k_\lambda) ,
$$
where $\alpha$ can be expressed as 
$$
\begin{pmatrix}
    {\rm T}_{0,1}^{(1)}&{\rm T}_{0,1}^{(2)}&\cdots&{\rm T}_{0,1}^{(n)}\\
\end{pmatrix}
$$ Here for $1\leq i\leq n,$ ${\rm T}_{0,1}^{(i)}$ are Hecek actions defined in Definition~\ref{S:Hecke action}.

As in the proof of Proposition~\ref{cohomologyy}, $\overline{\rm Y}_{00}\bigcap\overline{\rm Y}_{10}$ and $\overline{\rm Y}_{00}\bigcap\overline{\rm Y}_{11}$ can be realized as a closed subscheme of product of Grassmannians. With the same notation here, denote by $\overline T$ the closed subscheme of $\overline{\rm Y}_{00}\bigcap\overline{\rm Y}_{11}$ such that $\mathcal{H}_1=\mathcal{H}_1^{(p^2)}.$
By construction, $\Psi$ is the direct sum of 
\begin{equation}\label{Ihara i}
    \iota_i:{\rm H}^0_{\acute{e}t}(\overline C_i\bigcap\overline{\rm Y}_{10},k_\lambda)_\mathfrak{m}\ra {\rm H}^{2n-4}_{\acute{e}t}(\overline {\rm Y}_{10}\bigcap\overline{\rm Y}_{11},k_\lambda)_\mathfrak{m}\ra  {\rm H}^{2n-4}_{\acute{e}t}(\overline {\rm Y}_{00}\bigcap\overline{\rm Y}_{11},k_\lambda)_\mathfrak{m}\ra {\rm H}^{2n-4}_{\acute{e}t}(\overline{T},k_\lambda)_\mathfrak{m}
\end{equation}
with the first map induced by closed immersions and the last two maps induced by restrictions for $1\leq i\leq n-1$ and 
\begin{equation}\label{Ihara n}
    \iota_n:{\rm H}^0_{\acute{e}t}(\overline{\Sh}_{0,n},k_\lambda)_\mathfrak{m}\ra {\rm H}^{2n-4}_{\acute{e}t}(\overline{\rm Y}_{00}\bigcap\overline{\rm Y}_{10},k_\lambda)_\mathfrak{m}\ra {\rm H}^{2n-4}_{\acute{e}t}(\overline {\rm Y}_{00}\bigcap\overline{\rm Y}_{11},k_\lambda)_\mathfrak{m}\ra {\rm H}^{2n-4}_{\acute{e}t}(\overline{T},k_\lambda)_\mathfrak{m}
\end{equation}
as in Proposition~\ref{cohomologyy} with the last two maps induced by restrictions.

Note that $\overline T$ is a $\PP^{n-2}$-bundle over $\overline{\Sh}_{0,n}(K_\gothp^1)$ and thus ${\rm H}^{2n-4}_{\acute{e}t}(\overline{T},k_\lambda)_\mathfrak{m}={\rm H}^0_{\acute{e}t}(\overline{\Sh}_{0,n}(K_\gothp^1),k_\lambda)_\mathfrak{m}.$ For any point $(z_1,z_2)\in \overline{\Sh}_{0,n}(K_\gothp^1),$ denote by $f_{(z_1,z_2)}$ the function mapping $(z_1,z_2)$ to $1$ and other points to $0.$ Then $\{f_{(z_1,z_2)}~|~(z_1,z_2)\in \overline{\Sh}_{0,n}(K_\gothp^1)\}$ forms a base of ${\rm H}^{2n-4}_{\acute{e}t}(\overline{T},k_\lambda)_\mathfrak{m}.$

The map $\iota_n$ can be described as in the proof of Proposition~\ref{cohomologyy}. The map $\iota_i$ can be described as the following diagram:
\[\begin{tikzcd}
[
    column sep=small, 
    row sep=scriptsize,
    cells={font=\footnotesize} % Uniform font size
]
	{{\rm Ch}^0(\overline{C}_{i}\bigcap\overline{\rm Y}_{10},k_\lambda)_\mathfrak{m}} & {{\rm Ch}^{n-2}(\overline{\rm Y}_{11}\bigcap\overline{\rm Y}_{10},k_\lambda)_\mathfrak{m}} & {{\rm Ch}^{n-2}(\overline{\rm Y}_{00}\bigcap\overline{\rm Y}_{11},k_\lambda)_\mathfrak{m}} & {{\rm Ch}^{n-2}_{\acute{e}t}(\overline{T},k_\lambda)_\mathfrak{m}} \\
	{{\rm H}^0_{\acute{e}t}(\overline{C}_{i}\bigcap\overline{\rm Y}_{10},k_\lambda)_\mathfrak{m}} & {{\rm H}^{2n-4}_{\acute{e}t}(\overline{\rm Y}_{11}\bigcap\overline{\rm Y}_{10},k_\lambda)_\mathfrak{m}} & {{\rm H}^{2n-4}_{\acute{e}t}(\overline {\rm Y}_{00}\bigcap\overline{\rm Y}_{11},k_\lambda)_\mathfrak{m}} & {{\rm H}^{2n-4}_{\acute{e}t}(\overline{T},k_\lambda)_\mathfrak{m}}
	\arrow[from=1-1, to=1-2]
	\arrow["{cl_{\overline{\Sh}_{0,n}}^0}", from=1-1, to=2-1]
	\arrow[from=1-2, to=1-3]
	\arrow["{cl_{\overline{\rm Y}_{11}\bigcap\overline{\rm Y}_{10}}^{n-2}}", from=1-2, to=2-2]
	\arrow[from=1-3, to=1-4]
	\arrow["{cl_{\overline{\rm Y}_{00}\bigcap\overline{\rm Y}_{11}}^{n-2}}", from=1-3, to=2-3]
	\arrow["{cl_{\overline{T}}^{n-2}}", from=1-4, to=2-4]
	\arrow[from=2-1, to=2-2]
	\arrow[from=2-2, to=2-3]
	\arrow[from=2-3, to=2-4].
\end{tikzcd}\]
Then $\iota_i$ maps $1$ to $cl_{\overline{T}}^{n-2}([\overline{C}_{i}\bigcap\overline{\rm Y}_{10}\bigcap\overline{T}]).$ Thus $\Psi$ can be expressed as 
$$
\begin{pmatrix}
    c_1{\rm T}_{0,1}^{(1)}&c_2{\rm T}_{0,1}^{(2)}&\cdots&c_n{\rm T}_{0,1}^{(n)}\\
\end{pmatrix}
$$
with $c_i$ coefficients which can be calculated by intersection theory but we do not need here. Since $\Psi$ is injective, $c_i$ is prime to $\ell$ for $1\leq i\leq n$ and thus after an appropriate base change, the map $\Psi$ is identified with $\alpha.$ This shows $\alpha$ is injective. This finish the proof of the definite Ihara lemma in Theorem~\ref{IIhara lemma}.

%There is a level raising result following directly from the definite Ihara lemma.
%\begin{corollary}
   %Under Hypothesis~\ref{Main hypo}, for any unramified irreducible representation $\pi$ appeared in ${\rm H}^0_{\acute{e}t}(\overline{\Sh}_{0,n},k_\lambda)_\mathfrak{m},$ there exists an irreducible representation $\pi'$ appeared in ${\rm H}^0_{\acute{e}t}(\overline{\Sh}_{0,n},k_\lambda)_\mathfrak{m}$ with nontrivial $K^pK_p^1$-invariants such that the Galois representation $\rho_{\pi'}$ associated to $\pi'$ is isomorphic to $\rho_\pi$ residually. Moreover, $\rho_{\pi'}$ has monodromy type $\begin{pmatrix}
   %     1&1\\
    %    0&1
    %\end{pmatrix}\oplus \mathbbm{1}^{\oplus n-2}.$
%\end{corollary}

Now we give the proof of the Indefinite Ihara lemma.
Recall the prime $p'$ in Section~\ref{S:Hecke algebra} and $K,K'$ defined in Section~\ref{S:defn of Shimura var}.
we use notation `$(K)$' and `$(K')$' to strengthen the level structure of Shimura varieties since we need to consider Shimura varieties for different primes. We use subscript $p'$ to denote Shimura varieties defined at $p'.$

By Proposition~\ref{S:mod l cohomology} and Hypothesis~\ref{Main hypo}, we get:
\begin{align*}
{\rm H}^{0}_{\acute{e}t}(\overline{\Sh}_{0,n,p'},k_{\ell})_{\mathfrak{m}}&={\bar M'_{K}};\\
{\rm H}^{2(n-1)}_{\acute{e}t}(\overline{\Sh}_{1,n-1,p'},k_{\ell})_{\mathfrak{m}}&=\bar M'_{K}\otimes_{k_\lambda}\big(({\bar\rho_{\mathfrak m}}\otimes_{k_\lambda}\wedge^{n-1}{\bar\rho_{\mathfrak m}})\otimes_{k_\lambda} {k}_\lambda(\frac{(n-1)(n-2)}{2})\big);
\end{align*}
such that ${\bar M'_{K}}$ is a $k_\lambda$-module with trival $\Gamma_E$ action.
Here we use subscript $p'$ to denote the Shimura varieties defined at $p'.$

By Proposition~\ref{S:mod l cohomology}, we get 
\begin{align*}
{\rm H}^{0}_{\acute{e}t}(\overline{\Sh}_{0,n,p'}(K'),k_{\ell})_{\mathfrak{m}}&={\bar M'_{K'}};\\
{\rm H}^{2(n-1)}_{\acute{e}t}(\overline{\Sh}_{1,n-1,p'}(K'),k_{\ell})_{\mathfrak{m}}&=\bar M'_{K'}\otimes_{k_\lambda}\big(({\bar\rho_{\mathfrak m}}\otimes_{k_\lambda}\wedge^{n-1}{\bar\rho_{\mathfrak m}})\otimes_{k_\lambda} {k}_\lambda(\frac{(n-1)(n-2)}{2})\big);
\end{align*}
such that ${\bar N'_{K'}}$ is a $k_\lambda$-module with trival $\Gamma_E$ action.

By definite Ihara lemma and smooth proper base change theorem, we have a surjection \[{\rm H}^{0}_{\acute{e}t}(\overline{Sh}_{0,n}(K')_{\overline{\QQ}_{p}},k_\lambda)_{\mathfrak{m}}\ra {\rm H}^{0}_{\acute{e}t}(\overline{Sh}_{0,n}(K)_{\overline{\QQ}_{p}},k_\lambda)_{\mathfrak{m}}^{n}.\] Under the isomorphism $\overline{\QQ}_{p}\simeq \CC\simeq \overline{\QQ}_{p'},$ we have a surjection \[{\rm H}^{0}_{\acute{e}t}(\overline{Sh}_{0,n}(K')_{\overline{\QQ}_{p'}},k_\lambda)_{\mathfrak{m}}\ra {\rm H}^{0}_{\acute{e}t}(\overline{Sh}_{0,n}(K)_{\overline{\QQ}_{p'}},k_\lambda)_{\mathfrak{m}}^{n}.\] Again by torion-freeness and smooth proper base change, we get \[{\rm H}^{0}_{\acute{e}t}(\overline{\Sh}_{0,n,p'}(K'),k_\lambda)_{\mathfrak{m}}\ra {\rm H}^{0}_{\acute{e}t}(\overline{Sh}_{0,n,p'}(K),k_\lambda)_{\mathfrak{m}}^{n}.\] Then by torsion-freeness, we get a surjection $\bar M'_{K}\ra (\bar M'_{K'})^{\oplus n}.$ Hence 
$$\bar M'_{K}\otimes_{k_\lambda}\big(({\bar\rho_{\mathfrak m}}\otimes_{k_\lambda}\wedge^{n-1}{\bar\rho_{\mathfrak m}})\otimes_{k_\lambda} {k}_\lambda(\frac{(n-1)(n-2)}{2})\big)\ra \bigg(\bar M'_{K'}\otimes_{k_\lambda}\big(({\bar\rho_{\mathfrak m}}\otimes_{k_\lambda}\wedge^{n-1}{\bar\rho_{\mathfrak m}})\otimes_{k_\lambda} {k}_\lambda(\frac{(n-1)(n-2)}{2})\big)\bigg)^{\oplus n}$$ 
is surjective. After proper base change and a proper Tate twist, we get the indefinite Ihara lemma since the geometric generic fibers at $p$ and $p'$ are isomorphic.

\begin{corollary}
\label{level raising resultt}
    Fixed an unramified RACSDC representation $\Pi$ satisfies Hypothesis \ref{Main hypo} and \ref{Main hypo3}. Then there exists an irreducible representation $\Pi'$ of $\GL_n(\AAA_E)$ such that the associated Galois representation $\rho_{\Pi'}$ is residually isomorphic to $\rho_\Pi$ and the monodromy operator of $\rho_{\Pi'}$ is conjugate to $\begin{pmatrix}
        1&1\\
        0&1
    \end{pmatrix} \oplus 1_{n-2}.$
\end{corollary}
\begin{proof}
    We only give a sketch of proof here. Denote by $\psi': {\mathrm{H}^{0}_{\acute{e}t}}({\overline{\Sh}}_{0,n},k_\lambda)^{\oplus n}_{\mathfrak{m}}\rightarrow {\mathrm{H}}_{\acute{e}t}^{0}(\overline{\Sh}_{0,n}(K_{\gothp}^{1}),k_\lambda)_{\mathfrak{m}}$ the dual of $\psi.$ The Corollary is equivalent to $\psi'$ is not surjective by Mackey theory and Proposition~\ref{S:level raising}. Considering $\psi\circ \psi'$ as an $n\times n$ matrix with each element a Hecke operator, it can be shown the determinant is zero under Hypothesis~\ref{Main hypo3}. And thus we get Corollary~\ref{level raising result}. 
\end{proof}

\section{Geometry of \texorpdfstring{${\Sh}_{1,2}(Iw_{\gothp})$}{Sh_1,n-1Iw}
}\label{GI}

Recall that the Hecke action $\T$ is related to the special fiber of the unitary Shimura variety $\Sh_{1,n-1}(K_\gothp^1)$ for any $n.$ There is also a relation between the Hecke action ${\rm A}$ defined in Definition~\ref{S:Hecke action} and unitary Shimura varieties. For our use, we only consider the $n=3$ case. Recall the open compact subgroup $K=K^p(\ZZ_p^\times,K_\gothp)$ of $G_{1,2}(\AAA_\infty)$ we fixed in Section~\ref{S:Shimura Varieties}, let $K^2=K^p(\ZZ_p^\times,Iw_\gothp)$ with $Iw_\gothp$ the standard Iwahori subgroup of $\GL_n(\QQ_{p^2}).$
\begin{definition}\label{G:Iwahori level}
    Let $\cSh_{a_{\bullet}}(Iw_\gothp)$ be the unitary Shimura variety defined over $\ZZ_{p^{2}}$ which represents the functor the takes a locally Noetherian $\ZZ_{p^{2}}$-scheme $S$ to the set of isomorphism classes of tuples $$(A_1,\lambda_1,\eta_1,A_2,\lambda_2,\eta_2,A_3,\lambda_3,\eta_3,\phi_{12},\phi_{23}),$$ where
 $(A_1,\lambda_1,\eta_1,A_2,\lambda_2,\eta_2,\phi_{12})$ and $(A_2,\lambda_2,\eta_2,A_3,\lambda_3,\eta_3,\phi_{23})$ are $S$-points of ${\cSh}_{a_{\bullet}}(K_\gothp^1)$ such that the cokernels of the maps
 \[
 \phi_{23,*,1}\circ\phi_{12,*,1}: {\rm H}^\dR_1(A_1/S)^\circ_1 \to {\rm H}^\dR_1(A_3/S)^\circ_1 \quad\textrm{and}\quad
\phi_{23,*,2}\circ\phi_{12,*,2}: {\rm H}^\dR_1(A_1/S)^\circ_2 \to {\rm H}^\dR_1(A_3/S)^\circ_2
\]
are both locally free $\cO_S$-modules of rank $2.$
\end{definition}
Similarly as above, we use $\Sh_{a_{\bullet}}(Iw_{\gothp})$ to express the special fiber of $\cSh_{a_{\bullet}}(Iw_\gothp)$ and $Sh_{a_{\bullet}}(Iw_{\gothp})$ to express the generic fiber.

Denote by ${{\mathop{p}\limits^{\rightarrow}}_{Iw}}$ and ${{\mathop{p}\limits^{\leftarrow}}_{Iw}}$ the morphisms mapping any $S$-point of $\Sh_{a_\bullet}(Iw_\gothp)$ to 
$(A_2,\lambda_2,\eta_2,A_3,\lambda_3,\eta_3,\phi_{23})$ and $(A_1,\lambda_1,\eta_1,A_2,\lambda_2,\eta_2,\phi_{12}).$ 
% \[\begin{tikzcd}
% [
%     column sep=small, 
%     row sep=scriptsize,
%     cells={font=\footnotesize} % Uniform font size
% ]
% 	&& {\rm Sh}_{a_\bullet}(Iw_\gothp) \\
% 	& {\rm Sh}_{a_\bullet}(K_\gothp^1) && {\rm Sh}_{a_\bullet}(K_\gothp^1) \\
% 	{\rm Sh}_{a_\bullet} && {\rm Sh}_{a_\bullet} && {\rm Sh}_{a_\bullet}
% 	\arrow["{{{\mathop{p}\limits^{\leftarrow}}_{Iw}}}"', from=1-3, to=2-2]
% 	\arrow["{{{\mathop{p}\limits^{\rightarrow}}_{Iw}}}", from=1-3, to=2-4]
% 	\arrow["{{\mathop{p}\limits^{\leftarrow}}}"', from=2-2, to=3-1]
% 	\arrow["{{\mathop{p}\limits^{\rightarrow}}}", from=2-2, to=3-3]
% 	\arrow["{{\mathop{p}\limits^{\leftarrow}}}"', from=2-4, to=3-3]
% 	\arrow["{{\mathop{p}\limits^{\rightarrow}}}", from=2-4, to=3-5]
% \end{tikzcd}\]
% Here for any $S$-point of $\Sh_{a_\bullet}(Iw_\gothp),$ the image under ${{\mathop{p}\limits^{\rightarrow}}_{Iw}}$ is $(A_2,\lambda_2,\eta_2,A_3,\lambda_3,\eta_3,\phi_{23})$ and the image under ${{\mathop{p}\limits^{\leftarrow}}_{Iw}}$ is $(A_1,\lambda_1,\eta_1,A_2,\lambda_2,\eta_2,\phi_{12}).$ 
When $a_\bullet=(0,3),$ it is easy to see the correspondence $({{\mathop{p}\limits^{\leftarrow}}_{Iw}},{{\mathop{p}\limits^{\rightarrow}}_{Iw}})$ gives the Hecke action ${\rm A}$ defined in Definition~\ref{S:Hecke action}.
\begin{definition}\label{GI:Condition on closed subschemes}
    Let $(\mathcal{A}_1,\lambda_1,\eta_1,\mathcal{A}_2,\lambda_2,\eta_2,\mathcal{A}_3,\lambda_3,\eta_3,\phi_{12},\phi_{23})$ be the universal object of $\cSh_{1,2}(Iw_{\gothp}).$
    For $0\leq i,j\leq 2,$ let ${\rm Z}_{ij}$ be the locus of $\Sh_{1,n-1}(K_{\gothp}^{1})$ on which the universal object satisfies $(1.i)(2.j)$ in the following:
    \begin{itemize}
        \item 
        $(1.0)$ $\omega^{\circ}_{\mathcal{A}_2^{\vee},1}=\Ker(\phi_{23,*,1}),$
        $(1.1)$ $\omega^{\circ}_{\mathcal{A}_1^{\vee},1}=\Ker(\phi_{12,*,1}),$
        $(1.2)$ $\omega^{\circ}_{\mathcal{A}_3^{\vee},1}=\Im(\phi_{23,*,1}\circ\phi_{12,*,1}).$
        \item 
        $(2.0)$ $\Im(\phi_{23,*,2})=\omega^{\circ}_{\mathcal{A}_3^{\vee},2},$
        $(2.1)$ $\Im(\phi_{12,*,2})=\omega^{\circ}_{\mathcal{A}_2^{\vee},2},$  
        $(2.2)$ $\Ker(\phi_{23,*,2}\circ\phi_{12,*,2})=\omega^{\circ}_{\mathcal{A}_1^{\vee},2},$
    \end{itemize}
\end{definition}
\begin{proposition}\label{GI:stalk of points}
    \begin{enumerate}
        \item The scheme $\cSh_{1,2}(Iw_{\gothp})$ is quasi-projective over $\ZZ_{p^{2}}$ of dimension $4;$ and we have $${\Sh}_{1,n-1}(Iw_{\gothp})=\bigcup\limits_{0\leq i,j\leq 2}{\rm Z}_{ij}$$
        with points in $Z_{ij}$ satisfying $(1.i),(2.j);$
        \item For $0\leq i,j\leq 2,$ ${\rm Z}_{ij}$ is smooth over $\FF_{p^{2}}$ of dimension $4;$
        \item Let $k$ be a perfect field containing $\FF_{p^{2}}.$ $x$ be a closed point of $\Sh_{1,n-1}(Iw_{\gothp})(k).$ Let $S_j$ be the set of $i$ such that the universal object satisfies condition $(i,j)$ defined in Definition~\ref{G:Condition on closed subschemes} at $x$ Then the completed local ring of $\cSh_{1,n-1}(K_{\gothp}^{1})$ at $x$ is $$W(k)[\![X_i,i\in S_1;Y_j,j\in S_2;Z_k,k\in S_3;T_1,\dots,T_{r}]\!]/\mathcal{I}$$ with $r$ a positive integer to make the dimension of the local ring is $4$ and the ideal $\mathcal{I}$ satisfies for $0\leq i\leq 2,$
        \begin{itemize}
            \item if $i\in S_1\bigcap S_2\bigcap S_3,$ then $X_iY_iZ_i-p\in \mathcal{I};$ otherwise 
            \item if $i\in S_1\bigcap S_2,$ then $X_iY_i-p\in \mathcal{I};$
            \item if $i\in S_1\bigcap S_3,$ then $X_iZ_i-p\in \mathcal{I};$
            \item if $i\in S_2\bigcap S_3,$ then $Y_iZ_i-p\in \mathcal{I}.$
        \end{itemize}
    \end{enumerate}
\end{proposition}
\begin{proposition}\label{GI:blow up}
    After blowing up ${\cSh}_{1,2}(Iw_{\gothp})$ at ${\rm Z}_{11},{\rm Z}_{22}$ and ${\rm Z}_{00}$ successively, we can get a strictly semistable scheme, denoted by $\widetilde{\cSh}_{1,2}(Iw_\gothp).$ Denote the process of the resolution by $\cSh_{1,2}(Iw_\gothp)=V^0\xleftarrow{\pi_1} V^1\xleftarrow{\pi_2} V^2\xleftarrow{\pi_3} V^3.$
    For $1\leq i\leq 3,$ $\pi_i:V_{i}\ra V_{i-1}$ is induced by the blow up of $V_{i-1}$ at $(\pi_{i-1}\circ \dots \circ \pi_{1})^{-1}({\rm Z}_{(i-1)(i-1)}).\footnote{If $i=1,$ it is just the identity.}$
    Let $\widetilde{\Sh}_{1,2}(Iw_\gothp)$ denote its special fiber, then
    $
    \widetilde{\Sh}_{1,2}(Iw_\gothp)=\bigcup\limits_{0\leq i,j\leq 2}\widetilde{\rm Z}_{ij}
    $ with 
    \begin{itemize}
        \item $\widetilde{\rm Z}_{00}$ obtained from ${\rm Z}_{00}$ by blow ups at 
        \begin{enumerate}
            \item $({\rm Z}_{00}\cap {\rm Z}_{11})\bigcup ({\rm Z}_{00}\cap {\rm Z}_{22})\bigcup ({\rm Z}_{00}\cap {\rm Z}_{12})\bigcup ({\rm Z}_{00}\cap {\rm Z}_{21});$
            \item $\pi^{-1}_{1}({\rm Z}_{11}\cap {\rm Z}_{00});$
            \item $(\pi_{2}\circ \pi_{1})^{-1}({\rm Z}_{22}\cap {\rm Z}_{00})$ successively;
        \end{enumerate}
        \item $\widetilde{\rm Z}_{11}$ obtained from ${\rm Z}_{11}$ by blow ups at 
        \begin{enumerate}
            \item ${\rm Z}_{11}\cap {\rm Z}_{00};$
            \item $\pi_1^{-1}(({\rm Z}_{11}\cap {\rm Z}_{22})\bigcup ({\rm Z}_{11}\cap {\rm Z}_{02})\bigcup ({\rm Z}_{11}\cap {\rm Z}_{20}));$
            \item $(\pi_{2}\circ \pi_{1})^{-1}({\rm Z}_{22}\cap {\rm Z}_{11})$ successively;
        \end{enumerate}
        \item $\widetilde{\rm Z}_{22}$ obtained from ${\rm Z}_{22}$ by blow ups at 
        \begin{enumerate}
            \item ${\rm Z}_{22}\cap {\rm Z}_{00};$
            \item $\pi_1^{-1}({\rm Z}_{22}\cap {\rm Z}_{11});$
            \item $(\pi_{2}\circ \pi_{1})^{-1}(({\rm Z}_{22}\cap {\rm Z}_{10})\bigcup({\rm Z}_{22}\cap{\rm Z}_{01}))$ successively;
        \end{enumerate}
        \item $\widetilde{\rm Z}_{01}$ obtained from ${\rm Z}_{01}$ by
        \begin{enumerate}
            \item picking an irreducible component $\pi_{1}^{-1}{\rm Z}_{01}$ isomorphic to ${\rm Z}_{01};$
            \item picking an irreducible component $(\pi_{2}^{-1}\circ\pi_{1}^{-1}){\rm Z}_{01}$ isomorphic to ${\rm Z}_{01};$
            \item blowing up at the intersection locus with $(\pi_{2}\circ \pi_{1})^{-1}{\rm Z}_{22}$ successively; 
        \end{enumerate}
        \item $\widetilde{\rm Z}_{10}$ obtained from ${\rm Z}_{01}$ by
        \begin{enumerate}
            \item picking an irreducible component $\pi_{1}^{-1}{\rm Z}_{10}$ isomorphic to ${\rm Z}_{10};$
            \item picking an irreducible component $(\pi_{2}^{-1}\circ\pi_{1}^{-1}){\rm Z}_{10}$ isomorphic to ${\rm Z}_{10};$
            \item blowing up at the intersection locus with $(\pi_{2}\circ \pi_{1})^{-1}{\rm Z}_{22}$ successively; 
        \end{enumerate}
        \item $\widetilde{\rm Z}_{12}$ obtained from ${\rm Z}_{12}$ by
        \begin{enumerate}
            \item blowing up at the intersection locus with ${\rm Z}_{00}$ and denoting the scheme we get by ${\rm Z}_{12}'$ 
            \item picking an irreducible component $\pi_{2}^{-1}{\rm Z}'_{12}$ isomorphic to ${\rm Z}_{12}';$
            \item  picking an irreducible component $(\pi_{3}^{-1}\circ\pi_{2}^{-1}){\rm Z}'_{12}$ isomorphic to ${\rm Z}_{12}'$ successively;
        \end{enumerate}
        \item $\widetilde{\rm Z}_{21}$ obtained from ${\rm Z}_{21}$ by
        \begin{enumerate}
            \item blowing up at the intersection locus with ${\rm Z}_{00}$ and denoting the scheme we get by ${\rm Z}_{21}'$ 
            \item picking an irreducible component $\pi_{2}^{-1}{\rm Z}'_{21}$ isomorphic to ${\rm Z}_{21}';$
            \item  picking an irreducible component $(\pi_{3}^{-1}\circ\pi_{2}^{-1}){\rm Z}'_{21}$ isomorphic to ${\rm Z}_{21}'$ successively;
        \end{enumerate}
        \item $\widetilde{\rm Z}_{20}$ obtained from ${\rm Z}_{20}$ by
        \begin{enumerate}
            \item picking an irreducible component $\pi_{1}^{-1}{\rm Z}_{20}$ isomorphic to ${\rm Z}_{20};$
            \item
            blowing up at the intersection locus with $\pi_{1}^{-1}{\rm Z}_{11}$ and denoting the scheme we get by ${\rm Z}_{20}'$ 
            \item  picking an irreducible component $\pi_{3}^{-1}{\rm Z}'_{20}$ isomorphic to ${\rm Z}_{20}'$ successively;
        \end{enumerate}
        \item $\widetilde{\rm Z}_{02}$ obtained from ${\rm Z}_{02}$ by
        \begin{enumerate}
            \item picking an irreducible component $\pi_{1}^{-1}{\rm Z}_{02}$ isomorphic to ${\rm Z}_{02};$
            \item
            blowing up at the intersection locus with $\pi_{1}^{-1}{\rm Z}_{11}$ and denoting the scheme we get by ${\rm Z}_{02}'$ 
            \item  picking an irreducible component $\pi_{3}^{-1}{\rm Z}'_{02}$ isomorphic to ${\rm Z}_{02}'$ successively;
        \end{enumerate}
    \end{itemize}
\end{proposition}
\begin{remark}
    The proof of Proposition~\ref{GI:blow up} and ~\ref{GI:stalk of points} uses the same method as in Proposition~\ref{G:blow up} and ~\ref{G:stalk of points} only with more difficulties in calculation. We omit here for simplicity.
\end{remark}
Recall the closed subschemes $C_i$ of $\Sh_{1,n-1}(K_\gothp^1)$ constructed in Definition~\ref{G:irreducible components of supsingular locus of Sh_1.n-1} for $1\leq i\leq n.$ For $n=3,$ it can be checked easily that $\pr_i$ is bijective on points for $1\leq i\leq 3,$ hence an isomorphism. From now on, we use $C_i$ to stand for $C_i$ and $\pr_i(C_i)$ both. 
\begin{proposition}
    The morphisms ${{\mathop{p}\limits^{\leftarrow}}_{Iw}}:\Sh_{1,2}(Iw_\gothp)\ra \Sh_{1,2}(K_\gothp^1)$ and ${{\mathop{p}\limits^{\rightarrow}}_{Iw}}:\Sh_{1,2}(Iw_\gothp)\ra \Sh_{1,2}(K_\gothp^1)$ induces morphisms:
   \begin{itemize}
       \item ${{\mathop{p}\limits^{\leftarrow}}_{Iw,00}}:{\rm Z}_{00}\ra{C}_2$ and ${{\mathop{p}\limits^{\rightarrow}}_{Iw,00}}:{\rm Z}_{00}\ra{\rm Y}_{00}$ with ${{\mathop{p}\limits^{\leftarrow}}_{Iw,00}}$ inducing ${\rm Z}_{00}$ to be a $\PP^1$-bundle over $C_2$ and ${{\mathop{p}\limits^{\rightarrow}}_{Iw,00}}$ surjective;
       \item 
       ${{\mathop{p}\limits^{\leftarrow}}_{Iw,01}}:{\rm Z}_{01}\ra{\rm Y}_{10}$ and ${{\mathop{p}\limits^{\rightarrow}}_{Iw,01}}:{\rm Z}_{01}\ra{\rm Y}_{01}$ with ${{\mathop{p}\limits^{\leftarrow}}_{Iw,01}}$ inducing ${\rm Z}_{01}$ to be he blow up of ${\rm Y}_{10}$ at $C_2\bigcap{\rm Y}_{10}$ and ${{\mathop{p}\limits^{\rightarrow}}_{Iw,01}}$ inducing ${\rm Z}_{01}$ to be the blow up of ${\rm Y}_{01}$ at $C_1\bigcap{\rm Y}_{01}$ both differ by a Frobenius twist of degree $p^2;$
       \item  ${{\mathop{p}\limits^{\leftarrow}}_{Iw,02}}:{\rm Z}_{02}\ra{\rm Y}_{11}$ and ${{\mathop{p}\limits^{\rightarrow}}_{Iw,02}}:{\rm Z}_{02}\ra{\rm Y}_{01}$ with ${{\mathop{p}\limits^{\leftarrow}}_{Iw,02}}$ inducing ${\rm Z}_{02}$ to be isomorphic ${\rm Y}_{11}$ with a Frobenius twist of degree $p^2$ and ${{\mathop{p}\limits^{\rightarrow}}_{Iw,02}}$ inducing ${\rm Z}_{02}$ to be the blow up of ${\rm Y}_{01}$ at $C_2\bigcap{\rm Y}_{01};$
       \item 
       ${{\mathop{p}\limits^{\leftarrow}}_{Iw,10}}:{\rm Z}_{10}\ra{\rm Y}_{01}$ and ${{\mathop{p}\limits^{\rightarrow}}_{Iw,10}}:{\rm Z}_{10}\ra{\rm Y}_{10}$ with ${{\mathop{p}\limits^{\leftarrow}}_{Iw,10}}$ inducing ${\rm Z}_{10}$ to be the blow up of ${\rm Y}_{01}$ at $C_2\bigcap{\rm Y}_{01}$ and ${{\mathop{p}\limits^{\rightarrow}}_{Iw,10}}$ inducing ${\rm Z}_{10}$ to be the blow up of ${\rm Y}_{10}$ at $C_1\bigcap{\rm Y}_{10};$
       \item ${{\mathop{p}\limits^{\leftarrow}}_{Iw,11}}:{\rm Z}_{11}\ra{\rm Y}_{00}$ and ${{\mathop{p}\limits^{\rightarrow}}_{Iw,11}}:{\rm Z}_{11}\ra C_1$ with ${{\mathop{p}\limits^{\leftarrow}}_{Iw,11}}$ surjective and ${{\mathop{p}\limits^{\rightarrow}}_{Iw,11}}$ inducing ${\rm Z}_{11}$ to be a $\PP^1$-bundle over $C_1;$
      \item ${{\mathop{p}\limits^{\leftarrow}}_{Iw,12}}:{\rm Z}_{12}\ra{\rm Y}_{01}$ and ${{\mathop{p}\limits^{\rightarrow}}_{Iw,12}}:{\rm Z}_{12}\ra {\rm Y}_{11}$ with ${{\mathop{p}\limits^{\leftarrow}}_{Iw,12}}$ inducing ${\rm Z}_{12}$ isomorphic to the blow up of ${\rm Y}_{01}$ at $C_1\bigcap {\rm Y}_{01}$ with a Frobenius twist of degree $p^2$ and ${{\mathop{p}\limits^{\rightarrow}}_{Iw,12}}$ inducing ${\rm Z}_{12}$ isomorphic to ${\rm Y}_{11};$
    \item ${{\mathop{p}\limits^{\leftarrow}}_{Iw,20}}:{\rm Z}_{20}\ra{\rm Y}_{11}$ and ${{\mathop{p}\limits^{\rightarrow}}_{Iw,20}}:{\rm Z}_{20}\ra {\rm Y}_{10}$ with ${{\mathop{p}\limits^{\leftarrow}}_{Iw,20}}$ inducing ${\rm Z}_{20}$ isomorphic to ${\rm Y}_{11}$ and ${{\mathop{p}\limits^{\rightarrow}}_{Iw,20}}$ inducing ${\rm Z}_{20}$ isomorphic to the blow up of ${\rm Y}_{10}$ at $C_2\bigcap{\rm Y}_{10}$ with a Frobenius twist of degree $p^2.$
     \item ${{\mathop{p}\limits^{\leftarrow}}_{Iw,21}}:{\rm Z}_{21}\ra{\rm Y}_{10}$ and ${{\mathop{p}\limits^{\rightarrow}}_{Iw,21}}:{\rm Z}_{21}\ra {\rm Y}_{11}$ with ${{\mathop{p}\limits^{\leftarrow}}_{Iw,21}}$ inducing ${\rm Z}_{21}$ isomorphic to the blow up of ${\rm Y}_{10}$ at $C_1\bigcap {\rm Y}_{10}$ and ${{\mathop{p}\limits^{\rightarrow}}_{Iw,21}}$ inducing ${\rm Z}_{21}$ isomorphic to the blow up of ${\rm Y}_{11}$ with a Frobenius twist of degree $p^2.$
     \item ${{\mathop{p}\limits^{\leftarrow}}_{Iw,22}}:{\rm Z}_{22}\ra C_1$ and ${{\mathop{p}\limits^{\rightarrow}}_{Iw,11}}:{\rm Z}_{22}\ra C_2$ with ${{\mathop{p}\limits^{\rightarrow}}_{Iw,22}}$ inducing ${\rm Z}_{22}$ to be a $\PP^1$-bundle over $C_1;$ and ${{\mathop{p}\limits^{\rightarrow}}_{Iw,22}}$ inducing ${\rm Z}_{22}$ to be a $\PP^1$-bundle over $C_2;$
   \end{itemize}
\end{proposition}
\begin{proof}
    We take the first point as an example. Let $(\mathcal{A}_1,\lambda_1,\eta_1,\mathcal{A}_2,\lambda_2,\eta_2,\phi)$ be the universal object of $C_2$ as a closed subscheme of ${\Sh}_{1,2}(K_\gothp^1).$
    Consider $\Gr(\phi_{*,1}(\mathcal{H}_1^{\dR}(\mathcal{A}_1/C_2)_2^\circ),1)$ the Grassmannian scheme over $C_2.$ Let $S$ be a scheme over $\FF_{p^2}.$ Define a morphism $\alpha_{00}:{\rm Z}_{00}\ra \Gr(\phi_{*,1}(\mathcal{H}_1^{\dR}(\mathcal{A}_1/C_2)_2^\circ),1)$ by mapping any $S$-point $(A_1,\lambda_1,\eta_1,A_2,\lambda_2,\eta_2,A_3,\lambda_3,\eta_3,\phi_{12},\phi_{23})$ to $(A_1,\lambda_1,\eta_1,A_2,\lambda_2,\eta_2,\phi_{12},\psi_{23,*,2}(\Lie_{A_3/S,2}^\circ)).$ Here $\psi_{23}$ is the quasi-isogeny $A_3\ra A_2$ such that $\psi_{23}\circ\phi_{23}=p$ and $\phi_{23}\circ\psi_{23}=p.$ Since $\phi_{23,*,2}\circ \phi_{12,*,2}({\rm H}_{1}^{\rm dR}(A_1/S)^\circ_2) \subseteq \phi_{23,*,2}({\rm H}_{1}^{\rm dR}(A_2/S)^\circ_2)=\omega_{A^\vee_{3}/S,2}^\circ$ is a line bundle of ${\rm H}_{1}^{\rm dR}(A_3/S)^\circ_2,$  $\psi_{23,*,2}(\Lie_{A_3/S,2}^\circ)\subseteq \phi_{12,*,2}({\rm H}_{1}^{\rm dR}(A_1/S)^\circ_2).$ Thus $\alpha_{00}$ is well defined. For any field $k$ over $\FF_{p^2},$ to show it is bijective on $k$-points, it suffices to construct an invertible map $\beta_{00}:\Gr(\phi_{*,1}(\mathcal{H}_1^{\dR}(\mathcal{A}_1/C_2)_2^\circ),1)(k)\ra {\rm Z}_{00}(k).$ For any $k$-point $(A_1,\lambda_1,\eta_1,A_2,\lambda_2,\eta_2,\phi_{12},H),$ let $\tilde{\mathcal{E}}_1=V\tcD(A_2)^\circ_2\subseteq \tcD(A_2)^\circ_2$ and $\tilde{\mathcal{E}}_2$ be the preimage of $H$ under the natural projection from $\tcD(A_2)^\circ_2$ to $H_2^{\dR}(A_2/k)_2^\circ.$ Then by Proposition~\ref{P:abelian-Dieud}, there exists an abelian variety $A_3$ over $k$ equipped with an $\cO_D$-action, a prime-to-$p$ polarization $\lambda_3$ and an $\cO_D$-equivariant $p$-isogeny $\psi_{23}:A_3\ra A_2$ such that the natural inclustion $\tilde{\mathcal{E}}_i\subseteq \tcD(A_2)^\circ_2$ is identified with the map $\psi_{23,*,i}:\tcD(A_3)^\circ_2\ra \tcD(A_2)^\circ_2$ induced by $\psi_{23}$ and such that $\psi_{23}^\vee\circ\lambda_2\circ\psi_{23}=p\lambda_3.$ We can also equip $A_3$ with a level structure $\eta_3$ such that $\psi_{23}\circ\eta_3=p\eta_2.$ Take the isogeny $\phi_{23}:A_2\ra A_3$ such that $\psi_{23}\circ\phi_{23}=p$ and $\phi_{23}\circ\psi_{23}=p$, we have $\phi_{23}^\vee\circ\lambda_3\phi_{23}=p\lambda_2$ and $\phi_{23}\circ\eta_2=\eta_3.$ Then $(A_1,\lambda_1,\eta_1,A_2,\lambda_2,\eta_2,A_3,\lambda_3,\eta_3,\phi_{12},\phi_{23})$ gives a $k$-point in ${\rm Z}_{00}$ and we define it to be the image under $\beta_{00}.$ It is easy to see $\beta_{00}\circ\alpha_{00}=\id$ and $\alpha_{00}\circ\beta_{00}=\id.$ Then by deformation theory, we can see $\alpha_{00}$ induces a bijection on tangent spaces, hence an isomorphism. Since ${{\mathop{p}\limits^{\leftarrow}}_{Iw,00}}$ is equal to the composition of the natural projection of the Grassmannian scheme to $C_2$ with $\alpha_{00},$ we see that ${{\mathop{p}\limits^{\leftarrow}}_{Iw,00}}$ induces ${\rm Z}_{00}$ to be a $\PP^1$-bundle over $C_2.$ The surjectivity of ${{\mathop{p}\limits^{\rightarrow}}_{Iw,00}}$ can be checked direcly.

    The other parts of the proposition can also be obtained similarly, by constructing a map to Grassmannian schemes and we omit here. The degree of the map can be obtained easily only to notice that the degree of ${{\mathop{p}\limits^{\leftarrow}}_{Iw}}:\Sh_{1,2}(Iw_\gothp)\ra \Sh_{1,2}(K_\gothp^1)$ and ${{\mathop{p}\limits^{\rightarrow}}_{Iw}}:\Sh_{1,2}(Iw_\gothp)\ra \Sh_{1,2}(K_\gothp^1)$ are all $1+p^2.$
\end{proof}
As a direct corollary, we have
\begin{corollary}\label{GI:characterize of Y_11}
    Up to Frobenius twist, ${\rm Y}_{11}$ is isormorphic to the blow up of ${\rm Y}_{01}$ at $C_2\bigcap{\rm Y}_{01}$ or at $C_1\bigcap{\rm Y}_{01}$ or the blow up of ${\rm Y}_{01}$ at $C_2\bigcap{\rm Y}_{10}$ or at $C_2\bigcap{\rm Y}_{01}.$
\end{corollary}
\begin{remark}
    By Corollary~\ref{GI:characterize of Y_11}, the cohomology groups of ${\rm Y}_{11}$ is computable. This is the main difficulty when proving the arithmetic level raising theorem for $n\geq 4.$
\end{remark}
Now we construct some `essential Frobenius' morphisms as in Construction~\ref{frob:Y_10 Y_01}.
\begin{construction}\label{frob:Z_10 Z_01}
    Let $S$ be a $\FF_{p^2}$-scheme. For any $S$-point $(A_1,\lambda_1,\eta_1,A_2,\lambda_2,\eta_2,A_3,\lambda_3,\eta_3,\phi_{12},\phi_{23})$ of ${\rm Z}_{10},$ we construct a $S$-point in ${\rm Z}_{01}.$ Take $\tcE_1=FV^{-1}\tcD(A_2)_1^\circ$ and $\tcE_2=FV^{-1}\tcD(A_2)_2^\circ.$ Applying Corollary~\ref{CORP}, we get a pair $(A_2',\lambda_2',\eta_2')$ and a $p$-quasi-isogeny $\psi_{2}:A_2'\ra A_3$ such that $\psi_{2}^\vee\circ\lambda_2'\circ\psi_{23}=\lambda_2$ and $\psi_{2}'\circ\eta_2=\eta_2'.$ 
    Let $\phi_{2}:A_2'\ra A_2$ be the $p$-quasi-isogeny such that $\psi_{2}\circ\phi_{2}=1$ and $\phi_{2}\circ\psi_{2}=1.$ Let $\phi'_{12}=\psi_2\circ\phi_{12}$ and $\phi_{23}'=\phi_{23}\circ\phi_2$
    It can be checked easily that $(A_1,\lambda_1,\eta_1,A_2',\lambda_2',\eta_2',A_3,\lambda_3,\eta_3,\phi'_{12},\phi'_{23})\in {\rm Z}_{01}.$ This gives a morphism $Fr_{10\ra 01}:{\rm Z}_{10}\ra {\rm Z}_{01},$ which is bijective on points.

    For any $S$-point $(A_1,\lambda_1,\eta_1,A_2,\lambda_2,\eta_2,A_3,\lambda_3,\eta_3,\phi_{12},\phi_{23})$ of ${\rm Z}_{01},$ we construct a $S$-point in ${\rm Z}_{10}.$ Take $\tcE_1=FV^{-1}\tcD(A_1)_1^\circ$ and $\tcE_2=FV^{-1}\tcD(A_1)_2^\circ\subseteq \tcD(A_2)_2^\circ.$ Applying Corollary~\ref{CORP}, we get a pair $(A_1',\lambda_1',\eta_1')$ and a $p$-quasi-isogeny $\psi_{1}:A_1\ra A_1'$ such that $\psi_{1}^\vee\circ\lambda_1'\circ\psi_{1}=\lambda_1$ and $\psi_{1}\circ\eta_1=\eta_1'.$ 
    Let $\phi_{1}:A_1\ra A_1'$ be the $p$-quasi-isogeny such that $\psi_{1}\circ\phi_{1}=1$ and $\phi_{1}\circ\psi_{1}=1.$
    Take $\tcE'_1=FV^{-1}\tcD(A_3)_1^\circ$ and $\tcE'_2=\tcD(A_3)_2^\circ.$ Applying Corollary~\ref{CORP}, we get a pair $(A_3',\lambda_3',\eta_3')$ and $\phi_{3}:A_3\ra A_3'$ such that $\phi_{3}^\vee\circ\lambda_3'\circ\phi_{3}=\lambda_3$ and $\phi_{3}'\circ\eta_3=\eta_3'.$ Let $\phi_{12}'=\phi_{12}\circ\phi_{1}$ and $\phi_{23}'=\phi_{3}\circ\phi_{23}.$
    It can be checked easily that $(A'_1,\lambda'_1,\eta'_1,A_2,\lambda_2,\eta_2,A'_3,\lambda'_3,\eta'_3,\phi'_{12},\phi'_{23})\in {\rm Z}_{10}.$ This gives a morphism $Fr_{01\ra 10}:{\rm Z}_{10}\ra {\rm Z}_{01},$ which is bijective on points. Note that $Fr_{01\ra10}\circ Fr_{10\ra01}=Fr_{p^2}$ and $Fr_{10\ra01}\circ Fr_{01\ra10}=Fr_{p^2}.$
\end{construction}
\begin{construction}\label{frob:Z_12 Z_21}
    Let $S$ be a $\FF_{p^2}$-scheme. For any $S$-point $(A_1,\lambda_1,\eta_1,A_2,\lambda_2,\eta_2,A_3,\lambda_3,\eta_3,\phi_{12},\phi_{23})$ of ${\rm Z}_{21},$ we construct a $S$-point in ${\rm Z}_{12}.$ Take $\tcE_1=FV^{-1}\tcD(A_1)_1^\circ$ and $\tcE_2=FV^{-1}\tcD(A_1)_2^\circ\subseteq \tcD(A_2)_2^\circ.$ Applying Corollary~\ref{CORP}, we get a pair $(A_1',\lambda_1',\eta_1')$ and a $p$-quasi-isogeny $\psi_{1}:A_1\ra A_1'$ such that $\psi_{1}^\vee\circ\lambda_1'\circ\psi_{1}=\lambda_1$ and $\psi_{1}\circ\eta_1=\eta_1'.$ 
    Let $\phi_{1}:A_1\ra A_1'$ be the $p$-quasi-isogeny such that $\psi_{1}\circ\phi_{1}=1$ and $\phi_{1}\circ\psi_{1}=1.$ Let $\phi_{12}'=\phi_{12}\circ\phi_1.$
    It can be checked easily that the point $(A'_1,\lambda'_1,\eta'_1,A_2,\lambda_2,\eta_2,A_3,\lambda_3,\eta_3,\phi'_{12},\phi_{23})\in {\rm Z}_{12}.$ This gives a morphism $Fr_{21\ra 12}:{\rm Z}_{21}\ra {\rm Z}_{12},$ which is bijective on points.

    For any $S$-point $(A_1,\lambda_1,\eta_1,A_2,\lambda_2,\eta_2,A_3,\lambda_3,\eta_3,\phi_{12},\phi_{23})$ of ${\rm Z}_{12},$ we construct a $S$-point in ${\rm Z}_{21}.$  Take $\tcE'_1=FV^{-1}\tcD(A_3)_1^\circ$ and $\tcE'_2=\tcD(A_3)_2^\circ.$ Applying Corollary~\ref{CORP}, we get a pair $(A_3',\lambda_3',\eta_3')$ and $\phi_{3}:A_3\ra A_3'$ such that $\phi_{3}^\vee\circ\lambda_3'\circ\phi_{3}=\lambda_3$ and $\phi_{3}'\circ\eta_3=\eta_3'.$ Let $\psi_3:A_3'\ra A_3$ be the $p$-quasi-isogeny such that $\psi_3\circ\phi_3=1$ and $\phi_3\circ\psi_3=1.$Take $\tcE_1=FV^{-1}\tcD(A_2)_1^\circ$ and $\tcE_2=FV^{-1}\tcD(A_2)_2^\circ.$ Applying Corollary~\ref{CORP}, we get a pair $(A_2',\lambda_2',\eta_2')$ and a $p$-quasi-isogeny $\psi_{2}:A_2'\ra A_3$ such that $\psi_{2}^\vee\circ\lambda_2'\circ\psi_{23}=\lambda_2$ and $\psi_{2}'\circ\eta_2=\eta_2'.$ 
    Let $\phi_{2}:A_2'\ra A_2$ be the $p$-quasi-isogeny such that $\psi_{2}\circ\phi_{2}=1$ and $\phi_{2}\circ\psi_{2}=1.$ Let $\phi'_{12}=\psi_2\circ\phi_{12}$ and $\phi_{23}'=\phi_3\circ\phi_{23}\circ\phi_2$
    It can be checked easily that $(A_1,\lambda_1,\eta_1,A'_2,\lambda'_2,\eta'_2,A'_3,\lambda'_3,\eta'_3,\phi'_{12},\phi'_{23})\in {\rm Z}_{21}.$ This gives a morphism $Fr_{12\ra 21}:{\rm Z}_{12}\ra {\rm Z}_{21},$ which is bijective on points. Note that $Fr_{12\ra21}\circ Fr_{21\ra12}=Fr_{p^2}$ and $Fr_{21\ra12}\circ Fr_{12\ra21}=Fr_{p^2}.$
\end{construction}
\begin{construction}\label{frob:Z_20 Z_02}
    Let $S$ be a $\FF_{p^2}$-scheme. For any $S$-point $(A_1,\lambda_1,\eta_1,A_2,\lambda_2,\eta_2,A_3,\lambda_3,\eta_3,\phi_{12},\phi_{23})$ of ${\rm Z}_{02},$ we construct a $S$-point in ${\rm Z}_{20}.$ Take $\tcE'_1=FV^{-1}\tcD(A_3)_1^\circ$ and $\tcE'_2=\tcD(A_3)_2^\circ.$ Applying Corollary~\ref{CORP}, we get a pair $(A_3',\lambda_3',\eta_3')$ and $\phi_{3}:A_3\ra A_3'$ such that $\phi_{3}^\vee\circ\lambda_3'\circ\phi_{3}=\lambda_3$ and $\phi_{3}'\circ\eta_3=\eta_3'.$ Let $\psi_3:A_3'\ra A_3$ be the $p$-quasi-isogeny such that $\psi_3\circ\phi_3=1$ and $\phi_3\circ\psi_3=1.$ Let $\phi_{23}'=\phi_3\circ\phi_{23}.$
    It can be checked easily that the point $(A_1,\lambda_1,\eta_1,A_2,\lambda_2,\eta_2,A'_3,\lambda'_3,\eta'_3,\phi_{12},\phi'_{23})\in {\rm Z}_{12}.$ This gives a morphism $Fr_{02\ra 20}:{\rm Z}_{02}\ra {\rm Z}_{20},$ which is bijective on points.

    For any $S$-point $(A_1,\lambda_1,\eta_1,A_2,\lambda_2,\eta_2,A_3,\lambda_3,\eta_3,\phi_{12},\phi_{23})$ of ${\rm Z}_{20},$ we construct a $S$-point in ${\rm Z}_{02}.$ Take $\tcE_1=FV^{-1}\tcD(A_1)_1^\circ$ and $\tcE_2=FV^{-1}\tcD(A_1)_2^\circ\subseteq \tcD(A_2)_2^\circ.$ Applying Corollary~\ref{CORP}, we get a pair $(A_1',\lambda_1',\eta_1')$ and a $p$-quasi-isogeny $\psi_{1}:A_1\ra A_1'$ such that $\psi_{1}^\vee\circ\lambda_1'\circ\psi_{1}=\lambda_1$ and $\psi_{1}\circ\eta_1=\eta_1'.$ 
    Let $\phi_{1}:A_1\ra A_1'$ be the $p$-quasi-isogeny such that $\psi_{1}\circ\phi_{1}=1$ and $\phi_{1}\circ\psi_{1}=1.$ 
    Take $\tcE_1=FV^{-1}\tcD(A_2)_1^\circ$ and $\tcE_2=FV^{-1}\tcD(A_2)_2^\circ.$ Applying Corollary~\ref{CORP}, we get a pair $(A_2',\lambda_2',\eta_2')$ and a $p$-quasi-isogeny $\psi_{2}:A_2'\ra A_3$ such that $\psi_{2}^\vee\circ\lambda_2'\circ\psi_{23}=\lambda_2$ and $\psi_{2}'\circ\eta_2=\eta_2'.$ 
    Let $\phi_{2}:A_2'\ra A_2$ be the $p$-quasi-isogeny such that $\psi_{2}\circ\phi_{2}=1$ and $\phi_{2}\circ\psi_{2}=1.$ Let $\phi'_{12}=\psi_2\circ\phi_{12}\circ\phi_1$ and $\phi_{23}'=\phi_3\circ\phi_{23}\circ\phi_2$
    It can be checked easily that $(A_1',\lambda_1',\eta_1',A'_2,\lambda'_2,\eta'_2,A_3,\lambda_3,\eta_3,\phi'_{12},\phi'_{23})\in {\rm Z}_{02}.$ This gives a morphism $Fr_{20\ra 02}:{\rm Z}_{20}\ra {\rm Z}_{02},$ which is bijective on points. Note that $Fr_{02\ra20}\circ Fr_{20\ra02}=Fr_{p^2}$ and $Fr_{20\ra02}\circ Fr_{02\ra20}=Fr_{p^2}.$
\end{construction}
\begin{construction}\label{12-20}
    Let $S$ be a $\FF_{p^2}$-scheme. For any $S$-point $(A_1,\lambda_1,\eta_1,A_2,\lambda_2,\eta_2,A_3,\lambda_3,\eta_3,\phi_{12},\phi_{23})$ of ${\rm Z}_{12},$ we construct a $S$-point in ${\rm Z}_{20}.$ Take $\tcE_1=p\tcD(A_2)_{1}^{\circ}, \tcE_2=p\tcD(A_2)_{2}^{\circ}.$ Then by Corollary~\ref{CORP},
    we get
    Let $(A'_2,\lambda'_2,\eta'_2)$ with $p$-quasi-isogeny $\psi_2:A_2\ra A_2'$ such that $\psi_2^\vee\circ\lambda_2'\circ\psi_2=\lambda_2$ and $\psi_2\circ\eta_2=\eta_2'.$ Moreover, $\psi_{2,*,i}:\tcD(A_2)_i^\circ\ra\tcD(A_2')_{i}^\circ$ has image $p\tcD(A_2')_i^\circ.$
   Similarly we can get $(A'_3,\lambda'_3,\eta'_3)$ with $p$-quasi-isogeny $\psi_3:A_3\ra A_3'$ such that $\psi_3^\vee\circ\lambda_3'\circ\psi_3=\lambda_3$ and $\psi_2\circ\eta_3=\eta_3'.$ Moreover, $\psi_{3,*,i}:\tcD(A_3)_i^\circ\ra\tcD(A_3')_{i}^\circ$ has image $p\tcD(A_3')_i^\circ.$
   Let $\psi_{23}:A_3\ra A_2$ be the isogeny such that $\psi_{23}\circ\phi_{23}=p$ and $\phi_{23}\circ\psi_{23}=p.$ Let $\psi_{12}:A_2\ra A_1$ be the isogeny such that $\psi_{12}\circ\phi_{12}=p$ and $\phi_{12}\circ\psi_{12}=p.$ Take $\phi_2:A_2'\ra A_2$ and $\phi_3:A_3'\ra A_3$ the $p$-quasi-isogeny as in Remark~\ref{dual_isogeny}. Let $\phi_{31}'=\frac{1}{p^2}\psi_{12}\circ\psi_{23}\circ\phi_3$ and $\phi_{23}'=\psi_{3}\circ\phi_{23}\circ\phi_2$
   Then $(A_2',\lambda_2',\eta_2',A_3',\lambda_3',\eta_3',A_1,\eta_1,\lambda_1,\phi_{23}',\phi_{31}')\in {\rm Z}_{20}.$
   This gives a morphism $\Phi_{12\ra20}:{\rm Z}_{12}\ra{\rm Z}_{20},$ which can be checked to be an isomorphism by deformation theory. 

    With exacly the same construction, we can also get a morphism $\Phi_{21\ra02}:{\rm Z}_{21}\ra{\rm Z}_{02},$ which maps a $S$-point $(A_1,\lambda_1,\eta_1,A_2,\lambda_2,\eta_2,A_3,\lambda_3,\eta_3,\phi_{12},\phi_{23})$ of ${\rm Z}_{21}$ to $(A_2',\lambda_2',\eta_2',A_3',\lambda_3',\eta_3',A_1,\eta_1,\lambda_1,\phi_{23}',\phi_{31}')$ with $(A'_2,\lambda'_2,\eta'_2)$ and $(A'_3,\lambda'_3,\eta'_3)$ obtained in the same way using Corollary~\ref{CORP}. 
\end{construction}
\begin{construction}\label{Y_1010}
    Let $S$ be a $\FF_{p^2}$-scheme. For any $S$-point $(A_2,\lambda_2,\eta_2,A_3,\lambda_3,\eta_3,\phi_{23})$ of ${\rm Y}_{10},$ we construct a $S$-point in ${\rm Y}_{10}.$ Take $\tcE_1=p\tcD(A_2)_{1}^{\circ}, \tcE_2=p\tcD(A_2)_{2}^{\circ}.$ Then by Corollary~\ref{CORP},
    we get
    Let $(A'_2,\lambda'_2,\eta'_2)$ with $p$-quasi-isogeny $\psi_2:A_2\ra A_2'$ such that $\psi_2^\vee\circ\lambda_2'\circ\psi_2=\lambda_2$ and $\psi_2\circ\eta_2=\eta_2'.$ Moreover, $\psi_{2,*,i}:\tcD(A_2)_i^\circ\ra\tcD(A_2')_{i}^\circ$ has image $p\tcD(A_2')_i^\circ.$
   Similarly we can get $(A'_3,\lambda'_3,\eta'_3)$ with $p$-quasi-isogeny $\psi_3:A_3\ra A_3'$ such that $\psi_3^\vee\circ\lambda_3'\circ\psi_3=\lambda_3$ and $\psi_2\circ\eta_3=\eta_3'.$ Moreover, $\psi_{3,*,i}:\tcD(A_3)_i^\circ\ra\tcD(A_3')_{i}^\circ$ has image $p\tcD(A_3')_i^\circ.$
   Let $\psi_{23}:A_3\ra A_2$ be the isogeny such that $\psi_{23}\circ\phi_{23}=p$ and $\phi_{23}\circ\psi_{23}=p.$ Take $\phi_2:A_2'\ra A_2$ and $\phi_3:A_3'\ra A_3$ the $p$-quasi-isogeny as in Remark~\ref{dual_isogeny}. Let $\phi_{23}'=\psi_{3}\circ\phi_{23}\circ\phi_2$
   Then $(A_2',\lambda_2',\eta_2',A_3',\lambda_3',\eta_3',\phi_{23}')\in {\rm Y}_{10}.$
   This gives a morphism $\Phi_{10\ra10}:{\rm Y}_{10}\ra{\rm Y}_{10},$ which can be checked to be an isomorphism by deformation theory. In fact, this construction can be viewed as a kind of Hecke action and this can be defined for any closed subschemes mentioned above. We omit the construction here for simplicity and use notation $\Phi_{ij\ra ij}:{\rm Y}_{ij}\ra{\rm Y}_{ij}$ to denote the action on ${\rm Y}_{ij}$ and $\Psi_{ij\ra ij}:{\rm Z}_{ij}\ra{\rm Z}_{ij}$ to denote the action on ${\rm Z}_{ij}$ for every possible $i,j.$ Denote by $\Phi:\Sh_{1,2}\ra\Sh_{1,2}$ the action on $\Sh_{1,2}.$
\end{construction}

\begin{proposition}\label{GI:Diagram commuting}
With the constructions above, it is easily to check that the following diagrams are commutative:
\[\begin{tikzcd}
[
    column sep=small, 
    row sep=scriptsize,
    cells={font=\footnotesize} % Uniform font size
]
	& {{\rm Z}_{10}} && {{\rm Z}_{01}} \\
	\\
	{{\rm Y}_{01}} && {{\rm Y}_{10}} & {{\rm Y}_{10}} && {{\rm Y}_{01}}
	\arrow["{Fr_{10\ra01}}", from=1-2, to=1-4]
	\arrow["{{{\mathop{p}\limits^{\leftarrow}}_{Iw,10}}}"', from=1-2, to=3-1]
	\arrow["{{{\mathop{p}\limits^{\rightarrow}}_{Iw,10}}}"', from=1-2, to=3-4]
	\arrow["{{{\mathop{p}\limits^{\leftarrow}}_{Iw,01}}}", from=1-4, to=3-3]
	\arrow["{{{\mathop{p}\limits^{\rightarrow}}_{Iw,01}}}", from=1-4, to=3-6]
	\arrow["{Fr''}", from=3-1, to=3-3]
	\arrow["{Fr'}", from=3-4, to=3-6]
\end{tikzcd}
\begin{tikzcd}
[
    column sep=small, 
    row sep=scriptsize,
    cells={font=\footnotesize} % Uniform font size
]
	& {{\rm Z}_{10}} && {{\rm Z}_{01}} \\
	\\
	{{\rm Y}_{01}} && {{\rm Y}_{10}} & {{\rm Y}_{10}} && {{\rm Y}_{01}}
	\arrow["{{{\mathop{p}\limits^{\leftarrow}}_{Iw,10}}}"', from=1-2, to=3-1]
	\arrow["{{{\mathop{p}\limits^{\rightarrow}}_{Iw,10}}}"', from=1-2, to=3-4]
	\arrow["{Fr_{01\ra10}}"', from=1-4, to=1-2]
	\arrow["{{{\mathop{p}\limits^{\leftarrow}}_{Iw,01}}}", from=1-4, to=3-3]
	\arrow["{{{\mathop{p}\limits^{\rightarrow}}_{Iw,01}}}", from=1-4, to=3-6]
	\arrow["{Fr'}"', from=3-3, to=3-1]
	\arrow["{Fr''}"', from=3-6, to=3-4]
\end{tikzcd}\]
\[\begin{tikzcd}
[
    column sep=small, 
    row sep=scriptsize,
    cells={font=\footnotesize} % Uniform font size
]
	& {{\rm Z}_{20}} && {{\rm Z}_{02}} \\
	\\
	{{\rm Y}_{11}} && {{\rm Y}_{11}} & {{\rm Y}_{10}} && {{\rm Y}_{01}}
	\arrow["{{{\mathop{p}\limits^{\leftarrow}}_{Iw,20}}}"', from=1-2, to=3-1]
	\arrow["{{{\mathop{p}\limits^{\rightarrow}}_{Iw,20}}}"', from=1-2, to=3-4]
	\arrow["{Fr_{02\ra20}}"', from=1-4, to=1-2]
	\arrow["{{{\mathop{p}\limits^{\leftarrow}}_{Iw,02}}}", from=1-4, to=3-3]
	\arrow["{{{\mathop{p}\limits^{\rightarrow}}_{Iw,02}}}", from=1-4, to=3-6]
	\arrow[no head, from=3-1, to=3-3]
	\arrow[shift left, no head, from=3-1, to=3-3]
	\arrow["{Fr''}"', from=3-6, to=3-4]
\end{tikzcd}
\begin{tikzcd}
[
    column sep=small, 
    row sep=scriptsize,
    cells={font=\footnotesize} % Uniform font size
]
	& {{\rm Z}_{20}} && {{\rm Z}_{02}} \\
	\\
	{{\rm Y}_{11}} && {{\rm Y}_{11}} & {{\rm Y}_{10}} && {{\rm Y}_{01}}
	\arrow["{Fr_{20\ra02}}", from=1-2, to=1-4]
	\arrow["{{{\mathop{p}\limits^{\leftarrow}}_{Iw,20}}}"', from=1-2, to=3-1]
	\arrow["{{{\mathop{p}\limits^{\rightarrow}}_{Iw,20}}}"', from=1-2, to=3-4]
	\arrow["{{{\mathop{p}\limits^{\leftarrow}}_{Iw,02}}}", from=1-4, to=3-3]
	\arrow["{{{\mathop{p}\limits^{\rightarrow}}_{Iw,02}}}", from=1-4, to=3-6]
	\arrow["{Fr_{p^2}}", from=3-1, to=3-3]
	\arrow["{Fr'}"', from=3-4, to=3-6]
\end{tikzcd}\]
\[\begin{tikzcd}
[
    column sep=small, 
    row sep=scriptsize,
    cells={font=\footnotesize} % Uniform font size
]
	& {{\rm Z}_{21}} && {{\rm Z}_{12}} \\
	\\
	{{\rm Y}_{10}} && {{\rm Y}_{01}} & {{\rm Y}_{11}} && {{\rm Y}_{11}}
	\arrow["{Fr_{21\ra12}}", from=1-2, to=1-4]
	\arrow["{{{\mathop{p}\limits^{\leftarrow}}_{Iw,21}}}"', from=1-2, to=3-1]
	\arrow["{{{\mathop{p}\limits^{\rightarrow}}_{Iw,21}}}"', from=1-2, to=3-4]
	\arrow["{{{\mathop{p}\limits^{\leftarrow}}_{Iw,12}}}", from=1-4, to=3-3]
	\arrow["{{{\mathop{p}\limits^{\rightarrow}}_{Iw,12}}}", from=1-4, to=3-6]
	\arrow["{Fr'}", from=3-1, to=3-3]
	\arrow[no head, from=3-4, to=3-6]
	\arrow[shift left, no head, from=3-4, to=3-6]
\end{tikzcd}
\begin{tikzcd}
[
    column sep=small, 
    row sep=scriptsize,
    cells={font=\footnotesize} % Uniform font size
]
	& {{\rm Z}_{21}} && {{\rm Z}_{12}} \\
	\\
	{{\rm Y}_{10}} && {{\rm Y}_{01}} & {{\rm Y}_{11}} && {{\rm Y}_{11}}
	\arrow["{{{\mathop{p}\limits^{\leftarrow}}_{Iw,21}}}"', from=1-2, to=3-1]
	\arrow["{{{\mathop{p}\limits^{\rightarrow}}_{Iw,21}}}"', from=1-2, to=3-4]
	\arrow["{Fr_{12\ra21}}"', from=1-4, to=1-2]
	\arrow["{{{\mathop{p}\limits^{\leftarrow}}_{Iw,12}}}", from=1-4, to=3-3]
	\arrow["{{{\mathop{p}\limits^{\rightarrow}}_{Iw,12}}}", from=1-4, to=3-6]
	\arrow["{Fr''}"', from=3-3, to=3-1]
	\arrow["{Fr_{p^2}}", from=3-6, to=3-4]
\end{tikzcd}\]
\[\begin{tikzcd}
	{{\rm Z}_{21}} && {{\rm Z}_{12}} \\
	{{\rm Z}_{02}} && {{\rm Z}_{20}}
	\arrow["{Fr_{21\ra12}}", from=1-1, to=1-3]
	\arrow["{{\Phi_{21\ra02}}}"', from=1-1, to=2-1]
	\arrow["{{\Phi_{12\ra20}}}", from=1-3, to=2-3]
	\arrow["{Fr_{02\ra20}}", from=2-1, to=2-3]
\end{tikzcd}
\begin{tikzcd}
	{{\rm Z}_{02}} & {{\rm Z}_{20}} & {{\rm Z}_{12}} & {{\rm Y}_{11}} & {{\rm Z}_{20}} \\
	{{\rm Y}_{01}} & {{\rm Y}_{10}} &&& {{\rm Y}_{10}}
	\arrow["{Fr_{02\ra20}}", from=1-1, to=1-2]
	\arrow["{{{\mathop{p}\limits^{\rightarrow}}_{Iw,02}}}", from=1-1, to=2-1]
	\arrow["{\Phi_{12\ra20}^{-1}}", from=1-2, to=1-3]
	\arrow["{{{\mathop{p}\limits^{\rightarrow}}_{Iw,20}}}", from=1-2, to=2-2]
	\arrow["{{{\mathop{p}\limits^{\rightarrow}}_{Iw,12}}}", from=1-3, to=1-4]
	\arrow["{{{{\mathop{p}\limits^{\leftarrow}}_{Iw,20}}}^{-1}}", from=1-4, to=1-5]
	\arrow["{{{\mathop{p}\limits^{\rightarrow}}_{Iw,20}}}", from=1-5, to=2-5]
	\arrow["{Fr''}", from=2-1, to=2-2]
	\arrow["{\Phi_{10\ra10}}", from=2-2, to=2-5]
\end{tikzcd}\]
\[\begin{tikzcd}
[
    column sep=small, 
    row sep=scriptsize,
    cells={font=\footnotesize} % Uniform font size
]
	& {{\rm Y}_{10}} && {{\rm Z}_{21}} && {{\rm Z}_{12}} && {{\rm Y}_{11}} \\
	{\Sh_{1,2}} &&&& {{\rm Z}_{02}} && {{\rm Z}_{20}} \\
	& {{\rm Y}_{01}} && {{\rm Z}_{02}} &&&& {{\rm Y}_{11}} \\
	&&&&& {{\rm Z}_{20}}
	\arrow["{{{\mathop{p}\limits^{\leftarrow}}_{10}}}"', from=1-2, to=2-1]
	\arrow["{{{\mathop{p}\limits^{\leftarrow}}_{Iw,21}}}"', from=1-4, to=1-2]
	\arrow["{Fr_{21\ra12}}", from=1-4, to=1-6]
	\arrow["{\Phi_{21\ra02}}"', from=1-4, to=3-4]
	\arrow["{{{\mathop{p}\limits^{\rightarrow}}_{Iw,12}}}", from=1-6, to=1-8]
	\arrow["{\Phi_{12\ra20}}"', from=1-6, to=4-6]
	\arrow["{Fr_{02\ra20}}"', from=2-5, to=2-7]
	\arrow["{{{\mathop{p}\limits^{\leftarrow}}_{Iw,20}}}"', from=2-7, to=1-8]
	\arrow["{{{\mathop{p}\limits^{\rightarrow}}_{01}}}"', from=3-2, to=2-1]
	\arrow["{\Phi_{02\ra02}^{-1}}"', from=3-4, to=2-5]
	\arrow["{{{\mathop{p}\limits^{\rightarrow}}_{Iw,02}}}"', from=3-4, to=3-2]
	\arrow["{Fr_{02\ra20}}"', from=3-4, to=4-6]
	\arrow["{\Phi_{11\ra 11}^{-1}}"', from=3-8, to=1-8]
	\arrow["{\Phi_{11\ra 11}^{-1}}"', from=4-6, to=2-7]
	\arrow["{{{\mathop{p}\limits^{\leftarrow}}_{Iw,20}}}"', from=4-6, to=3-8]
\end{tikzcd}
\begin{tikzcd}
[
    column sep=small, 
    row sep=scriptsize,
    cells={font=\footnotesize} % Uniform font size
]
	& {{\rm Z}_{12}} && {{\rm Z}_{21}} && {{\rm Y}_{10}} & {{\rm Y}_{10}} \\
	{{\rm Y}_{11}} &&&&&&& {\Sh_{1,2}} \\
	{{\rm Y}_{11}} &&& {{\rm Z}_{02}} &&&& {\Sh_{1,2}} \\
	& {{\rm Z}_{20}} && {{\rm Z}_{20}} && {{\rm Y}_{01}}
	\arrow["{Fr_{12\ra21}}", from=1-2, to=1-4]
	\arrow["{{\mathop{p}\limits^{\rightarrow}}_{Iw,12}}"', from=1-2, to=2-1]
	\arrow["{\Phi_{12\ra20}}", from=1-2, to=4-2]
	\arrow["{{\mathop{p}\limits^{\leftarrow}}_{Iw,21}}", from=1-4, to=1-6]
	\arrow["{\Phi_{21\ra02}}", from=1-4, to=3-4]
	\arrow["{\Phi_{10\ra10}}", from=1-6, to=1-7]
	\arrow["{{\mathop{p}\limits^{\leftarrow}}_{10}}", from=1-7, to=2-8]
	\arrow["{\Phi_{11\ra11}}"', from=2-1, to=3-1]
	\arrow["{Fr_{02\ra20}}", from=3-4, to=4-4]
	\arrow["{\Phi}"', from=3-8, to=2-8]
	\arrow["{{\mathop{p}\limits^{\leftarrow}}_{Iw,20}}", from=4-2, to=3-1]
	\arrow["{Fr_{p^2}}", from=4-2, to=4-4]
	\arrow["{{\mathop{p}\limits^{\rightarrow}}_{Iw,02}}"', from=4-4, to=4-6]
	\arrow["{{\mathop{p}\limits^{\leftarrow}}_{01}}"', from=4-6, to=3-8]
\end{tikzcd}\]
\end{proposition}

\section{Arithmetic level raising theorem for \texorpdfstring{$n\geq 3$}{n3}
}\label{A3}
We first construct the Arithmetic level raising map for general $n.$

Recall in Proposition~\ref{property of higher Chow groups} ${\rm Ch}^{n}({{\Sh}_{1,n-1}},1,k_\lambda)={{\rm{H}}}^{2n-1}_{\mathcal{M}} ({{{\Sh}_{1,n-1}}},k_\lambda(n)).$ Consider the cycle class map from ${{\rm{H}}}^{2n-1}_{\mathcal{M}} ({{{\Sh}_{1,n-1}}},k_\lambda(n))$ to $ {\rm H}_{\acute{e} t}^{2n-1}({\Sh}_{1,n-1},k_\lambda(n))),$ we get a map from ${\rm Ch}^{n}({{\Sh}_{1,n-1}},1,k_\lambda)$ to ${\rm H}_{\acute{e} t}^{2n-1}({\Sh}_{1,n-1},k_\lambda(n))).$ Composing with the Gysin map ${\rm Ch}^{1}({{\Sh}^{\rm ss}_{1,n-1}},1,k_\lambda)\rightarrow{\rm Ch}^{n}({{\Sh}_{1,n-1}},1,k_\lambda)$ induced by the closed immersion of ${\Sh}^{\rm ss}_{1,n-1}$ into ${\Sh}_{1,n-1},$ we get the map ${\rm Ch}^{1}({{\Sh}_{1,n-1}},1,k_\lambda)\rightarrow{\rm H}_{\acute{e} t}^{2n-1}({\Sh}_{1,n-1},k_\lambda(n))).$

On the other hand, considering the Galois action on the special fiber ${\Sh}_{1,n-1},$ we get a short exact sequence:
\[0\rightarrow{\rm H}^{1}(\mathbb{F}_{p^{2}}, {\rm H}_{\acute{e} t}^{2n-2}({\overline{\Sh}}_{1,n-1},k_\lambda(n))) \rightarrow {\rm H}_{\acute{e} t}^{2n-1}({\Sh}_{1,n-1},k_\lambda(n))) \rightarrow
{\rm H}^{0}(\mathbb{F}_{p^{2}}, {\rm H}_{\acute{e} t}^{2n-1}({\overline{\Sh}}_{1,n-1},k_\lambda(n))).\]

After localizing at $\mathfrak{m}$, we get that ${\rm H}^{i}({\overline{\Sh}}_{1,n-1},1, k_\lambda(n))_{\mathfrak{m}}$ is nonzero if and only if $i=2n-2.$ Hence we get two maps by lifting:
${\rm Ch}^{n}({{\Sh}_{1,n-1}},1,k_\lambda)_{\mathfrak{m}}\rightarrow{\rm H}^{1}(\mathbb{F}_{p^{2}}, {\rm H}_{\acute{e} t}^{2n-2}({{\overline{\Sh}}_{1,n-1}},k_\lambda(n))_{\mathfrak{m}},$ which is the so-called Abel-Jacobi map; and 
${\rm Ch}^{1}({{\Sh}_{1,n-1}},1,k_\lambda)_{\mathfrak{m}}\rightarrow{\rm H}^{1}(\mathbb{F}_{p^{2}}, {\rm H}_{\acute{e} t}^{2n-2}({{\overline{\Sh}}_{1,n-1}},k_\lambda(n))_{\mathfrak{m}},$ which is the so-called level raising map.

Summing up, we have the diagram as below:
\[\begin{tikzcd}
	&& {0={\rm{H}}^0(\mathbb{F}_{p^2},{\rm{H}}_{\acute{e}t}^{2n-1}(\overline{{\Sh}}_{1,n-1},k_\lambda(n))_\mathfrak{m})} & {} & {} \\
	{{\rm{Ch}}^{n}({{\Sh}_{1,n-1}},1,k_\lambda)_{\mathfrak{m}}} & {{\rm{H}}^{2n-1}_{\mathcal{M}} ({{\Sh}_{1,n-1}},k_\lambda(n))_{\mathfrak{m}}} & {{\rm{H}}_{\acute{e}t}^{2n-1}({{\Sh}_{1,n-1}},k_\lambda(n))_{\mathfrak{m}}} & {} & {} \\
	{{\rm{Ch}}^1({{\Sh}_{1,n-1}^{{\rm{ss}}}},1,k_\lambda)_{\mathfrak{m}}} & {} & {{\rm{H}}^1(\mathbb{F}_{p^2},{\rm{H}}_{\acute{e}t}^{2n-2}(\overline{{\Sh}}_{1,n-1},k_\lambda(n))_\mathfrak{m})} & {} & {}
	\arrow[Rightarrow, no head, from=2-2, to=2-1]
	\arrow[from=2-2, to=2-3]
	\arrow["{{{Abel-Jacobi~~map}}}"'{pos=0.05}, from=2-2, to=3-3]
	\arrow[two heads, from=2-3, to=1-3]
	\arrow[from=3-1, to=2-1]
	\arrow["{{{level -raising ~~map}}}", from=3-1, to=3-3]
	\arrow[hook, from=3-3, to=2-3]
\end{tikzcd}\]

\begin{theorem}\label{main theorem n=3}
    Under Hypothesis \ref{Main hypo} and \ref{Main hypo2}, the map ${\rm Ch}^{1}({{\Sh}_{1,2}},1,k_\lambda)_{\mathfrak{m}}\rightarrow{\rm H}^{1}(\mathbb{F}_{p^{2}}, {\rm H}_{\acute{e}t}^{4}({\overline{\Sh}_{1,2}}, 1, k_\lambda(2))_{\mathfrak{m}}$ is surjective.
\end{theorem}
\begin{notation}
    Recall the blow up of $\cSh_{1,2}(K_{\gothp}^1)$ at ${\rm Y}_{00}$ in Proposition~\ref{G:blow up} denoted by $\widetilde{\cSh}_{1,2}(K_{\gothp}^1).$  Denote its special fiber by $\widetilde{\Sh}_{1,2}(K_\gothp^1),$ which can be expressed as the union of four irreducible components, denoted by $\widetilde{\rm Y}_{00},\widetilde{\rm Y}_{10},\widetilde{\rm Y}_{01}$ and $\widetilde{\rm Y}_{11}.$ Let $\widetilde{Sh}_{1,2}(K_{\gothp}^1)$ denote the generic fiber.
    Let $Y^{(i)}$ be the disjoint union of all the intersection of $i+1$ irreducible closed components of $\widetilde{\Sh}_{1,2}(K_\gothp^1).$
    For a $\FF_{p^2}$-scheme $X$, we use $\overline{X}$ to denote the geometric fiber of $X$ defined over $\overline\FF_p.$
\end{notation}

By Proposition~\ref{G:blow up}, $\widetilde{\Sh}_{1,2}(K_{\gothp}^1)$ is strictly semistable. Then by \cite[Corollary 2.8]{Sai03}, we have the weight spectral sequence
$
E_1^{p,q}=\bigoplus\limits_{i\geq\max(0,-p)}{\rm H}^{q-2i}_{\acute{e}t}(\overline{Y}^{(p+2i)},k_\lambda(-i))
\Rightarrow {\rm H}^{p+q}_{\acute{e}t}(\overline{\widetilde{Sh}}_{1,2}(K_{\gothp}^1),k_\lambda),
$
with $\overline{\widetilde{Sh}}_{1,2}(K_{\gothp}^1)$ the geoemtric generic fiber of $\widetilde{\Sh}_{1,2}(K_\gothp^1).$ Since blow up does not change the generic fiber, ${\rm H}^{p+q}_{\acute{e}t}(\overline{\widetilde{Sh}}_{1,2}(K_{\gothp}^1),k_\lambda)={\rm H}^{p+q}_{\acute{e}t}(\overline{Sh}_{1,2}(K_{\gothp}^1),k_\lambda).$ The weight spectral sequence gives a filtration $\Fil_{\bullet}{\rm H}^{p+q}_{\acute{e}t}(\overline{\widetilde{Sh}}_{1,2}(K_\gothp^1),k_\lambda).$ We denote the $i$-th graded piece by $Gr_i {\rm H}^{p+q}_{\acute{e}t}(\widetilde{Sh}_{1,2}(K_\gothp^1),k_\lambda).$ From now on, we consider the the spectral sequence after localizing at $\mathfrak{m}.$
\begin{proposition}
\label{gr_2 calculation}
    We have
    $
    Gr_2 {\rm H}^{p+q}_{\acute{e}t}(\widetilde{Sh}_{1,2}(K_\gothp^1),k_\lambda)_{\mathfrak{m}}\subseteq {\rm Ch}^{1}(\overline{\Sh}_{1,2}^{\rm ss},1,k_\lambda(-2))_{\mathfrak{m}}={\rm Ch}^{1}(\Sh_{1,2}^{\rm ss},1,k_\lambda(-2))_{\mathfrak{m}}.
    $
\end{proposition}
\begin{proof}
    We first show that $
    Gr_2 {\rm H}^{p+q}_{\acute{e}t}(\widetilde{Sh}_{1,2},k_\lambda)_{\mathfrak{m}}\subseteq {\rm Ch}^{1}(\overline{\Sh}_{1,2},1,k_\lambda(-2))_{\mathfrak{m}}.$ It suffices to show $E_{2,\mathfrak{m}}^{-2,6}=\Ker d_{1,\mathfrak{m}}^{-2,6}\subseteq  {\rm Ch}^{1}(\overline{\Sh}_{1,2},1,k_\lambda(-2))_{\mathfrak{m}}.$ 
    
    By proposition~\ref{G:blow up}, we have 
    $
        E_{1,\mathfrak{m}}^{-2,6}={\rm H}^{2}_{\acute{e}t}(\overline{Y}^{(2)},k_\lambda(-2))_{\mathfrak{m}}=H^2_{\acute{e}t}(\overline{\rm Y}_{00}\bigcap \overline{\rm Y}_{11},k_\lambda(-2))_{\mathfrak{m}}^{\oplus 2}={\rm H}^0_{\acute{e}t}(\overline{\Sh}_{0,3},k_\lambda(-3))_{\mathfrak{m}}^{\oplus 2}\oplus {\rm H}^{0}_{\acute{e}t}(\overline{\Sh}_{0,3}(K_\gothp^1),k_\lambda(-3))_{\mathfrak{m}}^{\oplus 2}
        $
        with the last equation following from Proposition~\ref{cohomologyy}
    and
    \begin{align*}
    E_{1,\mathfrak{m}}^{-1,6}=&{\rm H}^{4}_{\acute{e}t}(\overline{Y}^{(1)},k_\lambda(-1))_{\mathfrak{m}}={\rm H}^4_{\acute{e}t}(\overline{\rm Y}_{00}\bigcap \overline{\rm Y}_{10},k_\lambda(-1))_{\mathfrak{m}}\oplus {\rm H}^4_{\acute{e}t}(\overline{\rm Y}_{00}\bigcap \overline{\rm Y}_{01},k_\lambda(-1))_{\mathfrak{m}}\\&\oplus {\rm H}^4_{\acute{e}t}(\overline{\rm Y}_{11}\bigcap \overline{\rm Y}_{10},k_\lambda(-1))_{\mathfrak{m}}\oplus {\rm H}^4_{\acute{e}t}(\overline{\rm Y}_{11}\bigcap \overline{\rm Y}_{01},k_\lambda(-1))_{\mathfrak{m}}\oplus {\rm H}^4_{\acute{e}t}(\overline{\widetilde{\rm Y}}_{00}\bigcap \overline{\widetilde{\rm Y}}_{11},k_\lambda(-1))_{\mathfrak{m}};
    \end{align*}
    with $\overline{\widetilde{\rm Y}}_{00}\bigcap \overline{\widetilde{\rm Y}}_{11}$ the ${\overline{\PP}}^1$-bundle over $\overline{{\rm Y}}_{00}\bigcap \overline{{\rm Y}}_{11}.$ By Proposition~\ref{cohomologyy}, 
    \[
    E_{1,\mathfrak{m}}^{-1,6}={\rm H}^0_{\acute{e}t}(\overline{\Sh}_{0,3},k_\lambda(-3))_{\mathfrak{m}}^{\oplus 6}\oplus {\rm H}^{0}_{\acute{e}t}(\overline{\Sh}_{0,3}(K_\gothp^1),k_\lambda(-3))_{\mathfrak{m}}\oplus {\rm H}^4_{\acute{e}t}(\overline{\rm Y}_{11}\bigcap \overline{\rm Y}_{10},k_\lambda(-1))_{\mathfrak{m}}\oplus {\rm H}^4_{\acute{e}t}(\overline{\rm Y}_{11}\bigcap \overline{\rm Y}_{01},k_\lambda(-1))_{\mathfrak{m}}.
    \]
    If $d_{1,\mathfrak{m}}^{-2,6}(x,y,z,w)=0$ with $x,y\in {\rm H}^0_{\acute{e}t}(\overline{\Sh}_{0,3},k_\lambda(-3))_{\mathfrak{m}}$ and $z,w \in {\rm H}^{0}_{\acute{e}t}(\overline{\Sh}_{0,3}(K_\gothp^1),k_\lambda(-3))_{\mathfrak{m}},$ then composition with the projection to ${\rm H}^4_{\acute{e}t}(\overline{\widetilde{\rm Y}}_{00}\bigcap \overline{\widetilde{\rm Y}}_{11},k_\lambda(-1))_{\mathfrak{m}}$ in $E_{1,\mathfrak{m}}^{-1,6}$ shows that $x=y$ and $z=w.$ Consider the composition of the map $H^2_{\acute{e}t}(\overline{\rm Y}_{00}\bigcap \overline{\rm Y}_{11},k_\lambda(-2))_{\mathfrak{m}}\ra {\rm H}^4_{\acute{e}t}(\overline{\rm Y}_{00}\bigcap \overline{\rm Y}_{10},k_\lambda(-1))_{\mathfrak{m}}\oplus {\rm H}^4_{\acute{e}t}(\overline{\rm Y}_{11}\bigcap \overline{\rm Y}_{10},k_\lambda(-1))_{\mathfrak{m}}$ induced by the closed immersion as a part of $d_{1,\mathfrak{m}}^{-2,6}$ with the  natural map
    ${\rm H}^4_{\acute{e}t}(\overline{\rm Y}_{11}\bigcap \overline{\rm Y}_{10},k_\lambda(-1))_{\mathfrak{m}}\ra {\rm H}^4_{\acute{e}t}(\overline{C}_1\bigcap \overline{\rm Y}_{10},k_\lambda(-1))_{\mathfrak{m}}\oplus {\rm H}^4_{\acute{e}t}(\overline{C}_2\bigcap \overline{\rm Y}_{10},k_\lambda(-1))_{\mathfrak{m}}\oplus {\rm H}^4_{\acute{e}t}(\overline{\rm Y}_{00}\bigcap \overline{\rm Y}_{11},k_\lambda(-1))_{\mathfrak{m}}$ also induced by closed immersions and denote it by $\alpha$, we get $x=y=0,\quad z=w\in\Ker({\rm H}^{0}_{\acute{e}t}(\overline{\Sh}_{0,3}(K_\gothp^1),k_\lambda(-3))_{\mathfrak{m}}\xra{\alpha}{\rm H}^{0}_{\acute{e}t}(\overline{\Sh}_{0,3},k_\lambda(-3))_{\mathfrak{m}})^{\oplus 3}.\footnote{Here we still use $\alpha$ to denote the map, even though it is the composition of $\alpha$ with a projection map.} $ It can be checked easily that $\alpha$ is exact the map appearing in Theorem~\ref{IIhara lemma} and Theorem~\ref{Chow}. Hence $E_{2,\mathfrak{m}}^{-2,6}=\Ker d_{1,\mathfrak{m}}^{-2,6}\subseteq  {\rm Ch}^{1}(\overline{\Sh}_{1,2}^{\rm ss},1,k_\lambda(-2))_{\mathfrak{m}}.$

    The equation ${\rm Ch}^{1}(\overline{\Sh}_{1,2}^{\rm ss},1,k_\lambda(-2))_{\mathfrak{m}}={\rm Ch}^{1}(\Sh_{1,2}^{\rm ss},1,k_\lambda(-2))_{\mathfrak{m}}$ follows from Theorem~\ref{S:mod l cohomology} which shows that the Galois action on ${\rm Ch}^{1}(\overline{\Sh}_{1,2}^{\rm ss},1,k_\lambda(-2))_{\mathfrak{m}}={\rm Ch}^{1}(\Sh_{1,2}^{\rm ss},1,k_\lambda(-2))_{\mathfrak{m}}$ is trivial.
\end{proof}
\begin{remark}
In fact, with a torsion-freeness argument as in \cite[Proposition 6.3.1]{LTXZZ}, we can show $Gr_2 {\rm H}^{p+q}_{\acute{e}t}(\widetilde{Sh}_{1,2}(K_\gothp^1),k_\lambda)_{\mathfrak{m}}={\rm Ch}^{1}(\Sh_{1,2}^{\rm ss},1,k_\lambda(-2))_{\mathfrak{m}}.$
    % The map ${\rm H}^2_{\acute{e}t}(\overline{C}_1\bigcap \overline{\rm Y}_{10},k_\lambda(-1))_{\mathfrak{m}}\oplus {\rm H}^2_{\acute{e}t}(\overline{C}_2\bigcap \overline{\rm Y}_{10},k_\lambda(-1))_{\mathfrak{m}}\oplus {\rm H}^2_{\acute{e}t}(\overline{\rm Y}_{00}\bigcap \overline{\rm Y}_{11},k_\lambda(-1))_{\mathfrak{m}}\ra {\rm H}^2_{\acute{e}t}(\overline{\rm Y}_{11}\bigcap \overline{\rm Y}_{10},k_\lambda(-1))_{\mathfrak{m}}$ induced by closed immersions can be shown to be an isomorphism as in \cite[Proposition 6.3.1]{LTXZZ} as by the proof of Proposition~\ref{vanishing of gr_1}, ${\rm H}^4_{\acute{e}t}(\overline{\rm Y}_{11}\bigcap \overline{\rm Y}_{10},k_\lambda(-1))_{\mathfrak{m}}$ is torsion free and hence the map ${\rm H}^4_{\acute{e}t}(\overline{\rm Y}_{11}\bigcap \overline{\rm Y}_{10},k_\lambda(-1))_{\mathfrak{m}}\ra {\rm H}^4_{\acute{e}t}(\overline{C}_1\bigcap \overline{\rm Y}_{10},k_\lambda(-1))_{\mathfrak{m}}\oplus {\rm H}^4_{\acute{e}t}(\overline{C}_2\bigcap \overline{\rm Y}_{10},k_\lambda(-1))_{\mathfrak{m}}\oplus {\rm H}^4_{\acute{e}t}(\overline{\rm Y}_{00}\bigcap \overline{\rm Y}_{11},k_\lambda(-1))_{\mathfrak{m}}$ induced by closed immersions is an isomorphism. Thus in fact $$Gr_2 {\rm H}^{p+q}_{\acute{e}t}(\widetilde{Sh}_{1,2}(K_\gothp^1),k_\lambda)_{\mathfrak{m}}={\rm Ch}^{1}(\Sh_{1,2}^{\rm ss},1,k_\lambda(-2))_{\mathfrak{m}}.$$
But it turns out that we do not need to know what $Gr_2 {\rm H}^{p+q}_{\acute{e}t}(\widetilde{Sh}_{1,2}(K_\gothp^1),k_\lambda)_{\mathfrak{m}}$ is for the proof of Theorem~\ref{main theorem n=3} as can be seen below.    
\end{remark}

To calculate $Gr_1 {\rm H}^{p+q}_{\acute{e}t}(\widetilde{Sh}_{1,2}(K_\gothp^1),k_\lambda)_{\mathfrak{m}},$ we need to analyze the weight spectral sequence with $\cO_\lambda$-coefficient:
$$
\mathcal{E}_1^{p,q}=\bigoplus\limits_{i\geq\max(0,-p)}{\rm H}^{q-2i}_{\acute{e}t}(\overline{Y}^{(p+2i)},\cO_\lambda(-i))
\Rightarrow {\rm H}^{p+q}_{\acute{e}t}(\overline{\widetilde{Sh}}_{1,2}(K_{\gothp}^1),\cO_\lambda),
$$

We have the following lemma which can be checked directly:
\begin{lemma}\label{torsion-free lemma}
    Let $A,B,C$ be two abelian groups satisfying the short exact sequence $0\ra A\ra B\ra C.$ Suppose $A,C$ are torsion-free, then $B$ is torsion-free.
\end{lemma}
% \begin{proof}
%     Let $b\in B$ and there exists a positive integer $n$ such that $nb=0.$ Then the image of $b$ in $C$ is zero since $C$ is torsion free and $b$ is the image of an element $a\in A$ by the exactness of the short exact sequence. Since the map from $A$ to $B$ is injective and $A$ is torsion-free, $b$ must be zero. This shows $B$ is torion-free.
% \end{proof}

\begin{lemma}\label{torsion-free of supersingular locus}
    Let $\Sh_{1,2}^{\rm ss}$ denote the supersingular locus of $\Sh_{1,2}.$ Then ${\rm H}^0_{\acute{e}t}(\overline\Sh_{1,2}^{\rm ss},\cO_\lambda)_{\mathfrak{m}}=0$ and ${\rm H}^i_{\acute{e}t}(\overline\Sh_{1,2}^{\rm ss},\cO_\lambda)_{\mathfrak{m}}$ is torsion-free for $i=1,2.$
\end{lemma}
\begin{proof}
    For $1\leq i\leq 3$ and $z\in \overline\Sh_{0,3},$ denote the closed immersion of $\overline{\rm Y}_{i,z}$ into $\overline\Sh_{1,2}^{\rm ss}$ by $\iota_{i,z}.$ For $1\leq i_1,i_2\leq 3$ and $z_1, z_2\in \overline\Sh_{0,3}$ such that $(i_1,z_1)\neq (i_2,z_2)$ and $\overline{\rm Y}_{i_1,z_1}\cap\overline{\rm Y}_{i_2,z_2}$ is nonempty, denote the closed immersion of $\overline{\rm Y}_{i_1,z_1}\cap\overline{\rm Y}_{i_2,z_2}$ into $\overline\Sh_{1,2}^{\rm ss}$ by $\iota_{i_1,i_2,z_1,z_2}.$ For $1\leq i_1,i_2,i_3\leq 3$ and $z_1, z_2, z_3\in \overline\Sh_{0,3}$ such that $(i_1,z_1)\neq (i_2,z_2)\neq (i_3,z_3)$ and $\overline{\rm Y}_{i_1,z_1}\cap\overline{\rm Y}_{i_2,z_2}\cap\overline{\rm Y}_{i_3,z_3}$ is nonempty, denote the closed immersion of $\overline{\rm Y}_{i_1,z_1}\cap\overline{\rm Y}_{i_2,z_2}\cap\overline{\rm Y}_{i_3,z_3}$ into $\overline\Sh_{1,2}^{\rm ss}$ by $\iota_{i_1,i_2,i_3,z_1,z_2,z_3}.$ For $1\leq i_1,i_2,i_3,i_4\leq 3$ and $z_1, z_2, z_3,  z_4\in \overline\Sh_{0,3}$ such that $(i_1,z_1)\neq (i_2,z_2)\neq (i_3,z_3)\neq (i_4,z_4)$ and $\overline{\rm Y}_{i_1,z_1}\cap\overline{\rm Y}_{i_2,z_2}\cap\overline{\rm Y}_{i_3,z_3}\cap\overline{\rm Y}_{i_4,z_4}$ is nonempty, denote the closed immersion of $\overline{\rm Y}_{i_1,z_1}\cap\overline{\rm Y}_{i_2,z_2}\cap\overline{\rm Y}_{i_3,z_3}\cap\overline{\rm Y}_{i_4,z_4}$ into $\overline\Sh_{1,2}^{\rm ss}$ by $\iota_{i_1,i_2,i_3,i_4,z_1,z_2,z_3,z_4}.$ 

    By \cite[Proposition 4.9]{HTX17}, $\overline\Sh_{0,3}$ is finite. Then we have the following exact sequence
    \[\begin{tikzcd}
	0 & {\cO_\lambda} & {\bigoplus\limits_{1\leq i\leq3}\bigoplus\limits_{z\in \Sh_{0,3}}\iota_{i,z,*}\cO_\lambda} & {\bigoplus\limits_{1\leq i_1,i_2\leq3}\bigoplus\limits_{z_1, z_2\in \Sh_{0,3}}\iota_{i_1,i_2,z_1,z_2,*}\cO_\lambda} \\
	&&& {\bigoplus\limits_{1\leq i_1,i_2,i_3\leq3}\bigoplus\limits_{z_1, z_2, z_3\in \Sh_{0,3}}\iota_{i_1,i_2,i_3,z_1,z_2,z_3,*}\cO_\lambda} \\
	&&& {\bigoplus\limits_{1\leq i_1,i_2,i_3,i_4\leq3}\bigoplus\limits_{z_1, z_2, z_3, z_4\in \Sh_{0,3}}\iota_{i_1,i_2,i_3,i_4,z_1,z_2,z_3,z_4,*}\cO_\lambda}
	\arrow[from=1-1, to=1-2]
	\arrow[from=1-2, to=1-3]
	\arrow[from=1-3, to=1-4]
	\arrow[from=1-4, to=2-4]
	\arrow[from=2-4, to=3-4]
\end{tikzcd}\]

Applying $\Gamma(\overline\Sh_{1,2}^{\rm ss},\bullet)_\mathfrak{m}$ to the exact sequence and since it is left exact, the short sequence 
\[\begin{tikzcd}
	0 & {{\rm H}^0_{\acute{e}t}(\overline{\Sh}_{1,2}^{\rm ss},\cO_\lambda)_\mathfrak{m}} & {\bigoplus\limits_{1\leq i\leq3}{\rm H}^0_{\acute{e}t}(\overline{\rm Y}_i,\cO_\lambda)_\mathfrak{m}} & {\bigoplus\limits_{1\leq i_1,i_2\leq3}\bigoplus\limits_{z_1, z_2\in \Sh_{0,3}}{\rm H}^0_{\acute{e}t}(\overline{\rm Y}_{i_1,z_1}\cap\overline{\rm Y}_{i_2,z_2},\cO_\lambda)_\mathfrak{m}}
	\arrow[from=1-1, to=1-2]
	\arrow[from=1-2, to=1-3]
	\arrow["\alpha", from=1-3, to=1-4]
\end{tikzcd}\] is exact.

By Proposition~\ref{S:corre(1,n-1)(0,n)}, for any point $z\in \overline\Sh_{0,3},$ $\overline{\rm Y}_{1,z}$ and $\overline{\rm Y}_{3,z}$ are isomorphic to $\PP^2$ and $\overline{\rm Y}_{2,z}$ is isomorphic to $Z^{\langle 3\rangle}_2.$ 
By \cite[Proposition 6.4]{HTX17}, for any two different points $z_1,z_2\in \overline{\Sh}_{0,3},$ we have
\begin{itemize}
    \item $\overline{\rm Y}_{1,z_1}\cap\overline{\rm Y}_{1,z_2}$ and $\overline{\rm Y}_{3,z_1}\cap\overline{\rm Y}_{3,z_2}$ are empty;
    \item $\overline{\rm Y}_{2,z_1}\cap\overline{\rm Y}_{2,z_2}$ is nonempty if and only if $z_1\in {\rm R}_{\gothp}^{(1,2)}{\rm S}_\gothp^{-1}(z)$ and in this case $\overline{\rm Y}_{2,z_1}\cap\overline{\rm Y}_{2,z_2}$ is a point;
    \item $\overline{\rm Y}_{1,z_1}\cap\overline{\rm Y}_{2,z_2}$ is nonempty if and only if $z_2\in {\rm T}_\gothp^{(1)}(z_1),$ in which case $\overline{\rm Y}_{1,z_1}\cap\overline{\rm Y}_{2,z_2}$ is isomorphic to $\PP^1;$
    \item $\overline{\rm Y}_{2,z_1}\cap\overline{\rm Y}_{3,z_2}$ is nonempty if and only if $z_2\in {\rm T}_\gothp^{(1)}(z_1),$ in which case $\overline{\rm Y}_{2,z_1}\cap\overline{\rm Y}_{3,z_2}$ is isomorphic to $\PP^1;$
    \item $\overline{\rm Y}_{1,z_1}\cap\overline{\rm Y}_{3,z_2}$ is nonempty if and only if $z_3\in {\rm T}_\gothp^{(2)}(z_1),$ in which case $\overline{\rm Y}_{1,z_1}\cap\overline{\rm Y}_{3,z_2}$ is a point.
\end{itemize}

Consider $\alpha_1,\alpha_2$ induced by ${\bigoplus\limits_{1\leq i\leq3}\bigoplus\limits_{z\in \Sh_{0,3}}\iota_{i,z,*}\cO_\lambda} \ra \bigoplus\limits_{z_2\in {\rm T}_\gothp^{(1)}(z_1)}\iota_{1,2,z_1,z_2,*}\cO_\lambda$ and ${\bigoplus\limits_{1\leq i\leq3}\bigoplus\limits_{z\in \Sh_{0,3}}\iota_{i,z,*}\cO_\lambda} \ra \bigoplus\limits_{z_2\in {\rm T}_\gothp^{(1)}(z_1)}\iota_{2,3,z_1,z_2,*}\cO_\lambda$ respectively, then $\alpha_1$ can be expressed as 
$
	{{\rm H}^0_{\acute{e}t}(\overline{\Sh}_{0,3},\cO_\lambda)^{\oplus 3}_\mathfrak{m}} \xra{{({\mathop{p}\limits^{\rightarrow}}^{*},{\mathop{p}\limits^{\leftarrow}}^{*},0)}} {{\rm H}^0_{\acute{e}t}(\overline{\Sh}_{0,3}(K_\gothp^1),\cO_\lambda)_\mathfrak{m}}
$ and $\alpha_2$ can be expressed as
$
	{{\rm H}^0_{\acute{e}t}(\overline{\Sh}_{0,3},\cO_\lambda)^{\oplus 3}_\mathfrak{m}} \xra{{(0,{\mathop{p}\limits^{\rightarrow}}^{*},{\mathop{p}\limits^{\leftarrow}}^{*})}} {{\rm H}^0_{\acute{e}t}(\overline{\Sh}_{0,3}(K_\gothp^1),\cO_\lambda)_\mathfrak{m}}
$

By Theorem~\ref{I3}, $\Ker\alpha_1$ is the third component of the direct sum on the left and $\Ker\alpha_2$ is the first component of the direct sum on the left. Hence
$\Ker\alpha\subseteq\Ker\alpha_1\cap\Ker\alpha_2=0.$ This shows ${{\rm H}^0_{\acute{e}t}(\overline{\Sh}_{1,2}^{\rm ss},\cO_\lambda)_\mathfrak{m}}=0.$ Since the maps $\alpha_1,\alpha_2,\alpha$ are induced by immersions of connected components, the same result holds for ${{\rm H}^0_{\acute{e}t}(\overline{\Sh}_{1,2}^{\rm ss},k_\lambda)_\mathfrak{m}},$ which is also zero. Then by the universal coefficient theorem, ${{\rm H}^1_{\acute{e}t}(\overline{\Sh}_{1,2}^{\rm ss},\cO_\lambda)_\mathfrak{m}}$ is torsion-free.
    Denote the kernel of $$
    \bigoplus\limits_{1\leq i_1,i_2\leq3}\bigoplus\limits_{z_1, z_2\in \Sh_{0,3}}\iota_{i_1,i_2,z_1,z_2,*}\cO_\lambda\ra\bigoplus\limits_{1\leq i_1,i_2,i_3\leq3}\bigoplus\limits_{z_1, z_2, z_3\in \Sh_{0,3}}\iota_{i_1,i_2,i_3,z_1,z_2,z_3,*}\cO_\lambda
    $$ by $C,$ the kernel of $$\bigoplus\limits_{1\leq i_1,i_2,i_3\leq3}\bigoplus\limits_{z_1, z_2,z_3\in \Sh_{0,3}}\iota_{i_1,i_2,i_3,z_1,z_2,z_3,*}\cO_\lambda\ra \bigoplus\limits_{1\leq i_1,i_2,i_3,i_4\leq3}\bigoplus\limits_{z_1, z_2, z_3,z_4\in \Sh_{0,3}}\iota_{i_1,i_2,i_3,i_4,z_1,z_2,z_3,z_4,*}\cO_\lambda$$ by $D.$

    Then we have short exact sequences
   $
	0 \ra {\cO_\lambda} \ra {\bigoplus\limits_{1\leq i\leq3}\bigoplus\limits_{z\in \Sh_{0,3}}\iota_{i,z,*}\cO_\lambda} \ra C \ra 0 $ and $
	0 \ra C \ra {\bigoplus\limits_{1\leq i_1,i_2\leq3}\bigoplus\limits_{z_1,z_2\in \Sh_{0,3}}\iota_{i_1,i_2,z_1,z_2,*}\cO_\lambda} \ra D \ra 0
	$

Taking cohomology of the exact sequences, we get
\[\begin{tikzcd}
[
    column sep=small, 
    row sep=scriptsize,
    cells={font=\footnotesize} % Uniform font size
]
	0 & {{\rm H}^1_{\acute{e}t}(\overline\Sh_{1,2}^{\rm ss},C)_{\mathfrak{m}}} & {{\rm H}^2_{\acute{e}t}(\overline\Sh_{1,2}^{\rm ss},\cO_\lambda)_{\mathfrak{m}}} & {\bigoplus\limits_{1\leq i\leq3}{\rm H}^2_{\acute{e}t}(\overline{\rm Y}_i,\cO_\lambda)_\mathfrak{m}} \\
	0 & {{\rm H}^0_{\acute{e}t}(\overline\Sh_{1,2}^{\rm ss},C)_{\mathfrak{m}}} & {\bigoplus\limits_{1\leq i_1,i_2\leq3}\bigoplus\limits_{z_1, z_2\in \Sh_{0,3}}{\rm H}^0_{\acute{e}t}(\overline{\rm Y}_{i_1,z_1}\cap\overline{\rm Y}_{i_2,z_2},\cO_\lambda)_{\mathfrak{m}}} & {{\rm H}^0_{\acute{e}t}(\overline\Sh_{1,2}^{\rm ss},D)_{\mathfrak{m}}} \\
	&&& {{\rm H}^1_{\acute{e}t}(\overline\Sh_{1,2}^{\rm ss},C)_{\mathfrak{m}}} \\
	&&& 0
	\arrow[from=1-1, to=1-2]
	\arrow[from=1-2, to=1-3]
	\arrow[from=1-3, to=1-4]
	\arrow[from=2-1, to=2-2]
	\arrow[from=2-2, to=2-3]
	\arrow["{\beta_1}", from=2-3, to=2-4]
	\arrow[from=2-4, to=3-4]
	\arrow[from=3-4, to=4-4]
\end{tikzcd}\]
The left zero of the first exact sequence follows from the vanishing of the first cohomology groups of ${\rm Y}_i$ for $1\leq i\leq 3.$ The right zero of the second row follows from the intersection of ${\rm Y}_{i_1,z_1}$ and ${\rm Y}_{i_2,z_2}$ can only be point or $\PP^1.$ By lemma~\ref{torsion-free lemma}, it suffices to show ${{\rm H}^1_{\acute{e}t}(\overline\Sh_{1,2}^{\rm ss},C)_{\mathfrak{m}}}$ is torsion-free.

Applying the left exact functor $\Gamma(\overline{\Sh}_{1,2}^{\rm ss},\bullet)_\mathfrak{m}$ to the exact sequence 
$$0\ra D\ra \bigoplus\limits_{1\leq i_1,i_2,i_3\leq3}\bigoplus\limits_{z_1,z_2,z_3\in \Sh_{0,3}}\iota_{i_1,i_2,i_3,z_1,z_2,z_3,*}\cO_\lambda\ra \bigoplus\limits_{1\leq i_1,i_2,i_3,i_4\leq3}\bigoplus\limits_{z_1, z_2,z_3,z_4\in \Sh_{0,3}}\iota_{i_1,i_2,i_3,i_4,z_1,z_2,z_3,z_4,*}\cO_\lambda,$$ there is an injection
$
{{\rm H}^0_{\acute{e}t}(\overline\Sh_{1,2}^{\rm ss},D)_{\mathfrak{m}}}\ra {\bigoplus\limits_{1\leq i_1,i_2,i_3\leq3}\bigoplus\limits_{z_1\neq z_2\neq z_3\in \Sh_{0,3}}{\rm H}^0_{\acute{e}t}(\overline{\rm Y}_{i_1,z_1}\cap\overline{\rm Y}_{i_2,z_2}\cap\overline{\rm Y}_{i_3,z_3},\cO_\lambda)_{\mathfrak{m}}}.
$
Composing with $\beta_1$, we get the commutative diagram
\begin{center}
\begin{sideways}
\begin{adjustbox}{width=\textheight, keepaspectratio}
\begin{tikzcd}[column sep=scriptsize, row sep=scriptsize]
0 & 
\scriptsize{
\bigoplus\limits_{\substack{1\leq i_1,i_2\leq3 \\ z_1\neq z_2\in \Sh_{0,3}}}
{\rm H}^0_{\acute{e}t}(\overline{\mathrm{Y}}_{i_1,z_1}\cap\overline{\mathrm{Y}}_{i_2,z_2},\mathcal{O}_\lambda)_{\mathfrak{m}} 
} & 
\scriptsize{
\bigoplus\limits_{\substack{1\leq i_1,i_2\leq3 \\ z_1\neq z_2\in \Sh_{0,3}}}
{\rm H}^0_{\acute{e}t}(\overline{\mathrm{Y}}_{i_1,z_1}\cap\overline{\mathrm{Y}}_{i_2,z_2},\mathcal{O}_\lambda)_{\mathfrak{m}}
} & 0 & 0 \\
0 & 
\scriptsize{{\rm H}^0_{\acute{e}t}(\overline{\Sh}_{1,2}^{\mathrm{ss}},D)_{\mathfrak{m}}} & 
\scriptsize{
\bigoplus\limits_{\substack{1\leq i_1,i_2,i_3\leq3 \\ z_1\neq z_2\neq z_3\in \Sh_{0,3}}}
{\rm H}^0_{\acute{e}t}(\overline{\mathrm{Y}}_{i_1,z_1}\cap\overline{\mathrm{Y}}_{i_2,z_2}\cap\overline{\mathrm{Y}}_{i_3,z_3},\mathcal{O}_\lambda)_{\mathfrak{m}}
} & 
\scriptsize{
\bigoplus\limits_{\substack{1\leq i_1,i_2,i_3,i_4\leq3 \\ z_1\neq z_2\neq z_3\neq z_4\in \Sh_{0,3}}}
{\rm H}^0_{\acute{e}t}(\overline{\mathrm{Y}}_{i_1,z_1}\cap\cdots\cap\overline{\mathrm{Y}}_{i_4,z_4},\mathcal{O}_\lambda)_{\mathfrak{m}}
} \\
& 
\scriptsize{H^1_{\acute{e}t}(\overline{\Sh}_{1,2}^{\mathrm{ss}},C)_{\mathfrak{m}}} & 
\scriptsize{\mathrm{Coker}\,\beta_2}
\arrow[from=1-1, to=1-2]
\arrow[from=1-2, to=1-3]
\arrow["{{\beta_1}}", from=1-2, to=2-2]
\arrow[from=1-3, to=1-4]
\arrow["{{\beta_2}}", from=1-3, to=2-3]
\arrow[from=1-4, to=1-5]
\arrow[from=1-4, to=2-4]
\arrow[from=2-1, to=2-2]
\arrow[from=2-2, to=2-3]
\arrow[from=2-2, to=3-2]
\arrow[from=2-3, to=2-4]
\arrow[from=2-3, to=3-3]
\arrow["{{\beta_3}}", from=3-2, to=3-3]
\end{tikzcd}
\end{adjustbox}
\end{sideways}
\end{center}

Since $\beta_2$ is induced by immersions of connected components, ${\rm Coker}\beta_2$ counts the difference of the number of the connected components and hence is torsion free. By snake lemma, $\beta_3$ is injective. This shows ${{\rm H}^1_{\acute{e}t}(\overline\Sh_{1,2}^{\rm ss},C)_{\mathfrak{m}}}$ is torsion free. Here we finish the proof.
\end{proof}

\begin{proposition}
    $\mathcal{E}_{1,\mathfrak{m}}^{p,q}$ is torsion free except $(p,q)=(1,3),(1,4),(-1,5),(-1,6).$
\end{proposition}
\begin{proof}
    The torsion-freeness can be checked easily for $p\neq \pm1$ by Hyposthesis~\ref{Main hypo}, Proposition~\ref{bloww up}, Proposition~\ref{Corresss}, Proposition~\ref{cohomologyy} and Proposition~\ref{GI:characterize of Y_11}. To show the torsion-freeness for $p=\pm1,$ it remains to show ${\rm H}^i_{\acute{e}t}(\overline{\rm Y}_{10}\bigcap \overline{\rm Y}_{11},\cO_{\lambda})_{\mathfrak{m}}$ and ${\rm H}^i_{\acute{e}t}(\overline{\rm Y}_{01}\bigcap \overline{\rm Y}_{11},\cO_{\lambda})_{\mathfrak{m}}$ are torsion free for $i\neq 3,4.$ By Poincare duality, it suffices to show ${\rm H}^i_{\acute{e}t}(\overline{\rm Y}_{10}\bigcap \overline{\rm Y}_{11},\cO_{\lambda})_{\mathfrak{m}}$ and ${\rm H}^i_{\acute{e}t}(\overline{\rm Y}_{01}\bigcap \overline{\rm Y}_{11},\cO_{\lambda})_{\mathfrak{m}}$ are torsion free for $i=0,1,2.$

    Recall the subschemes $N_{10},N_{01},\mathcal{U}_{10},\mathcal{U}_{01}$ in Section~\ref{I3}. For $i\geq 7$ and $i=0,1,3,5,$ ${\rm H}^{i}_{\acute{e}t}(\overline{\rm Y}_{10},\cO_\lambda)_{\mathfrak{m}}$ and ${\rm H}^{i}_{\acute{e}t}(\overline{\rm Y}_{01},\cO_\lambda)_{\mathfrak{m}}$ are all zero.

    By excision sequence, there is an injective map from ${\rm H}^{i}_{\acute{e}t}(\overline{\rm Y}_{10}\bigcap\overline{\rm Y}_{11},\cO_\lambda)_{\mathfrak{m}}$ to $H^{i+1}_{c}(\overline{\mathcal{U}}_{10},\cO_\lambda)_{\mathfrak{m}}$ for $i=0,1.$ Also by excision sequence, we get an exact sequence
$
	{{\rm H}^{2}_{c}(\overline{\mathcal{U}}_{10},\cO_\lambda)_{\mathfrak{m}}} \ra {{\rm H}^{2}_{\acute{e}t}(\overline{\rm Y}_{10},\cO_\lambda)_{\mathfrak{m}}} \ra {{\rm H}^{2}_{\acute{e}t}(\overline{\rm Y}_{10}\bigcap \overline{\rm Y}_{11},\cO_\lambda)_{\mathfrak{m}}} \ra {H^{3}_{c}(\overline{\mathcal{U}}_{10},\cO_\lambda)_{\mathfrak{m}}} \ra {0}
	$
    Now we show that ${\rm H}^{0}_{c}(\overline{\mathcal{U}}_{10},\cO_\lambda)_{\mathfrak{m}}=0$ and ${\rm H}^{i}_{c}(\overline{\mathcal{U}}_{10},\cO_\lambda)_{\mathfrak{m}}$ are torsion-free for $i=1,2.$ 

    By excision sequence and Proposition~\ref{cohomologyy}, ${\rm H}^{i}_{c}(\overline\Sh_{1,2}-\overline{N}_{10},\cO_\lambda)_{\mathfrak{m}}={\rm H}^{i}_{c}(\overline{\mathcal{U}}_{10},\cO_\lambda)_{\mathfrak{m}}$ for $i=1,2.$ And the sequence 
$
	0 \ra {{\rm H}^{3}_{\acute{e}t}(\overline{\Sh}_{12}-\overline{\rm N}_{10},\cO_\lambda)_{\mathfrak{m}}} \ra {{\rm H}^{3}_{c}(\overline{\mathcal{U}}_{10},\cO_\lambda)_{\mathfrak{m}}} \ra {{\rm H}^{3}_{\acute{e}t}(\overline{\rm Y}_{00}\bigcap \overline{\rm Y}_{10}-\overline{\rm Y}_{00}\bigcap \overline{\rm Y}_{11},\cO_\lambda)_{\mathfrak{m}}}
$ is exact.

    By Mayer-Vietoris sequence and Proposition~\ref{affineness}, ${\rm H}^{i}_{c}(\overline\Sh_{1,2}-\overline{N}_{10},\cO_\lambda)_{\mathfrak{m}}\oplus {\rm H}^{i}_{c}(\overline\Sh_{1,2}-\overline{N}_{01},\cO_\lambda)_{\mathfrak{m}}$ is isomorphic to ${\rm H}^{i}_{c}(\overline\Sh_{1,2}-\overline{N}_{10}\bigcap \overline{N}_{01},\cO_\lambda)_{\mathfrak{m}}$ for $0\leq i\leq 3.$ By excision sequence and Hypothesis~\ref{Main hypo}, this is isomorphic to $H^{i-1}_{c}(\overline{N}_{10}\bigcup Fr''(\overline{N}_{01}),\cO_\lambda)_{\mathfrak{m}},$ which is exactly $H^{i-1}_{\acute{e}t}(\overline\Sh_{1,2}^{\rm ss},\cO_\lambda)_{\mathfrak{m}}.$ Then it comes from Lemma~\ref{torsion-free of supersingular locus}.
    
    The torsion-freeness of ${{\rm H}^{2}_{\acute{e}t}(\overline{\rm Y}_{10}\bigcap \overline{\rm Y}_{11},\cO_\lambda)_{\mathfrak{m}}}$ comes from 
    % Denote by $\tilde j_{10}:\mathcal{U}_{10}\ra{\rm Y}_{10}$ 
    % and $j_{10}:\overline\Sh_{1,2}-\overline{N}_{10}\ra{\Sh}_{1,2}.$ Recall the morphism $\pi_{10}$ in Proposition~\ref{bloww up}, then $\pi_{10}\circ\tilde j_{10}=j_{10}\circ \pi_{10}.$ Since $\tilde j_{10,!}\cO_\lambda$ is supported on $\mathcal{U}_{10},$ $\pi_{10,*}\pi_{10}^{*}j_{10,!}\cO_\lambda=\pi_{10,*}\pi_{10}^{*}j_{10,!}\pi_{10,*}\pi^{*}_{10}\cO_\lambda=
    % \pi_{10,*}\tilde j_{10,!}\pi_{10}^*\cO_\lambda.
    % $ Then we have the
%     following commutative diagram
%     \[\begin{tikzcd}
% 	{\pi_{10,*}\tilde j_{10,!}\pi_{10}^*\cO_\lambda} && {\pi_{10,*}\pi_{10}^*\cO_\lambda} \\
% 	{j_{10,!}\cO_\lambda} && {\cO_\lambda}
% 	\arrow[from=1-1, to=1-3]
% 	\arrow[from=2-1, to=1-1]
% 	\arrow[from=2-1, to=2-3]
% 	\arrow[from=2-3, to=1-3]
% \end{tikzcd}\]
%     Taking cohomology of
the commutative diagram
    \[\begin{tikzcd}
	{{{\rm H}^{2}_{c}(\overline{\mathcal{U}}_{10},\cO_\lambda)_{\mathfrak{m}}}} && {{{\rm H}^{2}_{\acute{e}t}(\overline{\rm Y}_{10},\cO_\lambda)_{\mathfrak{m}}}} \\
	{{\rm H}^{2}_{c}(\overline\Sh_{1,2}-\overline{N}_{10},\cO_\lambda)_{\mathfrak{m}}} && {{{\rm H}^{2}_{\acute{e}t}(\overline{\rm Sh}_{1,2},\cO_\lambda)_{\mathfrak{m}}}=0}
	\arrow[from=1-1, to=1-3]
	\arrow[from=2-1, to=1-1]
	\arrow[from=2-1, to=2-3]
	\arrow[from=2-3, to=1-3]
\end{tikzcd}\]
    
    By excision sequence and blow up exact sequence, ${{\rm H}^{2}_{c}(\overline{\mathcal{U}}_{10},\cO_\lambda)_{\mathfrak{m}}}={\rm H}^{2}_{c}(\overline\Sh_{1,2}-\overline{N}_{10},\cO_\lambda)_{\mathfrak{m}}
    ,$
    ${{\rm H}^{2}_{\acute{e}t}(\overline{\rm Y}_{10},\cO_\lambda)_{\mathfrak{m}}}={\rm H}^0_{\acute{e}t}(\overline{\rm Y}_3,H^2_{\acute{e}t}(\PP^1,\cO_\lambda))_\mathfrak{m}.$ 
    Thus the image of 
    ${{\rm H}^{2}_{c}(\overline{\mathcal{U}}_{10},\cO_\lambda)_{\mathfrak{m}}}$ in ${{\rm H}^{2}_{\acute{e}t}(\overline{\rm Y}_{10},\cO_\lambda)_{\mathfrak{m}}}$ is zero. By Lemma~\ref{torsion-free lemma}, we get the torsion-freeness of ${{\rm H}^{2}_{\acute{e}t}(\overline{\rm Y}_{10}\bigcap \overline{\rm Y}_{11},\cO_\lambda)_{\mathfrak{m}}}.$ 
    Then the Proposition follows from the symmetry between ${\rm Y}_{10}$ and ${\rm Y}_{01}.$
\end{proof}

\begin{proposition}\label{vanishing of gr_1}
    Under Hypothesis~\ref{Main hypo2}, $E_{1,\mathfrak{m}}^{-1,5}=0.$ As a direct corollary,
    $Gr_1 {\rm H}^{p+q}_{\acute{e}t}(\widetilde{Sh}_{1,2}(K_\gothp^1),k_\lambda)_{\mathfrak{m}}=0.$
\end{proposition}
\begin{proof}
    It can be checked directly that $\mathcal{E}_{1,\mathfrak{m}}^{-1,5}={\rm H}^3_{\acute{e}t}(\overline{\rm Y}_{10}\bigcap \overline{\rm Y}_{11},\cO_\lambda)_{\mathfrak{m}}\oplus {\rm H}^3_{\acute{e}t}(\overline{\rm Y}_{01}\bigcap \overline{\rm Y}_{11},\cO_\lambda)_{\mathfrak{m}}$ and the torsion part of $\mathcal{E}_{1,\mathfrak{m}}^{-1,6}$ comes from 
    ${\rm H}^4_{\acute{e}t}(\overline{\rm Y}_{10}\bigcap \overline{\rm Y}_{11},\cO_\lambda)_{\mathfrak{m}}\oplus {\rm H}^4_{\acute{e}t}(\overline{\rm Y}_{01}\bigcap \overline{\rm Y}_{11},\cO_\lambda)_{\mathfrak{m}}.$
    By the universal coefficient theorem, $E_{1,\mathfrak{m}}^{-1,5}=\mathcal{E}_{1,\mathfrak{m}}^{-1,6}\otimes_{\cO_\lambda}k_\lambda\oplus {\rm Tor}_1(\mathcal{E}_{1,\mathfrak{m}}^{-1,6},k_\lambda).$
    By Weight monodromy conjecture \cite{Sch12}, the eigenvalue of ${\rm Fr}_{p^2}$ on  $Gr_1 {\rm H}^{p+q}_{\acute{e}t}(\widetilde{Sh}_{1,2}(K_\gothp^1),{\rm L}_\lambda)_{\mathfrak{m}}$ has absolute value $p^{5}.$ By Hypothesis~\ref{Main hypo2}, it must be zero. This shows the free part of $\mathcal{E}_{1,\mathfrak{m}}^{-1,5}$ is zero.
    
    Since $\mathcal{E}_{\infty,\mathfrak{m}}^{-1,6}=0$ and $\mathcal{E}_{1,\mathfrak{m}}^{1,i}$ are torsion-free for $i=5,6,$ the possible eigenvalue of $\Frob_{p^2}$ on ${\rm Tor}_1(\mathcal{E}_{1,\mathfrak{m}}^{-1,6},k_\lambda)$ can have absolute value $p^6,p^5.$ By Poincare duality and universal coefficient theorem, the possible eigenvalue of ${\rm Fr}_{p^2}$ on ${\rm Tor}_1(\mathcal{E}_{1,\mathfrak{m}}^{-1,5},k_\lambda)$ can have absolute value $p^4,p^5.$ Since $E_{1,\mathfrak{m}}^{1,4}=E_{1,\mathfrak{m}}^{-1,6}(1),$ the possible eigenvalue of ${\rm Fr}_{p^2}$ on $E_{1,\mathfrak{m}}^{1,4}$ can have absolute value $p^4,p^3.$ Then since $E_{1,\mathfrak{m}}^{2,3}=0,$ if the eigenvalue of ${\rm Fr}_{p^2}$ on ${\rm Tor}_1(\mathcal{E}_{1,\mathfrak{m}}^{-1,5},k_\lambda)$ and ${\rm Tor}_1(\mathcal{E}_{1,\mathfrak{m}}^{-1,6},k_\lambda)$ can be $p^5$ indeed, the eigenvalues of ${\rm Fr}_{p^2}$ on $Gr_1 {\rm H}^{p+q}_{\acute{e}t}(\widetilde{Sh}_{1,2}(K_\gothp^1),{k}_\lambda)_{\mathfrak{m}}$ must contain $p^5,$ which contradicts to Hypothesis~\ref{Main hypo2}.
    
    By Poincare duality, it suffices to show that the  eigenvalue of $\Frob_{p^2}$ on ${\rm Tor}_1(\mathcal{E}_{1,\mathfrak{m}}^{-1,6},k_\lambda)$ can not have absolute value $p^6,$ which is equivalent to show the eigenvalue of $\Frob_{p^2}$ on the torsion part of $\mathcal{E}_{1,\mathfrak{m}}^{-1,6}$ can not have absolute valus $p^6.$

    Since $\mathcal{E}_{1,\mathfrak{m}}^{0,6}$ is torsion free, the $p^6$-torsion part of $\mathcal{E}_{1,\mathfrak{m}}^{-1,6}$ is contained in $\ker d_{1,\mathfrak{m}}^{-1,6}.$ Note that $\mathcal{E}_{1,\mathfrak{m}}^{1,5}$ is torsion-free, the eigenvalue of ${\rm Fr}_{p^2}$ can not have absolute value $p^6.$ It can be checked directly that $\mathcal{E}^{2,4}_{2,\mathfrak{m}}=0.$ Then since $\mathcal{E}_{\infty,\mathfrak{m}}^{-1,6}=0,$ the $p^6$-torsion part of $\mathcal{E}_{1,\mathfrak{m}}^{-1,6}$ is contained in $\Im d_{1,\mathfrak{m}}^{0,6}.$ By definition 
    $\mathcal{E}_{1,\mathfrak{m}}^{-2,6}={\rm H}^{2}_{\acute{e}t}(\overline{\rm Y}_{00}\cap\overline{\rm Y}_{11},\cO_\lambda(-2))^{\oplus 2}_{\mathfrak{m}}.
    $
    Let $(x_1,x_2)$ be an element in $\mathcal{E}_{1,\mathfrak{m}}^{-2,6}.$ Suppose $d_{1,\mathfrak{m}}^{-2,6}((x_1,x_2))$ is contained in the $p^6$-torsion part of $\mathcal{E}_{1,\mathfrak{m}}^{-1,6}.$ Consider the projection of $\mathcal{E}_{1,\mathfrak{m}}^{-1,6}$ to the component ${\rm H}^{4}_{\acute{e}t}(\overline{\widetilde{\rm Y}}_{00}\bigcap\overline{\widetilde{\rm Y}}_{11},\cO_\lambda(-1))_{\mathfrak{m}},$ this forces $x_1=-x_2.$ Then $d_{1,\mathfrak{m}}^{-2,6}((x_1,x_2))$ is contained in the anti-diagonal part of the $p^6$-torsion part of $\mathcal{E}_{1,\mathfrak{m}}^{-1,6},$ which contradicts to the $p^6$-torsion part of $\mathcal{E}_{1,\mathfrak{m}}^{-1,6}$ is contained in $\Im d_{1,\mathfrak{m}}^{0,6}.$ Hence it must be zero. Here we finish the proof.
\end{proof}
\begin{remark}\label{torsion vanishing of 10-01}
    By Proposition~\ref{vanishing of gr_1} and some easy calculations, it can be checked that the spectral sequences degenerate at the second page. A direct corollary of this is that ${\rm H}^3_{\acute{e}t}(\overline{\rm Y}_{10}\bigcap\overline{\rm Y}_{11})_{\mathfrak{m}}={\rm H}^3_{\acute{e}t}(\overline{\rm Y}_{01}\bigcap\overline{\rm Y}_{11})_{\mathfrak{m}}=0.$
\end{remark}

\begin{lemma}\label{A:Supsingular vanishing}
    Let $\Frob_{p^2}$ denote the Frobenius morphism on schemes defined over $\overline\FF_{p^2}$. For $1\leq i\leq 3,$ the map ${\rm H}^{0}_{\acute{e}t}(\overline{\rm Y}_{i},k_{\lambda}(1))_{\mathfrak{m}}\ra {\rm H}^{4}_{\acute{e}t}(\overline\Sh_{1,2},k_{\lambda}(3))_{\mathfrak{m}}$ induced by the closed immersion $\iota_i:\overline{\rm Y}_{i}\hookrightarrow\overline\Sh_{1,2}$ is contained in $(1-p^{-6}{\Frob}_{p^2}^{*}){\rm H}^{4}_{\acute{e}t}(\overline\Sh_{1,2},k_{\lambda}(3))_{\mathfrak{m}}.$
\end{lemma}
\begin{proof}
    Take $x\in {\rm H}^{0}_{\acute{e}t}(\overline{\rm Y}_{i},k_{\lambda}(1))_{\mathfrak{m}}.$ Since $\iota_i$ is Frobenius equivariant, $\iota_{i,*}(p^{-2}{\Frob}_{p^2}^{*}(x))=p^{-6}{\Frob}_{p^2}^{*}\iota_{i,*}(x).$ Since the Frobenius action on ${\rm H}^{0}_{\acute{e}t}({\rm Y}_{i},k_{\lambda})_{\mathfrak{m}}$ is trivial, $\iota_{i,*}(x)=p^{-4}{\Frob}_{p^2}^{*}\iota_{i,*}(x).$ Take $x'=\frac{p^2}{p^2-1}\iota_{i,*}(x)\in {\rm H}^{4}_{\acute{e}t}(\overline\Sh_{1,2},k_{\lambda}(2))_{\mathfrak{m}},$ then $x=(1-p^{-6}{\Frob}_{p^2}^{*})x'$ and hence 
    $x\in (1-p^{-6}{\Frob}_{p^2}^{*}){\rm H}^{4}_{\acute{e}t}(\overline\Sh_{1,2},k_{\lambda}(2))_{\mathfrak{m}}.$ Hence we finish the proof.
\end{proof}

\begin{notation}
    Recall the strictly semistable scheme $\widetilde{\cSh}_{1,2}(Iw_\gothp)$ we get by blow up in Proposition~\ref{GI:blow up}.  $\widetilde{\Sh}_{1,2}(Iw_\gothp)$ can be expressed as the union of nine irreducible components, denoted by $\tilde{\rm Z}_{ij}$ for $0\leq i,j\leq 2.$

    Let $Z^{(i)}$ be the disjoint union of all the intersection of $i+1$ irreducible closed components of $\widetilde{\Sh}_{1,2}(Iw_\gothp).$
\end{notation}

By \cite[Corollary 2.8]{Sai03}, we have the weight spectral sequence
$$
{E'}_1^{p,q}=\bigoplus\limits_{i\geq\max(0,-p)}{\rm H}^{q-2i}_{\acute{e}t}(\overline{Z}^{(p+2i)},k_\lambda(-i))
\Rightarrow {\rm H}^{p+q}_{\acute{e}t}(\overline{\widetilde{Sh}}_{1,2}(Iw_\gothp),k_\lambda),
$$
with $\overline{\widetilde{Sh}}_{1,2}(Iw_\gothp)$ the geoemtric generic fiber of $\widetilde{\Sh}_{1,2}(Iw_\gothp).$ since blow up does not change the generic fiber, ${\rm H}^{p+q}_{\acute{e}t}(\overline{\widetilde{Sh}}_{1,2}(Iw_\gothp),k_\lambda)={\rm H}^{p+q}_{\acute{e}t}(\overline{Sh}_{1,2}(Iw_\gothp),k_\lambda).$ The spectral sequence gives a filtration $\Fil_{\bullet}{\rm H}^{p+q}_{\acute{e}t}(\overline{\widetilde{Sh}}_{1,2}(Iw_\gothp^1),k_\lambda).$ We denote the $i$-th graded piece by $Gr_i {\rm H}^{p+q}_{\acute{e}t}(\overline{\widetilde{Sh}}_{1,2}(Iw_\gothp^1),k_\lambda).$ 

 Let $\pi_{Iw}:\widetilde{\cSh}_{1,2}(Iw_\gothp)\ra \cSh_{1,2}(Iw_\gothp)$ and $\pi_{K_\gothp^1}:\widetilde{\cSh}_{1,2}(K_\gothp^1)\ra \cSh_{1,2}(K_\gothp^1)$ denote the blow ups.
By the universal property of blow up, we have the following commutative diagram:
\[\begin{tikzcd}
[
    column sep=small, 
    row sep=scriptsize,
    cells={font=\footnotesize} % Uniform font size
]
	&& {\widetilde{\rm Sh}_{1,2}(Iw_\gothp)} \\
	& {\widetilde{\rm Sh}_{1,2}(K_\gothp^1)} & {{\rm Sh}_{1,2}(Iw_\gothp)} & {\widetilde{\rm Sh}_{1,2}(K_\gothp^1)} \\
	& {{\rm Sh}_{1,2}(K_\gothp^1)} && {{\rm Sh}_{1,2}K_\gothp^1)} \\
	{{\rm Sh}_{1,2}} && {{\rm Sh}_{1,2}} && {{\rm Sh}_{1,2}}
	\arrow["{{{{{{\mathop{\widetilde{p}}\limits^{\leftarrow}}_{Iw}}}}}}"', from=1-3, to=2-2]
	\arrow["{\pi_{Iw_\gothp}}", from=1-3, to=2-3]
	\arrow["{{{{{{\mathop{\widetilde{p}}\limits^{\rightarrow}}_{Iw}}}}}}", from=1-3, to=2-4]
	\arrow["{\pi_{K_\gothp^1}}", from=2-2, to=3-2]
	\arrow["{{{{\mathop{p}\limits^{\leftarrow}}}}}"', from=2-2, to=4-1]
	\arrow["{{{{{\mathop{p}\limits^{\leftarrow}}_{Iw}}}}}"', from=2-3, to=3-2]
	\arrow["{{{{{\mathop{p}\limits^{\rightarrow}}_{Iw}}}}}", from=2-3, to=3-4]
	\arrow["{\pi_{K_\gothp^1}}", from=2-4, to=3-4]
	\arrow["{{{{\mathop{p}\limits^{\rightarrow}}}}}", from=2-4, to=4-5]
	\arrow["{{{{\mathop{p}\limits^{\leftarrow}}}}}", from=3-2, to=4-1]
	\arrow["{{{{\mathop{p}\limits^{\rightarrow}}}}}", from=3-2, to=4-3]
	\arrow["{{{{\mathop{p}\limits^{\leftarrow}}}}}"', from=3-4, to=4-3]
	\arrow["{{{{\mathop{p}\limits^{\rightarrow}}}}}"', from=3-4, to=4-5]
\end{tikzcd}\]

Since blow up does not change the generic fiber, the map in the Indefinite Ihara lemma in Theorem~\ref{IIhara lemma} can be translated as the following three maps:
\begin{enumerate}
    \item $f_1: {\rm H}^4_{\acute{e}t}(\overline{\widetilde{Sh}}_{1,2}(K_\gothp^1),k_{\lambda}(3))_{\mathfrak{m}}\xra{\lp}{\rm H}^4_{\acute{e}t}(\overline{Sh}_{1,2},k_{\lambda}(3))_{\mathfrak{m}};$
    \item $f_2:{\rm H}^4_{\acute{e}t}(\overline{\widetilde{Sh}}_{1,2}(K_\gothp^1),k_{\lambda}(3))_{\mathfrak{m}}\xra{\rp}{\rm H}^4_{\acute{e}t}(\overline{Sh}_{1,2},k_{\lambda}(3))_{\mathfrak{m}};$
    \item $f_3:{\rm H}^4_{\acute{e}t}(\overline{\widetilde{Sh}}_{1,2}(K_\gothp^1),k_{\lambda}(3))_{\mathfrak{m}}\xra{{{\mathop{\widetilde{p}}\limits^{\leftarrow}}^{*}_{Iw}}}{\rm H}^4_{\acute{e}t}(\overline{\widetilde{Sh}}_{1,2}(Iw_\gothp),k_{\lambda}(3))_{\mathfrak{m}}\xra{{{\mathop{\widetilde{p}}\limits^{\rightarrow}}_{*,Iw}}}{\rm H}^4_{\acute{e}t}(\overline{\widetilde{Sh}}_{1,2}(K_\gothp^1),k_{\lambda}(3))_{\mathfrak{m}}\xra{\rp} {\rm H}^4_{\acute{e}t}(\overline{Sh}_{1,2},k_{\lambda}(3))_{\mathfrak{m}}.$
\end{enumerate}
By Theorem~\ref{IIhara lemma}, $(f_1,f_2,f_3)$ is surjective.

By \cite[Proposition 2.11, 2.13]{Sai03}, pushforward and pullback of a morphism preserves the monodromy filtraion. Since ${\Sh}_{1,2}$ is smooth and irreducible, the monodromy filtration concentrates. We get three maps
\begin{enumerate}
    \item $Gr_0f_{1}:Gr_0 {\rm H}^{4}_{\acute{e}t}(\overline{\widetilde{Sh}}_{1,2}(Iw_\gothp),k_\lambda(3))\xra{Gr_0\lp}Gr_0 {\rm H}^{4}_{\acute{e}t}(\overline{{Sh}}_{1,2},k_\lambda(3))_{\mathfrak{m}};$
    \item $Gr_0f_{2}:Gr_0 {\rm H}^{4}_{\acute{e}t}(\overline{\widetilde{Sh}}_{1,2}(Iw_\gothp),k_\lambda(3))\xra{Gr_0\rp}Gr_0 {\rm H}^{4}_{\acute{e}t}(\overline{{Sh}}_{1,2},k_\lambda(3))_{\mathfrak{m}};$
     \item $Gr_0f_3:Gr_0{\rm H}^4_{\acute{e}t}(\overline{\widetilde{Sh}}_{1,2}(K_\gothp^1),k_{\lambda}(3))_{\mathfrak{m}}\xra{Gr_0{{\mathop{\widetilde{p}}\limits^{\leftarrow}}^{*}_{Iw}}}Gr_0{\rm H}^4_{\acute{e}t}(\overline{\widetilde{Sh}}_{1,2}(Iw_\gothp),k_{\lambda}(3))_{\mathfrak{m}}\xra{Gr_0{{\mathop{\widetilde{p}}\limits^{\rightarrow}}_{*,Iw}}}$\\$Gr_0{\rm H}^4_{\acute{e}t}(\overline{\widetilde{Sh}}_{1,2}(K_\gothp^1),k_{\lambda}(3))_{\mathfrak{m}}\xra{Gr_0\rp} Gr_0{\rm H}^4_{\acute{e}t}(\overline{Sh}_{1,2},k_{\lambda}(3))_{\mathfrak{m}}.$
\end{enumerate}
\begin{proposition}\label{Gr_0:non-Frob-p^2}
    We have $(E_{1,\mathfrak{m}}^{0,4}(3))^{{\rm Fr}_{p^2}\neq p^{-2}}=({\rm H}^4_{\acute{e}t}(\overline\Sh_{1,2},k_\lambda(3))_{\mathfrak{m}}^{^{{\rm Fr}_{p^2}\neq p^{-2}}})^{\oplus 3}$ with each of $\overline{\rm Y}_{01},\overline{\rm Y}_{10},\overline{\rm Y}_{11}$ contributing to one of the factor and 
    $({E'}_{1,\mathfrak{m}}^{0,4}(3))^{{\rm Fr}_{p^2}\neq p^{-2}}=({\rm H}^4_{\acute{e}t}(\overline\Sh_{1,2},k_\lambda(3))_{\mathfrak{m}}^{^{{\rm Fr}_{p^2}\neq p^{-2}}})^{\oplus 6},$ with each of $\{\overline{\rm Z}_{ij};0\leq i,j\leq 2,i\neq j\}$ contributing to one of the factor. 
    % As a direct corollary, there are natural injections $({\rm H}^4_{\acute{e}t}(\overline\Sh_{1,2},k_\lambda(3))_{\mathfrak{m}}^{^{Frob_{p^2}^{*}\neq p^{-2}}})^{\oplus 3} \hookrightarrow Gr_0{\rm H}^{4}_{\acute{e}t}(\overline{\widetilde{Sh}}_{1,2}(K_\gothp^1),k_\lambda(3))$ and $({\rm H}^4_{\acute{e}t}(\overline\Sh_{1,2},k_\lambda(3))_{\mathfrak{m}}^{^{Frob_{p^2}^{*}\neq p^{-2}}})^{\oplus 6} \hookrightarrow Gr_0{\rm H}^{4}_{\acute{e}t}(\overline{\widetilde{Sh}}_{1,2}(Iw_\gothp),k_\lambda(3))$ induced by the construction of the weight spectral sequences and are exactly the $Frob_{p^2}^{*}\neq p^{-2}$ part of the graded pieces.
\end{proposition}
\begin{proof}
    First, we calculate $E_{1,\mathfrak{m}}^{0,4}(3).$ By definition, $E_{1,\mathfrak{m}}^{0,4}(3)={\rm H}^4_{\acute{e}t}(\overline{Y}^{(0)},k_\lambda(3))_{\mathfrak{m}}\oplus {\rm H}^{2}_{\acute{e}t}(\overline{Y}^{(2)},k_{\lambda}(2))_{\mathfrak{m}}.$ 
    By Corollary~\ref{Corresss}, Proposition~\ref{bloww up}, Proposition~\ref{G:blow up} and Corollary~\ref{GI:characterize of Y_11}, 
    \begin{align*}
        {\rm H}^4_{\acute{e}t}(\overline{Y}^{(0)},k_\lambda(3))_{\mathfrak{m}}=&{\rm H}^{4}_{\acute{e}t}(\overline{\widetilde{\rm Y}}_{11})_{\mathfrak{m}}\oplus {\rm H}^{4}_{\acute{e}t}(\overline{\widetilde{\rm Y}}_{10})_{\mathfrak{m}}\oplus {\rm H}^{4}_{\acute{e}t}(\overline{\widetilde{\rm Y}}_{01})_{\mathfrak{m}}\oplus {\rm H}^{4}_{\acute{e}t}(\overline{\widetilde{\rm Y}}_{00})_{\mathfrak{m}}\\=&{\rm H}^{0}(\overline\Sh_{0,3},k_{\lambda}(1))_{\mathfrak{m}}^{\oplus 7}\oplus {\rm H}^{4}_{\acute{e}t}(\overline\Sh_{1,2},k_{\lambda}(3))^{\oplus 3}_{\mathfrak{m}}.
    \end{align*}
    By Proposition~\ref{cohomologyy} and Proposition~\ref{G:blow up}, $${\rm H}^{2}_{\acute{e}t}(\overline{Y}^{(2)},k_{\lambda}(2))_{\mathfrak{m}}={\rm H}^{0}(\overline\Sh_{0,3},k_{\lambda}(1))_{\mathfrak{m}}^{\oplus 2}\oplus {\rm H}^{0}(\overline\Sh_{1,2}(K_\gothp^1),k_\lambda(1))_{\mathfrak{m}}^{\oplus 2}.$$
    Thus by Proposition~\ref{S:mod l cohomology}, $(E_{1,\mathfrak{m}}^{0,4}(3))^{{\rm Fr}_{p^2}\neq p^{-2}}=({\rm H}^4_{\acute{e}t}(\overline\Sh_{1,2},k_\lambda(3))_{\mathfrak{m}}^{^{{\rm Fr}_{p^2}\neq p^{-2}}})^{\oplus 3}.$

    Next, we calculate ${E'}_{1,\mathfrak{m}}^{0,4}(3).$ Since the morphism ${\mathop{p}\limits^{\rightarrow}}\circ{{\mathop{p}\limits^{\rightarrow}}_{Iw}}$ is ${\rm Fr}_{p^2}$-equivariant and the images of $\overline{\widetilde{\rm Z}}_{00},\overline{\widetilde{\rm Z}}_{11},\overline{\widetilde{\rm Z}}_{22}$  are contained in $\overline{\rm Y}_1,\overline{\rm Y}_{2},\overline{\rm Y}_3$ respectively, it is easy to see $${\rm H}^4_{\acute{e}t}(\overline{\widetilde{\rm Z}}_{00},k_\lambda(3))_{\mathfrak{m}},{\rm H}^4_{\acute{e}t}(\overline{\widetilde{\rm Z}}_{11},k_\lambda(3))_{\mathfrak{m}},{\rm H}^4_{\acute{e}t}(\overline{\widetilde{\rm Z}}_{22},k_\lambda(3))_{\mathfrak{m}}\subseteq ({E'}_{1,\mathfrak{m}}^{0,4}(3))^{{\rm Fr}_{p^2}=p^2}$$ by Proposition~\ref{A:Supsingular vanishing}. By Proposition~\ref{GI:blow up} and a similar reason as above, we can see 
    $$
    {\rm H}^4_{\acute{e}t}(\overline{Y}^{(0)},k_\lambda(3))_{\mathfrak{m}}^{{\rm Fr}_{p^2}\neq p^2}= \bigoplus\limits_{0\leq i,j\leq 2,i\neq j}{\rm H}^{4}_{\acute{e}t}(\overline{\widetilde{\rm Z}}_{ij},k_{\lambda}(3))_{\mathfrak{m}}^{{\rm Fr}_{p^2}\neq p^2}=({\rm H}^4_{\acute{e}t}(\overline\Sh_{1,2},k_\lambda(3))_{\mathfrak{m}}^{^{{\rm Fr}_{p^2}\neq p^{-2}}})^{\oplus 6}.
    $$ 
\end{proof}

\begin{proposition}\label{Cal of Gr_2}
 With appropriate basis on ${\rm H}^{4}_{\acute{e}t}(\overline{{Sh}}_{1,2},k_\lambda(3))^{\oplus 3}_{\mathfrak{m}},$ the image of the maps $$(Gr_0f_1,Gr_0f_2,Gr_0f_3):Gr_0 {\rm H}^{4}_{\acute{e}t}(\overline{\widetilde{Sh}}_{1,2}(Iw_\gothp),k_\lambda(3))\ra Gr_0 {\rm H}^{4}_{\acute{e}t}(\overline{{Sh}}_{1,2},k_\lambda(3))^{\oplus 3}_{\mathfrak{m}}={\rm H}^{4}_{\acute{e}t}(\overline{{Sh}}_{1,2},k_\lambda(3))^{\oplus 3}_{\mathfrak{m}}$$ is contained in ${\rm H}^{4}_{\acute{e}t}(\overline{{Sh}}_{1,2},k_\lambda(3))^{\oplus 2}_{\mathfrak{m}}\oplus (1-{\rm Fr}_{p^2}){\rm H}^{4}_{\acute{e}t}(\overline{{Sh}}_{1,2},k_\lambda(3))^{\oplus 3}_{\mathfrak{m}}.$
\end{proposition}
\begin{proof}
    For $1\leq i\leq 3,$ we use $f_{i,1},f_{i,2},f_{i,3}$ to denote the map from ${\rm H}^4_{\acute{e}t}(\overline{\Sh}_{1,2},k_\lambda(3))_{\mathfrak{m}}$ factor of $E_{1,\mathfrak{m}}^{0,4}$ to ${\rm H}^4_{\acute{e}t}(\overline{\Sh}_{1,2},k_\lambda(3))_{\mathfrak{m}}$ induced by $f_i$ and contributed by ${\rm Y}_{10},{\rm Y}_{01},{\rm Y}_{11}$ respectively. 

    Then it suffices to show that the mapping matrix $(f_{j,i})_{1\leq i,j\leq 3}$\footnote{Here rows are indexed by $i$ and columns are indexed by $j$} has image contained in $${\rm H}^{4}_{\acute{e}t}(\overline{{Sh}}_{1,2},k_\lambda(3))^{\oplus 2}_{\mathfrak{m}}\oplus (1-{\rm Fr}_{p^2}){\rm H}^{4}_{\acute{e}t}(\overline{{Sh}}_{1,2},k_\lambda(3))_{\mathfrak{m}}$$ with appropriate basis on ${\rm H}^{4}_{\acute{e}t}(\overline{{Sh}}_{1,2},k_\lambda(3))^{\oplus 3}_{\mathfrak{m}}.$ 
    Express the $\{f_{i,j}\}_{1\leq i,j\leq 3}$ explicitly, we have
    \begin{itemize}
        \item $f_{1,1}={\mathop{p}\limits^{\leftarrow}}_{10,*}\circ {\mathop{p}\limits^{\leftarrow}}_{10}^{*};$
        \item $f_{1,2}={\mathop{p}\limits^{\rightarrow}}_{10,*}={\mathop{p}\limits^{\rightarrow}}_{01,*}\circ (Fr')_{*}\circ {\mathop{p}\limits^{\leftarrow}}_{10}^{*};$
        \item $f_{1,3}=({\mathop{p}\limits^{\rightarrow}}_{01,*}\circ{{\mathop{p}\limits^{\rightarrow}}_{Iw,01,*}}\circ{{\mathop{p}\limits^{\leftarrow}}_{Iw,01}^{*}}+{{\mathop{p}\limits^{\rightarrow}}_{11,*}}\circ {{\mathop{p}\limits^{\rightarrow}}_{Iw,21,*}}\circ{{\mathop{p}\limits^{\leftarrow}}_{Iw,21}^{*}})\circ {\mathop{p}\limits^{\leftarrow}}_{10}^{*};$
        \item $f_{2,1}={\mathop{p}\limits^{\leftarrow}}_{01,*}={\mathop{p}\limits^{\leftarrow}}_{10,*}\circ (Fr'')_{*}\circ {\mathop{p}\limits^{\rightarrow}}_{01}^{*};$
        \item $f_{2,2}={\mathop{p}\limits^{\rightarrow}}_{01,*}\circ {\mathop{p}\limits^{\rightarrow}}_{01}^{*};$
        \item $f_{2,3}=({\mathop{p}\limits^{\rightarrow}}_{10,*}\circ{{\mathop{p}\limits^{\rightarrow}}_{Iw,10,*}}\circ{{\mathop{p}\limits^{\leftarrow}}_{Iw,10}^{*}}+{{\mathop{p}\limits^{\rightarrow}}_{11,*}}\circ {{\mathop{p}\limits^{\rightarrow}}_{Iw,12,*}}\circ{{\mathop{p}\limits^{\leftarrow}}_{Iw,12}^{*}})\circ {\mathop{p}\limits^{\rightarrow}}_{01}^{*};
        $
        \item $f_{3,1}={{\mathop{p}\limits^{\leftarrow}}_{11,*}}\circ i_{11};$
        \item $f_{3,2}={{\mathop{p}\limits^{\rightarrow}}_{11,*}}\circ i_{11};$
        \item $f_{3,3}=({\mathop{p}\limits^{\rightarrow}}_{10,*}\circ{{\mathop{p}\limits^{\rightarrow}}_{Iw,20,*}}\circ{{\mathop{p}\limits^{\leftarrow}}_{Iw,20}^{*}}+{{\mathop{p}\limits^{\rightarrow}}_{01,*}}\circ {{\mathop{p}\limits^{\rightarrow}}_{Iw,02,*}}\circ{{\mathop{p}\limits^{\leftarrow}}_{Iw,02}^{*}})\circ i_{11}.$
    \end{itemize}
    Here we denote by $i_{11}$ the inclusion of ${\rm H}^{4}_{\acute{e}t}(\overline{{Sh}}_{1,2},k_\lambda(3))^{\oplus 3}_{\mathfrak{m}}$ to ${\rm H}^{4}_{\acute{e}t}({\rm Y}_{11},k_\lambda(3))_{\mathfrak{m}}.$

    By Proposition~\ref{GI:Diagram commuting}, ${{\mathop{p}\limits^{\leftarrow}}_{Iw,01}}\circ Fr_{10\ra01}=Fr''\circ{{\mathop{p}\limits^{\leftarrow}}_{Iw,10}}$ and ${\mathop{p}\limits^{\rightarrow}}_{Iw,01}\circ Fr_{10\ra01}=Fr'\circ{\mathop{p}\limits^{\rightarrow}}_{Iw,10}.$ Then 
    \begin{align*}(Fr'\circ{\mathop{p}\limits^{\rightarrow}}_{Iw,10})_{*}\circ (Fr''\circ{{\mathop{p}\limits^{\leftarrow}}_{Iw,10}})^{*}=&({\mathop{p}\limits^{\rightarrow}}_{Iw,01}\circ Fr_{10\ra01})_{*}\circ({{\mathop{p}\limits^{\leftarrow}}_{Iw,01}}\circ Fr_{10\ra01})^{*}\\=& {\mathop{p}\limits^{\rightarrow}}_{Iw,01,*}\circ Fr_{10\ra01,*}\circ Fr_{10\ra01}^{*}\circ {{\mathop{p}\limits^{\leftarrow}}_{Iw,01}^{*}}\\=&p^2({\mathop{p}\limits^{\rightarrow}}_{Iw,01,*}\circ {{\mathop{p}\limits^{\leftarrow}}_{Iw,01}^{*}}),
    \end{align*} where the second equation comes from Proposition~\ref{GI:blow up}.
    Hence 
    \begin{equation}\label{f_11_1}
    {\mathop{p}\limits^{\rightarrow}}_{Iw,01,*}\circ {{\mathop{p}\limits^{\leftarrow}}_{Iw,01}^{*}}=\frac{1}{p^2}(Fr'\circ{\mathop{p}\limits^{\rightarrow}}_{Iw,10})_{*}\circ (Fr''\circ{{\mathop{p}\limits^{\leftarrow}}_{Iw,10}})^{*}.
    \end{equation}

    By Proposition~\ref{GI:Diagram commuting}, ${\mathop{p}\limits^{\rightarrow}}_{Iw,21}={\mathop{p}\limits^{\rightarrow}}_{Iw,12}\circ Fr_{21\ra12}.$ Hence
    \begin{equation}\label{f_11_2}{{\mathop{p}\limits^{\rightarrow}}_{Iw,21,*}}\circ{{\mathop{p}\limits^{\leftarrow}}_{Iw,21}^{*}}=({\mathop{p}\limits^{\rightarrow}}_{Iw,12}\circ Fr_{21\ra12})_{*}\circ {{\mathop{p}\limits^{\leftarrow}}_{Iw,21}^{*}}.
    \end{equation}

    By Proposition~\ref{GI:Diagram commuting}, ${\mathop{p}\limits^{\rightarrow}}_{Iw,21}={\mathop{p}\limits^{\rightarrow}}_{Iw,12}\circ Fr_{21\ra12}$ and $Fr'\circ {\mathop{p}\limits^{\leftarrow}}_{Iw,21}={\mathop{p}\limits^{\leftarrow}}_{Iw,12}\circ Fr_{21\ra12}.$ Then 
    \begin{align*}({\mathop{p}\limits^{\rightarrow}}_{Iw,12}\circ Fr_{21\ra12})_{*}\circ {{\mathop{p}\limits^{\leftarrow}}_{Iw,21}^{*}\circ Fr'^{*}}=&({\mathop{p}\limits^{\rightarrow}}_{Iw,12}\circ Fr_{21\ra12})_{*}\circ({{\mathop{p}\limits^{\leftarrow}}_{Iw,12}}\circ Fr_{21\ra12})^{*}\\=& {\mathop{p}\limits^{\rightarrow}}_{Iw,12,*}\circ Fr_{21\ra12,*}\circ Fr_{21\ra12}^{*}\circ {{\mathop{p}\limits^{\leftarrow}}_{Iw,12}^{*}}\\=&p^2({\mathop{p}\limits^{\rightarrow}}_{Iw,12,*}\circ {{\mathop{p}\limits^{\leftarrow}}_{Iw,12}^{*}}),
    \end{align*} where the second equation comes from Proposition~\ref{GI:blow up}.
    Hence
    \begin{equation}\label{f_23}{\mathop{p}\limits^{\rightarrow}}_{Iw,12,*}\circ {{\mathop{p}\limits^{\leftarrow}}_{Iw,12}^{*}}=\frac{1}{p^2}{\mathop{p}\limits^{\rightarrow}}_{Iw,12,*}\circ Fr_{21\ra12,*}\circ {\mathop{p}\limits^{\leftarrow}}_{Iw,21}^{*}\circ {Fr'}^{*}.
    \end{equation}

    By Proposition~\ref{GI:Diagram commuting},
    ${\mathop{p}\limits^{\leftarrow}}_{Iw,02}={\mathop{p}\limits^{\leftarrow}}_{Iw,20}\circ Fr_{02\ra20}.$ Hence
    \begin{equation}\label{f_33_1}
        {\mathop{p}\limits^{\rightarrow}}_{Iw,02,*}\circ{\mathop{p}\limits^{\leftarrow}}_{Iw,02}^{*}={\mathop{p}\limits^{\rightarrow}}_{Iw,02,*}\circ({\mathop{p}\limits^{\leftarrow}}_{Iw,20}\circ Fr_{02\ra20})^{*}.
    \end{equation}

    By proposition~\ref{GI:Diagram commuting},
    ${\mathop{p}\limits^{\leftarrow}}_{Iw,02}={\mathop{p}\limits^{\leftarrow}}_{Iw,20}\circ Fr_{02\ra20}$ and ${\mathop{p}\limits^{\rightarrow}}_{Iw,20}\circ Fr_{02\ra20}=Fr''\circ {\mathop{p}\limits^{\rightarrow}}_{Iw,02}.$ Then \begin{align*}
    (Fr''\circ {\mathop{p}\limits^{\rightarrow}}_{Iw,02})_{*}\circ({\mathop{p}\limits^{\leftarrow}}_{Iw,20}\circ Fr_{02\ra20})^{*}=&({\mathop{p}\limits^{\rightarrow}}_{Iw,20}\circ Fr_{02\ra20})_{*}\circ({\mathop{p}\limits^{\leftarrow}}_{Iw,20}\circ Fr_{02\ra20})^{*}\\=& {\mathop{p}\limits^{\rightarrow}}_{Iw,20,*}\circ Fr_{02\ra20,*}\circ Fr_{02\ra20}^{*}\circ {{\mathop{p}\limits^{\leftarrow}}_{Iw,20}^{*}}\\=&p^2({\mathop{p}\limits^{\rightarrow}}_{Iw,20,*}\circ {{\mathop{p}\limits^{\leftarrow}}_{Iw,20}^{*}}),
    \end{align*} where the second equation comes from Proposition~\ref{GI:blow up}.
    Hence
    \begin{equation}\label{f_33_2}{\mathop{p}\limits^{\rightarrow}}_{Iw,20,*}\circ {{\mathop{p}\limits^{\leftarrow}}_{Iw,20}^{*}}=\frac{1}{p^2}(Fr''\circ {\mathop{p}\limits^{\rightarrow}}_{Iw,02})_{*}\circ({\mathop{p}\limits^{\leftarrow}}_{Iw,20}\circ Fr_{02\ra20})^{*}.
    \end{equation}

    By proposition~\ref{GI:Diagram commuting}, ${\mathop{p}\limits^{\rightarrow}}_{11}\circ{\mathop{p}\limits^{\leftarrow}}_{Iw,20}={\mathop{p}\limits^{\leftarrow}}_{10}\circ{\mathop{p}\limits^{\rightarrow}}_{Iw,20}$ and ${\mathop{p}\limits^{\leftarrow}}_{11}\circ{\mathop{p}\limits^{\rightarrow}}_{Iw,21}={\mathop{p}\limits^{\rightarrow}}_{10}\circ{\mathop{p}\limits^{\leftarrow}}_{Iw,21}.$ Since ${\mathop{p}\limits^{\leftarrow}}_{Iw,20}$ and ${\mathop{p}\limits^{\rightarrow}}_{Iw,12}$ are invertible by Proposition~\ref{GI:blow up},
    \begin{equation}\label{ra p_11}{\mathop{p}\limits^{\rightarrow}}_{11}={\mathop{p}\limits^{\leftarrow}}_{10}\circ{\mathop{p}\limits^{\rightarrow}}_{Iw,20}\circ {\mathop{p}\limits^{\leftarrow}}_{Iw,20}^{-1}
    \end{equation} and 
    \begin{equation}\label{la p_11}{\mathop{p}\limits^{\leftarrow}}_{11}\circ Fr_{p^2}={\mathop{p}\limits^{\rightarrow}}_{10}\circ{\mathop{p}\limits^{\leftarrow}}_{Iw,21}\circ Fr_{12\ra21}\circ{\mathop{p}\limits^{\rightarrow}}_{Iw,12}^{-1}.
    \end{equation}

    Multiplying the matrix
    $\begin{pmatrix}1&-{\mathop{p}\limits^{\leftarrow}}_{10,*}\circ Fr''_{*}\circ{\mathop{p}\limits^{\rightarrow}}_{01}^{*}&0\\
        0&1&0\\
        0&0&1\\
    \end{pmatrix}$ on the left, we get $(f_{j,i,1})_{1\leq i,j\leq 3}$ satisfying only the elements of the second column changed with $f_{2,1,1}=0,f_{2,2,1}=1-\Frob_{p^2,*}$ and 
    \begin{equation}\label{f_2,3,1}
        \begin{aligned}
        f_{2,3,1}=&f_{2,3}+\frac{1}{p^2}{\mathop{p}\limits^{\rightarrow}}_{01,*}\circ(Fr'\circ{\mathop{p}\limits^{\rightarrow}}_{Iw,10})_{*}\circ (Fr''\circ{{\mathop{p}\limits^{\leftarrow}}_{Iw,10}})^{*}\circ{\mathop{p}\limits^{\leftarrow}}_{10}^{*}\circ{\mathop{p}\limits^{\leftarrow}}_{10,*}\circ  (-Fr''_{*}\circ{\mathop{p}\limits^{\rightarrow}}_{01}^{*})+\\&{{\mathop{p}\limits^{\rightarrow}}_{11,*}}\circ ({\mathop{p}\limits^{\rightarrow}}_{Iw,12}\circ Fr_{21\ra12})_{*}\circ {{\mathop{p}\limits^{\leftarrow}}_{Iw,21}^{*}}\circ {\mathop{p}\limits^{\leftarrow}}_{10}^{*}\circ{\mathop{p}\limits^{\leftarrow}}_{10,*}\circ  (-Fr''_{*}\circ{\mathop{p}\limits^{\rightarrow}}_{01}^{*})
    \end{aligned}
    \end{equation}
    
    by \eqref{f_11_1} and \eqref{f_11_2}.

    Note that for $x\in {\rm H}^{4}_{\acute{e}t}(\overline{{Sh}}_{1,2},k_\lambda(3))_{\mathfrak{m}},$ $({\mathop{p}\limits^{\rightarrow}}_{01,*}\circ Fr''^{*}\circ{\mathop{p}\limits^{\leftarrow}}_{10}^{*}\circ{\mathop{p}\limits^{\leftarrow}}_{10,*}\circ (Fr''_{*}\circ{\mathop{p}\limits^{\rightarrow}}_{01}^{*}))x=0$ if $(Fr''_{*}\circ{\mathop{p}\limits^{\rightarrow}}_{01}^{*})x$ is contained in the ${\rm H}^{0}_{\acute{e}t}(\overline{\Sh}_{0,3},k_\lambda(1))_{\mathfrak{m}}$ factor of ${\rm H}^{4}_{\acute{e}t}(\overline{\rm Y}_{10},k_\lambda(3))_{\mathfrak{m}}.$ Then since ${\mathop{p}\limits^{\rightarrow}}_{01,*}\circ Fr''^{*}\circ (Fr''_{*}\circ{\mathop{p}\limits^{\rightarrow}}_{01}^{*})=p^4,$ we have $x\in {\rm H}^{4}_{\acute{e}t}(\overline{{Sh}}_{1,2},k_\lambda(3))^{{\rm Fr}_{p^2}= p^{-2}}_{\mathfrak{m}}.$ Otherwise if the image of $x$ is not zero, $({\mathop{p}\limits^{\rightarrow}}_{01,*}\circ Fr''^{*}\circ{\mathop{p}\limits^{\leftarrow}}_{10}^{*}\circ{\mathop{p}\limits^{\leftarrow}}_{10,*}\circ (Fr''_{*}\circ{\mathop{p}\limits^{\rightarrow}}_{01}^{*}))x={\mathop{p}\limits^{\rightarrow}}_{01,*}\circ Fr''^{*}\circ (Fr''_{*}\circ{\mathop{p}\limits^{\rightarrow}}_{01}^{*})x=p^4(x)$ if we consider in the image modulo $(1-{\rm Fr}_{p^2})$ Thus
    it suffices for us to prove the proposition by enlarging the map to be $p^4.$ Thus the second factor in \eqref{f_2,3,1} can be enlarged to be $p^2{\mathop{p}\limits^{\rightarrow}}_{01,*}\circ(Fr'\circ{\mathop{p}\limits^{\rightarrow}}_{Iw,10})_{*}\circ{\mathop{p}\limits^{\leftarrow}}_{Iw,10}^*
    \circ  {\mathop{p}\limits^{\rightarrow}}_{01}^{*}=p^2{\mathop{p}\limits^{\rightarrow}}_{01,*}\circ Fr'_*\circ{{\mathop{p}\limits^{\leftarrow}}_{10}^*}$ with the last equation coming from ${\mathop{p}\limits^{\rightarrow}}_{Iw,10,*}\circ{\mathop{p}\limits^{\leftarrow}}_{Iw,10}^*=1.$ 

    By Proposition~\ref{GI:Diagram commuting} and Construction~\ref{12-20}, $Fr_{21\ra12}=\Phi^{-1}_{12\ra20}\circ Fr_{02\ra20}\circ\Phi_{21\ra02}$ and ${{\mathop{p}\limits^{\rightarrow}}_{Iw,20}}\circ ({{\mathop{p}\limits^{\leftarrow}}_{Iw,20}})^{-1}\circ {{\mathop{p}\limits^{\rightarrow}}_{Iw,12}}\circ \Phi_{12\ra20}^{-1}\circ Fr_{02\ra20}=\Phi_{10\ra10}\circ Fr''\circ {{\mathop{p}\limits^{\rightarrow}}_{Iw,02}}.$ Thus by \eqref{ra p_11}, the third factor of \eqref{f_2,3,1} is equal to ${\mathop{p}\limits^{\leftarrow}}_{10,*}\circ(\Phi_{10\ra10}\circ Fr''\circ {{\mathop{p}\limits^{\rightarrow}}_{Iw,02}})_{*}\circ \Phi_{21\ra02,*}\circ {{\mathop{p}\limits^{\leftarrow}}_{Iw,21}^{*}}\circ {\mathop{p}\limits^{\leftarrow}}_{10}^{*}\circ{\mathop{p}\limits^{\leftarrow}}_{10,*}\circ(-Fr''_{*}\circ{\mathop{p}\limits^{\rightarrow}}_{01}^{*})$
    By the same reason as above, it can be enlarged to be ${\mathop{p}\limits^{\leftarrow}}_{10,*}\circ(\Phi_{10\ra10}\circ Fr''\circ {{\mathop{p}\limits^{\rightarrow}}_{Iw,02}})_{*}\circ \Phi_{21\ra02,*}\circ {{\mathop{p}\limits^{\leftarrow}}_{Iw,21}^{*}}\circ(-Fr''_{*}\circ{\mathop{p}\limits^{\rightarrow}}_{01}^{*}).$ 

    Since ${{\mathop{p}\limits^{\leftarrow}}_{10}}\circ {{\mathop{p}\limits^{\rightarrow}}_{Iw,10}}={{\mathop{p}\limits^{\rightarrow}}_{01}}\circ {{\mathop{p}\limits^{\leftarrow}}_{Iw,10}},$ ${{\mathop{p}\limits^{\rightarrow}}_{Iw,10,*}}\circ {{\mathop{p}\limits^{\leftarrow}}_{Iw,10}^*}\circ {{\mathop{p}\limits^{\rightarrow}}_{01}^*}={{\mathop{p}\limits^{\rightarrow}}_{Iw,10,*}}\circ({{\mathop{p}\limits^{\leftarrow}}_{10}}\circ {{\mathop{p}\limits^{\rightarrow}}_{Iw,10}})^{*}={{\mathop{p}\limits^{\leftarrow}}_{10}^*}.$ Hence 
    \begin{equation}\label{f_2,3}
        f_{2,3}=
    {{\mathop{p}\limits^{\rightarrow}}_{01,*}}\circ Fr'_{*}\circ {{\mathop{p}\limits^{\leftarrow}}_{10}}^{*}+\frac{1}{p^2} (\Phi_{10\ra10}\circ Fr''\circ {{\mathop{p}\limits^{\rightarrow}}_{Iw,02}})_{*}\circ \Phi_{21\ra02,*}\circ {{\mathop{p}\limits^{\leftarrow}}_{Iw,21}^{*}}\circ {Fr'}^{*}\circ {{\mathop{p}\limits^{\rightarrow}}_{01}}^{*}
    \end{equation}

     Multiplying the matrix
    $\begin{pmatrix}1&0&-{{\mathop{p}\limits^{\leftarrow}}_{11,*}}\circ i_{11}\\
        0&1&0\\
        0&0&1\\
    \end{pmatrix}$ on the right, we get $(f_{j,i,2})_{1\leq i,j\leq 3}$ satisfying only the elements of the third column changed with $f_{3,1,2}=0;$
    \begin{equation}\label{f_{3,2,2,}}
        f_{3,2,2}={{\mathop{p}\limits^{\rightarrow}}_{11,*}}\circ i_{11}-{\mathop{p}\limits^{\rightarrow}}_{01,*}\circ (Fr')_{*}\circ {\mathop{p}\limits^{\leftarrow}}_{10}^{*}\circ{{\mathop{p}\limits^{\leftarrow}}_{11,*}}\circ i_{11};
    \end{equation}
    and 
    % ({\mathop{p}\limits^{\rightarrow}}_{01,*}\circ{{\mathop{p}\limits^{\rightarrow}}_{Iw,01,*}}\circ{{\mathop{p}\limits^{\leftarrow}}_{Iw,01}^{*}}+{{\mathop{p}\limits^{\rightarrow}}_{11,*}}\circ {{\mathop{p}\limits^{\rightarrow}}_{Iw,21,*}}\circ{{\mathop{p}\limits^{\leftarrow}}_{Iw,21}^{*}})
    % ({\mathop{p}\limits^{\rightarrow}}_{10,*}\circ{{\mathop{p}\limits^{\rightarrow}}_{Iw,20,*}}\circ{{\mathop{p}\limits^{\leftarrow}}_{Iw,20}^{*}}+{{\mathop{p}\limits^{\rightarrow}}_{01,*}}\circ {{\mathop{p}\limits^{\rightarrow}}_{Iw,02,*}}\circ{{\mathop{p}\limits^{\leftarrow}}_{Iw,02}^{*}})\circ i_{11}
    \begin{equation}
    f_{3,3,2}=f_{3,3}-f_{1,3}\circ{{\mathop{p}\limits^{\leftarrow}}_{11,*}}\circ i_{11}.
    \end{equation}

    By \eqref{f_33_2} and \eqref{f_33_1}, we have 
    \begin{equation}\label{f_33}
        \begin{aligned}
            f_{3,3}=&({\mathop{p}\limits^{\rightarrow}}_{10,*}\circ{{\mathop{p}\limits^{\rightarrow}}_{Iw,20,*}}\circ{{\mathop{p}\limits^{\leftarrow}}_{Iw,20}^{*}}+{{\mathop{p}\limits^{\rightarrow}}_{01,*}}\circ {{\mathop{p}\limits^{\rightarrow}}_{Iw,02,*}}\circ{{\mathop{p}\limits^{\leftarrow}}_{Iw,02}^{*}})\circ i_{11}\\=&\frac{1}{p^2}{\mathop{p}\limits^{\rightarrow}}_{01,*}\circ \Frob_{p^2,*}\circ {\mathop{p}\limits^{\rightarrow}}_{Iw,02,*}\circ Fr_{02\ra20}^{*}\circ{\mathop{p}\limits^{\leftarrow}}_{Iw,20}^*\circ i_{11}+{{\mathop{p}\limits^{\rightarrow}}_{01,*}}\circ{\mathop{p}\limits^{\rightarrow}}_{Iw,02,*}\circ({\mathop{p}\limits^{\leftarrow}}_{Iw,20}\circ Fr_{02\ra20})^{*}\circ i_{11}\\=&{\mathop{p}\limits^{\rightarrow}}_{01,*}\circ (\frac{1}{p^2}\Frob_{p^2,*}+1)\circ {\mathop{p}\limits^{\rightarrow}}_{Iw,02,*}\circ Fr_{02\ra20}^{*}\circ{\mathop{p}\limits^{\leftarrow}}_{Iw,20}^*\circ i_{11}.
        \end{aligned}
    \end{equation}

    By \eqref{f_11_1}, we have
    \begin{equation}
    f_{1,3}\circ{{\mathop{p}\limits^{\leftarrow}}_{11,*}}\circ i_{11}={{\mathop{p}\limits^{\rightarrow}}_{11,*}}\circ {{\mathop{p}\limits^{\rightarrow}}_{Iw,21,*}}\circ{{\mathop{p}\limits^{\leftarrow}}_{Iw,21}^{*}}\circ {\mathop{p}\limits^{\leftarrow}}_{10}^{*}\circ {{\mathop{p}\limits^{\leftarrow}}_{11,*}}\circ i_{11}+\frac{1}{p^2}{\mathop{p}\limits^{\rightarrow}}_{01,*}\circ Fr'_{*}\circ {Fr''}^*\circ {\mathop{p}\limits^{\leftarrow}}_{10}^{*}\circ{{\mathop{p}\limits^{\leftarrow}}_{11,*}}\circ i_{11}.
    \end{equation}
    
    Multiplying the matrix
    $\begin{pmatrix}
        1&0&0\\
        -f_{1,2,2}&1&0\\
        -f_{1,3,2}&-{{\mathop{p}\limits^{\rightarrow}}_{01,*}}\circ Fr'_{*}\circ {{\mathop{p}\limits^{\leftarrow}}_{10}}^{*}&1\\
    \end{pmatrix}$ on the left, we get $(f_{j,i,3})_{1\leq i,j\leq 3}$ such that $f_{1,1,3}=1,f_{1,2,3}=f_{1,3,3}=f_{2,1,3}=f_{3,1,3}=0$ and $f_{2,2,3}=f_{2,2,2}$ and $f_{3,2,3}=f_{3,2,2}.$ Moreover,
    \begin{equation}\label{f_2,3,3}
        \begin{aligned}
            f_{2,3,3}=&(1-p^2){{\mathop{p}\limits^{\rightarrow}}_{01,*}}\circ Fr'_{*}\circ {\mathop{p}\limits^{\leftarrow}}_{10}^{*}-{{\mathop{p}\limits^{\rightarrow}}_{01,*}}\circ Fr'_{*}\circ {{\mathop{p}\limits^{\rightarrow}}_{01}}^{*}\circ (1-\Frob_{p^2,*})
            \\&+{\mathop{p}\limits^{\leftarrow}}_{10,*}\circ(\Phi_{10\ra10}\circ Fr''\circ {{\mathop{p}\limits^{\rightarrow}}_{Iw,02}})_{*}\circ \Phi_{21\ra02,*}\circ {{\mathop{p}\limits^{\leftarrow}}_{Iw,21}^{*}}\circ (\frac{1}{p^2} {Fr'}^{*}-Fr''_{*})\circ {{\mathop{p}\limits^{\rightarrow}}_{01}}^{*}.
        \end{aligned}
    \end{equation}
    Since ${Fr''}^*,{\Frob_{p^2}^*}$ are invertible on cohomology groups,
    \begin{align*}
         (1-p^2){{\mathop{p}\limits^{\rightarrow}}_{01,*}}\circ Fr'_{*}\circ {\mathop{p}\limits^{\leftarrow}}_{10}^{*}-{{\mathop{p}\limits^{\rightarrow}}_{01,*}}\circ Fr'_{*}\circ {{\mathop{p}\limits^{\rightarrow}}_{01}}^{*}\circ (1-\Frob_{p^2,*})=&{{\mathop{p}\limits^{\rightarrow}}_{01,*}}\circ Fr'_{*}\circ {\mathop{p}\limits^{\leftarrow}}_{10}^{*}(p^2-\Frob_{p^2,*})\\=&-p^8(1-p^{-6}\Frob_{p^2}^*)\circ{{\mathop{p}\limits^{\rightarrow}}_{01,*}}\circ Fr'_{*}\circ {\mathop{p}\limits^{\leftarrow}}_{10}^{*}\circ {\Frob_{p^2}^*}^{-1},
    \end{align*} 
    and \begin{align*}
        (\Phi_{10\ra10}\circ Fr''\circ {{\mathop{p}\limits^{\rightarrow}}_{Iw,02}})_{*}\circ \Phi_{21\ra02,*}\circ {{\mathop{p}\limits^{\leftarrow}}_{Iw,21}^{*}}\circ (\frac{1}{p^2} {Fr'}^{*}-Fr''_{*})\circ {{\mathop{p}\limits^{\rightarrow}}_{01}}^{*}\\=-p^4(1-p^{-6}\Frob_{p^2}^*)(\Phi_{10\ra10}\circ Fr''\circ {{\mathop{p}\limits^{\rightarrow}}_{Iw,02}})_{*}\circ \Phi_{21\ra02,*}\circ {{\mathop{p}\limits^{\leftarrow}}_{Iw,21}^{*}}\circ {{Fr''}^*}^{-1}\circ {{\mathop{p}\limits^{\rightarrow}}_{01}}^{*}.
    \end{align*}
    They all have image contained in $(1-p^{-6}\Frob_{p^2}^*){\rm H}^{4}_{\acute{e}t}(\overline{{Sh}}_{1,2},k_\lambda(3))_\mathfrak{m}=(1-{\rm Fr}_{p^2}){\rm H}^{4}_{\acute{e}t}(\overline{{Sh}}_{1,2},k_\lambda(3))_{\mathfrak{m}}.$ Thus $f_{2,3,3}$ has image 
    contained in $(1-\Frob_{p^2}^{*}){\rm H}^{4}_{\acute{e}t}(\overline{{Sh}}_{1,2},k_\lambda(3))_{\mathfrak{m}}.$ 
    
   Now we calculate $f_{3,3,3}.$ By \eqref{ra p_11} and \eqref{f_33_2},
   \begin{equation}
       \begin{aligned}{{\mathop{p}\limits^{\rightarrow}}_{01,*}}\circ Fr'_{*}\circ {{\mathop{p}\limits^{\leftarrow}}_{10}}^{*}\circ {{\mathop{p}\limits^{\rightarrow}}_{11,*}}\circ i_{11}
       =&{{\mathop{p}\limits^{\rightarrow}}_{01,*}}\circ Fr'_{*}\circ {{\mathop{p}\limits^{\leftarrow}}_{10}}^{*}\circ ({\mathop{p}\limits^{\leftarrow}}_{10}\circ{\mathop{p}\limits^{\rightarrow}}_{Iw,20}\circ {\mathop{p}\limits^{\leftarrow}}_{Iw,20}^{-1})_{*}\circ i_{11}
       \\=&{{\mathop{p}\limits^{\rightarrow}}_{01,*}}\circ Fr'_{*}\circ{\mathop{p}\limits^{\rightarrow}}_{Iw,20,*}\circ {\mathop{p}\limits^{\leftarrow}}_{Iw,20}^{*}\circ i_{11}\\=&\frac{1}{p^2}{{\mathop{p}\limits^{\rightarrow}}_{01,*}}\circ Fr'_{*}\circ{Fr''}_{*}\circ{\mathop{p}\limits^{\rightarrow}}_{Iw,02,*}\circ {\mathop{p}\limits^{\leftarrow}}_{Iw,02}^{*}\circ i_{11}.
       \end{aligned}
   \end{equation} 
    Then by \eqref{ra p_11},
    \begin{equation}
        \begin{aligned}
            {{\mathop{p}\limits^{\rightarrow}}_{01,*}} \circ{\mathop{p}\limits^{\rightarrow}}_{Iw,02,*}\circ {\mathop{p}\limits^{\leftarrow}}_{Iw,02}^{*}\circ i_{11}-\frac{1}{p^2}{\mathop{p}\limits^{\rightarrow}}_{01,*}\circ Fr'_{*}\circ {Fr''}^*\circ {\mathop{p}\limits^{\leftarrow}}_{10}^{*}\circ{{\mathop{p}\limits^{\leftarrow}}_{11,*}}\circ i_{11}-\\{{\mathop{p}\limits^{\rightarrow}}_{01,*}}\circ Fr'_{*}\circ {{\mathop{p}\limits^{\leftarrow}}_{10}}^{*}\circ ({{\mathop{p}\limits^{\rightarrow}}_{11,*}}\circ i_{11}-{\mathop{p}\limits^{\rightarrow}}_{01,*}\circ (Fr')_{*}\circ {\mathop{p}\limits^{\leftarrow}}_{10}^{*}\circ{{\mathop{p}\limits^{\leftarrow}}_{11,*}}\circ i_{11})\\=(1-\frac{1}{p^2}\Frob_{p^2,*})\circ {{\mathop{p}\limits^{\rightarrow}}_{01,*}}\circ{\mathop{p}\limits^{\rightarrow}}_{Iw,02,*}\circ {\mathop{p}\limits^{\leftarrow}}_{Iw,02}^{*}\circ i_{11}-{\mathop{p}\limits^{\rightarrow}}_{01,*}\circ Fr'_{*}\circ (\frac{1}{p^2}{Fr''}^*-Fr'_{*})\circ {\mathop{p}\limits^{\leftarrow}}_{10}^{*}\circ{{\mathop{p}\limits^{\leftarrow}}_{11,*}}\circ i_{11}
        \end{aligned}
    \end{equation}
    Hence as what we do in \eqref{f_2,3,3}, this map has image in $(1-\Frob_{p^2}^{*}){\rm H}^{4}_{\acute{e}t}(\overline{{Sh}}_{1,2},k_\lambda(3))_{\mathfrak{m}}.$

    Now it suffices to show that 
    ${\mathop{p}\limits^{\rightarrow}}_{01,*}\circ \frac{1}{p^2}\Frob_{p^2,*}\circ {\mathop{p}\limits^{\rightarrow}}_{Iw,02,*}\circ Fr_{02\ra20}^{*}\circ{\mathop{p}\limits^{\leftarrow}}_{Iw,20}^*\circ i_{11}-{{\mathop{p}\limits^{\rightarrow}}_{11,*}}\circ {{\mathop{p}\limits^{\rightarrow}}_{Iw,21,*}}\circ{{\mathop{p}\limits^{\leftarrow}}_{Iw,21}^{*}}\circ {\mathop{p}\limits^{\leftarrow}}_{10}^{*}\circ {{\mathop{p}\limits^{\leftarrow}}_{11,*}}\circ i_{11}$ has image in $(1-{\rm Fr}_{p^2}){\rm H}^{4}_{\acute{e}t}(\overline{{Sh}}_{1,2},k_\lambda(3))_{\mathfrak{m}}.$

   By \eqref{f_33_2} and \eqref{ra p_11}, ${{\mathop{p}\limits^{\rightarrow}}_{11,*}}\circ {{\mathop{p}\limits^{\rightarrow}}_{Iw,21,*}}\circ{{\mathop{p}\limits^{\leftarrow}}_{Iw,21}^{*}}\circ {\mathop{p}\limits^{\leftarrow}}_{10}^{*}\circ {{\mathop{p}\limits^{\leftarrow}}_{11,*}}\circ i_{11}=\frac{1}{p^2}{{\mathop{p}\limits^{\leftarrow}}_{10,*}}\circ Fr''_*\circ {{\mathop{p}\limits^{\rightarrow}}_{Iw,02,*}}\circ Fr_{02\ra20}^* \circ {{\mathop{p}\limits^{\leftarrow}}_{Iw,20}^{*}}\circ{{\mathop{p}\limits^{\rightarrow}}_{Iw,21,*}}\circ{{\mathop{p}\limits^{\leftarrow}}_{Iw,21}^{*}}\circ {\mathop{p}\limits^{\leftarrow}}_{10}^{*}\circ {{\mathop{p}\limits^{\leftarrow}}_{11,*}}\circ i_{11}.$
   
    By Proposition~\ref{GI:Diagram commuting} and \eqref{f_11_2},
    \begin{equation}
        \begin{aligned}
            Fr_{02\ra20}^* \circ {{\mathop{p}\limits^{\leftarrow}}_{Iw,20}^{*}}\circ{{\mathop{p}\limits^{\rightarrow}}_{Iw,21,*}}\circ{{\mathop{p}\limits^{\leftarrow}}_{Iw,21}^{*}}=&Fr_{02\ra20}^* \circ {{\mathop{p}\limits^{\leftarrow}}_{Iw,20}^{*}}\circ{\mathop{p}\limits^{\rightarrow}}_{Iw,12,*}\circ Fr_{21\ra12,*}\circ {{\mathop{p}\limits^{\leftarrow}}_{Iw,21}^{*}}\\=&
            Fr_{02\ra20}^* \circ {{\mathop{p}\limits^{\leftarrow}}_{Iw,20}^{*}}\circ{\mathop{p}\limits^{\leftarrow}}_{Iw,20,*}\circ{\Phi_{11\ra11,*}^{-1}}\circ\Phi_{12\ra20,*}\circ Fr_{21\ra12,*}\circ {{\mathop{p}\limits^{\leftarrow}}_{Iw,21}^{*}}\\=&
            Fr_{02\ra20}^* \circ Fr_{02\ra20,*}\circ{\Phi_{02\ra02,*}^{-1}}\circ  \Phi_{21\ra02,*}\circ{{\mathop{p}\limits^{\leftarrow}}_{Iw,21}^{*}}\\=&p^2{\Phi_{02\ra02,*}^{-1}}\circ \Phi_{21\ra02,*}\circ{{\mathop{p}\limits^{\leftarrow}}_{Iw,21}^{*}}.
        \end{aligned}
    \end{equation}

    Thus by \eqref{la p_11} and Proposition~\ref{GI:Diagram commuting} and an appropriate base change such that $\Phi_{02\ra02,*},\Phi_{11\ra11,*}$ are identity by Hypothesis~\ref{Main hypo}, 
    \begin{equation}
        \begin{aligned}
            {{\mathop{p}\limits^{\rightarrow}}_{11,*}}\circ {{\mathop{p}\limits^{\rightarrow}}_{Iw,21,*}}\circ{{\mathop{p}\limits^{\leftarrow}}_{Iw,21}^{*}}\circ {\mathop{p}\limits^{\leftarrow}}_{10}^{*}\circ {{\mathop{p}\limits^{\leftarrow}}_{11,*}}\circ i_{11}=&{{\mathop{p}\limits^{\leftarrow}}_{10,*}}\circ Fr''_*\circ {{\mathop{p}\limits^{\rightarrow}}_{Iw,02,*}}\circ{\Phi_{02\ra02,*}^{-1}}\circ p^2\circ  {{\mathop{p}\limits^{\rightarrow}}_{Iw,02}^{*}}\circ Fr'_{*} \circ{{\mathop{p}\limits^{\leftarrow}}_{Iw,21}^{*}}\circ Fr_{12\ra 21,*}\\&
            \circ {\mathop{p}\limits^{\rightarrow}}_{Iw,12,*}^{-1}\circ \Frob_{p^2,*}^{-1}
            \circ i_{11}\\=&{{\mathop{p}\limits^{\leftarrow}}_{10,*}}\circ p^2 \circ{{\mathop{p}\limits^{\leftarrow}}_{Iw,21}^{*}}\circ Fr_{12\ra 21,*}
            \circ {\mathop{p}\limits^{\rightarrow}}_{Iw,12,*}^{-1}\circ i_{11}\\=&{\mathop{p}\limits^{\rightarrow}}_{01,*}\circ \Frob_{p^2,*}\circ {\mathop{p}\limits^{\rightarrow}}_{Iw,02,*}\circ Fr_{02\ra20}^{*}\circ{\mathop{p}\limits^{\leftarrow}}_{Iw,20}^*\circ i_{11}
        \end{aligned}
    \end{equation}
    Thus \begin{equation}
        \begin{aligned}
            {\mathop{p}\limits^{\rightarrow}}_{01,*}\circ \frac{1}{p^2}\Frob_{p^2,*}\circ {\mathop{p}\limits^{\rightarrow}}_{Iw,02,*}\circ Fr_{02\ra20}^{*}\circ{\mathop{p}\limits^{\leftarrow}}_{Iw,20}^*\circ i_{11}-\\{{\mathop{p}\limits^{\rightarrow}}_{11,*}}\circ {{\mathop{p}\limits^{\rightarrow}}_{Iw,21,*}}\circ{{\mathop{p}\limits^{\leftarrow}}_{Iw,21}^{*}}\circ {\mathop{p}\limits^{\leftarrow}}_{10}^{*}\circ {{\mathop{p}\limits^{\leftarrow}}_{11,*}}\circ i_{11}=&{\mathop{p}\limits^{\rightarrow}}_{01,*}\circ (1-\frac{1}{p^2}\Frob_{p^2,*})\circ {\mathop{p}\limits^{\rightarrow}}_{Iw,02,*}\circ Fr_{02\ra20}^{*}\circ{\mathop{p}\limits^{\leftarrow}}_{Iw,20}^*\circ i_{11}
        \end{aligned}
    \end{equation}
    This shows that it has image in $(1-{\rm Fr}_{p^2}){\rm H}^{4}_{\acute{e}t}(\overline{{Sh}}_{1,2},k_\lambda(3))_{\mathfrak{m}}.$ Hence we finish the proof
\end{proof}
\begin{lemma}
\label{fil}
    Suppose we are given two groups $A$ and $B$ and their filtrations $\Fil_{A,\bullet}$ and $\Fil_{B,\bullet}$ satisfying:
    \begin{enumerate}
        \item $0=\Fil_{-3}A\subseteq \Fil_{-2}A =\Fil_{-1} A\subseteq \Fil_{0} A= \Fil_{1} A\subseteq \Fil_{2} A=A.$
        \item $0=\Fil_{-1} B\subseteq \Fil_{0} B= B.$
    \end{enumerate}
    Every surjective map $A\to B$ preserving filtrations induces a surjective map
    $gr_{2}A\twoheadrightarrow\coker(gr_{0}A\rightarrow gr_{0}B).$
\end{lemma}
\begin{proof}
    In fact, we have
    \[gr_{2}(A)=\frac{\Fil_{2}A}{\Fil_{1}A}=\frac{A}{\Fil_{0}A}\twoheadrightarrow\frac{B}{\Im(\Fil_{0}A\rightarrow B)}=\frac{gr_{0}B}{\Im(gr_{0}A\rightarrow gr_{0}B)}=\coker(gr_{0}A\rightarrow gr_{0}B).\]
\end{proof}

By Lemma~\ref{fil}, there is a sujective map $\Phi:E_{2,\mathfrak{m}}^{-2,6}=E_{\infty,\mathfrak{m}}^{-2,6}\twoheadrightarrow H^1(\FF_{p^2},{\rm H}^4_{\acute{e}t}(\overline\Sh_{1,2},k_\lambda(3))_{\mathfrak{m}}).$
% \begin{remark}\label{characterize of phi}
%     By construction, it can be checked by Mayer-Vietoris sequence that $\Phi$ is exactly $\Phi'$ in the following diagram:
%     \[\begin{tikzcd}
%     [
%     column sep=small, 
%     row sep=scriptsize,
%     cells={font=\footnotesize} % Uniform font size
% ]
% 	0 & {E_{2,\mathfrak{m}}^{-2,6}} & {{\rm H}^4_{\acute{e}t}(\overline{\widetilde\Sh}_{1,2},k_\lambda(3))_\mathfrak{m}} & {{\rm H}^4_{\acute{e}t}(\overline{\widetilde{\rm Y}}_{00})_{\mathfrak{m}}\oplus {\rm H}^4_{\acute{e}t}(\overline{\widetilde{\rm Y}}_{10})_{\mathfrak{m}}\oplus {\rm H}^4_{\acute{e}t}(\overline{\widetilde{\rm Y}}_{01})_{\mathfrak{m}}\oplus {\rm H}^4_{\acute{e}t}(\overline{\widetilde{\rm Y}}_{11})_{\mathfrak{m}}} \\
% 	0 & {H^1(\FF_{p^2},{\rm H}^4_{\acute{e}t}(\overline{\Sh}_{1,2},k_\lambda(3))_\mathfrak{m})} & {{\rm H}^4_{\acute{e}t}(\overline{\Sh}_{1,2})_\mathfrak{m}^{\oplus 3}}
% 	\arrow[from=1-2, to=1-1]
% 	\arrow["{\Phi'}", from=1-2, to=2-2]
% 	\arrow[from=1-3, to=1-2]
% 	\arrow["\psi", from=1-3, to=2-3]
% 	\arrow["\phi"', from=1-4, to=1-3]
% 	\arrow[from=2-2, to=2-1]
% 	\arrow[from=2-3, to=2-2]
% \end{tikzcd}\]
% Here $\phi$ is induced by closed immersions and $\psi$ is the map in Theorem~\ref{I3}. The first element in the second row is the cokernel of $\psi\circ \phi.$
% \end{remark}

\begin{notation}
    Recall that $\widetilde\cSh_{1,2}(K_\gothp^1)$ is a stricly semistable scheme over $\ZZ_{p^2}.$ Let $\ZZ_{p^2}^{\rm ur}$ be the maximal unramified extension of $\ZZ_{p^2}.$ Denote by $i:\widetilde\Sh_{1,2}(K_\gothp^1)\ra \widetilde\cSh_{1,2}(K_\gothp^1),j:\widetilde {Sh}_{1,2}(K_\gothp^1)\ra \widetilde\cSh_{1,2}(K_\gothp^1), \bar i:\overline{\widetilde\Sh}_{1,2}(K_\gothp^1)\ra \widetilde\cSh_{1,2}(K_\gothp^1)_{\ZZ_{p^2}^{\rm un}},\bar j:\overline{\widetilde {Sh}}_{1,2}(K_\gothp^1)\ra \widetilde\cSh_{1,2}(K_\gothp^1)_{\ZZ_{p^2}^{\rm un}}.$ Define the nearby cycle to be $R\psi k_\lambda=\bar i_*R\bar j_*k_\lambda$ as an object of the derived category $D^+(\overline{\widetilde{\Sh}}(K_\gothp^1)_{1,2},k_\lambda).$ By \cite[Lemma 2.5]{Sai03}, $R\psi k_\lambda$ is an object in ${\rm Perv}(\overline{\widetilde{\Sh}}(K_\gothp^1)_{1,2},k_\lambda)[-4].$ Denote by $M_\bullet R\psi k_\lambda$ the monodromy filtration on $R\psi k_\lambda$ and for any $k$ an integer, $M_{\geq k} R\psi k_\lambda=R\psi k_\lambda/M_{k-1}R\psi k_\lambda.$ Let $gr_k R\psi k_\lambda=M_kR\psi k_\lambda/M_{k-1}R\psi k_\lambda.$
    
    For $0\leq p\leq 2,$ denote by $a_{p}: \overline Y^{(p)}\ra \overline{\widetilde{\Sh}}_{1,2}(K_\gothp^1)$ induced by closed immersions. Denote by $\Sh_{1,2}^{\rm \mu-ord}$ the $\mu$-ordinary locus of $\Sh_{1,2}$ and $\Sh_{1,2}^{\rm \mu-ss}$ its complement. Denote by $\Sh_{1,2}(K_\gothp^1)^{\rm \mu-ord}$ the preimage of $\Sh_{1,2}^{\rm \mu-ord}.$
\end{notation}

Consider the short exact sequence $0\ra gr_0 R\psi k_\lambda\ra M_{\geq 0}R\psi k_\lambda\ra M_{\geq 1}R\psi k_\lambda\ra 0,$ we have the following commutative diagram:
\[\begin{tikzcd}
[
    column sep=small, 
    row sep=scriptsize,
    cells={font=\footnotesize} % Uniform font size
]
	{{\tiny {\rm H}^{4}_{\acute{e}t}(\overline{\widetilde{\Sh}}_{1,2}(K_\gothp^1),gr_0R\psi k_\lambda(3))_\mathfrak{m}}} & {\tiny {\rm H}^{4}_{\acute{e}t}(\overline{\widetilde{\Sh}}_{1,2}(K_\gothp^1),gr_0 R\psi k_\lambda(3))_\mathfrak{m}} & {\tiny {\rm H}^4_{\acute{e}t}(\overline{\Sh}_{1,2},k_\lambda(3))_\mathfrak{m}^{\oplus 3}} \\
	{\tiny {\rm H}^{4}_{\acute{e}t}(\overline{\widetilde{\Sh}}_{1,2}(K_\gothp^1),M_{\geq 0}R\psi k_\lambda(3))_\mathfrak{m}} & {\tiny {\rm H}^{4}_{\acute{e}t}(\overline{\widetilde{\Sh}}_{1,2}(K_\gothp^1)^{\rm ord},gr_{0}R\psi k_\lambda(3)|_{\overline{\widetilde{\Sh}}_{1,2}(K_\gothp^1)^{\rm ord}})_\mathfrak{m}} & {\tiny {\rm H}^4_{\acute{e}t}(\overline{\Sh}_{1,2}^{\rm ord},k_\lambda(3))_\mathfrak{m}^{\oplus 3}} \\
	{\tiny E_{2,\mathfrak{m}}^{-2,6}(3)} & {\tiny X} & {\tiny H^5_{\overline{\Sh}^{\rm \mu-ss}_{1,2}}(\overline{\Sh}_{1,2},k_\lambda(3))_\mathfrak{m}^{\oplus 3}} \\
	0 & 0 & 0
	\arrow["\sim", from=1-1, to=1-2]
	\arrow[from=1-1, to=2-1]
	\arrow["{{{{\psi_1}}}}", from=1-2, to=1-3]
	\arrow[from=1-2, to=2-2]
	\arrow[from=1-3, to=2-3]
	\arrow["{{{{{\rho_1}}}}}", from=2-1, to=2-2]
	\arrow[from=2-1, to=3-1]
	\arrow["{{{{\psi_2}}}}", from=2-2, to=2-3]
	\arrow[from=2-2, to=3-2]
	\arrow[from=2-3, to=3-3]
	\arrow["{{{{{\rho_2}}}}}", from=3-1, to=3-2]
	\arrow[from=3-1, to=4-1]
	\arrow["{{{{\psi_3}}}}", from=3-2, to=3-3]
	\arrow[from=3-2, to=4-2]
	\arrow[from=3-3, to=4-3]
\end{tikzcd}\]
Here $\rho_1$ is the restriction map and $\psi_1,\psi_2$ are induced by the map in Theorem~\ref{I3}.
% Denote by $\theta:{\rm H}^3_{\acute{e}t}(\overline{N}_{10}\cap \overline{\Sh}_{1,2}^{\rm ord},k_\lambda(2))_\mathfrak{m}\oplus {\rm H}^3_{\acute{e}t}(\overline{N}_{01}\cap \overline{\Sh}_{1,2}^{\rm ord},k_\lambda(2))_\mathfrak{m}\ra H^5_{\acute{e}t}( \overline{\Sh}_{1,2}^{\rm ord},k_\lambda(3))_\mathfrak{m}.$
Moreover, we have the following commutative diagram:
\[\begin{tikzcd}
[
    column sep=small, 
    row sep=scriptsize,
    cells={font=\footnotesize} % Uniform font size
]
	{\tiny {\rm H}^4_{\acute{e}t}(\overline{\Sh}_{1,2},k_\lambda(3))_\mathfrak{m}\oplus Z } & {\tiny {\rm H}^{4}_{\acute{e}t}(\overline{\widetilde{\Sh}}_{1,2}(K_\gothp^1),gr_0 R\psi k_\lambda(3))_\mathfrak{m}} & 0 \\
	{\tiny {\rm H}^4_{\acute{e}t}(\overline{\Sh}^{\rm ord}_{1,2},k_\lambda(3))_\mathfrak{m}^{\oplus 3}} & {\tiny {\rm H}^{4}_{\acute{e}t}(\overline{\widetilde{\Sh}}_{1,2}(K_\gothp^1)^{\rm ord},M_{\geq 0}R\psi k_\lambda(3)|_{\overline{\widetilde{\Sh}}_{1,2}(K_\gothp^1)^{\rm ord}})_\mathfrak{m}} & {\tiny U} \\
	{\tiny Y} & {\tiny X} & {\tiny W} \\
	0 & 0 & 0
	\arrow["{{{\sigma_1}}}", from=1-1, to=1-2]
	\arrow["{{{\phi_1}}}"', from=1-1, to=2-1]
	\arrow[from=1-2, to=1-3]
	\arrow["{{{\psi_1}}}", from=1-2, to=2-2]
	\arrow[from=1-3, to=2-3]
	\arrow["{{{\sigma_2}}}", from=2-1, to=2-2]
	\arrow["{{{\phi_2}}}"', from=2-1, to=3-1]
	\arrow[from=2-2, to=2-3]
	\arrow[from=2-2, to=3-2]
	\arrow[from=2-3, to=3-3]
	\arrow["{{{\sigma_3}}}", from=3-1, to=3-2]
	\arrow[from=3-1, to=4-1]
	\arrow[from=3-2, to=3-3]
	\arrow[from=3-2, to=4-2]
	\arrow[from=3-3, to=4-3]
\end{tikzcd}\]

Here $Z$ denotes the contribution of the supersingular locus.
The left column is induced by Gysin sequence with $\phi_1(Z)=0.$ Hence $Y=H^5_{\overline{\Sh}_{1,2}^{\rm \mu-ss}}(\overline{\Sh}_{1,2},k_\lambda(3))_\mathfrak{m}^{\oplus 3}.$ 
Since $gr_2R\psi k_\lambda(3)|_{\overline{\widetilde{\Sh}}_{1,2}(K_\gothp^1)^{\rm ord}}=0,$ $U$ is the kernel of $\theta:{\rm H}^3_{\acute{e}t}(\overline N_{10}^{\rm ord},k_\lambda(2))_\mathfrak{m}\oplus {\rm H}^3_{\acute{e}t}(\overline N_{01}^{\rm ord},k_\lambda(2))_\mathfrak{m}\ra H^5_{\acute{e}t}(\overline{\Sh}^{\rm ord}_{1,2},k_\lambda(3))_\mathfrak{m}^{\oplus 3}$ induced by Gysin maps. Then $\rho_2$ factors $\sigma_2$ by the following lemma:
\begin{lemma}
    The maps
        ${\rm H}^3_{\acute{e}t}(\overline N_{10}^{\rm ord},k_\lambda(2))_\mathfrak{m}\ra H^5_{\acute{e}t}(\overline{\Sh}^{\rm ord}_{1,2},k_\lambda(3))_\mathfrak{m}^{\oplus 3}$ and $
        {\rm H}^3_{\acute{e}t}(\overline N_{01}^{\rm ord},k_\lambda(2))_\mathfrak{m}\ra H^5_{\acute{e}t}(\overline{\Sh}^{\rm ord}_{1,2},k_\lambda(3))_\mathfrak{m}^{\oplus 3} $
    are injective.
\end{lemma}
\begin{proof}
    By symmetry, it suffices to show that ${\rm H}^3_{\acute{e}t}(\overline N_{10}^{\rm ord},k_\lambda(2))_\mathfrak{m}\ra H^5_{\acute{e}t}(\overline{\Sh}^{\rm ord}_{1,2},k_\lambda(3))_\mathfrak{m}^{\oplus 3}$ is injective.
    The following daigram is commutative with each row and column exact.
    \[\begin{tikzcd}
    [
    column sep=small, 
    row sep=scriptsize,
    cells={font=\footnotesize} % Uniform font size
]
	{H^6(\overline{\rm Y}_{00}\bigcap\overline{\rm Y}_{10}-\overline{\rm Y}_{00}\bigcap\overline{\rm Y}_{11})_\mathfrak{m}=0} & {H^6_{(\overline{\rm Y}_{10}\bigcap\overline{\rm Y}_{11})^{\rm ss}}(\overline{\rm Y}_{10},k_\lambda(2))_\mathfrak{m}} & {H^6_{\overline{\rm Y}_{10}^{\rm ss}}(\overline{\rm Y}_{10},k_\lambda(3))_\mathfrak{m}} \\
	& {{\rm H}^4_{(\overline{\rm Y}_{10}\bigcap\overline{\rm Y}_{11})^{\rm ss}}(\overline{\rm Y}_{10}\bigcap\overline{\rm Y}_{11},k_\lambda(2))_\mathfrak{m}} \\
	& {{\rm H}^3_{\acute{e}t}(\overline{N}_{10}^{\rm ord},k_\lambda(2))_\mathfrak{m}} & {H^5_{\acute{e}t}(\overline{\Sh}_{1,2}^{\rm ord},k_\lambda(3))_\mathfrak{m}} \\
	& {{\rm H}^3_{\acute{e}t}(\overline{\rm Y}_{10}\bigcap\overline{\rm Y}_{11},k_\lambda(2))_\mathfrak{m}=0} & {H^5_{\acute{e}t}(\overline{\rm Y}_{10},k_\lambda(3))=0}
	\arrow[from=1-1, to=1-2]
	\arrow[hook, from=1-2, to=1-3]
	\arrow["{=}", from=2-2, to=1-2]
	\arrow[dashed, hook, from=3-2, to=1-3]
	\arrow[hook, from=3-2, to=2-2]
	\arrow["\theta_1", from=3-2, to=3-3]
	\arrow[from=3-3, to=1-3]
	\arrow[from=4-2, to=3-2]
	\arrow[from=4-3, to=3-3]
\end{tikzcd}\]
The equality of ${\rm H}^4_{(\overline{\rm Y}_{10}\bigcap\overline{\rm Y}_{11})^{\rm ss}}(\overline{\rm Y}_{10}\bigcap\overline{\rm Y}_{11},k_\lambda(2))_\mathfrak{m}$ and $H^6_{(\overline{\rm Y}_{10}\bigcap\overline{\rm Y}_{11})^{\rm ss}}(\overline{\rm Y}_{10},k_\lambda(2))_\mathfrak{m}$ comes from the smoothness of $\overline{\rm Y}_{10}\bigcap\overline{\rm Y}_{11}.$
Thus $\theta_1$ is injective from the diagram. The case for $\theta_2$ is the same and here we finish the proof.
\end{proof}

As in Proposition~\ref{Cal of Gr_2}, after changing a proper basis, we may suppose $\psi_i\circ\rho_i$ are $\diag\{1,1,1-\Frob_{p^2}^{*}\}$ for $i=1,2,3.$ Let $x\in E_{2,\mathfrak{m}}^{-2,6}.$ Take a lift $\tilde x\in {\rm H}^{4}_{\acute{e}t}(\overline{\widetilde{\Sh}}_{1,2}(K_\gothp^1),M_{\geq 0}R\psi k_\lambda(3))_\mathfrak{m}$ of $x.$ The third factor of $\psi_2\circ \rho_1(\tilde x)$ is mapped to zero in $H^5_{\overline{\Sh}^{\rm ss}_{1,2}}(\overline{\Sh}_{1,2},k_\lambda(3))_\mathfrak{m}^{\oplus 3}.$ Hence it can be lifted to an element $c(x)$ in ${\rm H}^4_{\acute{e}t}(\overline{\Sh}_{1,2},k_\lambda(3))_\mathfrak{m}.$ $\Phi(x)$ is exactly $c(x)$ in $H^1(\FF_{p^2},{\rm H}^4_{\acute{e}t}(\overline{\Sh}_{1,2},k_\lambda(3))_\mathfrak{m}).$ 

We define a map $\Psi:H^5_{\overline{\Sh}^{\rm ss}_{1,2}}(\overline{\Sh}_{1,2},k_\lambda(3))_\mathfrak{m}\ra H^1(\FF_{p^2},{\rm H}^4_{\acute{e}t}(\overline\Sh_{1,2},k_\lambda(3))_{\mathfrak{m}}).$ For $(y_1,y_2,y_3)\in H^5_{\overline{\Sh}^{\rm ss}_{1,2}}(\overline{\Sh}_{1,2},k_\lambda(3))_\mathfrak{m}^{\oplus 3},$ it has a lift $(y_1',y_2',y_3')\in {\rm H}^4_{\acute{e}t}(\overline{\Sh}^{\rm ord}_{1,2},k_\lambda(3))_\mathfrak{m}^{\oplus 3}.$ $\psi_2\circ\sigma_2((y_1',y_2',y_3'))=(y_1',y_2',(1-\Frob_{p^2}^{*})y_3').$ The third factor is mapped to zero in $H^5_{\overline{\Sh}^{\rm ss}_{1,2}}(\overline{\Sh}_{1,2},k_\lambda(3))_\mathfrak{m}$ and hence has a lift $c(y_3)$ in 
${\rm H}^4_{\acute{e}t}(\overline{\Sh}_{1,2},k_\lambda(3))_\mathfrak{m},$ which is independent of the lift. We define $\Psi(y_3)=c(y_3).$ Since the image of ${\rm H}^{4}_{\acute{e}t}(\overline{\widetilde{\Sh}}_{1,2}(K_\gothp^1),gr_0 R\psi k_\lambda(3))_\mathfrak{m}$ under $\psi_1$ is contained in ${\rm H}^4_{\acute{e}t}(\overline{\Sh}^{\rm ord}_{1,2},k_\lambda(3))_\mathfrak{m}^{\oplus 3}$ and $\Phi$ is surjective, $\Psi$ is surjective. 

Now we consider the relation of the level raising map in Theorem~\ref{main theorem n=3} with $\Psi.$
By definition of the cycle class map of the higher chow group, the following diagram is commutative:
\[\begin{tikzcd}[
    column sep=small, 
    row sep=scriptsize,
    cells={font=\footnotesize} % Uniform font size
]
	&& 0 && 0 \\
	0 & {{\rm L}} & {\bigoplus\limits_{i=1}^{3}H^1_{\acute{e}t}({\rm Y}_i^\circ,k_\lambda(1))_\mathfrak{m}} && {\bigoplus\limits_{1\leq i_1,i_2\leq 3}\bigoplus\limits_{z_1\neq z_2\in \Sh_{0,3}}{\rm H}^0_{\acute{e}t}({\rm Y}_{i_1,z_1}\cap {\rm Y}_{i_2,z_2},k_\lambda)_\mathfrak{m}} \\
	0 & {{\rm Ch}^{1}({\Sh}_{1,2}^{\rm ss},1,k_\lambda)_{\mathfrak{m}}} & {\bigoplus\limits_{i=1}^{3}{\rm H}^0_{\acute{e}t}({\rm Y}_i^\circ,\mathbb{G}_m)_\mathfrak{m}} && {\bigoplus\limits_{1\leq i_1,i_2\leq 3}\bigoplus\limits_{z_1\neq z_2\in \Sh_{0,3}}{\rm H}^0_{\acute{e}t}({\rm Y}_{i_1,z_1}\cap {\rm Y}_{i_2,z_2},k_\lambda)_\mathfrak{m}}
	\arrow[from=2-1, to=2-2]
	\arrow[from=2-2, to=2-3]
	\arrow[from=2-3, to=1-3]
	\arrow[from=2-3, to=2-5]
	\arrow[from=2-5, to=1-5]
	\arrow[from=3-1, to=3-2]
	\arrow["{\alpha_1}", from=3-2, to=2-2]
	\arrow[from=3-2, to=3-3]
	\arrow["{\alpha_2}", from=3-3, to=2-3]
	\arrow["div", from=3-3, to=3-5]
	\arrow["\sim"', from=3-5, to=2-5]
\end{tikzcd}\]
The map $\alpha_2$ is induced by the Kummer map and is surjective since ${\rm Y}_i^\circ$ are union of open subschemes of $\mathbb{A}^2$ and have trivial picard groups for $1\leq i\leq 3.$ Thus $\alpha_1$ is surjecitve. By definition, the image of $div$ is contained in the component such that ${{\rm Y}_{i_1,z_1}\cap {\rm Y}_{i_2,z_2}}$ has dimension $1.$ Thus we can suppose $i_2=i_1+1$ and $z_2\in {\rm T}_\gothp^{(1)}(z_1)$ as in Section~\ref{D}.

By purity, ${\bigoplus\limits_{i=1}^{3}H^1_{\acute{e}t}({\rm Y}_i^\circ,k_\lambda(1))_\mathfrak{m}}={\bigoplus\limits_{i=1}^{3}H^5_{{\rm Y}_i^\circ}({\Sh}_{1,2},k_\lambda(3))_\mathfrak{m}}$ and $\bigoplus\limits_{i=1}^{2}\bigoplus\limits_{z_2\in {\rm T}_{\gothp}^{(1)}(z_1)}{\rm H}^0_{\acute{e}t}({\rm Y}_{i_1,z_1}\cap {\rm Y}_{i_2,z_2},k_\lambda)_\mathfrak{m}=\bigoplus\limits_{i=1}^{2}\bigoplus\limits_{z_2\in {\rm T}_{\gothp}^{(1)}(z_1)}H^6_{{\rm Y}_{i,z_1}\cap {\rm Y}_{i+1,z_2}}(\Sh_{1,2},k_\lambda)_\mathfrak{m}.$ Thus it can be checked that ${\rm L}$ is isormophic to $H^5_{{\Sh}^{\rm ss}_{1,2}}(\Sh_{1,2},k_\lambda(3))_\mathfrak{m}.$  Denote by $\gamma$ the Gysin map $H^5_{{\Sh}^{\rm ss}_{1,2}}(\Sh_{1,2},k_\lambda(3))_\mathfrak{m}\ra H^5_{\acute{e}t}(\Sh_{1,2},k_\lambda(3))_\mathfrak{m}.$

By Hochschild-Serre spectral sequence, the following diagram is commutative with the two rows exact.
\[\begin{tikzcd}[
    column sep=small, 
    row sep=scriptsize,
    cells={font=\footnotesize} % Uniform font size
]
	0 & {H^1(\FF_{p^2},{\rm H}^4_{{\Sh}^{\rm ss}_{1,2}}(\Sh_{1,2},k_\lambda(3))_\mathfrak{m})} & {H^5_{{\Sh}^{\rm ss}_{1,2}}(\Sh_{1,2},k_\lambda(3))_\mathfrak{m}} && {{\rm H}^0(\FF_{p^2},H^5_{\overline{\Sh}^{\rm ss}_{1,2}}(\overline\Sh_{1,2},k_\lambda(3))_\mathfrak{m})} & 0 \\
	0 & {H^1(\FF_{p^2},{\rm H}^4_{\acute{e}t}(\Sh_{1,2},k_\lambda(3))_\mathfrak{m})} & {{\rm H}^4_{\acute{e}t}(\Sh_{1,2},k_\lambda(3))_\mathfrak{m}} && {{\rm H}^0(\FF_{p^2},H^5_{\acute{e}t}(\overline\Sh_{1,2},k_\lambda(3))_\mathfrak{m})} & 0
	\arrow[from=1-1, to=1-2]
	\arrow[from=1-2, to=1-3]
	\arrow[from=1-2, to=2-2]
	\arrow[from=1-3, to=1-5]
	\arrow["\gamma", from=1-3, to=2-3]
	\arrow[from=1-5, to=1-6]
	\arrow[from=1-5, to=2-5]
	\arrow[from=2-1, to=2-2]
	\arrow[from=2-2, to=2-3]
	\arrow[from=2-3, to=2-5]
	\arrow[from=2-5, to=2-6]
\end{tikzcd}\]

Since ${\rm H}^0(\FF_{p^2},H^5_{\acute{e}t}(\overline\Sh_{1,2},k_\lambda(3))_\mathfrak{m})=0,$ the surjectivity of the level raising map follows from the surjectivity of 
$$\gamma':{\rm H}^0(\FF_{p^2},H^5_{\overline{\Sh}^{\rm ss}_{1,2}}(\overline\Sh_{1,2},k_\lambda(3))_\mathfrak{m})\ra H^1(\FF_{p^2},{\rm H}^4_{\acute{e}t}(\Sh_{1,2},k_\lambda(3))_\mathfrak{m})/H^1(\FF_{p^2},{\rm H}^4_{{\Sh}^{\rm ss}_{1,2}}(\Sh_{1,2},k_\lambda(3))_\mathfrak{m}).
$$

Meanwhile, by Gysin sequence, the following sequence is exact.
\[\begin{tikzcd}[
    column sep=small, 
    row sep=scriptsize,
    cells={font=\footnotesize} % Uniform font size
]
	{{\rm H}^4_{\overline{\Sh}^{\rm ss}_{1,2}}(\overline\Sh_{1,2},k_\lambda(3))_\mathfrak{m}} & {{\rm H}^4_{\acute{e}t}(\overline\Sh_{1,2},k_\lambda(3))_\mathfrak{m}} && {{\rm H}^4_{\acute{e}t}(\overline\Sh_{1,2}^{\rm ord},k_\lambda(3))_\mathfrak{m}} & {H^5_{\overline{\Sh}^{\rm ss}_{1,2}}(\overline\Sh_{1,2},k_\lambda(3))_\mathfrak{m}} & 0
	\arrow["\rho", from=1-1, to=1-2]
	\arrow[from=1-2, to=1-4]
	\arrow[from=1-4, to=1-5]
	\arrow[from=1-5, to=1-6]
\end{tikzcd}\]

Substitute the left element of the sequence by $\ker \rho$ and applying the left exact functor $(\bullet)^{\Gal(\overline\FF_{p^2}/\FF_{p^2})},$ we get $\gamma'$ coincides with first boundary map, which can be checked by definition is exactly $\Psi$ and hence is surjective. This shows the level raising map is surjective and we finish the proof.

\appendix
\section{Ihara Lemma for \texorpdfstring{$n=2$}{n=2}}\label{I}
In this section, we give the proof of Ihara lemma for $n=2.$ Recall the Hecke action ${\rm T}={\rm T}_\gothp^{(1)}$ and ${\rm S}_\gothp$ defined in Definiton~\ref{S:Hecke action}. Defintion~\ref{G:Condition on closed subschemes}, Proposition~\ref{G:stalk of points} and Proposition~\ref{G:blow up} also holds for $n=2.$
    
Our main result in this section is:
\begin{theorem}
\label{Ihara}
Under the Hypothesis \ref{Main hypo} for $n=2,$ we have
    \begin{enumerate}
        \item (Definite Ihara) The map 
        \begin{equation*}
        {\mathrm{H}}^{0}({\overline{\Sh}_{0,2}(K_\gothp^1)},k_\lambda)_{\mathfrak{m}} \xrightarrow{(\lp,\rp)} {\mathrm{H}^{0}}({\overline{\Sh}}_{0,2},k_\lambda)_{\mathfrak{m}}^{\oplus 2}
        \end{equation*} is surjective, with the map induced by projection of ${{{\overline{\Sh}}}}_{0,2}(K_{\gothp}^{1})$ to ${{{\overline{\Sh}}}}_{1,1}.$
        \item (Indefinite Ihara) The map
        \begin{equation*}
        {\mathrm{H}}^{2}({{\overline{Sh}}}_{1,1}(K_{\gothp}^{1}),k_\lambda(2))_{\mathfrak{m}} \xrightarrow {(\lp,\rp)}({{{\overline{Sh}}}}_{1,1},k_\lambda(2))_{\mathfrak{m}}^{\oplus 2}
        \end{equation*}
        is surjective, with the map induced by projection of ${{{\overline{Sh}}}}_{1,1}(K_{\gothp}^{1})$ to ${{{\overline{Sh}}}}_{1,1}.$
    \end{enumerate}
\end{theorem}

To prove Theorem~\ref{Ihara}, we need to analyze the structure of ${\rm Y}_{ij}$ more carefully. 

Using deformation theory in Subsection~\ref{Deformaion}, we get:
\begin{proposition}
\label{tangent sheaf}
    For $1\leq i,j \leq 2,$ the tangent sheaf $T_{{\rm Y}_{ij}}=\mathcal{F}_{i}\oplus \mathcal{G}_{j}$ where $\mathcal{F}_{i}$ and $\mathcal{G}_{j}$ are:
    \begin{enumerate}
        \item $ \mathcal{F}_{0}=\cHom\big( \omega^{\circ}_{\mathcal{A}^{\vee}_2/{{\Sh}}_{1,1},1},\Lie^{\circ}_{\mathcal{A}^{'}/{{\Sh}}_{1,1},1}\big) ,$ $ \mathcal{F}_{1}=\cHom\big( \omega^{\circ}_{\mathcal{A}^{\vee}/{{\Sh}}_{1,1},1},\frac{\phi^{-1}_{*,1}(\omega^{\circ}_{\mathcal{A}^{\vee}/{{\Sh}}_{1,1},1})}{\omega^{\circ}_{\mathcal{A}^{\vee}/{{\Sh}}_{1,1},1}}\big) $ ,
        \item $\mathcal{G}_{0}=\cHom\big( \omega^{\circ}_{\mathcal{A}^{\vee}/{{\Sh}}_{1,1},2},\Lie^{\circ}_{\mathcal{A}/{{\Sh}}_{1,1},2}\big) 
        ,$ $ \mathcal{G}_{1}=\cHom\big( \frac{\omega^{\circ}_{\mathcal{A}^{\vee}_2/{{\Sh}}_{1,1},2}}{\phi_{*,2}(\omega^{\circ}_{\mathcal{A}^{\vee}/{{\Sh}}_{1,1},2})},\Lie^{\circ}_{\mathcal{A}^{'}/{{\Sh}}_{1,1},2}\big).$
    \end{enumerate}
     Here we suppose $(\mathcal{A},\lambda,\eta,\mathcal{A'}.\lambda',\eta',\phi)$ is the universal object over ${{\Sh}}_{1,1}.$
    The tangent sheaves $T_{{\rm Y}_{ij}}$ are all locally free of rank $2.$
\end{proposition}
\begin{proof}
    The proof for Proposition~\ref{tangent sheaff} still works.
\end{proof}

We need to define an action on ${{\Sh}}_{1,1}$ which is exactly the ``essential Frobenius'' as in \cite{Zho23}.
\begin{definition}
    We define $F$ to be the morphism on $\Sh_{1,1}$ which maps its any $S$-point $(A, \lambda , \eta )$ to $(A^{(p)},\lambda',\eta')$ satisfying $F$ acts on $A$ as the Frobenius morphism, $F\circ \eta'=\eta$ and $F^{\vee}\circ \lambda' \circ F=\lambda.$ Such $\lambda'$ and $\eta'$ exist by \cite[Theorem 2, Section 23]{Mum08} and $F$ on $A$ corresponds to the Frobenius map which is purely inseparable with trivial kernel.
\end{definition}

\begin{proposition}
\label{geometry1}
    The four closed subscheme $\rm {\rm Y_{00}}, \rm {\rm Y_{11}}, \rm {\rm Y_{01}}, \rm {\rm Y_{10}}$ have the following properties:
    \begin{itemize}
        \item $\rm {\rm Y_{00}}$ is a $\mathbb{P}^1\times\mathbb{P}^1$-bundle over ${\Sh}_{0,2}.$ More explicitly, ${\rm Y}_{00}$ is isomorphic to $C_2$ defined as a closed subscheme of ${\Sh}_{1,1}(K_{\gothp}^{1})$ satisfying for any $\mathbb{F}_{p^{2}}$ scheme $S,$ any $S$-point $(A, \lambda , \eta , A', \lambda',\eta', \phi),$ there exists $S$-points of ${{\Sh}}_{0,2}$ $(B_1,\lambda_{1},\eta_{1})$ and $(B_2,\lambda_{2},\eta_{2})$ such that there exists isogenies $B_{1}\rightarrow A\in Y_{2},$ $B_{2}\rightarrow A'\in Y_{1}$ and $B_{1}\in {\rm S}_{\gothp}(B_{2}).$
%         we have the following diagram:
%         \[\begin{tikzcd}
%         [
%     column sep=small, 
%     row sep=scriptsize,
%     cells={font=\footnotesize} % Uniform font size
% ]
% 	{{\rm Y}_{00}} && C_2 && {{\rm Sh}_{0,2}} \\
% 	\\
% 	{{\rm Sh_{1,1}}\times{\rm Sh_{1,1}}} && {{\rm Y_{2}\times \rm Y_{1}}} && {{\rm Sh}_{0,2}\times {\rm Sh}_{0,2}}
% 	\arrow["\simeq", from=1-1, to=1-3]
% 	\arrow[from=1-1, to=3-1]
% 	\arrow["{\mathbb{P}^{1}\times \mathbb{P}^{1}-bundle}", from=1-3, to=1-5]
% 	\arrow[from=1-3, to=3-3]
% 	\arrow["{(1,{\rm S})}", from=1-5, to=3-5]
% 	\arrow["{{\mathop{p}\limits^{\leftarrow}}_{2}\times {\mathop{p}\limits^{\leftarrow}}_{1}}"', from=3-3, to=3-1]
% 	\arrow["{{\mathop{p}\limits^{\rightarrow}}_{2}\times {\mathop{p}\limits^{\rightarrow}}_{1}}", from=3-3, to=3-5]
% \end{tikzcd}\]

\item $\rm {\rm Y_{11}}$ is a $\mathbb{P}^1\times\mathbb{P}^1$ bundle over ${\Sh}_{0,2}.$ More explicitly, ${\rm Y}_{11}$ is isomorphic to $C_1$ defined as a closed subscheme of ${\Sh}_{1,1}(K_{\gothp}^{1})$ satisfying for any $\mathbb{F}_{p^{2}}$ scheme $S,$ any $S$-point $(A, \lambda , \eta , A', \lambda',\eta', \phi),$ there exists a $S$-point of ${{\Sh}}_{0,2}$ $(B_{1},\lambda_{1},\eta_{1})$ such that there exists isogenies $B_{1}\rightarrow A\in Y_{1}$ and $B_{1}\rightarrow A'\in Y_{2},$ 
% we have the following diagram:
%         \[\begin{tikzcd}
%         [
%     column sep=small, 
%     row sep=scriptsize,
%     cells={font=\footnotesize} % Uniform font size
% ]
% 	{{\rm Y}_{11}} && C_1 && {{\rm Sh}_{0,2}} \\
% 	\\
% 	{{\rm Sh_{1,1}}\times{\rm Sh_{1,1}}} && {{\rm Y_{1}\times \rm Y_{2}}} && {{\rm Sh}_{0,2}\times {\rm Sh}_{0,2}}
% 	\arrow["\simeq", from=1-1, to=1-3]
% 	\arrow[from=1-1, to=3-1]
% 	\arrow["{\mathbb{P}^{1}\times \mathbb{P}^{1} bundle}", from=1-3, to=1-5]
% 	\arrow[from=1-3, to=3-3]
% 	\arrow["{(1,1)}", from=1-5, to=3-5]
% 	\arrow["{{\mathop{p}\limits^{\leftarrow}}_{1}\times {\mathop{p}\limits^{\leftarrow}}_{2}}"', from=3-3, to=3-1]
% 	\arrow["{{\mathop{p}\limits^{\rightarrow}}_{1}\times {\mathop{p}\limits^{\rightarrow}}_{2}}", from=3-3, to=3-5]
% \end{tikzcd}\]

        \item $\rm {\rm Y_{01}}$ and $\rm {\rm Y_{10}}$ are all isomorphic to ${\Sh}_{1,1}$ and they induce a morphism on ${\Sh}_{1,1}^{\oplus 2}$ characterized by $\begin{pmatrix} 1&F\\{\rm S}_\gothp^{-1}F &1\\
      \end{pmatrix}.$ 
    \end{itemize}
\end{proposition}
\begin{proof}
    Firstly, we show $\rm {\rm Y_{00}}$ and $\rm {\rm Y_{11}}$ are all $\mathbb{P}^1\times\mathbb{P}^1$ bundles over ${\Sh}_{0,2}.$
    For simplicity, we only prove it for $\rm {\rm Y_{00}}.$ There is a natrual map from $\rm {\rm Y_{00}}$ to ${\Sh}_{0,2}$ such that for any $\mathbb{F}_{p^{2}}$-scheme $S,$ an $S$-point $(A,\lambda,\eta, A',\lambda',\eta',\phi)$ of $\rm {\rm Y_{00}}$ is sent to $B$ which is given by Proposition~\ref{P:abelian-Dieud} with $\tcD(B)^{\circ}_{1}=\tcD(A')^{\circ}_{1}$ and $\tcD(B)^{\circ}_{2}=V\tcD(A')^{\circ}_{2}.$ With a simple argument of deformation theory, we can see such a map gives $\rm {\rm Y_{00}}$ the structure of $\mathbb{P}^1\times\mathbb{P}^1$ bundles over ${\Sh}_{0,2}.$

    Secondly, we show ${{\rm Y}_{00}}\simeq C_2.$ The proof of ${{\rm Y}_{11}}\simeq C_{1}$ is also similar. In fact, given any $S$-point $(A, \lambda , \eta , A', \lambda',\eta', \phi),$ we can construct $B,B'$ as follows. By Proposition~\ref{P:abelian-Dieud}, we get two $S$-points of ${{\Sh}}_{0,2}$ $B,B'$ from two pairs of dieudonne\'e modules $(\tcD(A')^{\circ}_{1},V\tcD(A')^{\circ}_{1})$ and $(V\tcD(A)^{\circ}_{2}, p\tcD(A)^{\circ}_{2})$ respectively. It is easy to check this gives us the desired isomophism. Thus we get the diagram.

    Thirdly, we show $\rm {\rm Y_{01}}$ and $\rm {\rm Y_{10}}$ are all isomorphic to ${\Sh}_{1,1}.$ Let $\alpha_{1}$ and $\beta_{2}$ be morphisms from $\rm {\rm Y_{01}}$ and $\rm {\rm Y_{10}}$ to ${\Sh}_{1,1}$ such that for any $\mathbb{F}_{p^{2}}$-scheme $S,$ an $S$-point $(A,\lambda,\eta, A',\lambda',\eta',\phi)$ of $\rm {\rm Y_{01}}$ or $\rm {\rm Y_{10}}$ is sent to $A'$ and let $\beta_{1}$ and $\alpha_{2}$ be morphisms from $\rm {\rm Y_{01}}$ and $\rm {\rm Y_{10}}$ to ${\Sh}_{1,1}$ such that for any $\mathbb{F}_{p^{2}}$-scheme $S,$ an $S$-point $(A,\lambda,\eta, A',\lambda',\eta',\phi)$ of $\rm {\rm Y_{01}}$ or $\rm {\rm Y_{10}}$ is sent to $A.$
    Then $\alpha_{1}$ and $\alpha_{2}$ are all isomorphisms and $\beta_{1}$ and $\beta_{2}$ are all purely inseparable morphisms which are bijective on points.

    What remains to show is that $\begin{pmatrix}
        1& \beta_{2}\circ\alpha_{2}^{-1}\\
        \beta_{1}\circ\alpha_{1}^{-1}&1\\
    \end{pmatrix}$
    on ${\Sh}_{1,1}^{\oplus 2}$ induces $\begin{pmatrix} 1&F\\{\rm S}_\gothp^{-1}F &1\\
      \end{pmatrix}.$

In fact, for any $\mathbb{F}_{p^{2}}$-scheme $S,$ we have the following claim:
\begin{enumerate}
    \item Any $S$ point $(A,\lambda,\eta, A',\lambda',\eta',\phi)$ in $\rm {\rm Y_{01}}$ is isomorphic to $(A,\lambda,\eta, A',\lambda',\eta',F^{-1}{\rm S}_\gothp)$\footnote{It may not be valid to write $F^{-1}$ as an isogeny. It just means after the Hecke action ${\rm S}$ acts on $A,$ it is isomorphic to the image of $A'$ under $F.$}
    \item Any $S$ point $(A,\lambda,\eta, A',\lambda',\eta',\phi)$ in $\rm {\rm Y_{10}}$ is isomorphic to $(A,\lambda,\eta, A^{(p)},\lambda',\eta',F)$
\end{enumerate}

As to (1), we find that $F\circ \phi$ maps the dieudonn\'e of $A$ $(\tcD(A)_{1}^{\circ}, \tcD(A)_{2}^{\circ})$ to $(p\tcD({A'}^{(p)})_{1}^{\circ}, p\tcD({A'}^{(p)})_{2}^{\circ}),$ since $\phi_{1,*}\tcD({A})_{1}^{\circ}=F\tcD(A')_{2}^{\circ}$ and $\phi_{2,*}\tcD({A})_{2}^{\circ}=F\tcD(A')_{1}^{\circ}.$ Moreover, we can see $F\circ \phi$ gives an isogeny from $A$ to $A'^{(p)}$ such that $A\in {\rm S}_\gothp(A').$
(2) can be proved similarly by consider the Frobenius action on $A$ and the uniqueness of Proposition~\ref{P:abelian-Dieud}.
With the claim, we finish the proof of the proposition.
\end{proof}

With Proposition~\ref{geometry1}, we can describe the intersections of the four closed subschemes as below:
\begin{proposition}
\label{geometry2}
For the intersections of the four closed subschemes, we have:
\begin{enumerate}
    \item $\rm {\rm Y_{00}}\bigcap \rm {\rm Y_{01}}\simeq\rm Y_{1}$; $\rm {\rm Y_{00}}\bigcap \rm {\rm Y_{10}}\simeq\rm Y_{2}$;
    \item $\rm {\rm Y_{11}}\bigcap \rm {\rm Y_{01}}\simeq\rm Y_{2}$; $\rm {\rm Y_{11}}\bigcap \rm {\rm Y_{10}}\simeq\rm Y_{1}$;
    \item ${\rm Y_{00}}\bigcap {\rm Y_{11}}\simeq{\Sh}_{0,3}(K_\gothp^{1}).$
\end{enumerate}
% More explicitly, we can express the intersections as the following diagram:
% \[\begin{tikzcd}
% [
%     column sep=small, 
%     row sep=scriptsize,
%     cells={font=\footnotesize} % Uniform font size
% ]
% 	{} && {\rm \rm {\rm Y_{00}}={\mathbb{P}^{1}\times\mathbb{P}^{1}}/\rm S} & {} & {\rm \rm {\rm Y_{10}}=\rm Sh_{1,1}} & {} & {} \\
% 	&&& {{\Sh}_{0,3}(K_\gothp^{1})} \\
% 	{} && {\rm \rm {\rm Y_{01}}=\rm Sh_{1,1}} & {} & {\rm {Y}_{11}={\mathbb{P}^{1}\times\mathbb{P}^{1}}/\rm S} & {} & {}
% 	\arrow["{{{{\rm \rm Y_{2}={\mathbb{P}^{1}}/Sh_{0,2}}}}}", no head, from=1-3, to=1-5]
% 	\arrow["{{{{\rm \rm Y_{1}={\mathbb{P}^{1}}/\rm Sh_{0,2}}}}}"', no head, from=1-3, to=3-3]
% 	\arrow["{{{{\rm \rm Y_{1}={\mathbb{P}^{1}}/\rm Sh_{0,2}}}}}", no head, from=1-5, to=3-5]
% 	\arrow["{{{{\rm \rm Y_{2}={\mathbb{P}^{1}}/Sh_{0,2}}}}}"', no head, from=3-3, to=3-5]
% \end{tikzcd}\]
\end{proposition}
\begin{proof}
    The proof of $(1)$ and $(2)$ are similar. For simplicity, we only give the proof of ${\rm Y}_{00}\bigcap {\rm Y}_{01}\simeq {\rm Y}_{01}$ and $(3).$
    
    First, we show ${\rm Y}_{00}\bigcap {\rm Y}_{01}\simeq {\rm Y}_{1}.$ We define a morphism $\alpha:{\rm Y}_{00}\bigcap{\rm Y}_{01}\ra {\rm Y}_{1}$ as following: Let $k$ be a perfect field containinng $\FF_{p^{2}},$ suppose $y=(A,\lambda,\eta,A',\lambda',\eta',\phi)$ is a $k$-point of ${\rm Y}_{00}\bigcap {\rm Y}_{01}.$ We let $\tcE_1=\tcD(A')_{1}^{\circ}$ and $\tcE_2=V\tcD(A')_{1}^{\circ}\subseteq \tcD(A')^{\circ}_{2}.$
Then it can be checked that $F(\tilde \calE_i) \subseteq \tilde \calE_{3-i}$ and $V(\tilde \calE_i) \subseteq \tilde \calE_{3-i}$ for $i = 1,2.$
Applying PropositionProposition~\ref{P:abelian-Dieud} with $\tcE_1,\tcE_2,$ we get $(B,\lambda'', \eta'')$ a point of ${\Sh}_{0,n}$ and an isogeny $\phi': B\ra A',$ such that $(A',\lambda', \eta',B,\lambda'',\eta'',\phi')\in {\rm Y}_{1}$
% where $B$ is an abelian variety over $k$ with an action of $\cO_D,$ $\lambda''$ is a prime-to-$p$ polarization on $B,$ and $\eta''$ is a prime-to-$p$ level structure on $B$ respectively, such that $\phi'^\vee\circ\lambda'\circ\phi'=p\lambda''.$ Moreover we have $\phi'\circ\eta''=\eta'.$
% Moreover, the dimension formula \eqref{EE:dimension of new differentials} implies that $\omega^{\circ}_{B^\vee/k,1}$ has dimension $0,$ and $\omega^{\circ}_{B^\vee/k,2}$ has dimension $n.$ 
% Therefore, $(B,\lambda'', \eta'')$ is a point of ${\Sh}_{0,n}.$ and $(A',\lambda', \eta',B,\lambda'',\eta'',\phi')\in {\rm Y}_{1}.
% $ 
In this way, we define $\alpha(y)=(A',\lambda', \eta',B,\lambda'',\eta'',\phi').$ Now we construct $\beta$ and check it is the converse of $\alpha.$ Suppose $y'=(A',\lambda',\eta',B,\lambda'',\eta''.\phi')$ is a $k$-point of ${\rm Y}_{1}.$ Let $\tcE_1=F\tcD(A')_{2}^{\circ}$ and $\tcE_2=F\tcD(A')_{1}^{\circ}\subseteq \tcD(A')^{\circ}_{2}.$
Then it can be checked that $F(\tilde \calE_i) \subseteq \tilde \calE_{3-i}$ and $V(\tilde \calE_i) \subseteq \tilde \calE_{3-i}$ for $i = 1,2.$
Applying Proposition~\ref{P:abelian-Dieud} with $\tcE_1,\tcE_2,$ we get $(A,\lambda,\eta)$ a point of ${\Sh}_{1,1}$ and an isogeny $\phi: A\ra A',$ such that $(A,\lambda, \eta,A',\lambda',\eta',\phi)\in {\rm Y}_{00}\bigcap{\rm Y}_{01}.
$
% where $A$ is an abelian variety over $k$ with an action of $\cO_D,$ $\lambda$ is a prime-to-$p$ polarization on $B,$ and $\eta$ is a prime-to-$p$ level structure on $A$ respectively, such that $\phi^\vee\circ\lambda'\circ\phi=p\lambda.$ Moreover we have $\phi'\circ\eta=\eta'.$
% Moreover, the dimension formula \eqref{EE:dimension of new differentials} implies that $\omega^{\circ}_{A^\vee/k,1}$ has dimension $1,$ and $\omega^{\circ}_{A^\vee/k,2}$ has dimension $1.$ 
% Therefore, $(A,\lambda, \eta)$ is a point of ${\Sh}_{1,1}.$ and $(A,\lambda, \eta,A',\lambda',\eta',\phi)\in {\rm Y}_{00}\bigcap{\rm Y}_{01}.
% $ 
In this way, we define $\beta(y')=(A,\lambda, \eta,A',\lambda',\eta',\phi').$ It can be checked directly $\alpha$ and $\beta$ are inverse of each other on points.
By deformation theory, $\alpha$ induces a bijection on tangent spaces. Thus $\alpha$ is an isomorphism.

Now we give the proof of $(3).$ Similar as above, we construct a morpshim $\alpha'$ as follows: Let $k$ be a perfect field containing $\FF_{p^{2}},$ suppose $y=(A,\lambda,\eta,A',\lambda',\eta',\phi)\in {\rm Y}_{00}\bigcap {\rm Y}_{11}(k).$ We let $\tcE_1=V\tcD(A)_{2}^{\circ},\tcE'_1=\tcD(A')_{1}^{\circ}$ and $\tcE_2=p\tcD(A)_{2}^{\circ}, \tcE_{2}'=V\tcD(A')_{1}^{\circ}.$
Applying Proposition~\ref{P:abelian-Dieud} with $\tcE_1,\tcE_2$ and $\tcE_{1}',\tcE'_2$ repectively, we get two points $(B_1,\lambda_1,\eta_1),(B_2,\lambda_{2},\eta_{2})$ of $\Sh_{0,2}$ and two isogenies $\phi_{1}: B_{1}\ra A,\phi_{2}:B_{2}\ra A'$ such that $(A,\lambda, \eta,B_1,\lambda_1,\eta_1,\phi_1)
$ and $(A',\lambda', \eta',B_2,\lambda_2,\eta_2,\phi_2)\in {\rm Y}_{2}.$ In this way, we define $\alpha'(y)=(B_1,\lambda_1,\eta_1,B_2,\lambda_2,\eta_2,\phi_2^{-1}\circ\phi\circ\phi_{1})\in {\Sh}_{0,2}(K_\gothp^{1}).$ Using a similar argument as above, we can show $\alpha'$ is an isomorphism. This concludes the proof.
\end{proof}

From now on, we consider the blow up $\widetilde\cSh_{1,1}(K_\gothp^1)$ and $\widetilde\Sh_{1,1}(K_\gothp^1)$ as considered in Proposition~\ref{G:blow up} and \cite[Section 6]{Hel12}. Applying \cite[Corollary 2.8]{Sai03} to $\widetilde\cSh_{1,1}(K_{\gothp}^{1}),$ we can get the weight spectral sequence.

Let $Y_{\overline{\FF}_p}^{(i)}$ be the union of intersection of $i+1$ irreducible components of the special fiber. We can write $E_{1}$-page of the weight spectral sequence as the following diagram:
\[\begin{tikzcd}[
    column sep=small, 
    row sep=scriptsize,
    cells={font=\footnotesize} % Uniform font size
]
	{{\rm H}_{\acute{e}t}^{0}(Y^{(2)}_{\overline{\FF}_{p}})(-2)} & {{\rm H}_{\acute{e}t}^{2}(Y^{(1)}_{\overline{\FF}_{p}})(-1)} & {{\rm H}_{\acute{e}t}^{4}(Y^{(0)}_{\overline{\FF}_{p}})} & 0 & 0 \\
	0 & {{\rm H}_{\acute{e}t}^{1}(Y^{(1)}_{\overline{\FF}_{p}})(-1)} & {{\rm H}_{\acute{e}t}^{3}(Y^{(0)}_{\overline{\FF}_{p}})} & 0 & 0 \\
	0 & {{\rm H}_{\acute{e}t}^{0}(Y^{(1)}_{\overline{\FF}_{p}})(-1)} & {{\rm H}_{\acute{e}t}^{2}(Y^{(0)}_{\overline{\FF}_{p}}) \oplus {\rm H}_{\acute{e}t}^{0}(Y^{(2)}_{\overline{\FF}_{p}})(-1)} & {{\rm H}_{\acute{e}t}^{2}(Y^{(1)}_{\overline{\FF}_{p}})} & 0 \\
	0 & 0 & {{\rm H}_{\acute{e}t}^{1}(Y^{(0)}_{\overline{\FF}_{p}})} & {{\rm H}_{\acute{e}t}^{1}(Y^{(1)}_{\overline{\FF}_{p}})} & 0 \\
	0 & 0 & {{\rm H}_{\acute{e}t}^{0}(Y^{(0)}_{\overline{\FF}_{p}})} & {{\rm H}_{\acute{e}t}^{0}(Y^{(1)}_{\overline{\FF}_{p}})} & {{\rm H}_{\acute{e}t}^{0}(Y^{(2)}_{\overline{\FF}_{p}})}
	\arrow[from=1-1, to=1-2]
	\arrow[from=1-2, to=1-3]
	\arrow[from=2-2, to=2-3]
	\arrow[from=3-2, to=3-3]
	\arrow[from=3-3, to=3-4]
	%\arrow[from=3-4, to=3-5]
	\arrow[from=4-3, to=4-4]
	%\arrow[from=4-4, to=4-5]
	\arrow[from=5-3, to=5-4]
	\arrow[from=5-4, to=5-5]
\end{tikzcd}
\]
with the middle term of the last row at index $(0,0)$ and $E_{1}^{p,q}=0$ for $q>2$ or $q<-2.$ Here we omit the coefficient ring $k_\lambda$ and use `$(a)$' to stand for the Tate twist $k_\lambda(a)$ for some integer $a.$

It is easy to see that at the index $(1,0),$ the spectral sequence degenerates at $E_{2}$ page. Hence the short sequence at the bottom of the $E_{1}$-page is exact after localizing at $\mathfrak{m}.$
Now we give the proof of the definite Ihara lemma by analyzing such a short exact sequence:
\begin{proof}[Proof of theorem 6.2(1)]
    After localization, the short exact sequence above is equivalent to 
    \begin{equation*}
        {\rm H}^{0}({\overline{\Sh}}_{0,2},k_\lambda)_{m}^{\oplus 2}\xrightarrow{\alpha} {\rm H}^{0}({\overline{\Sh}}_{0,2},k_\lambda)_{m}^{\oplus 4}\oplus {\rm H}^{0}({\overline{{\Sh}}_{0,3}(K_\gothp^{1})},k_\lambda)_{m}\xrightarrow{\beta} {\rm H}^{0}({\overline{{\Sh}}_{0,3}(K_\gothp^{1})},k_\lambda)_{m}^{\oplus 2}
    \end{equation*}
    with $\alpha^{t}=\left(\begin{array}{ccccc}
       -{\rm S}_\gothp & -1 & 0 & 0 & -{\mathop{p}\limits^{\leftarrow}}^{*} \\
        0 & 0 & 1 & 1 & {\mathop{p}\limits^{\rightarrow}}^{*}
    \end{array}\right)$ and $\beta=\left(\begin{array}{ccccc}
       {\mathop{p}\limits^{\leftarrow}}^{*}  & 0 & {\mathop{p}\limits^{\rightarrow}}^{*} & 0 & -{\rm T}{\rm S}_\gothp \\
        0 & {\mathop{p}\limits^{\rightarrow}} & 0 & {\mathop{p}\limits^{\leftarrow}}^{*} & -1
    \end{array}\right),$ where we use $\alpha^{t}$ to express the transverse of $\alpha.$
    Since $\Im \alpha=\Ker \beta,$ we get for any $(x,y,z,w,r)\in {\rm H}^{0}({\overline{\Sh}}_{0,2},k_\lambda)_{m}^{\oplus 4}\oplus {\rm H}^{0}({\overline{{\Sh}}_{0,3}(K_\gothp^{1})},k_\lambda)_{m}$ satisfying ${\mathop{p}\limits^{\leftarrow}}^{*} x+{\mathop{p}\limits^{\rightarrow}}^{*} z-{\rm T}{\rm S}_\gothp r={\mathop{p}\limits^{\leftarrow}}^{*} y+{\mathop{p}\limits^{\rightarrow}}^{*} w-r=0,$ i.e $(x,y,z,w,r)\in \Ker \beta,$ we get that there exists $(s,t)\in {\rm H}^{0}({\overline{\Sh}}_{0,2},k_\lambda)_{m}^{\oplus 2}$ such that $(x,y,z,w,r)=(-{\rm S}_\gothp s,-s,t,t,,-{\mathop{p}\limits^{\leftarrow}}^{*} s+{\mathop{p}\limits^{\rightarrow}}^{*} t).$ Therefore, $x={\rm S}_\gothp y$ and $z=w.$ Since ${\rm S}_\gothp$ is an isormorphism as a morphism, we get if $-{\mathop{p}\limits^{\leftarrow}}^{*} s+{\mathop{p}\limits^{\rightarrow}}^{*} t=0,$ then $s=t=0.$ Hence the map ${\rm H}^{0}({\overline{\Sh}}_{0,2})_{m}^{\oplus 2}\xrightarrow{({\mathop{p}\limits^{\leftarrow}}^{*},{\mathop{p}\limits^{\rightarrow}}^{*})} {\rm H}^{0}({\overline{{\Sh}}_{0,3}(K_\gothp^{1})})_{m}$ is injective. 
    By Poincar\'e duality, we get the definite Ihara lemma.
\end{proof} 

The proof of Indefinite Ihara lemma from the definite Ihara lemma makes no difference from the case $n\geq 3,$ so we omit the proof of Indefinite Ihara lemma here.

\section{Arithmetic level raising theorem for  \texorpdfstring{$n=2$}{n=2}}\label{A}
Recall the level raising map we constructed in Section~\ref{I3}. We have the following theorem for $n=2$:
\begin{theorem}\label{main theorem n=2}
    Under Hypothesis \ref{Main hypo} and \ref{Main hypo2}, the level raising map \[{\rm Ch}^{1}({{\Sh}_{1,1}},1,k_\lambda)_{m}\rightarrow{\rm H}^{1}(\mathbb{F}_{p^{2}}, {\rm H}^{2}({\overline{\Sh}_{1,1}}, k_\lambda(2))_{\mathfrak{m}})\] is surjective.
\end{theorem}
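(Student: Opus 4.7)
The plan is to reduce surjectivity of the level-raising map to a combination of Artin-type vanishing on the ordinary locus, purity/Gysin on the supersingular locus, and the explicit description of the Chow group from Theorem~\ref{Choww}. First, under Hypothesis~\ref{Main hypo}(2) I can apply the Hochschild--Serre spectral sequence for $\overline{\Sh}_{1,1}/{\Sh}_{1,1}$: localization at $\mathfrak{m}$ kills all cohomological degrees other than $2$, so the spectral sequence collapses to an isomorphism
\[
H^{3}_{\acute{e}t}({\Sh}_{1,1},\F_l(2))_{\mathfrak{m}}\;\xrightarrow{\sim}\;H^{1}(\FF_{p^2},H^{2}_{\acute{e}t}(\overline{\Sh}_{1,1},\F_l(2))_{\mathfrak{m}}).
\]
It therefore suffices to show that the composition of the Bloch cycle class map with the boundary map of the pair $({\Sh}_{1,1},{\Sh}_{1,1}^{\rm ss})$,
\[
{\rm Ch}^{1}({\Sh}_{1,1}^{\rm ss},1,\F_l)_{\mathfrak{m}}\;\longrightarrow\;H^{3}_{\acute{e}t}({\Sh}_{1,1},\F_l(2))_{\mathfrak{m}},
\]
is surjective.

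For $n=2$, Proposition~\ref{connection} implies that the Newton stratification of ${\Sh}_{1,1}$ has only two strata: the supersingular locus ${\Sh}_{1,1}^{\rm ss}$ of codimension one, and its open complement $U$, which coincides with the $\mu$-ordinary locus. By Proposition~\ref{affineness}, $U$ is affine of dimension two, and hence Artin's affine vanishing theorem yields $H^{3}_{\acute{e}t}(U,\F_l(2))_{\mathfrak{m}}=0$. The excision long exact sequence then provides a surjection
\[
H^{3}_{{\Sh}_{1,1}^{\rm ss},\acute{e}t}({\Sh}_{1,1},\F_l(2))_{\mathfrak{m}}\;\twoheadrightarrow\;H^{3}_{\acute{e}t}({\Sh}_{1,1},\F_l(2))_{\mathfrak{m}}.
\]
Using absolute purity on the smooth locus of ${\Sh}_{1,1}^{\rm ss}$ together with the Mayer--Vietoris description of its reducible structure, in which the components ${\rm Y}_{j,z}\cong \mathbb{P}^{1}$ meet transversally at the nodes enumerated by the Hecke correspondence $\T$ via Proposition~\ref{intersection of Yi}, I will identify $H^{3}_{{\Sh}_{1,1}^{\rm ss},\acute{e}t}({\Sh}_{1,1},\F_l(2))_{\mathfrak{m}}$ with $H^{1}_{\acute{e}t}({\Sh}_{1,1}^{\rm ss},\F_l(1))_{\mathfrak{m}}$.

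Finally, I compare the Chow group with $H^{1}_{\acute{e}t}({\Sh}_{1,1}^{\rm ss},\F_l(1))_{\mathfrak{m}}$ via the Kummer sequence, and combine this with Theorem~\ref{Choww} for $n=2$, which presents ${\rm Ch}^{1}({\Sh}_{1,1}^{\rm ss},1,\F_l)_{\mathfrak{m}}$ as the kernel of $(\lp,\rp)\colon H^{0}({\T},\F_l)_{\mathfrak{m}}\to H^{0}({\Sh}_{0,2},\F_l)^{\oplus 2}_{\mathfrak{m}}$. This matches the dual graph of ${\Sh}_{1,1}^{\rm ss}$ exactly: components on one side, nodes on the other. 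The hardest step will be to make the purity/Gysin identification rigorous across the nodal singularities of ${\Sh}_{1,1}^{\rm ss}$ and to show that any potential $l$-torsion contribution from $\mathrm{Pic}({\Sh}_{1,1}^{\rm ss})$ vanishes after localization at $\mathfrak{m}$; here Hypothesis~\ref{Main hypo}(3)--(5) (genericity of the Satake parameters and $\phi_{R}^{\pi}(S_{\gothp})\equiv 1$) together with the definite Ihara lemma (Theorem~\ref{Ihara}(1)) enter decisively to cut out precisely the image of the Chow group, yielding the desired surjectivity onto $H^{3}_{\acute{e}t}({\Sh}_{1,1},\F_l(2))_{\mathfrak{m}}$.
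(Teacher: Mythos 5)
Your reduction via Hochschild--Serre is the same as the paper's, but the core of your argument has a genuine gap at the excision step. You invoke Artin vanishing to get $H^{3}_{\acute{e}t}(U,\F_l(2))_{\mathfrak{m}}=0$ for the $\mu$-ordinary locus $U$. Artin vanishing only bounds the cohomology of the \emph{geometric} affine surface: $H^{i}_{\acute{e}t}(U_{\overline{\F}_p},\F_l)=0$ for $i>2$. Since $U$ lives over $\FF_{p^2}$, which has cohomological dimension one, one has an exact sequence identifying $H^{3}_{\acute{e}t}(U,\F_l(2))$ with $H^{1}(\FF_{p^2},H^{2}_{\acute{e}t}(U_{\overline{\F}_p},\F_l(2)))$, and the vanishing of this group after localization at $\mathfrak{m}$ is not formal --- controlling precisely such Frobenius-coinvariant spaces is the substance of the theorem, not an input to it. If you instead run the excision sequence over $\overline{\F}_p$ so that Artin vanishing applies, you lose the arithmetic target $H^{1}(\FF_{p^2},H^{2}_{\acute{e}t}(\overline{\Sh}_{1,1},\F_l(2))_{\mathfrak{m}})$ altogether, so the reduction collapses either way. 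In addition, the purity identification $H^{3}_{\Sh_{1,1}^{\rm ss}}(\Sh_{1,1},\F_l(2))\cong H^{1}_{\acute{e}t}(\Sh_{1,1}^{\rm ss},\F_l(1))$ cannot be quoted as stated because $\Sh_{1,1}^{\rm ss}$ is a nodal union of the curves ${\rm Y}_{j,z}$, not smooth; you flag this but do not resolve it.

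Finally, the place where you say the Ihara lemma "enters decisively" is exactly where your framework has nowhere to plug it in. In the paper, Ihara's lemma is used through the parahoric-level semistable model $\widetilde{\calS h}_{1,1}(K_{\gothp}^{1})$ (the blow-up along ${\rm Y}_{00}$): the definite Ihara lemma gives the injectivity of $d_{1}^{-1,2}$ in the weight spectral sequence, whose localized $E_2$-page then has only three surviving terms with $E_{2,\mathfrak{m}}^{-2,4}\cong {\rm Ch}^{1}(\Sh_{1,1}^{\rm ss},1,\F_l)_{\mathfrak{m}}$ (via Theorem~\ref{Choww}), and the indefinite Ihara lemma provides a filtration-compatible surjection $H^{2}_{\acute{e}t}(\overline{Sh}_{1,1}(K_{\gothp}^{1}),\F_l(2))_{\mathfrak{m}}\twoheadrightarrow H^{2}_{\acute{e}t}(\overline{Sh}_{1,1},\F_l(2))_{\mathfrak{m}}^{\oplus 2}$; Lemma~\ref{fil} then converts this into surjectivity of $E_{2,\mathfrak{m}}^{-2,4}(2)$ onto the relevant cokernel, which is identified with the level-raising target by an explicit Frobenius computation and by the comparison of the spectral-sequence map with the Abel--Jacobi/cycle-class construction. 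Your excision-on-$\Sh_{1,1}$ route never introduces the parahoric model or the monodromy filtration, so neither form of Ihara's lemma can act on any object you have constructed; the final surjectivity, which is the actual content of the theorem, is therefore not proved but postponed. To repair the proposal you would essentially have to rebuild the paper's weight-spectral-sequence argument (or supply an independent argument that the $\mathfrak{m}$-localized Frobenius coinvariants of $H^{2}$ of the ordinary locus vanish, which is not available here).
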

By Theorem~\ref{Ihara}, we get a surjective map ${\rm H}^{2}({\overline{Sh}}_{1,1}(K_{\gothp}^{1}),k_\lambda(2))_{\mathfrak{m}}\twoheadrightarrow {\rm H}^{2}_{\acute{e}t}({\overline{Sh}}_{1,1},k_\lambda(2))^{\oplus 2}_{\mathfrak{m}}.$ Since $\cSh_{1,1}$ is smooth and irreducible, the monodromy filtration of $\cSh_{1,1}$ concentrates on itself. The surjective map above preserves monodromy filtrations by \cite[Corollary 2.12, 2.14]{Sai03}.

\begin{proposition}\label{vanis}
    For the weight spectral sequence of $\cSh_{1,1}(K_\gothp^1)$, it satisfies $H^{1}_{\acute{e}t}(Y^{(1)}_{\overline{\FF}_{p}},k_\lambda)=0$ in the $E_{1}$-page and after localizaing at $\mathfrak{m},$ the $E_{2}$-page can be expressed as the following diagram:
    \[\begin{tikzcd}[
    column sep=small, 
    row sep=scriptsize,
    cells={font=\footnotesize} % Uniform font size
]
	{E^{-2,4}_{2,\mathfrak{m}}} && 0 && 0 && 0 && 0 \\
	0 && 0 && 0 && 0 && 0 \\
	0 && 0 && {E^{0,2}_{2,\mathfrak{m}}} && 0 && 0 \\
	0 && 0 && 0 && 0 && 0 \\
	0 && 0 && 0 && 0 && {E^{2,0}_{2,\mathfrak{m}}}
\end{tikzcd}.\] Moreover, we have $E_{2,m}^{-2,4}={\rm Ch}^{1}({\Sh}^{\rm ss}_{1,1},1,k_\lambda)_{\mathfrak{m}}.$
\end{proposition}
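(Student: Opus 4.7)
The plan is to analyze the $E_1$-page displayed above in three stages.

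First, I would establish $H^1(Y^{(1)}_{\overline{\FF}_p})(-1) = 0$. By Proposition~\ref{geometry2}, every irreducible component of $Y^{(1)}$ is either of the form $\mathbb{P}^1/\Sh_{0,2}$ (for the pieces $Y_1, Y_2$) or $T \times \mathbb{P}^1$ (for $\widetilde{Y}_{00} \cap \widetilde{Y}_{11}$). Both $\Sh_{0,2}$ and $T = \Sh_{0,2}(K_\gothp^1)$ are zero-dimensional (being discrete Shimura varieties, cf.\ Subsection~\ref{S:defn of Shimura var}), so by Künneth each such component has $\ell$-adic cohomology concentrated in even degrees; in particular $H^1$ vanishes, and the Tate twist does not affect this.

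Second, after localizing at $\mathfrak{m}$, I would kill the remaining unwanted entries using the non-Eisenstein hypothesis. Hypothesis~\ref{Main hypo}(2) gives $H^i(\overline{\Sh}_{1,1}, \FF_l)_\mathfrak{m} = 0$ for $i \neq 2$, while $H^i(\overline{\Sh}_{0,2}, \FF_l)_\mathfrak{m} = 0$ for $i \neq 0$ is automatic from zero-dimensionality. Using the descriptions in Propositions~\ref{geometry1} and~\ref{geometry2}:
\begin{itemize}
\item $Y_{10}, Y_{01} \cong \Sh_{1,1}$ localize to pure degree $2$;
\item $\widetilde{Y}_{00}, \widetilde{Y}_{11}$ are blow-ups of $\mathbb{P}^1 \times \mathbb{P}^1$-bundles over $\Sh_{0,2}$, so by Künneth and the blow-up formula their localized cohomology is supported in even degrees and built from $H^0(\Sh_{0,2})_\mathfrak{m}$ and $H^0(T)_\mathfrak{m}$;
\item all components of $Y^{(1)}$ and $Y^{(2)}$ fiber over zero-dimensional bases via projective spaces, hence have even-degree cohomology.
\end{itemize}
This forces every entry at odd total degree to vanish after localization. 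For the remaining even-total-degree entries, I would compute the $d_1$-differentials (restriction from $Y^{(r)}$ to $Y^{(r+1)}$ and Gysin in the other direction) explicitly in terms of the Hecke modules $H^0(\Sh_{0,2})_\mathfrak{m}$, $H^0(T)_\mathfrak{m}$, and $H^2(\Sh_{1,1})_\mathfrak{m}$; the genericity assumptions in Hypothesis~\ref{Main hypo}(3) on the Satake parameters and $\phi_R^\Pi(S_\gothp) \equiv 1 \pmod{\mathfrak{m}}$ in (5) collapse the $E_2$-page away from the positions $(-2,4)$, $(0,2)$, and $(2,0)$.

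Finally, I would identify $E_{2,\mathfrak{m}}^{-2,4}$ with ${\rm Ch}^1(\Sh_{1,1}^{ss}, 1, \FF_l)_\mathfrak{m}$. Since the $(-3, 4)$ position is zero, one has $E_{2,\mathfrak{m}}^{-2,4} = \ker(d_1^{-2,4})$, where
\[
d_1^{-2,4}\colon H^0(Y^{(2)})(-2)_\mathfrak{m} \longrightarrow H^2(Y^{(1)})(-1)_\mathfrak{m}
\]
is the Gysin map from the triple intersections into the $\mathbb{P}^1$-components of $Y^{(1)}$. Using the projective bundle formula to rewrite $H^2(Y^{(1)})(-1)_\mathfrak{m}$ as $H^0$ of the corresponding bases (copies of $\Sh_{0,2}$ and $T$), and observing that $Y^{(2)}$ is supported over $T$, this kernel matches term-by-term with the kernel of the divisor map $\bigoplus_i R(Y_i^\circ)^* \otimes \FF_l \to \bigoplus_i H^0(T, \FF_l)$ appearing in Proposition~\ref{Chow}. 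Applying that proposition (Nart's description of higher Chow groups) then gives the identification with ${\rm Ch}^1(\Sh_{1,1}^{ss}, 1, \FF_l)_\mathfrak{m}$.

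The main obstacle, I expect, will be the careful bookkeeping in the middle step: tracking all of the $d_1$ differentials across the Hecke modules $H^0(\Sh_{0,2})_\mathfrak{m}$, $H^0(T)_\mathfrak{m}$, and $H^2(\Sh_{1,1})_\mathfrak{m}$, and verifying that the genericity hypotheses force the required cancellations so that only the three claimed entries survive on $E_2$. The final identification, while geometrically natural, still requires carefully matching the Gysin-theoretic differential with the explicit divisor map of Nart while keeping Tate twists and Galois actions consistent.
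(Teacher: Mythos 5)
Your opening step (killing $H^{1}(Y^{(1)}_{\overline{\FF}_p})(-1)$ via the $\mathbb{P}^1$-bundle structure of the double intersections over the zero-dimensional $\Sh_{0,2}$ and $\T$) agrees with the paper, and your final identification of $E^{-2,4}_{2,\mathfrak{m}}=\ker(d_1^{-2,4})$ with $\mathrm{Ch}^1(\Sh_{1,1}^{\mathrm{ss}},1,\FF_l)_{\mathfrak{m}}$ via Proposition~\ref{Chow} is the right idea. The genuine gap is in your middle step. Your claim that ``every entry at odd total degree vanishes after localization'' because all strata have even-degree cohomology is false at the $E_1$ level: the entries $E_1^{1,0}=H^0(Y^{(1)})$, $E_1^{-1,2}=H^0(Y^{(1)})(-1)$, $E_1^{1,2}=H^2(Y^{(1)})$ and $E_1^{-1,4}=H^2(Y^{(1)})(-1)$ are built out of $H^0(\overline{\Sh}_{0,2})_{\mathfrak{m}}$ and $H^0(\overline{\T})_{\mathfrak{m}}$, which survive localization; evenness of the cohomology of the strata only kills the odd-$q$ rows, not the odd-total-degree columns. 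The paper disposes of almost all positions by a convergence argument you never invoke: away from $(\pm1,2)$ and the diagonal $p+q=2$, the spectral sequence degenerates at $E_2$, so $E_{2,\mathfrak{m}}^{p,q}=E_{\infty,\mathfrak{m}}^{p,q}$ is a subquotient of $H^{p+q}_{\acute{e}t}(X_{\overline{\QQ}_p},\FF_l)_{\mathfrak{m}}=0$ for $p+q\neq 2$ (non-Eisenstein concentration on the generic fibre). What then remains — and is the actual content of the proposition — is the vanishing $E^{-1,2}_{2,\mathfrak{m}}=0$ (and its dual $E^{1,2}_{2,\mathfrak{m}}$), i.e.\ the injectivity of $d_1^{-1,2}$ after localization. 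Your appeal to ``genericity of the Satake parameters'' does not supply this: the paper proves it by writing $d_1^{-1,2}$ as an explicit matrix in $H^0(\overline{\Sh}_{0,2})_{\mathfrak{m}}$, $H^0(\overline{\T})_{\mathfrak{m}}$, $H^2(\overline{\Sh}_{1,1})_{\mathfrak{m}}$ and reducing its injectivity to the injectivity of $({\mathop{p}\limits^{\leftarrow}}^{*},{\mathop{p}\limits^{\rightarrow}}^{*})\colon H^0(\overline{\Sh}_{0,2})_{\mathfrak{m}}^{\oplus 2}\to H^0(\overline{\T})_{\mathfrak{m}}$ established in the proof of the definite Ihara lemma; nothing in your sketch identifies this mechanism or a substitute for it.

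A secondary point: even for the even-total-degree positions off the diagonal (e.g.\ $(0,0)$ and $(0,4)$), verifying the collapse ``by hand via $d_1$-computations,'' as you propose, would require separate arguments (for instance that $H^0$ of the special fibre is Eisenstein), all of which the paper's degeneration-plus-abutment argument makes unnecessary. Finally, in your last step note that $Y^{(2)}$ has two components, each isomorphic to $\T$, and that $H^2(Y^{(1)})(-1)$ includes the $\T\times\mathbb{P}^1$ piece, so matching $\ker(d_1^{-2,4})$ with the $n=2$ kernel $\ker\bigl(H^0(\T,\FF_l)\to H^0(\Sh_{0,2},\FF_l)^{\oplus 2}\bigr)$ of Theorem~\ref{Choww} still requires the same componentwise Gysin bookkeeping as the $d_1^{-1,2}$ matrix.
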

\begin{proof}
     Since ${\rm H}^i_{\acute{e}t}(Y_{\overline{\FF}_p}^{(1)},\cO_\lambda)_\mathfrak{m}$ is torsion-free
    for any integer $i.$ We get the spectral sequence degenerates at $E_2$-page.

    The vanishing of ${\rm H}^1_{\acute{e}t}(Y^{(1)}_{\overline{\FF}_p},k_\lambda)$ is obvious as $Y^{(1)}_{\overline{\FF}_p}$ can be expressed as union of $\PP^1$-bundles over Shimura sets. This shows $E^{-1,3}_{2,\mathfrak{m}}$ and $E^{1,1}_{2,\mathfrak{m}}$ are zero. The calculation of $E_{2,\mathfrak{m}}^{-2,4}$ is the same as Proposition~\ref{gr_2 calculation}.
    Here we finish the proof. 
\end{proof}

Thus we get a filtration of ${\rm H}^{2}(\overline{Sh}_{1,1}(K_{\gothp}^{1}),k_\lambda)_{m}.$ By Lemma~\ref{fil}, we get a surjective map
$$E_{2,m}^{-2,4}(2)\twoheadrightarrow \coker(E_{2,m}^{0,2}(2)\xrightarrow{\alpha} {\rm H}^{2}_{\acute{e}t}(\overline{\Sh}_{1,1},k_\lambda(2))_{\mathfrak{m}}),$$ which we denote by $\Phi.$
Now we give the proof of Theorem~\ref{main theorem n=2}.
\begin{proof}
By the same reason as in the proof of Theorem~\ref{main theorem n=3}, we get $\coker\alpha\simeq {\rm H}^{1}(\mathbb{F}_{p^{2}}, {\rm H}^{2}({\overline{\Sh}_{1,1}}, k_\lambda(2))_{\mathfrak{m}}).$ To get the Theorem~\ref{main theorem n=2} from the surjectivity of $\Phi$ is also the same of Theorem~\ref{main theorem n=3} only to note that $gr_1R\Psi k_\lambda$ on $\overline{\widetilde\cSh}_{1,1}(K_\gothp^1)^{\rm ord}$ is trivial.
\end{proof}

\newpage
\bibliographystyle{alpha}
\bibliography{main}
\end{document}